\documentclass[11pt, reqno]{amsart}
\usepackage{txfonts}
\usepackage{amsmath,amssymb,amsthm,mathrsfs,mathtools,bm,xcolor,multirow,pbox}
\usepackage{mathrsfs}
\usepackage{algorithm,algorithmic,float}
\usepackage{graphicx,color,framed,tikz,caption,subcaption}
\usepackage{enumitem}
\usepackage{booktabs}
\setlist{leftmargin=9mm}
\usepackage[colorlinks,linkcolor=black,citecolor=black,urlcolor=black]{hyperref}
\allowdisplaybreaks[4]
\numberwithin{equation}{section}
\newcommand{\N}{\mathbb{N}}
\newcommand{\R}{\mathbb{R}}

\newcommand{\pnorm}[2]{\lVert #1\rVert_{#2}}

\newcommand{\abs}[1]{\lvert#1\rvert}
\newcommand{\bigabs}[1]{\big\lvert#1\big\rvert}
\newcommand{\biggabs}[1]{\bigg\lvert#1\bigg\rvert}
\newcommand{\iprod}[2]{\langle#1,#2\rangle}

\renewcommand{\epsilon}{\varepsilon}

\renewcommand{\d}[1]{\mathrm{d}#1}

\newcommand{\bigop}{\mathcal{O}_{\mathbf{P}}}

\newcommand{\smallo}{\mathfrak{o}}
\newcommand{\bigo}{\mathcal{O}}

\DeclareMathOperator{\E}{\mathbb{E}}
\DeclareMathOperator{\Prob}{\mathbb{P}}

\DeclareMathOperator{\sign}{\texttt{sgn}}

\DeclareMathOperator{\tr}{tr}
\DeclareMathOperator{\var}{Var}
\DeclareMathOperator{\cov}{Cov}

\DeclareMathOperator{\op}{op}
\DeclareMathOperator{\TV}{TV}

\DeclareMathOperator{\err}{\texttt{err}}

\DeclareMathOperator*{\argmax}{arg\,max\,}
\DeclareMathOperator*{\argmin}{arg\,min\,}

\newcommand{\beq}{\begin{equation}}
\newcommand{\eeq}{\end{equation}}
\newcommand{\beqa}{\begin{equation} \begin{aligned}}
\newcommand{\eeqa}{\end{aligned} \end{equation}}
\newcommand{\beqas}{\begin{equation*} \begin{aligned}}
\newcommand{\eeqas}{\end{aligned} \end{equation*}}

\newcommand{\bit}{\begin{itemize}}
	\newcommand{\eit}{\end{itemize}}
\newcommand{\bmat}{\begin{bmatrix}}
	\newcommand{\emat}{\end{bmatrix}}

\theoremstyle{definition}\newtheorem{problem}{Problem}[section]
\theoremstyle{definition}\newtheorem{definition}[problem]{Definition}
\theoremstyle{definition}
\theoremstyle{remark}\newtheorem{assumption}{Assumption}

\theoremstyle{remark}\newtheorem{remark}{Remark}
\theoremstyle{definition}\newtheorem{example}[problem]{Example}
\theoremstyle{plain}\newtheorem{theorem}[problem]{Theorem}
\theoremstyle{plain}
\theoremstyle{plain}\newtheorem{lemma}[problem]{Lemma}
\theoremstyle{plain}\newtheorem{proposition}[problem]{Proposition}
\theoremstyle{plain}\newtheorem{corollary}[problem]{Corollary}
\theoremstyle{plain}

\AtBeginDocument{%
	\def\MR#1{}
}

\begin{document}

\title[Empirical Bayes for correlated Gaussian sequence model]{Empirical Bayes for correlated Gaussian sequence model}

\author[Q. Han]{Qiyang Han}

\address[Q. Han]{
Department of Statistics, Rutgers University, Piscataway, NJ 08854, USA.
}
\email{qh85@stat.rutgers.edu}

\author[C.-H. Zhang]{Cun-Hui Zhang}

\address[C.-H. Zhang]{
	Department of Statistics, Rutgers University, Piscataway, NJ 08854, USA.
}
\email{czhang@stat.rutgers.edu}

\date{\today}
\keywords{composite likelihood, effective sample size, empirical Bayes, Gaussian sequence model, geometric Brascamp-Lieb inequality, maximum likelihood estimation}
\subjclass[2000]{60E15, 60G15}
\begin{abstract}
Empirical Bayes methods are among the most widely used statistical methods for large-scale inference. A central paradigm is the nonparametric maximum likelihood estimator, whose theoretical guarantees are by now well understood for the independent Gaussian sequence model.

In this paper, we study empirical Bayes estimation from observations in a correlated Gaussian sequence model with a possibly arbitrary dependence structure. We show that the maximum Composite Marginal Likelihood (CML) estimator, 
which ignores all correlations among the observations in the likelihood, converges in weighted Hellinger distance at the rate $n_\ast^{-1/2}$, modulo logarithmic factors, where $n_\ast=n/\pnorm{\texttt{Cor}_{\Sigma_0}}{\op}$ is the `effective sample size' determined solely by the number of observations $n$ and the spectral radius of the correlation matrix $\texttt{Cor}_{\Sigma_0}$ of the Gaussian observations. A complementary minimax lower bound shows that $n_\ast$ indeed serves as the right complexity measure, and that the CML estimator is nearly rate optimal under general dependence.

As an illustration of the broad applicability of the CML method, we consider two concrete settings for prior estimation. In the first, we consider Bayesian linear regression in high dimensions, where the signal prior is estimated via the CML method applied to the generalized least squares estimator. In the second, we consider the more challenging Bayesian nonlinear single-index model, where prior estimation is made possible by applying the CML method to a one-step debiased gradient descent. In both applications, although the full likelihood landscape can be arbitrarily complicated and intractable in high dimensions, our CML method is facilitated by exploiting the high-dimensional distribution of the auxiliary statistics through an approximate correlated Gaussian sequence model.

The key ingredient in the proof of our results is a sharp local maximal inequality for the log composite marginal likelihood process under arbitrarily dependent Gaussian observations. In contrast to standard empirical process methods, we prove this inequality by leveraging a recent version of the geometric Brascamp-Lieb inequality for Gaussian measures.
\end{abstract}
\maketitle

\setcounter{tocdepth}{1}
\tableofcontents

\sloppy

\section{Introduction}

\subsection{Empirical Bayes and the NPMLE}

Empirical Bayes methods are among the most widely used statistical methods for large-scale inference. In the classical compound decision formulation, one observes a collection of related experiments
\begin{align*}
X_j\mid \theta_j\sim f(\cdot\mid \theta_j),\qquad j\in[n],
\end{align*}
and attempts to use the full ensemble $X_{[n]}$ to learn the population structure of the latent effects $\theta_1,\ldots,\theta_n$. If $G_\ast$ denotes the oracle prior, or, in the non-random compound formulation, the empirical distribution $n^{-1}\sum_i\delta_{\theta_i}$, then the oracle Bayes rule $\delta_{G_\ast}$ under a loss function $\mathsf{L}$ provides the benchmark oracle Bayes risk $R(G_\ast)=\E_{G_\ast} \mathsf{L}(\delta_{G_\ast}(X),\theta)$. The fundamental empirical Bayes insight of Robbins is that, in large compound problems, estimating $G_\ast$ can lead to procedures whose average risk approaches the oracle Bayes risk. This principle underlies much of the modern methodology for shrinkage, sparse normal means, multiple testing, large-scale confidence assessment, and compound decision theory; see, e.g., \cite{robbins1951asymptotically,robbins1956empirical,stein1956inadmissibility,efron1972limiting,jiang2009general,zhang2009generalized,efron2010large,efron2014two,koenker2014convex,koenker2025empirical}.

A particularly popular approach to estimating the prior $G_\ast$ is the nonparametric maximum likelihood estimator (NPMLE) \cite{kiefer1956consistency}. In the canonical homoskedastic Gaussian sequence model
\begin{align}\label{def:model_ind}
X_j=\theta_{j}+\mathsf{Z}_j,
\qquad \mathsf{Z}_j\stackrel{\mathrm{i.i.d}}{\sim} \mathcal{N}(0,1),
\quad \theta_j\stackrel{\mathrm{i.i.d.}}{\sim}G_\ast,
\end{align}
the NPMLE estimates $G_\ast$ by
\begin{align}\label{def:NPMLE_intro}
\hat G_n\in\argmax_{G\in\mathscr G}
\frac1n\sum_{j \in [n]}\log \varphi_{G;1}(X_j),
\quad
\varphi_{G;\sigma}(x)=\int \varphi_\sigma(x-u)\,G(\d u).
\end{align}
Here $\mathscr{G}$ is the class of distribution functions on $\R$, and $\varphi_\sigma(u)\equiv (2\pi \sigma^2)^{-1/2}e^{-u^2/(2\sigma^2)}$ is the Lebesgue density function corresponding to $\mathcal{N}(0,\sigma^2)$. We usually write $\varphi_G\equiv \varphi_{G;1}$ for notational simplicity.

The NPMLE in \eqref{def:NPMLE_intro} has a long history and has received significant renewed interest recently; see, among others, \cite{kiefer1956consistency,lindsay1983geometry,jiang2009general,zhang2009generalized,koenker2014convex,dicker2016high,gu2016problem,feng2018approximate,jiang2020general,kim2020fast,polyanskiy2020self,saha2020nonparametric,deb2022two,ignatiadis2025empirical,ghosh2025stein,soloff2025multivariate,shen2026poisson,chen2026empirical,ignatiadis2026compound,kim2026empirical,chen2026normal}. It is well known \cite{ghosal2001entropies,zhang2009generalized,jiang2009general} that, under the model \eqref{def:model_ind}, the NPMLE \eqref{def:NPMLE_intro} enjoys desirable statistical properties, in that it converges at an almost parametric rate in Hellinger distance $d_H$ for mixture density estimation:
\begin{align}\label{eqn:rate_ind_intro}
d_H\big(\varphi_{\hat{G}_n}, \varphi_{G_\ast}\big) = \bigop(n^{-1/2}\mathrm{polylog}\, n).
\end{align}
Unfortunately, the independent observation scheme \eqref{def:model_ind} underpinning the validity of \eqref{eqn:rate_ind_intro} remains a major bottleneck to the broad practical usefulness of the NPMLE. Indeed, many modern applications call for the use of empirical Bayes methods in settings where the observations are dependent. For instance, regression coefficients estimated by ordinary least squares, not to mention those arising in many more challenging high-dimensional regression settings, already exhibit substantial nontrivial correlation. As another example, \cite{gu2022ranking} fit a Bradley–Terry model and apply the NPMLE to the components of the maximum likelihood estimator that are highly correlated to each other.

\subsection{CML estimator under correlated Gaussian sequence model}

In this paper, we consider a general correlated Gaussian sequence model
\begin{align}\label{eq:intro_corr_sequence}
U=\beta_0+\Sigma_0^{1/2}\mathsf Z_n\in \R^n,
\qquad \beta_{0,j}\stackrel{\mathrm{i.i.d.}}{\sim}G_0,
\quad \mathsf Z_n\sim\mathcal N(0,I_n),
\end{align}
where $\Sigma_0$ is an arbitrary covariance matrix. Similar to the setting above, the main statistical goal is to estimate the prior $G_0$ based on the observations $\{U_j\}$.

We propose to estimate $G_0$ by
\begin{align}\label{eq:intro_mple}
\hat G_n\in \argmax_{G\in\mathscr G}
\frac1n\sum_{j \in [n]}\log \varphi_{G;\sigma_{0,j}}(U_j),
\end{align}
where $\sigma_{0,j}^2=(\Sigma_0)_{jj}$. It is easy to see that the estimator \eqref{eq:intro_mple} deliberately ignores the correlations among the coordinates of $U$ in the full likelihood, and, in fact, can be viewed as applying the NPMLE \eqref{def:NPMLE_intro} by `pretending' that $\{U_j\}$ are independent of one another. In this sense, it is a type of `independence likelihood' \cite{chandler2007inference}  or `composite likelihood' \cite{varin2011overview}. Therefore, we call $\hat{G}_n$ in \eqref{eq:intro_mple} the \emph{maximum composite marginal likelihood (CML) estimator}. Clearly, computation of the CML estimator \eqref{eq:intro_mple} is almost as easy as that of the NPMLE \eqref{def:NPMLE_intro}.

As the dependence structure of the observations $\{U_j\}$ can be fairly arbitrary, there appears to be no a priori reason to believe that the CML estimator $\hat{G}_n$ should estimate the unknown $G_0$ well. Surprisingly, we prove in Theorem \ref{thm:MML_rate} that the CML estimator \eqref{eq:intro_mple} converges for mixture density estimation in a weighted Hellinger distance $\mathfrak d_{H;\sigma_{0,[n]}}$ (formally defined in \eqref{def:averaged_hellinger} below):
\begin{align}\label{eqn:rate_intro}
\mathfrak d_{H;\sigma_{0,[n]}}(\hat G_n,G_0)
=\bigop(n_\ast^{-1/2}\mathrm{polylog}\, n).
\end{align}
Here with $\texttt{Cor}_{\Sigma_0}\equiv \mathfrak{D}_{\Sigma_0}^{-1/2}\Sigma_0\mathfrak{D}_{\Sigma_0}^{-1/2}$, $\mathfrak{D}_{\Sigma_0}=\mathrm{diag}(\Sigma_0)$, denoting the correlation matrix of the standardized Gaussian noise, the `\emph{effective sample size}' $n_\ast$ is defined by
\begin{align}\label{eq:intro_eff_size}
n_\ast
\equiv
\frac{n}{\pnorm{ \texttt{Cor}_{\Sigma_0} }{\op}}.
\end{align}
Comparing \eqref{eqn:rate_ind_intro} and \eqref{eqn:rate_intro}, the CML estimator \eqref{eq:intro_mple} remains valid and converges essentially at the same rate as the NPMLE \eqref{def:NPMLE_intro}, with the sample size replaced by the effective sample size $n_\ast$, despite the possible arbitrary dependence among the Gaussian observations ${U_j}$.

The notion of effective sample size \eqref{eq:intro_eff_size} has a natural theoretical interpretation: in the independent case $\Sigma_0 = I_n$, the effective sample size is $n_\ast = n$, whereas in the perfectly correlated case $\Sigma_0 = \bm{1}\bm{1}^\top$, the effective sample size becomes $n_\ast = 1$. More importantly, a complementary minimax lower bound in Proposition \ref{prop:minimax_lower_effective_sample_size} shows that the effective sample size $n_\ast$ indeed serves as the right complexity measure, and that the rate $n_\ast^{-1/2}$ in \eqref{eqn:rate_intro} is optimal, up to logarithmic factors, in a minimax sense.

In addition to convergence in the weighted Hellinger distance in \eqref{eqn:rate_intro}, we also prove convergence of $\hat{G}_n$ to $G_0$ in Wasserstein distance
\begin{align}\label{eqn:rate_W_intro}
\mathsf{W}_p(\hat{G}_n,G_0)=\bigop((\log n_\ast)^{-1/2}).
\end{align}
{The logarithmic rate (\ref{eqn:rate_W_intro}) is known to be optimal already in the independent case (\ref{def:model_ind}) for general $G_0$'s with unbounded supports, cf. \cite{dedecker2013minimax}.}

As a direct application of our theory in (\ref{eqn:rate_intro}) and (\ref{eqn:rate_W_intro}):
\begin{enumerate}
	\item We construct empirical Bayes credible intervals based on marginal posterior distributions, and prove their validity in an averaged sense.
	\item We show that the marginal empirical Bayes regret, with the marginal Bayes rule as the oracle, converges at an optimal parametric rate in terms of the effective sample size $n_\ast$. 
\end{enumerate}
These applications will be detailed in Sections \ref{subsec:credible_consequence} and \ref{subsec:marginal_regret}.

We note that this paper is parallel to and contemporaneous with \cite{zhang2026empirical}, which focuses on a homoscedastic pairwise Gaussian copula model in the compound decision setting under stronger assumptions on the support of the unknown means. Moreover, the analytical approaches of this paper and \cite{zhang2026empirical} are fundamentally different: \cite{zhang2026empirical} is based on a variance inequality of \cite{guo2022extreme} that leads to second moment estimates, whereas the present work, as will become clear below, is based on a geometric Brascamp-Lieb inequality that leads to the sharp high probability estimate in (\ref{eqn:rate_intro}) with optimal rates.

\subsection{Applications to two concrete settings}

We now illustrate two non-trivial applications of the CML method for prior estimation in more concrete settings.

In the first application, we consider the Bayesian Gaussian linear model
\begin{align}\label{eq:intro_gls_model}
Y=A\mu_\ast+\xi,
\qquad \mu_{\ast,j}\stackrel{\mathrm{i.i.d.}}{\sim}G_\ast,\quad
\xi\mid A\sim\mathcal N(0,\tau_\ast^2\Omega),
\end{align}
where $A\in\R^{m\times n}$ has full column rank and $\Omega$ is known up to scale. The generalized least squares (GLS) estimator satisfies, conditionally on $A$,
\begin{align}\label{eq:intro_gls_sequence}
\hat\mu_{\mathrm{gls}}
=\mu_\ast+\tau_\ast Q_A^{-1/2}\mathsf Z_n,
\qquad
Q_A=A^\top\Omega^{-1}A,\quad \mathsf{Z}_n\sim \mathcal{N}(0,I_n).
\end{align}
Thus the coordinatewise GLS estimates form an exact correlated Gaussian sequence as in our setting \eqref{eq:intro_corr_sequence}, and the CML method may be applied to the `observation' $\hat\mu_{\mathrm{gls}}$ for the purpose of estimating $G_\ast$. The details of this program can be found in Section \ref{sec:gls_application}.

From a broader perspective of the literature, prior estimation in linear regression in high dimensions has received significant recent attention. Representative approaches along this line include the variational mean-field approach \cite{mukherjee2023mean,kim2024flexible,lee2026parametric}, which provides consistency of normalized log-likelihood-ratio under specific conditions and usually in the large sample regime, and the gradient-flow based approach with full likelihood \cite{fan2023gradient,fan2025dynamical,fan2025dynamical}, which requires a fairly complicated algorithm in the proportional regime. Our approach based on the CML method applied to \eqref{eq:intro_gls_sequence} is qualitatively different due to the intrinsic 
misspecification of the likelihood. Nonetheless, our approach enjoys a straightforward non-asymptotic convergence guarantee \eqref{eqn:rate_intro} in both regimes and beyond, in contrast to the regime-specific and/or asymptotic analyses presented in the aforementioned works.

The second application concerns the more challenging setting of Bayesian nonlinear regression. Suppose
\begin{align}\label{eq:intro_nonlin_model}
Y_i=\mathsf{F}(\langle A_i,\mu_\ast\rangle,\xi_i),
\qquad i\in[m],
\end{align}
with Gaussian features $A_i\sim\mathcal N(0,\Sigma/n)$ and an unknown prior $G_\ast$ for the coordinates of $\mu_\ast$. The major difficulty for the nonlinear model \eqref{eq:intro_nonlin_model} is that the full likelihood for $G_\ast$ is typically intractable and can be arbitrarily complicated, especially when the link function $\mathsf{F}$ is highly nonlinear or completely unknown. 

Our approach is inspired by the recent development of the so-called `debiased gradient descent' method in \cite{han2026gradient}, but uses a much simplified one-step debiased gradient descent:
\begin{align}\label{eq:intro_gd_sequence}
\hat\mu_{\mathrm{db}}
=\hat\tau\mu_0-\Sigma^{-1}A^\top\mathsf{L}(A\mu_0,Y),
\end{align}
where $\mathsf{L}$ is a user-chosen `loss derivative' function and $\mu_0$ is an initialization sampled from a user-chosen prior. The main reason for the choice is that \eqref{eq:intro_gd_sequence} behaves, in a distributional sense, as a correlated Gaussian sequence model
\begin{align}\label{eq:intro_gd_oracle}
\hat\mu_{\mathrm{db}} \stackrel{d}{\approx} \bar\delta\mu_\ast+\bar\sigma\Sigma^{-1/2}\mathsf Z_n.
\end{align}
Hence the difficult problem of estimating $G_\ast$ under an intractable likelihood can now be reduced to an approximate correlated Gaussian sequence (\ref{eq:intro_gd_oracle}), to which the CML method can be applied to estimate the prior $G_\ast$ (up to an unavoidable scaling factor). The details of this program can be found in Section \ref{sec:gd_application}.

\subsection{Proof techniques}

The key technical ingredient for proving \eqref{eqn:rate_intro} is to establish a sharp local maximal inequality for the log composite marginal likelihood ratio process
\begin{align}\label{eq:intro_log_process}
\mathfrak{L}_{n;\sigma_{0,[n]}}(G)
=\frac1{\sqrt n}\sum_{j \in [n]}
\log\frac{\varphi_{G;\sigma_{0,j}}(U_j)}{\varphi_{G_0;\sigma_{0,j}}(U_j)}.
\end{align}
When $\{U_j\}$ are independent, a large class of classical empirical process tools can be leveraged to provide a sharp bound for \eqref{eq:intro_log_process}, via, for example, entropy methods; cf. \cite{ghosal2001entropies,zhang2009generalized,jiang2009general}.

The major difficulty in our setting \eqref{eq:intro_corr_sequence} therefore lies in the possible arbitrary dependence among ${U_j}$. To this end, we leverage a version of the geometric Brascamp-Lieb inequality for Gaussian measures proved by \cite{chen2015improved} to establish a pointwise Bernstein-type inequality for $\mathfrak L_n(G)$ for a fixed $G$. Conceptually, this is viable because the Brascamp-Lieb inequality decouples the exponential moments of dependent Gaussian variables into marginal ones at the cost of the spectral radius of the underlying correlation matrix. We then use a normal-mixture discretization argument and a dyadic localization over Hellinger shells to strengthen the pointwise Bernstein inequality to a sharp local maximal inequality of the following form: with high probability, for $r>0$ not too small,

\begin{align}\label{eqn:local_maximal_ineq}
&\sup_{\mathfrak d_{H;\sigma_{0,[n]} }(G,G_0)\le r}
|(\mathrm{id}-\E)\mathfrak{L}_{n;\sigma_{0,[n]} }(G)| \lesssim
\pnorm{ \texttt{Cor}_{\Sigma_0} }{\op}^{1/2}\cdot \, r\,\mathrm{polylog}(n).
\end{align}
Once \eqref{eqn:local_maximal_ineq} is proved, we may use the standard peeling argument from empirical process theory \cite{van1996weak,van2000empirical} to prove \eqref{eqn:rate_intro}. In this sense, our method of proof identifies the effective sample size via the multiplicative factor in (\ref{eqn:local_maximal_ineq}) due to the intrinsic Gaussian fluctuation in the presence of a possibly arbitrary dependence structure within $\{U_j\}$.

\subsection{Organization}

The rest of the paper is organized as follows. Section \ref{section:main_results} presents our main theory \eqref{eqn:rate_intro} and a generic application to marginal posterior credible intervals and marginal empirical Bayes regret. Section \ref{sec:gls_application} details the application of the CML method to the generalized least squares estimator in the linear regression setting, whereas Section \ref{sec:gd_application} presents the application of the CML method to one-step debiased gradient descent in general nonlinear regression. Numerical experiments are presented in Section \ref{sec:numerics}. Proofs are deferred to Sections \ref{section:proof_MML_rate}-\ref{section:proof_eb_gd_hellinger} and the appendices.

\subsection{Notation}

For any two integers $m,n$, let $[m:n]\equiv \{m,m+1,\ldots,n\}$ and $[n]\equiv [1:n]$. When $m>n$, it is understood that $[m:n]=\emptyset$.  

For $a \in \R$, $\mathrm{id}(a)=a$. For $a,b \in \R$, $a\vee b\equiv \max\{a,b\}$ and $a\wedge b\equiv\min\{a,b\}$. For $a \in \R$, let $a_\pm \equiv (\pm a)\vee 0$. For a multi-index $a \in \mathbb{Z}_{\geq 0}^n$, let $\abs{a}\equiv \sum_{i \in [n]}a_i$. For $x \in \R^n$, let $\pnorm{x}{p}$ denote its $p$-norm $(0\leq p\leq \infty)$, and $B_{n;p}(R)\equiv \{x \in \R^n: \pnorm{x}{p}\leq R\}$. We simply write $\pnorm{x}{}\equiv\pnorm{x}{2}$ and $B_n(R)\equiv B_{n;2}(R)$. For $x \in \R^n$, let $\mathrm{diag}(x)\equiv (x_i\bm{1}_{i=j})_{i,j \in [n]} \in \R^{n\times n}$. 

For a matrix $M \in \R^{m\times n}$, let $\pnorm{M}{\op},\pnorm{M}{F}$ denote the spectral and Frobenius norms of $M$, respectively. $I_n$ is reserved for the $n\times n$ identity matrix, written simply as $I$ (in the proofs) if no confusion arises. 

We use $C_{x}$ to denote a generic constant that depends only on $x$, whose numerical value may change from line to line unless otherwise specified. $a\lesssim_{x} b$ and $a\gtrsim_x b$ mean $a\leq C_x b$ and $a\geq C_x b$, abbreviated as $a=\bigo_x(b)$ and $a=\Omega_x(b)$, respectively; $a\asymp_x b$ means both $a\lesssim_{x} b$ and $a\gtrsim_x b$. $\bigo$ and $\smallo$ (resp. $\mathcal{O}_{\mathbf{P}}$ and $\mathfrak{o}_{\mathbf{P}}$) denote the usual big-O and small-o notation (resp. in probability). By convention, sums and products over an empty set are understood as $\Sigma_{\emptyset}(\cdots)=0$ and $\Pi_{\emptyset}(\cdots)=1$. 

For a random variable $X$, we use $\Prob_X,\E_X$ (resp. $\Prob^X,\E^X$) to indicate that the probability and expectation are taken with respect to $X$ (resp. conditional on $X$). 

For $\Lambda>0$ and $\mathfrak{p}\in \N$, a measurable map $f:\R^n \to \R$ is called \emph{$\Lambda$-pseudo-Lipschitz of order $\mathfrak{p}$} iff $
\abs{f(x)-f(y)}\leq \Lambda\cdot  (1+\pnorm{x}{}+\pnorm{y}{})^{\mathfrak{p}-1}\cdot\pnorm{x-y}{}$ holds for all  $x,y \in \R^{n}$.
Moreover, $f$ is called \emph{$\Lambda$-Lipschitz} iff $f$ is $\Lambda$-pseudo-Lipschitz of order $1$, and in this case we often write $\pnorm{f}{\mathrm{Lip}}\leq L$, where $\pnorm{f}{\mathrm{Lip}}\equiv \sup_{x\neq y} \abs{f(x)-f(y)}/\pnorm{x-y}{}$.  

Let $d_H^2(f,g)\equiv 2^{-1}\int (f^{1/2}-g^{1/2})^2$ be the standard Hellinger distance defined for two densities $f,g$ on $\R$ (with respect to some dominating measure). For any $p\geq 1$, the Wasserstein $p$-metric between $G_1,G_2\in \mathscr{G}$ is defined as 
	\begin{align*}
	\mathsf{W}_p(G_1,G_2)&\equiv \inf_{(X,Y):X\sim G_1,Y\sim G_2} \E^{1/p}\abs{X-Y}^p.
	\end{align*}
	Here the infimum is taken over all possible couplings of $(X,Y)$ with the prescribed marginal distributions.

\section{Main results}\label{section:main_results}

\subsection{General setting}
Consider a generic setting: suppose $\beta_0 \in \R^n$ has i.i.d. entries
$\{\beta_{0,j}\}$ distributed as $G_0$. For a generic covariance
$\Sigma_0\in \R^{n\times n}$ and $\mathsf{Z}_n\sim \mathcal{N}(0,I_n)$
independent of all other variables, let
\begin{align}\label{def:MPLE_U}
U\equiv \beta_0+ \Sigma_0^{1/2}\mathsf{Z}_n \in \R^n.
\end{align}
The $\delta_n$-near maximum composite marginal likelihood (CMLE) estimator $\hat{G}_n$ is
defined as any distribution function $\hat{G}_n\in \mathscr{G}$ such that
\begin{align}\label{def:MPLE_generic}
\frac{1}{n}\sum_{j \in [n]} \log \varphi_{\hat{G}_n;\sigma_{0,j}}(U_j)
\geq  \max_{G \in \mathscr{G} } \frac{1}{n}\sum_{j \in [n]} \log
\varphi_{G;\sigma_{0,j}}(U_j)-\delta_n.
\end{align}
Here, as before, $\sigma_{0,j}^2\equiv (\Sigma_0)_{jj}$, $j\in[n]$. Recall the log composite marginal likelihood process $\mathfrak{L}_{n;\sigma_{0,[n]}}(G)$ in (\ref{eq:intro_log_process}).

\subsection{Main results}
We need some further notation to state the results:
\begin{itemize}
    \item For $\alpha\in(0,\infty]$ and $M>1$, let $\mathscr G(M,\alpha)\subset \mathscr{G}$ be defined via
	\begin{align*}
	G\in\mathscr G(M,\alpha)\,\Leftrightarrow\,
	\begin{cases}
	\int \exp\{|u|^\alpha/M^\alpha\}\,G(\d u)\leq M,& \alpha<\infty;\\
	\mathrm{supp}(G)\subset[-M,M], & \alpha = \infty.
	\end{cases}
	\end{align*}
    \item Recall $\sigma_{0,j}^2\equiv (\Sigma_0)_{jj}$ and $\mathfrak{D}_{\Sigma_0} = \mathrm{diag}(\Sigma_0)$. Let
\begin{align}\label{def:kappa_n_ast}
\kappa_0\equiv \pnorm{\texttt{Cor}_{\Sigma_0}}{\op} =  \pnorm{  \mathfrak D_{\Sigma_0}^{-1/2}\Sigma_0\mathfrak D_{\Sigma_0}^{-1/2}}{\op},\quad n_\ast\equiv \frac{n}{\pnorm{\texttt{Cor}_{\Sigma_0}}{\op}} = \frac{n}{\kappa_0}.
\end{align} 
\item For any $G_1,G_2 \in \mathscr{G}$, and a sequence $\{\sigma_j\}_{j \in [n]}\subset \R_{>0}$, we define the averaged Hellinger distance as
	\begin{align}\label{def:averaged_hellinger}
	\mathfrak{d}_{H;\sigma_{[n]}}^2(G_1,G_2)\equiv \frac{1}{n}\sum_{j \in [n]} d_H^2\big(\varphi_{G_1;\sigma_{j}},\varphi_{G_2;\sigma_{j}}\big).
	\end{align} 
Here recall $d_H^2(f,g)\equiv 2^{-1}\int (f^{1/2}-g^{1/2})^2$ is the standard Hellinger distance.
\end{itemize}

The main abstract result for this paper is the following; its proof can be found in Section \ref{section:proof_MML_rate}.
\begin{theorem}
\label{thm:MML_rate}
Consider the above setting. Suppose there exist $M>1$ and $\alpha\in(0,\infty]$ such that $G_0\in\mathscr G(M,\alpha)$ and $\sigma_{0,j}^2 \in [1/M,M]$ for all $j \in [n]$.
Then for any $D>0$, there exists a constant $c_1=c_1(\alpha,M,D)>1$ such that if $n_\ast \geq (\log n)^{c_1}$, with probability at least $1-c_1 n^{-D}$, the following hold.
\begin{enumerate}
\item (Local maximal inequality). Let $L_{n,\alpha}\equiv \sqrt{\log n}+(\log n)^{1/\alpha}$ and $r_n\equiv c_1n_\ast^{-1/2}(\log n)^{c_1}$. For any $\mathscr{G}_{\texttt{test}}\subset \mathscr{G}$ and any $r\in [1\wedge r_n,1]$,
\begin{align}\label{ineq:MML_localized_rate}
\sup_{G\in \mathscr{G}(c_1L_{n,\alpha},\infty)\cap \mathscr{G}_{\texttt{test}}:\mathfrak{d}_{H;\sigma_{0,[n]}}(G,G_0)\leq r}
\abs{(\mathrm{id}-\E)\mathfrak{L}_{n;\sigma_{0,[n]}}(G)}
\leq \sqrt{\kappa_0}\cdot r(\log n)^{c_1}.
\end{align}
\item (Convergence rates). Let $\hat G_n$ be a $\delta_n$-near CML estimator as in \eqref{def:MPLE_generic}. On the further event $\{\mathrm{supp}(\hat G_n)\subset [-c_0L_{n,\alpha},c_0L_{n,\alpha}]\}$, there exists $c_2=c_2(c_0,\alpha,M,D)>1$ such that if $
\Delta_{n,\delta}\equiv (n_\ast^{-1/2}+\delta_n^{1/2}) (\log n)^{c_2}\leq 1/2$,
\begin{align}\label{ineq:MML_Hellinger_rate}
d_H(\varphi_{\hat{G}_n;M},\varphi_{G_0;M})\leq \mathfrak{d}_{H;\sigma_{0,[n]}}(\hat G_n,G_0)
\leq \Delta_{n,\delta}.
\end{align}
Moreover, for any $p\geq 1$, there exists a constant $c_3=c_3(p,c_0,\alpha,M,D)>1$ such that, 
\begin{align}\label{ineq:MML_Wasserstein_rate}
\mathsf{W}_p(\hat G_n,G_0)
\leq c_3 \log^{-1/2} (1/\Delta_{n,\delta}).
\end{align}
\end{enumerate}
\end{theorem}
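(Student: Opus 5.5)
The plan follows the route sketched in the introduction: (a) a pointwise Bernstein-type inequality for $(\mathrm{id}-\E)\mathfrak L_{n;\sigma_{0,[n]}}(G)$ at a fixed $G$, obtained from the geometric Brascamp--Lieb inequality; (b) a discretization and union bound that upgrades it to the local maximal inequality \eqref{ineq:MML_localized_rate}; (c) a peeling argument on Hellinger shells giving \eqref{ineq:MML_Hellinger_rate}; (d) a deconvolution step giving \eqref{ineq:MML_Wasserstein_rate}.

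\emph{Step (a): pointwise Bernstein bound.} Write $U_j=\beta_{0,j}+\sigma_{0,j}W_j$, where $W=\mathfrak D_{\Sigma_0}^{-1/2}\Sigma_0^{1/2}\mathsf Z_n$ has covariance $\texttt{Cor}_{\Sigma_0}$ and unit-variance marginals. Fix $G$ and set $h_j(U_j)=\log\{\varphi_{G;\sigma_{0,j}}(U_j)/\varphi_{G_0;\sigma_{0,j}}(U_j)\}$.
\begin{itemize}
\item Truncation: because $\mathrm{supp}(G)\subset[-c_1L_{n,\alpha},c_1L_{n,\alpha}]$ and $G_0\in\mathscr G(M,\alpha)$, on an event of probability $1-n^{-D}$ we have $|U_j|\lesssim L_{n,\alpha}$ for all $j$. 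On this event each $h_j$ is bounded by $\mathrm{polylog}(n)$. The standard trick of replacing the log-ratio by $\log\{(\varphi_G+\eta\varphi_{G_0})/((1+\eta)\varphi_{G_0})\}$ with $\eta\asymp n^{-2}$ keeps the functions controlled.
\item Decoupling: conditionally on $\beta_0$, the Gaussian Brascamp--Lieb inequality of \cite{chen2015improved}, applied to the vector $W$, gives
\begin{align*}
\E\exp\Big\{\lambda\sum_j(\mathrm{id}-\E)h_j\Big\}\le\prod_j\Big(\E\exp\big\{\kappa_0\lambda(\mathrm{id}-\E)h_j\big\}\Big)^{1/\kappa_0}.
\end{align*}
\item Marginal moment bounds: each factor is controlled by the usual Bernstein bound for a bounded variable. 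The variance of $h_j$ is of order $d_H^2(\varphi_{G;\sigma_{0,j}},\varphi_{G_0;\sigma_{0,j}})$ times polylog factors, via the standard Hellinger control of log-likelihood ratios. The remaining randomness in $\beta_0$ is i.i.d. and is handled by the ordinary Bernstein inequality.
\item Result: optimizing over $\lambda$ gives a tail bound of the form $|(\mathrm{id}-\E)\mathfrak L|\lesssim \sqrt{\kappa_0}\,r\sqrt{t}\,\mathrm{polylog}+\kappa_0 t\,\mathrm{polylog}/\sqrt n$ with probability $1-e^{-t}$.
\end{itemize}

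\emph{Step (b): uniform bound.} Each $G$ is approximated by a discrete $G'$ with $\mathrm{polylog}(n)$ atoms (moment matching on intervals of width $\asymp 1$), so that $\sup_{|x|\lesssim L_{n,\alpha}}|\varphi_{G;\sigma}(x)-\varphi_{G';\sigma}(x)|\le n^{-c}$ uniformly in $\sigma\in[M^{-1/2},M^{1/2}]$. This is the normal-mixture discretization of \cite{zhang2009generalized}, adapted to heteroskedastic scales. The resulting net has log-cardinality $\mathrm{polylog}(n)$. Taking $t=\mathrm{polylog}(n)$ and a union bound over the net and over dyadic values of $r$ gives \eqref{ineq:MML_localized_rate}. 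The second (Bernstein) term $\kappa_0\,\mathrm{polylog}/\sqrt n=\sqrt{\kappa_0}\,n_\ast^{-1/2}\mathrm{polylog}$ is absorbed into $\sqrt{\kappa_0}\,r\,\mathrm{polylog}$ because $r\ge r_n$. The approximation errors, including the truncation in (a), are negligible.

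\emph{Step (c): Hellinger rate.} By the standard Kullback--Hellinger inequality, $\E\mathfrak L(G)\le -2\sqrt n\,\mathfrak d_{H}^2(G,G_0)$ up to polylog corrections from truncation; the marginal expectations are unaffected by dependence. For $\hat G_n$, we have $\mathfrak L(\hat G_n)\ge-\sqrt n\delta_n$. Suppose $\hat G_n$ lies in a shell $\mathfrak d_H\in(r,2r]$ with $r\ge \Delta_{n,\delta}$. Then \eqref{ineq:MML_localized_rate} forces $\sqrt n r^2\lesssim \sqrt{\kappa_0}\,r\,\mathrm{polylog}+\sqrt n\delta_n$. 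This yields $r\lesssim (n_\ast^{-1/2}+\delta_n^{1/2})\mathrm{polylog}$, a contradiction for large $c_2$. The first inequality in \eqref{ineq:MML_Hellinger_rate} holds because Gaussian convolution is a Markov kernel: $\varphi_{G;M}=\varphi_{G;\sigma_j}*\varphi_{\sqrt{M-\sigma_j^2}}$, so by data processing each $d_H(\varphi_{G;M},\cdot)$ is at most the corresponding $\sigma_j$ term, and one averages.

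\emph{Step (d): Wasserstein rate.} Use the smoothing/deconvolution argument with a Gaussian-kernel (or sinc) smoothing at bandwidth $\asymp\log^{-1/2}(1/\Delta)$. The Hellinger bound controls Fourier discrepancies of $\varphi_{G;M}$, the tails are controlled by $\mathrm{supp}\subset[-c_0L,c_0L]$ together with the moment class $\mathscr G(M,\alpha)$, and the standard estimate as in \cite{dedecker2013minimax} gives \eqref{ineq:MML_Wasserstein_rate}.

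The main obstacle is step (a): obtaining the sharp factor $\sqrt{\kappa_0}\,r$, rather than something like $\sqrt{\kappa_0}$ alone, after decoupling. This requires combining Brascamp--Lieb with a variance proxy of order $d_H^2$ for log-ratios, which hold only after truncation. I would first try to establish the marginal Bernstein step via the Hellinger bound on $\E(e^{h/2}-1)^2$.
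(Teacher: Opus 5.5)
Your plan is essentially the paper's proof. The matching steps are: Brascamp--Lieb decoupling at cost $\kappa_0$; a variance proxy of order $T^2 d_H^2$ via $\chi_T(\log a)^2\lesssim T^2(\sqrt a-1)^2$; Zhang's sieve with a union bound over dyadic radii; peeling; and Fourier deconvolution. The paper applies Brascamp--Lieb conditionally on $\beta_0$ and then integrates $\beta_0$ out by Jensen for $x\mapsto x^{1/\kappa_0}$, so every factor is an expectation under the marginal $p_{0,j}$ and no separate i.i.d. Bernstein step is needed.

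One calibration needs fixing. On $\{|x|\le CL_{n,\alpha}\}$ the mixture densities are only bounded below by $e^{-CL_{n,\alpha}^2}$, which is superpolynomially small when $\alpha<2$. So both the sieve accuracy and your $\eta$ must be of order $n^{-c}e^{-CL_{n,\alpha}^2}$ rather than $n^{-c}$; the entropy stays polylogarithmic. Otherwise the discretization and truncation errors are not negligible for the untruncated process $\mathfrak L_{n;\sigma_{0,[n]}}$. The paper avoids the $\eta$-trick and simply clips at $T\asymp L_{n,\alpha}^2$, which is inactive on $\{\max_j|U_j|\le CL_{n,\alpha}\}$.
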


For an exact minimizer, \cite{lindsay1983geometry} proves its uniqueness and that its support must be contained in $[\min_jU_j,\max_jU_j]$. Using the tail bound in Lemma \ref{lem:normal_mix_basic}, we have:
\begin{corollary}\label{cor:MML_exact}
	Suppose the conditions in Theorem \ref{thm:MML_rate} hold. Let $\hat G_n$ be the exact CML estimator defined in \eqref{def:MPLE_generic} with $\delta_n=0$. Then for any $D>0$, there exists a constant $c_1=c_1(\alpha,M,D)>1$ such that, if $n_\ast\ge (\log n)^{c_1}$, with probability at least $1-c_1 n^{-D}$,
	\begin{align*}
	d_H(\varphi_{\hat{G}_n;M},\varphi_{G_0;M})\leq\mathfrak{d}_{H;\sigma_{0,[n]}}(\hat G_n,G_0)\leq n_\ast^{-1/2}(\log n)^{c_1}.
	\end{align*}
	Moreover, for any $p\ge1$, after possibly increasing $c_1=c_1(p,\alpha,M,D)$,
	\begin{align*}
	\mathsf W_p(\hat G_n,G_0)\le c_1(\log n_\ast)^{-1/2}.
	\end{align*}
\end{corollary}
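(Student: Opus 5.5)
The plan is to reduce Corollary \ref{cor:MML_exact} to Theorem \ref{thm:MML_rate}(2) with $\delta_n=0$. All that is needed is to check that the extra event $\{\mathrm{supp}(\hat G_n)\subset[-c_0L_{n,\alpha},c_0L_{n,\alpha}]\}$ holds with probability at least $1-Cn^{-D}$ for a suitable $c_0=c_0(\alpha,M,D)$.

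\textbf{Step 1 (support of the exact maximizer).} I would invoke the classical result of \cite{lindsay1983geometry}, applied to the heteroskedastic composite likelihood $G\mapsto n^{-1}\sum_j\log\varphi_{G;\sigma_{0,j}}(U_j)$. Each summand is concave in $G$, and each kernel $u\mapsto\varphi_{\sigma_{0,j}}(U_j-u)$ is unimodal at $U_j$. Hence any atom outside $[\min_jU_j,\max_jU_j]$ can be moved to the nearest endpoint, and every kernel value weakly increases. So an exact maximizer exists with $\mathrm{supp}(\hat G_n)\subset[\min_jU_j,\max_jU_j]$, and by the cited result it is unique, so this containment holds for $\hat G_n$ itself. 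It therefore suffices to show $\max_j|U_j|\le c_0L_{n,\alpha}$ with high probability.

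\textbf{Step 2 (tail bound).} Write $U_j=\beta_{0,j}+\sigma_{0,j}g_j$, where $g_j\sim\mathcal N(0,1)$ marginally. The correlations among the $g_j$ play no role, because a union bound only uses marginals. Since $\sigma_{0,j}^2\le M$, the Gaussian tail gives $\Prob(\max_j\sigma_{0,j}|g_j|>\sqrt{2M(D+2)\log n})\le n^{-D}$.

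For the signal part, $G_0\in\mathscr G(M,\alpha)$ and Markov's inequality give
\[
\Prob(|\beta_{0,j}|>t)\le Me^{-t^\alpha/M^\alpha}.
\]
Taking $t=M((D+2)\log n)^{1/\alpha}$ and a union bound over $j\in[n]$ yields probability at most $Mn^{-D-1}$. In the case $\alpha=\infty$ we simply have $|\beta_{0,j}|\le M$ surely.

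Combining the two parts, $\max_j|U_j|\le c_0(\sqrt{\log n}+(\log n)^{1/\alpha})=c_0L_{n,\alpha}$ with probability at least $1-Cn^{-D}$. This is presumably what Lemma \ref{lem:normal_mix_basic} packages.

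\textbf{Step 3 (conclusion).} On the intersection of this event with the event of Theorem \ref{thm:MML_rate}, I would apply \eqref{ineq:MML_Hellinger_rate} with $\delta_n=0$, so that $\Delta_{n,0}=n_\ast^{-1/2}(\log n)^{c_2}$. Enlarging $c_1$ so that $n_\ast\ge(\log n)^{c_1}$ forces $\Delta_{n,0}\le1/2$ and $c_2\le c_1$, which gives the Hellinger bound.

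For the Wasserstein bound, \eqref{ineq:MML_Wasserstein_rate} gives $c_3\log^{-1/2}(1/\Delta_{n,0})$, and
\[
\log(1/\Delta_{n,0})=\tfrac12\log n_\ast-c_2\log\log n.
\]
Since $\log\log n\le c_1^{-1}\log n_\ast$ under $n_\ast\ge(\log n)^{c_1}$, choosing $c_1\ge4c_2$ makes this at least $\tfrac14\log n_\ast$. This yields $\mathsf W_p(\hat G_n,G_0)\le c_1(\log n_\ast)^{-1/2}$ after enlarging $c_1$.

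The only mildly delicate point is Step 1: the support-containment property must be justified for the weighted, heteroskedastic mixture likelihood rather than the homoskedastic one. The monotone-shifting argument handles this directly.
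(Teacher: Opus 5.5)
Your proposal is correct and is essentially the paper's argument. It uses Lindsay's containment $\mathrm{supp}(\hat G_n)\subset[\min_jU_j,\max_jU_j]$, then a marginal tail bound with a union bound (the content of \eqref{ineq:p0_tail_exp}, which needs no independence) to put this interval inside $[-c_0L_{n,\alpha},c_0L_{n,\alpha}]$, and then Theorem \ref{thm:MML_rate}(2) with $\delta_n=0$; your conversion $\log(1/\Delta_{n,0})\ge\frac14\log n_\ast$ spells out a step the paper leaves implicit. A minor simplification: your shifting argument increases every kernel value \emph{strictly} whenever $G$ charges the complement of $[\min_jU_j,\max_jU_j]$, so every exact maximizer satisfies the containment and you do not need to invoke uniqueness.
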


\begin{remark}
	Some technical remarks on Theorem \ref{thm:MML_rate} and Corollary \ref{cor:MML_exact}.
	\begin{enumerate}
		\item The variance boundedness condition $\sigma_{0,j}^2\in [1/M,M]$ is common in heteroscedastic independent Gaussian sequence model \cite{jiang2020general,soloff2025multivariate,chen2026normal}. Here we work with a very general covariance structure $\Sigma_0$ with the same marginal variance condition. 
		\item The term $\delta_n^{1/2}$ in $\Delta_{n,\delta}$ is intrinsic for a near maximizer. The population likelihood contrast is quadratic in the Hellinger distance, so an optimization error of order $\delta_n$ cannot in general imply better than a $\delta_n^{1/2}$ Hellinger error. 
		\item Part (2) is stated for a near maximizer whose support is logarithmically bounded. For an arbitrary near maximizer, projecting it onto $[\min_j U_j,\max_j U_j]$ gives another near maximizer but does not, by itself, prove a rate for the original unprojected one.
		\item The rate $(\log n_\ast)^{-1/2}$ under $\mathsf{W}_p$ in Corollary \ref{cor:MML_exact} may not be improved when $G_0$ has unbounded support \cite{dedecker2013minimax}. However, for $p=1$ and boundedly supported $G_0$, \cite[Theorem 5]{wu2020optimalestimation} shows that the optimal rate under $\mathsf{W}_1$ is $\log\log n/\log n$. It remains open whether the CML estimator is suboptimal in this case, or whether the analysis can be further sharpened.
	\end{enumerate}
	
\end{remark}

As explained in the Introduction, $n_\ast$ defined in (\ref{def:kappa_n_ast}) can be viewed as the `effective sample size'. For instance, when $\Sigma_0 = I_n$, $n_\ast = n$ recovers the usual sample size; in the extreme case $\Sigma_0 = \bm{1}\bm{1}^\top$, where the noise is perfectly correlated, we have $n_\ast=1$, and estimation of $G_0$ is impossible.

A more fundamental reason that $n_\ast$ serves as the right complexity measure is provided by the following minimax lower bound, which also shows that the rate in \eqref{ineq:MML_Hellinger_rate} is optimal modulo logarithmic factors.

\begin{proposition}
	\label{prop:minimax_lower_effective_sample_size}
	Fix $M>1$. There exists $c_1=c_1(M)>0$ such that for all
	$n_\ast\ge c_1$,
	\begin{align*}
	\inf_{\widetilde G} 
	\sup_{(G_0,\Sigma_0) \in \mathscr{F}_M(\kappa_0) }
	\mathbb \Prob_{G_0,\Sigma_0}\big(
	\mathfrak{d}_{H;\sigma_{0,[n]}}(\widetilde G,G_0)> (n/\kappa_0)^{-1/2}/c_1\big)
	\ge 1/4.
	\end{align*}
	Here $\mathscr{F}_M(\kappa_0)\equiv \big\{(G_0,\Sigma_0): G_0\in\mathscr G(M,\infty), \pnorm{\texttt{Cor}_{\Sigma_0}}{\op}\leq \kappa_0\big\}$.
\end{proposition}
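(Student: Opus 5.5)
A natural route is a reduction to the independent case with a smaller sample size. Take a block-equicorrelated covariance. Let $k\equiv \lceil \kappa_0\rceil\wedge n$ (assume for simplicity that $k$ divides $n$; otherwise absorb a remainder block). Partition $[n]$ into $m=n/k\asymp n_\ast$ blocks of size $k$, and set
\[
\Sigma_0=\mathrm{blockdiag}\big(\bm{1}_k\bm{1}_k^\top,\ldots,\bm{1}_k\bm{1}_k^\top\big)+\epsilon I_n .
\]
Here $\epsilon$ is chosen so that the diagonal lies in $[1/M,M]$. For example, take $\Sigma_0=(1-\rho)I_n+\rho\,\mathrm{blockdiag}(\bm{1}\bm{1}^\top)$ with $\rho$ close to $1$. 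Then the diagonal entries equal $1$, and $\pnorm{\texttt{Cor}_{\Sigma_0}}{\op}=1-\rho+\rho k\le k$. We rescale so that this is at most $\kappa_0$, for instance using blocks of size $\lfloor\kappa_0\rfloor$. Within each block, $U_j=\beta_{0,j}+\sqrt{\rho}\,W_b+\sqrt{1-\rho}\,\mathsf Z_j$ with a shared $W_b\sim\mathcal N(0,1)$.

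The cleanest version takes $\rho=1$, with diagonal variances $\sigma^2\in[1/M,M]$. Then each block observes $(\beta_{0,j}+W_b)_{j\in b}$. The lower bound must hold for every estimator, so we may give the statistician extra information: reveal $\{\beta_{0,j}\}$ up to the common shift, or more simply reveal one coordinate per block and discard the rest. The cleanest device is to restrict the sup to priors $G_0$ supported on a fixed grid with well-separated atoms. I will instead use the following data-processing argument. The law of $U$ under $(G_0,\Sigma_0)$ is a function of $m$ i.i.d. block vectors. Each block vector has law $P_{G_0}^{(k)}$, the distribution of $(\beta_j+W)_{j\le k}$ with $\beta_j$ i.i.d. $G_0$. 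The whole problem is therefore an i.i.d. experiment with $m\asymp n/\kappa_0$ samples.

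Next apply a two-point Le Cam argument. Choose $G_0,G_1\in\mathscr G(M,\infty)$, for example $G_\theta=\frac12\delta_{-a}+\frac12\delta_{a}$ perturbed to $(\frac12+h)\delta_{-a}+(\frac12-h)\delta_a$ with $h\asymp m^{-1/2}$. We need two bounds.

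\emph{(i) Information.} We need $d_H(P_{G_0}^{(k)},P_{G_1}^{(k)})\lesssim h$. Conditional on $W$, the block coordinates are i.i.d. discrete with Hellinger distance $\asymp h$ between the two laws. With $k$ of them this gives $\sqrt k\,h$, which is too large when $k$ is large. This bound is therefore the main obstacle. To avoid it, use the noisy version with $\rho<1$ fixed, for example $\rho=1/2$, so that $\kappa_0\asymp k$. Better still, do not rely on within-block information at all. Lower-bound the risk by revealing $W_b$ and all but one coordinate per block, or equivalently compare with the experiment of $n$ independent observations, which is more informative. That comparison gives only the rate $n^{-1/2}$, though. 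The resolution is to choose the perturbation to be invisible within blocks: let the sign configuration inside a block be perfectly tied. Concretely, I would restrict to $\beta_0$ that is constant within blocks. This violates the i.i.d. structure, so instead I will rely on the fact that the Hellinger distance to be lower-bounded is an average over coordinates.

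Given this difficulty, I commit to the following plan. Take $\rho$ fixed and show directly that the Gaussian block likelihood ratio has $\chi^2$-divergence $\lesssim h^2\cdot O(1)$ per block rather than $kh^2$. I expect this to be the hardest part. My first attempt would be to choose $G_0,G_1$ matching many moments, so that the block-level mixtures differ only at high order. Then show that the common factor $W$ with variance $\rho k$ along $\bm 1$ smooths the direction $\bm 1^\top\beta$ while leaving the orthogonal directions informative. If this fails, I would fall back on a Fano argument over $\kappa_0$-fold replicated coordinates.

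Finally, conclude. Le Cam's method gives $\Prob(\mathfrak d_H(\widetilde G,G_i)>\frac12\mathfrak d_H(G_0,G_1))\ge \frac14$ for some $i$, provided $d_{H}(P^{\otimes m}_0,P^{\otimes m}_1)$ is small. It also gives $\mathfrak d_{H;\sigma}(G_0,G_1)\asymp h\asymp (n/\kappa_0)^{-1/2}$, which yields the claim with $c_1=c_1(M)$.
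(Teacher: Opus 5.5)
There is a genuine gap, and it sits at the step you flag as the main obstacle. The information bound (i) is never established, and for the perturbation you choose it is false.

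Take $G_0=\frac12\delta_{-a}+\frac12\delta_a$ versus $G_1=(\frac12+h)\delta_{-a}+(\frac12-h)\delta_a$. The projection of each block onto $\bm{1}_b^{\perp}$ does not involve the shared factor $W_b$ at all. Yet it still reveals the sign pattern of the $k$ i.i.d. draws from $G_i$: with noise of size $\sqrt{1-\rho}$, or exactly when $\rho=1$. So for every $\rho$ the squared Hellinger distance between block laws is of order $kh^2$, as you computed for $\rho=1$. The total divergence over the $m$ blocks is then of order $nh^2$, and Le Cam only gives $h\asymp n^{-1/2}$.

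The reduction to $m\asymp n_\ast$ i.i.d. blocks is correct but does not help, since each block is worth $k$ observations. Matching many moments of $G_0,G_1$ shrinks the target separation $\mathfrak d_{H;\sigma_{0,[n]}}(G_0,G_1)$ along with the divergence. It also does not single out the directions orthogonal to $\bm{1}_b$, which is where the leakage occurs. Your intuition that the common factor ``smooths the direction $\bm{1}^\top\beta$'' is right. But it only helps a perturbation that moves the data along $\bm{1}$, and a reweighting of atoms does not.

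The missing idea is to make the two hypotheses differ by a location shift. You discarded constant $\beta_0$ as incompatible with the i.i.d. structure, but degenerate priors are allowed. For $|t|\le M$, $G_t=\delta_t\in\mathscr{G}(M,\infty)$, and i.i.d. draws from $G_t$ give $\beta_0=t\bm{1}_n$, so $U\sim\mathcal N(t\bm{1}_n,\Sigma_0)$ exactly. Take $\Sigma_0=I_N\otimes R_{b,\rho}$ with $R_{b,\rho}=(1-\rho)I_b+\rho\bm{1}_b\bm{1}_b^\top$. This has unit diagonal, $\kappa_0=1+(b-1)\rho$, and $\bm{1}_b$ is an eigenvector with eigenvalue $\kappa_0$. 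Hence
\[
\mathrm{KL}(P_t,P_s)=\tfrac12(t-s)^2\,\bm{1}_n^\top\Sigma_0^{-1}\bm{1}_n=\tfrac12(t-s)^2\,n/\kappa_0 .
\]
Now take $t_0=0$ and $t_1=an_\ast^{-1/2}$ with $a$ small. Pinsker gives $d_{\TV}(P_{t_0},P_{t_1})\le a/2$. The separation is $d_H^2(\varphi_{\delta_0;1},\varphi_{\delta_{t_1};1})=1-e^{-t_1^2/8}\gtrsim t_1^2$. The two-point bound of Lemma~\ref{lem:two_point_prob_metric} then yields probability at least $1/4$.

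This is the paper's argument, and no analysis of block-level mixtures is needed. Nondegenerate shift families $G(\cdot-t)$ work equally well, since joint convexity of KL bounds the divergence by the same Gaussian location quantity.
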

The proof of the above proposition can be found in Section \ref{section:proof_minimax_lower_effective_sample_size}.

\begin{remark}
	During the preparation of this manuscript, we became aware of the recent work \cite{chen2026normal}, which considers the convergence of the CML estimator under a general $\Sigma_0$. In particular, \cite[Corollary 4.1]{chen2026normal} shows that $\mathfrak{d}_{H;\{\sigma_{0,j}\}}(\hat G_n,G_0)\to 0$ and $\hat{G}_n \rightsquigarrow G_0$ in probability under a bounded support condition and the condition $\pnorm{\Sigma_0}{\op} = \smallo(n)$. 
	
	Our Theorem \ref{thm:MML_rate} substantially strengthens their results by proving an optimal rate with the effective sample size $n_\ast$ in \eqref{def:kappa_n_ast}, and our results do not require bounded supports. We also provide a matching lower bound in Proposition \ref{prop:minimax_lower_effective_sample_size}. Moreover, the proof method in \cite{chen2026normal} is intrinsically asymptotic, whereas our method leverages a version of the geometric Brascamp-Lieb inequality for Gaussian measures from \cite{chen2015improved} to obtain optimal, non-asymptotic rates.
\end{remark}

\subsection{Empirical Bayes credible intervals}\label{subsec:credible_consequence}

As a generic application of Theorem \ref{thm:MML_rate}, we now consider empirical Bayes credible intervals for the individual coordinates $U=(U_1,\ldots,U_n)$ in the original correlated Gaussian sequence model \eqref{def:MPLE_U}.  

We need some further notation. For $G\in\mathscr G$, $s>0$, and $u\in\R$, define the univariate posterior distribution of $\theta\sim G, u\mid \theta\sim \mathcal N(\theta,s^2)$ by 
\begin{align*}
\Pi_{G,s}(t \mid u)
\equiv
\frac{1}{\varphi_{G;s}(u)}\int_{-\infty}^t\varphi_s(u-\theta)\,G(\d\theta),\quad t \in \R,
\end{align*}
and its lower $a$-quantile by 
\begin{align*}
Q_{G,s}(a\mid u)
\equiv
\inf\{t\in\R:\Pi_{G,s}(t\mid u)\ge a\},\quad a \in (0,1).
\end{align*}
Let $\hat G_n$ be the exact CML estimator in Corollary \ref{cor:MML_exact}.  For $\gamma\in(0,1)$, define the empirical Bayes credible interval for $\beta_{0,j}$ by
\begin{align}\label{eq:eb_ci_original_U}
        \widehat C_j(U;\gamma)
        \equiv
        \big[
        Q_{\hat G_n,\sigma_{0,j}}(\gamma/2\mid U_j),
        Q_{\hat G_n,\sigma_{0,j}}(1-\gamma/2\mid U_j)
        \big],
        \qquad j\in[n].
\end{align}
The following corollary gives an average frequentist coverage guarantee.  

\begin{proposition}
\label{prop:credible_intervals_clean}
Suppose the conditions of Corollary \ref{cor:MML_exact} hold  with $\alpha=\infty$.  In addition, suppose that $G_0$ has a Lebesgue density $g_0$ supported on $[-M_0,M_0]$ on which $g_0(\cdot) \in [1/M,M]$.
Fix $\gamma\in(0,1)$.  Then there exists $c_1=c_1(\gamma,M,M_0)>1$, such that for $n_\ast\ge (\log n)^{c_1}$,
\begin{align}\label{eq:ci_nstar_final_bound}
\abs{
\Prob\big(\beta_{0,\pi_n}\in \widehat C_{\pi_n}(U;\gamma)\big)
-(1-\gamma)}
\le
{\log^{-1/c_1}(en_\ast)}.
\end{align}
Here $\pi_n\sim \mathrm{Unif}[n]$ is independent of all other variables.
\end{proposition}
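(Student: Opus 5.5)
The plan is to compare the empirical Bayes interval with the oracle interval built from the true prior. Define
\[
C_j^\ast\equiv \big[Q_{G_0,\sigma_{0,j}}(\gamma/2\mid U_j),\,Q_{G_0,\sigma_{0,j}}(1-\gamma/2\mid U_j)\big].
\]
Marginally, $(\beta_{0,j},U_j)$ has the univariate Bayes law $\beta_{0,j}\sim G_0$, $U_j\mid\beta_{0,j}\sim\mathcal N(\beta_{0,j},\sigma_{0,j}^2)$. The correlations in $\Sigma_0$ do not affect this pair. Since $G_0$ has a density, the posterior CDF $\Pi_{G_0,\sigma_{0,j}}(\cdot\mid u)$ is continuous. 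Hence $\Prob(\beta_{0,j}\in C_j^\ast)=1-\gamma$ exactly for each $j$, and averaging over $\pi_n$ gives exact coverage $1-\gamma$.

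It then suffices to bound
\[
n^{-1}\sum_j\Prob\big(\beta_{0,j}\in \widehat C_j\,\triangle\, C_j^\ast\big).
\]
The main difficulty is that $\hat G_n$ depends on $U_j$ and hence on $\beta_{0,j}$, so the oracle argument cannot be applied directly with $\hat G_n$ plugged in. I will avoid this by working on the high-probability event $E$ of Corollary \ref{cor:MML_exact}, where $\mathsf W_1(\hat G_n,G_0)\le \epsilon_n\equiv c(\log n_\ast)^{-1/2}$. On $E$, I replace $\hat G_n$ by a uniform statement over all $G$ in the deterministic ball $B\equiv\{G:\mathsf W_1(G,G_0)\le\epsilon_n\}$. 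Concretely, I will show a pointwise-in-$u$ stability bound: for $u$ in a good set $\{|u|\le K\}$ with $K\asymp\sqrt{\log(1/\epsilon_n)}$,
\[
\sup_{G\in B}\sup_t\big|\Pi_{G,s}(t\mid u)-\Pi_{G_0,s}(t\mid u)\big|\le \eta_n\equiv \epsilon_n^{c}e^{cK^2}.
\]

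To prove this bound, I write $\Pi_{G,s}(t\mid u)$ as the ratio of $\int \varphi_s(u-\theta)\bm 1_{\theta\le t}\,G(\d\theta)$ and $\varphi_{G;s}(u)$. The denominator is bounded below by $e^{-cK^2}$ because $g_0\ge1/M$ on $[-M_0,M_0]$ and $\varphi_{G;s}$ is close to $\varphi_{G_0;s}$. The numerator involves the discontinuous test function $\bm 1_{\theta\le t}$. I will smooth the indicator at scale $h$ at cost $h$ in the numerator. The cost is $O(h)$ because $G_0$ has bounded density, and for $G$ it is handled via $\mathsf W_1$ closeness, up to $\epsilon_n/h$ by Markov's inequality on the coupling. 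The smoothed test function is Lipschitz with constant $O(1/h)$, so it changes by $O(\epsilon_n/h)$ under $\mathsf W_1$. Optimizing over $h$ gives a $\sqrt{\epsilon_n}$-type error.

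Next I convert posterior-CDF closeness into a coverage comparison. Let $a=\gamma/2$. If $\sup_t|\Pi_{G}-\Pi_{G_0}|\le\eta$, then
\[
Q_{G_0}(a-\eta\mid u)\le Q_G(a\mid u)\le Q_{G_0}(a+\eta\mid u).
\]
Therefore, on $E\cap\{|U_j|\le K\}$, the event $\beta_{0,j}\in\widehat C_j\triangle C_j^\ast$ forces $\Pi_{G_0,\sigma_{0,j}}(\beta_{0,j}\mid U_j)$ into an $\eta_n$-neighborhood of $\{a,1-a\}$. By the exact oracle calibration, $\Pi_{G_0}(\beta_{0,j}\mid U_j)\sim\mathrm{Unif}(0,1)$ marginally, so this has probability at most $4\eta_n$. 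This step needs no independence between $\hat G_n$ and $U_j$, since the deterministic sandwich holds simultaneously for all $G\in B$.

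The remaining terms are $\Prob(E^c)\le c n^{-D}$ and $\Prob(|U_j|>K)\le e^{-cK^2}$. I choose $K^2=c'\log(1/\epsilon_n)$ with small $c'$ so that $\eta_n\le\epsilon_n^{c''}=(\log n_\ast)^{-c''/2}$. Summing the three contributions gives \eqref{eq:ci_nstar_final_bound}. The main obstacle I expect is the stability bound itself, in particular handling the indicator uniformly in $t$ using only $\mathsf W_1$ control. If the smoothing argument loses too much, I will instead use the Hellinger bound of Corollary \ref{cor:MML_exact}, since $\varphi_{G;s}$ is analytic and bounded below on $|u|\le K$, and then deconvolve via the same logarithmic-rate argument that gives \eqref{ineq:MML_Wasserstein_rate}.
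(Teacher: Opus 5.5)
Your proposal is correct. Its overall architecture matches the paper's proof:

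\begin{itemize}
\item compare with the exactly calibrated oracle interval;
\item obtain $\mathsf W_1(\hat G_n,G_0)\lesssim(\log n_\ast)^{-1/2}$ on a high-probability event;
\item prove a posterior-CDF stability bound, uniformly over a deterministic $\mathsf W_1$-ball, on a central region $|u|\le K$ with $K^2\asymp\log\log n_\ast$;
\item observe that the resulting implication is deterministic, so the dependence of $\hat G_n$ on $U_j$ does no harm.
\end{itemize}

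Two steps differ from the paper.

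For the numerator, you smooth the indicator at scale $h$. The paper instead converts $\mathsf W_1$ into Kolmogorov distance \eqref{eq:ci_K_from_W1_nstar} and integrates the bounded-variation function $\bm 1_{\{\theta\le t\}}\varphi_s(u-\theta)$ against $G-G_0$. Both are the same $h+\epsilon/h$ trade-off.

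The more substantive difference is the final conversion from posterior-CDF closeness to coverage.

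\begin{itemize}
\item The paper inverts the CDF bound into a quantile bound $r_\ast$ via Lemma \ref{lem:quantile_inversion}. This requires the posterior density lower bound \eqref{eq:ci_post_density_lower_nstar}, and hence $g_0\ge 1/M$. It then bounds the boundary events with the posterior density upper bound \eqref{eq:ci_post_density_upper_nstar}, losing further $e^{CB_\ast^2}$ factors.
\item You combine the quantile sandwich $Q_{G_0}(a-\eta\mid u)\le Q_G(a\mid u)\le Q_{G_0}(a+\eta\mid u)$ with the probability integral transform $\Pi_{G_0,\sigma_{0,j}}(\beta_{0,j}\mid U_j)\sim\mathrm{Unif}(0,1)$. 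This gives the $4\eta_n$ bound directly. You only need $\eta_n<\gamma/2$, so the shifted levels stay in $(0,1)$; otherwise the bound is trivial.
\end{itemize}

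Your route uses only continuity of the oracle posterior CDF. So it actually needs only $g_0\le M$, for the $\mathsf W_1$-to-Kolmogorov step, together with compact support of $G_0$. The denominator lower bound follows from compact support alone and does not need $g_0\ge 1/M$, contrary to what you wrote. This makes your argument cleaner and slightly more general.

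What the paper's route buys is explicit endpoint closeness: the endpoints of $\widehat C_j$ lie within $r_\ast$ of those of $C_j^0$. That is stronger information about the intervals themselves, but it is not needed for the averaged coverage statement.
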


The proof of the above proposition can be found in Section \ref{section:proof_eb_credible_interval}.

\subsection{Marginal empirical Bayes regret}\label{subsec:marginal_regret}

We next record a decision-theoretic consequence of Theorem \ref{thm:MML_rate} for the marginal empirical Bayes regret, in a similar flavor to \cite{jiang2009general}. For $G\in\mathscr G$, $s>0$, and $u\in\R$, let
\begin{align}\label{def:posterior_mean_regret}
m_{G,s}(u)
\equiv
\frac{1}{\varphi_{G;s}(u)} \int \theta\cdot \varphi_s(u-\theta)\,G(\d\theta)
\end{align}
be the posterior mean in the marginal Gaussian experiment $\theta\sim G$ and $u\mid\theta\sim\mathcal N(\theta,s^2)$.  For a deterministic prior estimate $G$, define its averaged marginal regret relative to $G_0$ by
\begin{align}\label{eq:regret_identity_text}
\texttt{Reg}_n(G,G_0)
\equiv \frac1n\sum_{j \in [n]}
\Big(\E
\big(m_{G,\sigma_{0,j}}(X_j^\circ)-\theta_j^\circ\big)^2
-
\E \big(m_{G_0,\sigma_{0,j}}(X_j^\circ)-\theta_j^\circ\big)^2\Big).
\end{align}
Here $\theta_j^\circ\sim G_0$ and $X_j^\circ=\theta_j^\circ+\sigma_{0,j}\mathsf{Z}_j^\circ$, $\mathsf{Z}_j^\circ \sim \mathcal{N}(0,1)$ is an independent fresh draw for $j \in [n]$. Note that the benchmark in (\ref{eq:regret_identity_text}) is the marginal normal oracle and is different from the full-vector oracle $\E(\beta_{0,j}\mid U_1,\ldots,U_n)$.

\begin{proposition}\label{prop:marginal_regret}
Suppose the conditions of Corollary \ref{cor:MML_exact} hold with $\alpha=\infty$, and let $\hat G_n$ be the exact CML estimator in Corollary \ref{cor:MML_exact}.  Then for any $D>0$, there exists $c_1=c_1(M,D)>1$ such that if $n_\ast\ge (\log n)^{c_1}$, then with probability at least $1-c_1n^{-D}$,
\begin{align*}
0\leq \texttt{Reg}_n(\hat G_n,G_0)
\le
\, n_\ast^{-1}(\log n)^{c_1}.
\end{align*}
\end{proposition}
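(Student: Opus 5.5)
Since $\hat G_n$ is independent of the fresh draws $(\theta_j^\circ,X_j^\circ)$ in \eqref{eq:regret_identity_text}, the regret is a deterministic functional of $\hat G_n$. The lower bound $\texttt{Reg}_n\ge 0$ holds for every realization because $m_{G_0,\sigma_{0,j}}$ is the Bayes rule under squared loss in the $j$th marginal experiment. Writing $f_j\equiv\varphi_{G_0;\sigma_{0,j}}$, the standard orthogonality identity gives
\begin{align*}
\texttt{Reg}_n(G,G_0)=\frac1n\sum_{j\in[n]}\int \big(m_{G,\sigma_{0,j}}-m_{G_0,\sigma_{0,j}}\big)^2 f_j .
\end{align*}
So the task is to bound this weighted $L^2$ distance between posterior means by a squared Hellinger quantity. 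We will then apply Corollary \ref{cor:MML_exact}, which gives $\mathfrak d_{H;\sigma_{0,[n]}}^2(\hat G_n,G_0)\le n_\ast^{-1}(\log n)^{2c_1}$ with probability at least $1-c_1n^{-D}$. We also use Lindsay's support property from \cite{lindsay1983geometry}, namely $\mathrm{supp}(\hat G_n)\subset[\min_jU_j,\max_jU_j]$, together with the Gaussian maximal bound (Lemma \ref{lem:normal_mix_basic}). Since $\alpha=\infty$, these give $\mathrm{supp}(\hat G_n)\subset[-C\sqrt{\log n},C\sqrt{\log n}]$ on the same event.

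The key step is a deterministic, per-coordinate inequality in the spirit of \cite{jiang2009general}. For $s\in[M^{-1/2},M^{1/2}]$, $G_0\in\mathscr G(M,\infty)$ and $G$ supported in $[-L,L]$ with $L=C\sqrt{\log n}$, we want
\begin{align*}
\int (m_{G,s}-m_{G_0,s})^2\varphi_{G_0;s}\lesssim (\log n)^{C}\big(d_H^2(\varphi_{G;s},\varphi_{G_0;s})\vee n^{-2}\big).
\end{align*}
We use Tweedie's formula $m_{G,s}=u+s^2\varphi_{G;s}'/\varphi_{G;s}$ and split the integral into two regions.

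On the region $\{|u|\le C'\sqrt{\log n}\}\cap\{\varphi_{G_0;s}\ge\rho\}$ with $\rho=n^{-3}$, we bound the difference of score functions. We write the difference as
\begin{align*}
\frac{\varphi_G'-\varphi_{G_0}'}{\sqrt{\varphi_{G_0}}}+\varphi_G'\,\frac{\varphi_{G_0}-\varphi_G}{\varphi_G\varphi_{G_0}}.
\end{align*}
We then control the derivative difference by the Hellinger distance. To do this, we use the fact that normal mixtures are analytic: Cauchy/Bernstein-type estimates bound derivatives of $\varphi_G-\varphi_{G_0}$ on a slightly smaller set by $\mathrm{polylog}$ factors times its $L^2$ norm, and the $L^2$ norm is in turn controlled by the Hellinger distance. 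Alternatively, we can invoke the Jiang--Zhang bound $|\varphi_G'/\varphi_G|\lesssim\sqrt{\log(1/\varphi_G)}$, which we expect to be the cleaner route. We also need a lower bound on $\varphi_{\hat G_n}$ that matches $\varphi_{G_0}$ up to polylog factors on the region. This step is the main obstacle. We would handle it by regularizing, that is, by comparing on $\{\varphi_{G}\vee\varphi_{G_0}\ge\rho\}$ exactly as in \cite{jiang2009general}, Theorem 3, with $\rho$ chosen polynomially small.

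On the complementary tail region, the posterior means are bounded. Indeed $|m_{G,s}|\le L$ trivially, and $|m_{G_0,s}|\le M$. Hence the tail contribution is at most $(L+M)^2$ times the $f_j$-mass of the region. Since $G_0$ has support in $[-M,M]$, that mass is $\lesssim n^{-2}$ by the choice of $C'$ and $\rho$.

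Finally, we average the per-coordinate bound over $j$. This gives $\texttt{Reg}_n(\hat G_n,G_0)\lesssim(\log n)^{C}(\mathfrak d^2_{H;\sigma_{0,[n]}}(\hat G_n,G_0)+n^{-2})\le n_\ast^{-1}(\log n)^{c_1}$ after enlarging $c_1$, using $n_\ast\le n$. The rate is the square of the Hellinger rate, which is the reason the result is parametric in $n_\ast$.
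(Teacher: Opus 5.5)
Your proposal is correct, and its outer layer matches the paper's: the orthogonality identity of Lemma \ref{lem:regret_alt_rep}, Lindsay's support property with the Gaussian maximal bound giving $\hat G_n\in\mathscr G(C\sqrt{\log n},\infty)$, and Corollary \ref{cor:MML_exact} for the Hellinger rate. The genuine difference is the regret--Hellinger transfer step.

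\textbf{Paper's route.} The paper applies \cite[Theorem 5]{chen2026sharp} directly to the unregularized posterior means (Lemma \ref{lem:regret_hellinger_growing_support}). This gives an $n$-free deterministic bound $C(1+L)^4h^2\log^C(e/h)$. However, the cited constant is $e^{\sigma^2/2}$, with $\sigma^2$ a second-moment bound on the priors. The paper therefore needs an extra step showing that $\hat G_n$ has $O_M(1)$ second moment in the nontrivial regime $(1+L)^2h^2\le c_0$. It also needs a concavity/Jensen step to average $h_j^2\log^C(e/h_j)$ over $j$.

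\textbf{Your route.} You use the regularized Tweedie comparison of \cite[Theorem 3]{jiang2009general}. It holds for arbitrary priors with no moment condition and costs only powers of $\log(1/\rho)$. With $\rho=n^{-C}$:
\begin{itemize}
\item the logarithmic factors are polylogarithmic in $n$;
\item the regularization is inactive on the bulk $\{|u|\le C'\sqrt{\log n}\}$;
\item the tail is handled crudely by $|m_{G,s}|\le L$ and $|m_{G_0,s}|\le M$;
\item the floor $d_H^2\vee n^{-2}$, together with monotonicity of $x\mapsto x^2\log^C(1/x)$ near $0$, makes averaging over $j$ immediate.
\end{itemize}
Both routes give $n_\ast^{-1}(\log n)^{c_1}$. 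The paper's has fewer logarithms and yields a reusable lemma; yours uses only classical tools.

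\textbf{Two fixes for the write-up.}

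First, drop the ``main obstacle'' and your direct two-term split. A lower bound on $\varphi_{\hat G_n;s}$ matching $\varphi_{G_0;s}$ up to polylog factors cannot be extracted from a Hellinger bound: where both densities are $o(n^{-1})$ they may differ by polynomial factors. Your split would lose exactly such a factor, through $\sqrt{\varphi_G/\varphi_{G_0}}$ and through dividing by $\varphi_{G_0}$. As written, its two terms also carry inconsistent powers of $\varphi_{G_0}$. The Jiang--Zhang citation is therefore essential, not an optional alternative.

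Second, state what that citation actually requires. You need a single polynomially small $\rho$ lying below both $\varphi_{\hat G_n;s}$ and $\varphi_{G_0;s}$ on the bulk. This follows from \eqref{ineq:normal_mix_lower} with $R=C\sqrt{\log n}$; note that the resulting $\rho$ may need to be smaller than your $n^{-3}$. Since \cite[Theorem 3]{jiang2009general} is stated for unit variance, you also need to rescale to $s\in[M^{-1/2},M^{1/2}]$, as in Step 2 of Lemma \ref{lem:regret_hellinger_growing_support}.
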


The proof of Proposition \ref{prop:marginal_regret} uses a recent sharp result of \cite{chen2026sharp} that relates the regret and the Hellinger distance; details can be found in Section \ref{section:proof_marginal_regret}. 

We emphasize here an important distinction of the regret result in Proposition \ref{prop:marginal_regret} with those presented in \cite{jiang2009general}. The estimator \eqref{def:posterior_mean_regret} is a coordinatewise empirical Bayes rule: after learning the prior, it denoises coordinate $j$ through the scalar marginal experiment $U_j=\beta_{0,j}+\sigma_{0,j}\mathsf{Z}_j$.  Its natural oracle is therefore the marginal Bayes rule $m_{G_0,\sigma_{0,j}}(U_j)$.  In contrast, the full posterior mean $\E(\beta_{0,j}\mid U_{[n]})$ uses the correlation structure of the noise to extract additional information about the error in coordinate $j$.  The following example shows that this extra information can produce a non-vanishing risk gap even when the effective sample size $n_\ast$ is of order $n$.

\begin{example}
\label{ex:marginal_vs_full_oracle}
Let $n$ be even and split the coordinates into independent pairs.  Within each pair, suppose
\begin{align*}
\beta=(\beta_1,\beta_2)^\top\sim \mathcal N(0,I_2),
\qquad
U=\beta+\varepsilon,
\quad
\varepsilon\sim\mathcal N(0,\Sigma_\rho),
\end{align*}
where $\Sigma_\rho=
\begin{psmallmatrix}
1 & \rho \\
\rho & 1
\end{psmallmatrix}$ with $0<\rho<1$, and different pairs are independent.  The diagonal-normalized covariance matrix is block diagonal with blocks $\Sigma_\rho$, and hence $
\kappa_0=\|\texttt{Cor}_{\Sigma_0}\|_{\op}=1+\rho$ and  $n_\ast=n/(1+\rho)\asymp n$. 

Nevertheless, even if the prior $G_0=\mathcal N(0,1)$ is known exactly, the marginal Bayes rule does not attain the full posterior minimum mean squared error (MMSE).  Since $U_j=\beta_j+\varepsilon_j$ with $\beta_j\sim\mathcal N(0,1)$ and $\varepsilon_j\sim\mathcal N(0,1)$ marginally, we have $\beta_j \mid U_j \sim \mathcal{N}(U_j/2,1/2)$, and therefore
\begin{align*}
\E(\beta_j-\E(\beta_j\mid U_j))^2=\E (\beta_j-U_j/2)^2= 1/2.
\end{align*}
On the other hand, the full posterior covariance in one pair is $\operatorname{Cov}(\beta\mid U)=\big(I_2+\Sigma_\rho^{-1}\big)^{-1}$.
Because $\Sigma_\rho^{-1}=\frac1{1-\rho^2}
\begin{psmallmatrix}
1 & -\rho \\
-\rho & 1
\end{psmallmatrix}$, 
a direct calculation gives the full posterior marginal variance
\begin{align*}
\big[\big(I_2+\Sigma_\rho^{-1}\big)^{-1}\big]_{jj}
=\frac{(2-\rho^2)(1-\rho^2)}{(2-\rho^2)^2-\rho^2}
<\frac12,
\qquad 0<\rho<1.
\end{align*}
Therefore the excess risk of the exact marginal oracle relative to the full posterior oracle is a positive constant depending only on $\rho$, whereas $n_\ast\asymp n$.  Consequently no nontrivial bound can hold, in general, for a coordinatewise marginal empirical Bayes rule when the benchmark is the full posterior MMSE.
\end{example}

As such, the main purpose of the marginal regret bound in Proposition \ref{prop:marginal_regret} is not to quantify the price of ignoring the full dependence structure in the denoising rule.  Rather, it isolates the statistical cost of estimating the prior $G_0$ within the marginal empirical Bayes class generated by the composite marginal likelihood.

\section{Linear regression: CML via generalized least squares}\label{sec:gls_application}

\subsection{Model and CML via generalized least squares}
Consider the Bayesian Gaussian linear model
\begin{align}\label{def:gls_model}
Y=A\mu_\ast+\xi \in \R^m,\qquad \xi\mid A\sim \mathcal N(0,\tau_\ast^2\Omega).
\end{align}
Here the entries of the signal $\mu_\ast \in \R^n$ are i.i.d. draws from an unknown prior distribution $G_\ast$. Moreover, the design matrix $A\in\R^{m\times n}$ has full column rank, $\Omega\in\R^{m\times m}$ is a known positive definite working covariance, and $\tau_\ast>0$ is either known or estimated from the data. 

In the simplest possible linear regression case, $\Omega=I_m$. Other applications with known $\Omega$ include, (i) weighted least squares with known inverse-variance (precision) weights, where $\Omega=\mathrm{diag}(w_1^{-1},\ldots,w_m^{-1})$, (ii) generalized least squares that treat the covariance shape as known with an unknown scalar variance \cite{seber2003linear}, (iii) spatial or inverse-problem settings where a covariance kernel or measurement-error covariance is supplied by the design or calibration model \cite{cressie2015statistics}, etc. If a nonparametric or high-dimensional estimate $\hat{\Omega}$ of $\Omega$ is used, an additional perturbation analysis for $A^\top\hat\Omega^{-1}A$ is required and will not be pursued here.

\begin{algorithm}
\caption{CML via generalized least squares}\label{alg:gls_eb}
\begin{algorithmic}[1]
\STATE \textbf{Input}: Data $(A,Y)\in \R^{m\times n}\times \R^{m}$, covariance $\Omega \in \R^{m\times m}$.
\STATE With $Q_A\equiv A^\top\Omega^{-1}A$, compute 
\begin{align*}
\hat\mu_{\mathrm{gls}}\equiv Q_A^{-1}A^\top\Omega^{-1}Y.
\end{align*}
\STATE Set
\begin{align*}
\hat\tau\equiv
\begin{cases}
\tau_\ast, & \hbox{if $\tau_\ast$ is known};\\
\Big\{\frac{(Y-A\hat\mu_{\mathrm{gls}})^\top\Omega^{-1}(Y-A\hat\mu_{\mathrm{gls}})}{m-n}\Big\}^{1/2}, & \hbox{if $\tau_\ast$ is unknown and $m>n$}.
\end{cases} 
\end{align*}
\STATE With $s_{A,j}^2\equiv (Q_A^{-1})_{jj}$, compute the CML
\begin{align*}
\hat G_{\mathrm{gls}}=\argmax_{G\in\mathscr G}
\frac1n\sum_{j \in [n]}\log\varphi_{G;\hat\tau s_{A,j}}(\hat\mu_{\mathrm{gls},j}).
\end{align*}
\end{algorithmic}
\end{algorithm}

We will be interested in estimating $G_\ast$ without imposing strong assumptions on the design matrix $A$. Our proposal is based on the following simple observation: conditionally on $A$, the generalized least squares estimator satisfies
\begin{align}\label{def:gls_sequence}
\hat\mu_{\mathrm{gls}}\equiv Q_A^{-1}A^\top\Omega^{-1}Y=\mu_\ast+\tau_\ast Q_A^{-1/2}\mathsf Z_n,
\quad \mathsf Z_n\sim\mathcal N(0,I_n),
\end{align}
where $Q_A \equiv A^\top \Omega^{-1} A$. Thus, $\hat\mu_{\mathrm{gls}}$ can be viewed as an exact correlated Gaussian sequence, with marginal standard errors $\tau_\ast (Q_A^{-1})_{jj}^{1/2}$. Our proposed Algorithm \ref{alg:gls_eb} then implements the CML method applied to $\hat\mu_{\mathrm{gls}}$ in (\ref{def:gls_sequence}).

\subsection{Theoretical guarantee}

The following theorem provides a formal justification for the output $\hat{G}_{\mathrm{gls}}$ from Algorithm \ref{alg:gls_eb}; its proof can be found in Section \ref{section:proof_gls_eb}.

\begin{theorem}
\label{thm:gls_eb}
Suppose $\mu_{\ast,j}\stackrel{\mathrm{i.i.d.}}{\sim}G_\ast$ independently of $A$ and $\xi$, and there exist $M>1$ and $\alpha\in(0,\infty]$ such that $G_\ast\in\mathscr G(M,\alpha)$ and, conditional on $A$, it holds that $\tau_\ast^2(Q_A^{-1})_{jj} \in [1/M,M]$ for all $j \in [n]$. Let $n_{\ast,A}\equiv n/\pnorm{\texttt{Cor}_{Q_A^{-1}}}{\op}$.
Fix $D>0$ and $p\ge1$. Then there exists $c_1=c_1(\alpha,M,D,p)>1$ such that, conditional on $A$, the following hold with probability at least $1-c_1 n^{-D}$:
\begin{enumerate}
\item If $\tau_\ast$ is known and $n_{\ast,A}\geq (\log n)^{c_1}$, then 
\begin{align*}
\mathfrak d_{H;\tau_\ast s_{A,[n]}}(\hat G_{\mathrm{gls}},G_\ast)
&\leq n_{\ast,A}^{-1/2}(\log n)^{c_1},\quad
\mathsf W_p(\hat G_{\mathrm{gls}},G_\ast)
\le c_1 (\log n_{\ast,A})^{-1/2}.
\end{align*}
\item If $\tau_\ast$ is unknown and $m>n$, and $
\Delta_A\equiv (n_{\ast,A}^{-1/2}+\gamma_A^{1/2})(\log n)^{c_1}\leq 1/2$, further on the event $\gamma_A\equiv \abs{\hat\tau/\tau_\ast-1}\leq 1/2$, we have
\begin{align*}
\mathfrak d_{H;\tau_\ast s_{A,[n]}}(\hat G_{\mathrm{gls}},G_\ast)
&\leq \Delta_A,\quad\mathsf W_p(\hat G_{\mathrm{gls}},G_\ast)
\le c_1 
\log^{-1/2}(1/\Delta_A).
\end{align*}
Moreover, $\Prob\big(\gamma_A>c_1 \{\epsilon_{m,n}^{1/2}+\epsilon_{m,n}\}\,|A\big)\leq c_1n^{-D}$ with $\epsilon_{m,n}\equiv \log n/(m-n)$.
\end{enumerate}
\end{theorem}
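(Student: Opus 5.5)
Conditionally on $A$, I will reduce the statement to Theorem \ref{thm:MML_rate} and Corollary \ref{cor:MML_exact}. By \eqref{def:gls_sequence}, $U\equiv\hat\mu_{\mathrm{gls}}$ has the form \eqref{def:MPLE_U} with $\beta_0=\mu_\ast$, $G_0=G_\ast$ and $\Sigma_0=\tau_\ast^2Q_A^{-1}$. Since $\mu_\ast$ is independent of $(A,\xi)$, conditioning on $A$ leaves $\mu_{\ast,j}$ i.i.d. $G_\ast$ and independent of $\mathsf Z_n$. Moreover $\sigma_{0,j}=\tau_\ast s_{A,j}$ and $\texttt{Cor}_{\Sigma_0}=\texttt{Cor}_{Q_A^{-1}}$, because the scalar $\tau_\ast^2$ cancels in the normalization. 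Hence $\kappa_0=\pnorm{\texttt{Cor}_{Q_A^{-1}}}{\op}$, $n_\ast=n_{\ast,A}$, and the marginal variance condition is exactly the hypothesis $\tau_\ast^2(Q_A^{-1})_{jj}\in[1/M,M]$.

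\textbf{Part (1).} Here $\hat\tau=\tau_\ast$, so $\hat G_{\mathrm{gls}}$ is the exact CML estimator. Corollary \ref{cor:MML_exact} then gives both bounds directly.

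\textbf{Part (2).} The plan is to view $\hat G_{\mathrm{gls}}$ as a $\delta_n$-near maximizer of the correctly scaled criterion and apply Theorem \ref{thm:MML_rate}(2) with $\delta_n\lesssim\gamma_A(\log n)^{C}$. Two ingredients are needed.

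First, the support of $\hat G_{\mathrm{gls}}$ is logarithmically bounded. By \cite{lindsay1983geometry}, applied to the maximizer with scales $\hat\tau s_{A,j}$, the support lies in $[\min_jU_j,\max_jU_j]$. The Gaussian tail bound and the moment condition $G_\ast\in\mathscr G(M,\alpha)$ (Lemma \ref{lem:normal_mix_basic}) give $\max_j|U_j|\le c_0L_{n,\alpha}$ with probability at least $1-cn^{-D}$.

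Second, the criterion must be controlled under a change of scale. Write $\ell_s(G)=n^{-1}\sum_j\log\varphi_{G;s_j}(U_j)$. I will show that for $|t-1|\le1/2$, every $G$ with support in $[-c_0L_{n,\alpha},c_0L_{n,\alpha}]$, and every $j$ with $|U_j|\le c_0L_{n,\alpha}$,
\[
\bigl|\log\varphi_{G;t\sigma}(u)-\log\varphi_{G;\sigma}(u)\bigr|\lesssim|t-1|\,(1+L_{n,\alpha}^2).
\]
This follows because the integrands satisfy $\varphi_{t\sigma}(u-\theta)/\varphi_\sigma(u-\theta)=t^{-1}\exp\{(u-\theta)^2(1-t^{-2})/(2\sigma^2)\}$, which is uniformly close to $1$ in ratio for $|u-\theta|\lesssim L_{n,\alpha}$.

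This bound is applied twice: to $\hat G_{\mathrm{gls}}$, and to the true-scale maximizer, whose support is also in $[\min_j U_j,\max_j U_j]$. It yields $\ell_{\sigma_0}(\hat G_{\mathrm{gls}})\ge\sup_G\ell_{\sigma_0}(G)-\delta_n$ with $\delta_n\lesssim\gamma_A L_{n,\alpha}^2$. The polylog factor is absorbed into $(\log n)^{c_1}$, and Theorem \ref{thm:MML_rate}(2) gives the Hellinger bound $\Delta_A$ and the Wasserstein bound.

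For the tail of $\gamma_A$: conditionally on $A$, the quantity $(m-n)\hat\tau^2/\tau_\ast^2$ is $\chi^2_{m-n}$. Indeed, the $\Omega^{-1}$-residual is the projection of $\Omega^{-1/2}\xi$ onto an $(m-n)$-dimensional complement, and it does not depend on $\mu_\ast$. Laurent--Massart with $x=D\log n$ gives $|\hat\tau^2/\tau_\ast^2-1|\lesssim\epsilon_{m,n}^{1/2}+\epsilon_{m,n}$, and $|\sqrt{a}-1|\le|a-1|$ transfers this bound to $\gamma_A$.

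\textbf{Main obstacle.} The step I expect to be most delicate is the uniform likelihood-perturbation bound. The sup over $G$ must be restricted to log-bounded supports, which is where Lindsay's support property for both maximizers is used. One must also make sure the near-maximizer definition \eqref{def:MPLE_generic} holds against the unrestricted $\max_{G\in\mathscr G}$; this is fine because that max is attained within the support range. The resulting $\delta_n^{1/2}$ loss is what gives $\gamma_A^{1/2}$ in $\Delta_A$.
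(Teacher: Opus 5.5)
Your proposal is correct and follows essentially the same route as the paper. For known $\tau_\ast$ you reduce to Corollary \ref{cor:MML_exact}. For unknown $\tau_\ast$ you show $\hat G_{\mathrm{gls}}$ is a $\delta_n$-near maximizer of the oracle criterion with $\delta_n\lesssim(\log n)^C\gamma_A$, by comparing the plug-in and oracle criteria once at each maximizer, and then invoke Theorem \ref{thm:MML_rate}(2), with the $\chi^2_{m-n}$ representation controlling $\gamma_A$. The only difference is cosmetic: you bound the scale perturbation via the explicit Gaussian density ratio rather than the paper's derivative bound in Lemma \ref{lem:log_normal_mix_der}, and both give the same $\gamma_A(1+L_{n,\alpha}^2)$ control.
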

Let us compare the results in Theorem \ref{thm:gls_eb} to some recent empirical Bayes proposals in the high-dimensional linear model:
\begin{enumerate}
	\item \cite{mukherjee2023mean} studies nonparametric and naive mean-field variational empirical Bayes for linear regression, and establishes consistency and $1$-Wasserstein posterior approximation under deterministic and random designs, typically in the regime $m/n\to \infty$.
	\item \cite{lee2026parametric} studies parametric empirical Bayes in high-dimensional linear regression, estimating a finite-dimensional prior parameter via a variational empirical Bayes objective, and identifies a phase transition in its asymptotic distribution and efficiency theory within the regime $m/n\to \infty$.
	\item \cite{fan2023gradient} proposes to estimate an i.i.d. prior by the NPMLE for the full likelihood, using a Gibbs variational representation and a coupled gradient-flow/Langevin-MCEM algorithm. Their method enjoys asymptotic mixing and convergence guarantees in high-noise or convex-sublevel regimes under a general class of random designs in the proportional regime $m\asymp n$.
	\item \cite{fan2025dynamical} studies a parametric adaptive Langevin empirical Bayes algorithm for Bayesian linear regression with i.i.d. design in the proportional regime $m\asymp n$, with a focus on the high-dimensional asymptotics of the coupled Langevin/prior-parameter dynamics using recent tools from dynamical mean-field theory \cite{celentano2021high,gerbelot2024rigorous,han2025entrywise}.
\end{enumerate}
Our Algorithm \ref{alg:gls_eb} is fundamentally different from these proposals: it estimates the prior by a marginal composite marginal likelihood that can be computed using standard empirical Bayes methods and enjoys a nearly $n_\ast^{-1/2}$ convergence rate in weighted Hellinger distance, even in the most challenging regime $m\asymp n$. However, it should be noted that Algorithm \ref{alg:gls_eb} targets prior estimation through one-dimensional marginals and therefore does not, by itself, provide posterior inference or sampling under the full regression likelihood, as some of the above works do.

We also note that the Gaussian error assumption on ${\xi_i}$ can be easily relaxed at the cost of unnecessary technical detours; an example of such an analysis can be found in the application in Section \ref{sec:gd_application} below. Moreover, it is also straightforward to consider a ridge-regularized version of the generalized least squares estimator in (\ref{def:gls_sequence}) to avoid potential singularity of $A$; we omit these details for simplicity of presentation.

\section{Nonlinear regression: CML via debiased gradient descent}\label{sec:gd_application}

\subsection{Model and CML via debiased gradient descent}
Consider the Bayesian non-linear regression model
\begin{align}\label{def:model}
Y_i = \mathsf{F}\big(\iprod{A_i}{\mu_\ast},\xi_i\big),\quad i \in [m].
\end{align}
Here the entries of $\mu_\ast \in \R^n$ are i.i.d. draws from an unknown prior distribution $G_\ast$. For each sample $i \in [m]$, the response $Y_i$ is generated from the feature vector $A_i\in \R^{n}$, the signal $\mu_\ast\in \R^{n}$ via a possibly unknown, non-linear mapping $\mathsf{F}:\R^{2}\to \R$, and $\xi_i$'s are unobservable statistical errors. We adopt the normalization with $\pnorm{\mu_\ast}{}/\sqrt{n}=\bigo(1)$ and assume that the Gaussian feature vectors $A_i$'s are i.i.d. $\mathcal{N}(0,\Sigma/n)$ for some covariance $\Sigma$. 

We will be interested in estimating $G_\ast$ in the most challenging regime, where the sample size $m$ is proportional to the problem dimension $n$, and the unknown signal $\mu_\ast$ cannot be consistently estimated even in the simplest possible model under (\ref{def:model}). Moreover, even if the link function $\mathsf{F}$ and the error distributions $\xi_i$'s are known, the likelihood function for $G_\ast$ can be arbitrarily complicated and therefore intractable in general.

Clearly, as the link function $\mathsf{F}$ is typically unknown, the prior $G_\ast$ cannot be identified by scalar transformation within the class $\{G_\ast(t\cdot): t \in \R_{\geq 0}\}$. Our proposal below enables estimation of $G_\ast$ up to a scalar factor.

Fix a user-chosen (loss derivative) function $\mathsf{L}:\R\times \R\to \R$. We propose to estimate the prior $G_\ast$ by a two-step procedure, as detailed in Algorithm \ref{alg:eb_gd}.

\begin{algorithm}
	\caption{CML via debiased gradient descent}\label{alg:eb_gd}
	\begin{algorithmic}[1]
		\STATE \textbf{Input}: Data $\{(A_i,Y_i)\}_{i \in [m]} \in \R^n\times \R$, data covariance $\Sigma \in \R^{n\times n}$, a loss derivative function $\mathsf{L}:\R^2\to \R$, initial guess prior  $G_0 \in \mathscr{G}$.
		\STATE Sample $\mu_0\in \R^n$ whose entries are i.i.d. $G_0$.
		\STATE Compute scale estimates $\hat{\sigma}\geq 0, \hat{\tau} \in \R$ by 
		\begin{align*}
		\hat{\sigma}^2\equiv \frac{1}{n}\sum_{i \in [m]} \mathsf{L}^2\big((A\mu_0)_{i},Y_{i}\big),\quad \hat{\tau}\equiv \frac{1}{n}\sum_{i \in [m]} \partial_1 \mathsf{L}\big((A\mu_0)_{i},Y_{i}\big).
		\end{align*}
		\STATE Compute debiased gradient descent $\hat{\mu}_{\mathrm{db}} \in \R^n$ by
		\begin{align}\label{def:one_step_gd}
		\hat{\mu}_{\mathrm{db}} &\equiv \hat{\tau}\cdot \mu_0-\Sigma^{-1} A^\top \mathsf{L}(A\mu_0,Y).
		\end{align}
			\STATE With $\sigma_j\equiv (\Sigma^{-1})_{jj}^{1/2}$, compute the CML estimator
			\begin{align}\label{def:gd_MLE}
			\hat{G}_{\mathrm{db}}= \argmax_{G \in \mathscr{G}} \frac{1}{n}\sum_{j \in [n]} \log \varphi_{G;\hat{\sigma}\sigma_j}(\hat{\mu}_{\mathrm{db},j} ).
			\end{align}
	\end{algorithmic}
\end{algorithm}

It is useful at this point to explain the rationale of Algorithm \ref{alg:eb_gd}. Indeed, inspired by \cite{han2026gradient}, the debiased gradient descent $\hat{\mu}_{\mathrm{db}}$ in (\ref{def:one_step_gd}) has the following distributional approximation
\begin{align}\label{eqn:one_step_gd_dist}
\hat{\mu}_{\mathrm{db}}\stackrel{d}{\approx} \mathcal{N}(\bar{\delta} \mu_\ast, \bar{\sigma}^2 \Sigma^{-1}). 
\end{align}
Now the CML estimator $\hat{G}_{\mathrm{db}}$ is obtained in (\ref{def:gd_MLE}) by pretending that the correlation in $\hat{\mu}$ can be ignored and performing maximum composite marginal likelihood estimation on the marginals.

\subsection{Theoretical guarantee}

Let $\mathfrak{S}:\R^3\to \R$ be defined by $\mathfrak{S}(u_0,v_0,\xi_0)\equiv  \mathsf{L}(u_0,\mathsf{F}(v_0,\xi_0))$.
\begin{assumption}\label{assump:model}
	Suppose the following hold for some $K,\Lambda\geq 2$:
	\begin{enumerate}
		\item[(A1)] $1/K\leq  \phi \equiv m/n \leq K$.
		\item[(A2)] $\{n^{1/2} A_{i\cdot}:  i \in [m]\}$ are i.i.d. as $\mathcal{N}(0,\Sigma)$, where $\pnorm{\Sigma}{\op}\vee \pnorm{\Sigma^{-1}}{\op}\leq \Lambda$. 
		\item[(A3)] The collection $\{\partial_\alpha \mathfrak{S}(\cdot,\cdot,\xi_i): i \in [m], \alpha\in \mathbb{Z}_{\geq 0}^2, \abs{\alpha} \in [0:1]\}$ is $\Lambda$-Lipschitz and bounded at $(0,0)$ by $\Lambda$. 
	\end{enumerate}
\end{assumption}

\begin{assumption}
\label{assump:prior}
Suppose the following hold for some $M\geq 2$ and $\alpha\in(0,\infty]$:
\begin{enumerate}
\item[(B1)] $\mu_\ast$ has i.i.d. entries distributed as $G_\ast$, where $G_\ast\in\mathscr G(M,\alpha)$.
\item[(B2)] $\mu_0$ has i.i.d. entries distributed as $G_0$, where $G_0\in\mathscr G(M,\alpha)$.
\end{enumerate}
\end{assumption}

\begin{definition}\label{def:U_bar}
		With $a_\Sigma\equiv \tr(\Sigma)/n$ and $b_\Sigma\equiv \bm{1}_n^\top \Sigma \bm{1}_n/n$, let $\bar{\mathfrak{U}}=(\bar{\mathfrak{U}}_{1},\bar{\mathfrak{U}}_{2})$ be a centered, bi-variate Gaussian vector with covariance  
		\begin{align*}
		\begin{pmatrix}
		a_\Sigma \var(G_0)+ b_\Sigma (\E G_0)^2 & a_\Sigma \cov(G_0,G_\ast)+ b_\Sigma (\E G_0 \E G_\ast)\\
		a_\Sigma \cov(G_0,G_\ast)+ b_\Sigma (\E G_0 \E G_\ast) & a_\Sigma \var(G_\ast)+ b_\Sigma (\E G_\ast)^2
		\end{pmatrix}.
		\end{align*}
        We then define 
        \begin{align*}
        \bar{\delta}\equiv -\phi\cdot \E \partial_2\mathfrak{S}\big(\bar{\mathfrak{U}}_{1}, \bar{\mathfrak{U}}_{2},\xi_{\pi_m}\big),\quad \bar{G}_\ast(\cdot)\equiv G_\ast(\cdot/\abs{\bar{\delta}}).
        \end{align*}
		Here $\E$ is taken jointly with respect to $\bar{\mathfrak U}$ and $\pi_m\sim\mathrm{Unif}([m])$.	
\end{definition}

Recall the averaged Hellinger distance $\mathfrak{d}_{H;\bar{\sigma}\sigma_{[n]}}$ as defined in (\ref{def:averaged_hellinger}). The following theorem provides a formal justification for the output $\hat{G}_{\mathrm{db}}$ from Algorithm \ref{alg:eb_gd} for estimating $\bar{G}_\ast$.

\begin{theorem}
\label{thm:eb_gd_hellinger}
Suppose Assumptions \ref{assump:model} and \ref{assump:prior} hold for some $K,\Lambda,M\geq 2$ and $\alpha\in(0,\infty]$, and $K\Lambda M (1\wedge \abs{\bar{\delta}})^{-1}\leq (\log n)^{c_0}$ for some $c_0>1$. Let $n_{\ast,\Sigma}\equiv n/\pnorm{\texttt{Cor}_{\Sigma^{-1}}}{\op}$. 
Fix $D>0$ and $p\geq 1$. Then there exist some constants $c_1=c_1(K,\Lambda,c_0,\alpha,M,D)>0$ and $c_2=c_2(p,K,\Lambda,c_0,\alpha,M,D)>0$ such that if $n_{\ast,\Sigma}\geq (\log n)^{c_1}$, with $\Prob^{\xi}$-probability at least $1-n^{-D}$,
\begin{align*}
\mathfrak{d}_{H;\bar{\sigma}\sigma_{[n]}}\big(\hat{G}_{\mathrm{db}},\bar{G}_\ast\big)
\leq (n^{-1/4}+ n_{\ast,\Sigma}^{-1/2})(\log n)^{c_1},\quad \mathsf{W}_p(\hat{G}_{\mathrm{db}},\bar{G}_\ast)&\leq c_2 (\log n_{\ast,\Sigma})^{-1/2}.
\end{align*}
\end{theorem}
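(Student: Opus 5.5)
The plan is to reduce Theorem \ref{thm:eb_gd_hellinger} to Theorem \ref{thm:MML_rate}, applied to an \emph{oracle} correlated Gaussian sequence, and then transfer the conclusion to the actual $\hat\mu_{\mathrm{db}}$.

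\textbf{Step 1: oracle sequence.} Conditionally on $\mu_0$ and $\mu_\ast$, decompose $A\mu_0$ and $A\mu_\ast$ via the Gaussian structure of $A$. Write $A = \bar A + A P$, where $P$ projects (in the $\Sigma$-geometry) onto $\mathrm{span}\{\mu_0,\mu_\ast\}$ and $\bar A$ is independent of $(A\mu_0,A\mu_\ast)$. Then $\Sigma^{-1}A^\top \mathsf{L}(A\mu_0,Y)$ splits into two pieces:
\begin{enumerate}
\item a term $\Sigma^{-1}\bar A^\top \mathsf{L}$, which conditionally is exactly $\mathcal N\big(0,\pnorm{\mathsf{L}}{}^2 n^{-1}\Sigma^{-1}\cdot(\text{rank-2 correction})\big)$, i.e.\ $\hat\sigma\Sigma^{-1/2}\mathsf Z_n$ up to a rank-two perturbation;
\item a low-rank term lying in $\mathrm{span}\{\Sigma^{-1}\Sigma\mu_0,\mu_\ast\}$, whose $\mu_0$-coefficient is $\hat\tau$ up to $\bigo(n^{-1/2}\mathrm{polylog}\,n)$ (by Stein's identity and concentration), which is precisely what the debiasing term cancels, and whose $\mu_\ast$-coefficient concentrates at $-\bar\delta$.
\end{enumerate}
Using Assumption \ref{assump:model} (Lipschitz $\mathfrak S$) and Gaussian concentration of the empirical quantities $\pnorm{A\mu_0}{}^2/n$, $\langle A\mu_0,A\mu_\ast\rangle/n$ toward the covariance in Definition \ref{def:U_bar}, this should give
\[
\hat\mu_{\mathrm{db}} = \bar\delta\mu_\ast + \hat\sigma\,\Sigma^{-1/2}\mathsf Z_n + \Delta,
\qquad \pnorm{\Delta}{}\lesssim (\log n)^{c}
\]
with high probability. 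The residual $\Delta$ collects the rank-two pieces and the $O(n^{-1/2})$ coefficient errors times vectors of norm $\sqrt n$. I will also show $|\hat\sigma-\bar\sigma|\lesssim n^{-1/2}\mathrm{polylog}\,n$. Replacing $\mu_\ast$ by $|\bar\delta|\mu_\ast$ (flipping the sign if $\bar\delta<0$, which only reflects the prior) gives i.i.d.\ entries from $\bar G_\ast$ in a moment class $\mathscr G(M',\alpha)$ with $M'\le (\log n)^{c_0}$-dependent constants.

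\textbf{Step 2: apply the main theorem to the oracle.} Set $U=\bar\delta\mu_\ast+\bar\sigma\Sigma^{-1/2}\mathsf Z_n$ with $\Sigma_0=\bar\sigma^2\Sigma^{-1}$, whose diagonal lies in $[1/M'',M'']$ and for which $\texttt{Cor}_{\Sigma_0}=\texttt{Cor}_{\Sigma^{-1}}$. Theorem \ref{thm:MML_rate}(1) then gives the local maximal inequality at the effective sample size $n_{\ast,\Sigma}$.

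\textbf{Step 3: perturbation, the main obstacle.} I must show that $\hat G_{\mathrm{db}}$ is a $\delta_n$-near CML estimator for the oracle likelihood at scales $\bar\sigma\sigma_j$, with $\delta_n\lesssim n^{-1/2}\mathrm{polylog}\,n$. This value of $\delta_n$ produces the $n^{-1/4}$ term through $\delta_n^{1/2}$ in \eqref{ineq:MML_Hellinger_rate}. To get it, I will bound
\[
\sup_{G}\frac1n\sum_j\Big|\log\varphi_{G;\hat\sigma\sigma_j}(\hat\mu_{\mathrm{db},j})-\log\varphi_{G;\bar\sigma\sigma_j}(U_j)\Big|
\]
over $G$ supported in $[-cL_{n,\alpha},cL_{n,\alpha}]$. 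The key tool is that $u\mapsto\log\varphi_{G;s}(u)$ has derivative bounded by $\bigo(|u|+L_{n,\alpha})/s^2$, and similarly in $s$. The sum is therefore at most
\[
n^{-1}\sum_j (|U_j|+L)\,|\Delta_j| + \text{(scale error)} \lesssim n^{-1/2}\pnorm{\Delta}{}\,\mathrm{polylog}\,n + |\hat\sigma-\bar\sigma|\,\mathrm{polylog}\,n,
\]
by Cauchy--Schwarz. This costs $n^{-1/2}\mathrm{polylog}\,n$, and it should be applied to both $\hat G_{\mathrm{db}}$ and the oracle maximizer. The delicate point is controlling $\pnorm{\Delta}{}=\bigo(\mathrm{polylog}\,n)$ rather than $\sqrt n$ accuracy. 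This needs the exact conditional-Gaussian decomposition of Step 1, not a cruder approximation, so I would first verify that the $\mu_0$-coefficient is cancelled by $\hat\tau$ to the required order.

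\textbf{Step 4: support and conclusion.} The support condition $\mathrm{supp}(\hat G_{\mathrm{db}})\subset[\min_j\hat\mu_{\mathrm{db},j},\max_j\hat\mu_{\mathrm{db},j}]$ holds by \cite{lindsay1983geometry}, and this interval lies in $[-cL_{n,\alpha},cL_{n,\alpha}]$ with high probability. Theorem \ref{thm:MML_rate}(2) then yields both the Hellinger bound and the $\mathsf W_p$ bound, since $\log^{-1/2}(1/\Delta)\lesssim(\log n_{\ast,\Sigma})^{-1/2}$. A final step converts weighted Hellinger at scales $\hat\sigma\sigma_j$ versus $\bar\sigma\sigma_j$, at a further cost of $|\hat\sigma/\bar\sigma-1|$.
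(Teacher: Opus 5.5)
Your proposal is correct in substance, but it takes a genuinely different route from the paper.

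\textbf{How the paper argues.} The paper never couples $\hat\mu_{\mathrm{db}}$ to an oracle sequence pathwise. Proposition \ref{prop:db_gd_normal} only shows that averages $n^{-1}\sum_j\psi_j(\hat\mu_j)$ of Lipschitz test functions agree, up to $n^{-1/2}\mathrm{polylog}\,n$, with those of a sequence $\mu^{\mathrm{db}}$ built from an independent $\mathsf Z_n$. This uses Gaussian concentration in $A$ (Lemma \ref{lem:gaussian_conc}) plus a coordinatewise Berry--Esseen bound. Proposition \ref{prop:mmle_max_seq} then transfers this to maximal log-likelihood values via a finite net over $G$ and Lemma \ref{lem:der_log_mix_est}. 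Finally, the peeling is redone with the test class $\mathscr G_{\texttt{test}}$, comparing only \emph{maximal values} over that class under $\hat\mu$ and under $\bar\mu^{\mathrm{db}}$.

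\textbf{How yours differs, and what each buys.} You write $A=n^{-1/2}Z\Sigma^{1/2}$ and split $Z$ into its independent components on $W=\mathrm{span}\{\Sigma^{1/2}\mu_0,\Sigma^{1/2}\mu_\ast\}$ and $W^\perp$. This exact conditioning yields an actual coupling $\hat\mu_{\mathrm{db}}=U+\Delta$, where $U$ is an exact instance of \eqref{def:MPLE_U} with $\Sigma_0=\bar\sigma^2\Sigma^{-1}$ and $\|\Delta\|_2=\mathcal O(\mathrm{polylog}\,n)$. The coupling implies the conclusion of Proposition \ref{prop:db_gd_normal} by Cauchy--Schwarz. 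It also makes the near-maximizer property a deterministic perturbation bound, so Theorem \ref{thm:MML_rate}(2) applies as a black box: no Berry--Esseen step, no net over $G$, no test-class device. It may also point toward an improvement: log-likelihood \emph{ratios} are perturbed through differences of posterior means, so a pathwise coupling might avoid the crude $\delta_n^{1/2}$ loss behind the $n^{-1/4}$ term. The paper's route buys robustness instead. It needs only an averaged distributional approximation, the form available for general iterated first-order methods as in \cite{han2026gradient}. Your conditioning relies on the one-step structure, namely that $\mathsf L(A\mu_0,Y)$ depends on $A$ only through $A\mu_0$ and $A\mu_\ast$.

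\textbf{First technical point: avoid inverting the Gram matrix.} Do not extract the $\mu_0$- and $\mu_\ast$-coefficients by inverting the $\Sigma$-Gram matrix of $(\mu_0,\mu_\ast)$. Its smallest eigenvalue is not controlled by $K,\Lambda,M$, and it degenerates when $\mu_0,\mu_\ast$ are nearly collinear. Work at the vector level instead:
\begin{itemize}
\item Stein's identity gives $\E^{(0)}\Sigma^{-1}A^\top\mathsf L=\tau_\ast\mu_0-\delta_\ast\mu_\ast$ exactly.
\item The $W$-fluctuation is a normalized sum of $m$ independent two-dimensional sub-exponential vectors.
\item The pieces of $\Delta$ that are not negligible in $\ell_\infty$ are this fluctuation and the rank-two remainder $\hat\sigma\Sigma^{-1/2}P_W g$. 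Both have $\ell_2$ norm $\mathcal O(\sqrt{\log n})$.
\item Hence $\|\hat\mu_{\mathrm{db}}\|_\infty\lesssim L_{n,\alpha}$, as the support event in Theorem \ref{thm:MML_rate}(2) requires. A merely polylogarithmic bound would force you to rerun the localization at polylogarithmic radius, as the paper does.
\end{itemize}

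\textbf{Second technical point: sign of $\bar\delta$ and lower bound on $\bar\sigma$.} Your parenthetical on $\bar\delta<0$ does not resolve the sign issue. The entries of $\bar\delta\mu_\ast$ have the law of $\bar\delta\theta$, which is the reflection of $\bar G_\ast$. So the CML targets that reflection unless $G_\ast$ is symmetric. Separately, bounding the diagonal of $\bar\sigma^2\Sigma^{-1}$ needs a lower bound on $\bar\sigma$ that is not in the statement. The paper's proof has both gaps: its identity $\E\bar{\mathfrak L}_n(G)=-n^{-1/2}\sum_j\mathrm{KL}$ presumes the entries of $\bar\delta\mu_\ast$ have law $\bar G_\ast$, and its Step 1 invokes a lower bound on $\bar\sigma$. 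These are defects of the statement, not of your route.
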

The proof of the above theorem can be found in Section \ref{section:proof_eb_gd_hellinger}. An important technical subtlety in proving Theorem \ref{thm:eb_gd_hellinger} lies in the fact that \eqref{eqn:one_step_gd_dist} holds only in an approximate and averaged sense (cf. Proposition \ref{prop:db_gd_normal}). Consequently, the proof is quantitatively different from that of Theorem \ref{thm:gls_eb} as a direct application of the master Theorem \ref{thm:MML_rate}.

To place Theorem \ref{thm:eb_gd_hellinger} in the literature, prior estimation for a general nonlinear regression model of the form \eqref{def:model} appears substantially less developed than in the linear-model case, mainly because the full likelihood is in general intractable, both theoretically and computationally. In the special case of sparse high-dimensional generalized linear models, \cite{tang2024empirical} proposed an empirical Bayes posterior distribution that achieves the optimal contraction rate and valid posterior inference. Here our proposal in Algorithm \ref{alg:eb_gd} is different: it uses one debiased gradient step to manufacture an approximate correlated Gaussian sequence experiment whose mean is a scalar multiple of $\mu_\ast$, and then applies the composite marginal likelihood theory of Section 2. It therefore avoids the full nonlinear marginal likelihood, but estimates only the prior up to the scalar factor $|\bar\delta|$ and does not immediately address the posterior inference problem in the regime $m\asymp n$.

\begin{remark}
Some technical remarks:
\begin{enumerate}
    \item The rate $n^{-1/4}$ in Theorem \ref{thm:eb_gd_hellinger} can be understood as resulting from taking $\delta_n\asymp n^{-1/2}$ in Theorem \ref{thm:MML_rate}, due to the error incurred by the distributional approximation in \eqref{eqn:one_step_gd_dist}. Interestingly, this rate is indeed observed in the numerical experiments in the right panel of Figure \ref{fig:numerics_regression} in Section \ref{sec:numerics}. We therefore conjecture that the $n^{-1/4}$ rate in Theorem \ref{thm:eb_gd_hellinger} cannot be removed for free.
    \item It is possible that $\bar{\delta}=0$, depending on the model characteristics and the choice of the initial guess prior $G_0 \in \mathscr{G}$. For instance, for the noiseless phase retrieval model with $\mathsf{F}(x,\xi) = x^2$ and loss derivative $\mathsf{L}(x,y)=(\d/\d x) (y-x^2)^2/4 =x(x^2-y)$, we have $\mathfrak{S}(u_0,v_0,\xi_0) = \mathsf{L}(u_0,v_0^2)= u_0(u_0^2-v_0^2)$. This means that, for centered $G_0,G_\ast$, we have $\bar{\delta} = 2 \phi a_\Sigma \cov(G_0,G_\ast)$, which requires an informative initial guess prior $G_0$ for estimating $G_\ast$. 
    
    This phenomenon is well understood in the literature, as gradient descent with uninformative random initialization requires $\Omega(\log n)$ iterations to become correlated with the signal \cite{chen2019gradient,han2025long}. It remains open to extend our one-step debiased gradient descent proposal in Algorithm \ref{alg:eb_gd} to accommodate such scenarios.
    \item While the smoothness assumption in Assumption \ref{assump:model} formally excludes the logistic regression example for (\ref{def:model}), we believe that a smoothing technique similar to that developed in \cite[Section 5]{han2026gradient} may be employed to rigorously extend our results to the logistic regression setting.
\end{enumerate}
\end{remark}

\section{Numerical experiments}\label{sec:numerics}

\subsection{Simulation designs}

This section illustrates the finite-sample behavior of the CML estimator for the empirical Bayes credible intervals and the marginal empirical Bayes regret in Sections \ref{subsec:credible_consequence} and \ref{subsec:marginal_regret}, and for the two main regression applications in Sections \ref{sec:gls_application} and \ref{sec:gd_application}. For the purpose of illustration, we adopt the fixed-grid approximation method to compute the Kiefer-Wolfowitz NPMLE as in \cite{jiang2009general}. This algorithm is summarized in Algorithm \ref{alg:grid_em_mple} in Appendix \ref{section:EM} for the reader's convenience. For more recent implementations, the readers are referred to, e.g., the \texttt{REBayes} method \cite{koenker2017rebayes} and the \texttt{EBNM} method \cite{willwerscheid2021ebnm}.

The concrete simulation settings are as follows:
\begin{enumerate}
	\item The first application studies the empirical Bayes credible interval \eqref{eq:eb_ci_original_U} and the marginal empirical Bayes regret \eqref{eq:regret_identity_text} directly in the correlated Gaussian sequence model. We take
	\begin{align*}
	\beta_{0,j}\stackrel{\mathrm{i.i.d.}}{\sim}\mathrm{Unif}[-1,1],
	\qquad U=\beta_0+\sigma \mathsf{Z}_n,
	\qquad \sigma=0.75,
	\end{align*}
	where $\mathsf{Z}_n \sim \mathcal{N}(0,\Lambda)$. The normalized covariance matrix $\Lambda$ is chosen to be block-equicorrelated with block size $b$ and within-block correlation $\rho$, so that $\kappa=\|\Lambda\|_{\op}=1+(b-1)\rho$ and $n_\ast=n/\kappa$. We estimate $G_0$ by the grid CML estimator, compute the equal-tailed $90\%$ credible intervals via \eqref{eq:eb_ci_original_U}, and compute the marginal empirical Bayes regret in \eqref{eq:regret_identity_text} by numerical integration in the one-dimensional Gaussian experiment.
	\item The second application considers the two regression settings. For linear generalized least squares (GLS), we observe
		\begin{align*}
		U=\mu_\ast+\tau_\ast \mathsf{Z}_n,
		\qquad \tau_\ast=0.8,
		\qquad \mathsf{Z}_n\sim \mathcal{N}(0,\Lambda),
		\end{align*}
		with $\mu_{\ast,j}\stackrel{\mathrm{i.i.d.}}{\sim}0.25\delta_{-1.5}+0.5\delta_0+0.25\delta_{1.5}$. The composite marginal likelihood uses the oracle $\tau_\ast$, in order to isolate the statistical behavior of the CML estimator from residual-variance plug-in effects.
	
	   For the debiased GD in nonlinear regression, we run Algorithm \ref{alg:eb_gd} on the high-dimensional nonlinear regression data
	   \begin{align*}
	   Y_i=\tanh\big(1.25\iprod{A_i}{\mu_\ast}\big)+\xi_i,
	   \qquad A_i\stackrel{\mathrm{i.i.d.}}\sim \mathcal{N}(0,\Sigma/n),
	   \quad i\in[m],
	   \end{align*}
	   where $m=\lceil 1.2 n\rceil$, $\xi_i\stackrel{\mathrm{i.i.d.}}\sim \mathcal{N}(0,1.2^2)$, and the coordinates of $\mu_\ast$ are i.i.d. from $G_\ast=0.25\delta_{-1}+0.5\delta_0+0.25\delta_1$. The initialization is set as $\mu^0=0$ and $\mathsf{L}(u,y)=u-y$. In this experiment, the normalized precision matrix $\Sigma^{-1}$ is chosen from the same block-equicorrelated family, because the relevant effective sample size in Theorem \ref{thm:eb_gd_hellinger} is determined by the correlation matrix of $\Sigma^{-1}$. The target prior is the scaled law $\bar G_\ast(\cdot)=G_\ast(\cdot/|\bar\delta|)$, where with $(V,\xi) \sim \mathcal{N}(0,a_\Sigma\mathrm{Var}(G_\ast))\otimes \mathcal{N}(0,1.2^2)$ and $a_\Sigma=\mathrm{tr}(\Sigma)/n$, we have
	   \begin{align*}
	   \bar\delta=1.2\cdot 1.25\,\E\mathrm{sech}^2(1.25V),
	   \qquad
	   \bar\sigma^2=1.2\,\E\big(\tanh(1.25V)+\xi\big)^2.
	   \end{align*}
\end{enumerate}
\begin{figure}
	\centering
	\IfFileExists{eb_mple_credible_regret.pdf}{%
		\includegraphics[width=.98\textwidth]{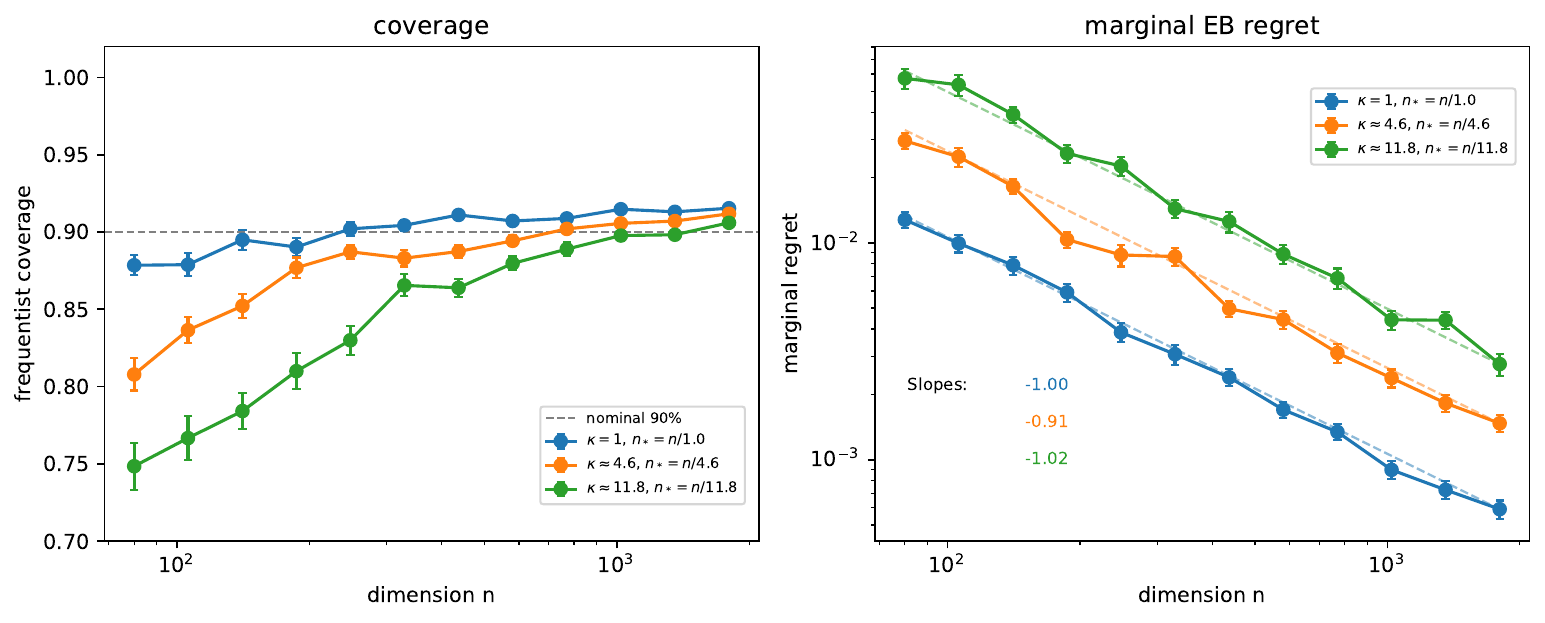}%
	}{%
		\fbox{\parbox{.9\textwidth}{\centering Figure file \texttt{eb\_mple\_credible\_regret.pdf} not found.}}%
	}
	\caption{\emph{Left}: average coverage for the nominal $90\%$ empirical Bayes credible intervals in \eqref{eq:eb_ci_original_U}. \emph{Right}: marginal empirical Bayes regret in \eqref{eq:regret_identity_text}. }
	\label{fig:numerics_credible}
\end{figure}

For the credible-interval and marginal-regret experiments in Figure \ref{fig:numerics_credible}, we use a logarithmic grid of sample sizes between $80$ and $1800$. For the regression experiments in Figure \ref{fig:numerics_regression}, we use the same grid of sample sizes and compare the moderate dependence levels $\kappa=1$, $\kappa\approx 1.9$, and $\kappa\approx 2.8$. The Monte Carlo averages and standard errors are computed over $100$ repetitions.

\subsection{Simulation results}

Figure \ref{fig:numerics_credible} reports the Monte Carlo average of the realized coverage $n^{-1}\sum_j\bm 1_{\{\beta_{0,j}\in\widehat C_j(U)\}}$ and the marginal empirical Bayes regret \eqref{eq:regret_identity_text} for several choices of the normalized covariance matrix $\Lambda$ in application (1). The plotted curves compare three choices: $\kappa=1$, $\kappa\approx4.6$, and $\kappa\approx11.8$. The left panel shows that the frequentist coverage improves as $n_\ast$ increases. Equivalently, for larger $\kappa$, a larger $n$ is needed for the empirical Bayes intervals to approach the nominal coverage. The right panel shows that the marginal empirical Bayes regret decreases approximately at the $n_\ast^{-1}$ rate predicted by Proposition \ref{prop:marginal_regret}; the fitted slopes are close to $-1$ for all three values of $\kappa$.

\begin{figure}
	\centering
	\IfFileExists{eb_mple_regression.pdf}{%
		\includegraphics[width=.98\textwidth]{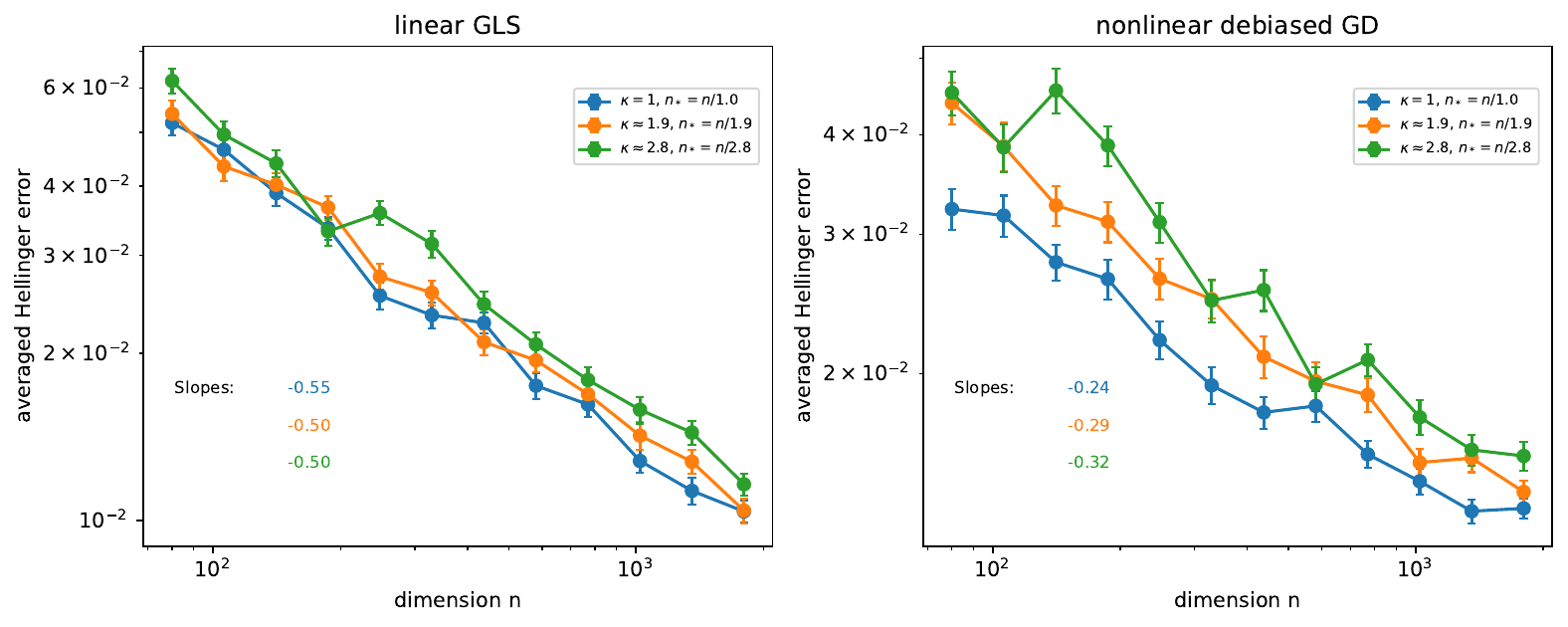}%
	}{%
		\fbox{\parbox{.9\textwidth}{\centering Figure file \texttt{eb\_mple\_regression.pdf} not found.}}%
	}
	\caption{ \emph{Left}: linear GLS with oracle noise scale. \emph{Right}: one-step debiased GD run on the original high-dimensional nonlinear regression model. }
	\label{fig:numerics_regression}
\end{figure}

Figure \ref{fig:numerics_regression} reports the averaged Hellinger error for the two regression applications in (2), with all three values of $\kappa$ displayed in a single two-panel figure. In both panels, the curves move upward as $\kappa$ increases, consistent with the theoretical prediction that the convergence rate is governed by the effective sample size $n_\ast=n/\kappa$. The left panel shows slopes close to $-1/2$ for the GLS experiment, in line with Theorem \ref{thm:gls_eb}. The right panel shows slower slopes, now close to the $-1/4$ side for all three moderate dependence levels, for the one-step debiased GD experiment. This is consistent with the additional distributional approximation error in Theorem \ref{thm:eb_gd_hellinger}.

\section{Proof of Theorem \ref{thm:MML_rate}}\label{section:proof_MML_rate}

\subsection{Notation and preliminary estimates}
Throughout this section we write $\mathfrak{d}_H$ for $\mathfrak{d}_{H;\sigma_{0,[n]}}$ and
$\mathfrak L_n$ for $\mathfrak{L}_{n;\sigma_{0,[n]}}$ whenever no confusion can
arise. For $G\in\mathscr G$ and $j\in[n]$, write
\begin{align*}
 p_{G,j}(x)\equiv \varphi_{G;\sigma_{0,j}}(x),\quad
 p_{0,j}(x)\equiv \varphi_{G_0;\sigma_{0,j}}(x),\quad
 \ell_{G,j}(x)\equiv \log \frac{p_{G,j}(x)}{p_{0,j}(x)}.
\end{align*}
Let $\chi\in C^\infty(\R)$ be a smooth non-decreasing function such that
$\chi(x)=x$ for $|x|\le 1$, $\chi(x)=2\sign(x)$ for $|x|\ge 3$,
$\pnorm{\chi}{\infty}\le 2$, and $\pnorm{\chi'}{\infty}\le c_\chi$ for some
universal constant $c_\chi>0$. For $T>0$, let $\chi_T(x)\equiv T\chi(x/T)$, and
write
\begin{align*}
 \ell^T_{G,j}(x)\equiv \chi_T(\ell_{G,j}(x)),\quad
 \mathfrak L_{n;T}(G)\equiv \frac{1}{\sqrt n}\sum_{j\in[n]}\ell^T_{G,j}(U_j).
\end{align*}

We begin with elementary estimates for normal mixtures.
\begin{lemma}\label{lem:normal_mix_basic}
Suppose $M^{-1}\leq \sigma_{0,j}^2\leq M$ for all $j\in[n]$ and
$G_0\in\mathscr G(M,\alpha)$.  Let $G\in\mathscr G(R,\infty)$ with $R\geq 1$.
Then, for every $x\in\R$,
\begin{align}\label{ineq:normal_mix_lower}
\inf_{G\in\mathscr G(R,\infty)}\inf_{j\in[n]}p_{G,j}(x)
\ge C_M^{-1}\exp\{-C_M(\abs{x}+R)^2\}.
\end{align}
Moreover, for every $B\geq 1$, all $\abs{x}\leq B$, and all $G\in\mathscr G(R,\infty)$,
\begin{align}\label{ineq:normal_mix_log_growth}
\biggabs{\log\frac{p_{G,j}(x)}{p_{0,j}(x)}}
\le C_{\alpha,M}\{1+R^2+B^2\}.
\end{align}
Finally, if $X\sim p_{0,j}$, then uniformly in $j\in[n]$,
\begin{align}\label{ineq:p0_tail_exp}
\Prob(|X|>B)\leq C_{\alpha,M}\exp\big[-(B^2\wedge B^\alpha)/C_{\alpha,M}\big],
\end{align}
with the second term interpreted as $\infty$ when $\alpha=\infty$.
\end{lemma}

\begin{proof}
For \eqref{ineq:normal_mix_lower}, as for every
$u\in[-R,R]$ we have
$\varphi_{\sigma_{0,j}}(x-u)\geq C_M^{-1}\exp\{-C_M(\abs{x}+R)^2\}$, integration with respect to $G$ then proves the lower bound.

For \eqref{ineq:normal_mix_log_growth}, since $\int \exp\{\abs{u}^\alpha/M^\alpha\}\,G_0(\d u)\leq M$, Markov's inequality gives
$G_0([-a_{\alpha,M},a_{\alpha,M}])\geq 1/2$ for a constant $a_{\alpha,M}<\infty$
(and in the case $\alpha=\infty$ one may take $a_M=M$). Hence, for $|x|\leq B$,
\begin{align*}
p_{0,j}(x)\geq \int_{|u|\leq a_{\alpha,M}}\varphi_{\sigma_{0,j}}(x-u)\, G_0(\d u)
\geq C_{\alpha,M}^{-1}\exp\{-C_{\alpha,M}(1+B^2)\}.
\end{align*}
The corresponding upper bounds $p_{G,j}(x),p_{0,j}(x)\leq C_M$ and
\eqref{ineq:normal_mix_lower} for $p_{G,j}$ yield
\begin{align*}
\abs{\log p_{G,j}(x)}+\abs{\log p_{0,j}(x)}
\leq C_{\alpha,M}(1+R^2+B^2),\quad \abs{x}\le B,
\end{align*}
which proves \eqref{ineq:normal_mix_log_growth}. 
	
For \eqref{ineq:p0_tail_exp}, if
$X=\theta+\sigma_{0,j}\mathsf{Z}$ with $\theta\sim G_0$ and $\mathsf{Z}\sim \mathcal{N}(0,1)$ independent, then
\begin{align*}
\Prob(\abs{X}>B)\leq
\Prob(\abs{\theta}>B/2)+\Prob(\sqrt M \abs{\mathsf{Z}}>B/2).
\end{align*}
The first term is controlled by the exponential moment assumption and the second by
the Gaussian tail bound, proving \eqref{ineq:p0_tail_exp}.
\end{proof}

\subsection{Bernstein inequality for truncated composite marginal likelihood ratio}

We prove a Bernstein inequality for the truncated composite marginal likelihood ratio.
\begin{proposition}\label{prop:fixed_G_bernstein}
Suppose the assumptions of Theorem \ref{thm:MML_rate} hold, and let
$\kappa_0,n_\ast$ be as defined in (\ref{def:kappa_n_ast}). There exists
$C_{\alpha,M}>1$ such that for every $T\ge1$, every $G\in\mathscr G$, and every
$x\ge1$,
\begin{align*}
\Prob\bigg(\abs{(\mathrm{id}-\E)\mathfrak L_{n;T}(G)}
> C_{\alpha,M}T\cdot \bigg[\sqrt{\kappa_0}\,
\mathfrak{d}_{H;\sigma_{0,[n]}}(G,G_0)\sqrt{x}
+\frac{\kappa_0 x}{\sqrt n}\bigg]\bigg)
\le e^{-x}.
\end{align*}
\end{proposition}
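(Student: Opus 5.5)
We will bound the Laplace transform of $\mathfrak L_{n;T}(G)$ and conclude by a Chernoff argument. Write $U_j=\beta_{0,j}+\sigma_{0,j}W_j$, where $W=\mathfrak D_{\Sigma_0}^{-1/2}\Sigma_0^{1/2}\mathsf Z_n$ is a centered Gaussian vector with unit variances and correlation matrix $\texttt{Cor}_{\Sigma_0}$. Conditionally on $\beta_0$, the statistic $\mathfrak L_{n;T}(G)$ is a sum of functions $f_j(W_j)$ with $f_j(w)=n^{-1/2}\ell^T_{G,j}(\beta_{0,j}+\sigma_{0,j}w)$. The tool is the Gaussian geometric Brascamp--Lieb inequality of \cite{chen2015improved}. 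In the form we need, it says that for a centered Gaussian vector $W$ with unit-variance coordinates and correlation matrix $R$, and nonnegative $g_j$,
\begin{align*}
\E\prod_j g_j(W_j)\le \prod_j \big(\E g_j(W_j)^{p}\big)^{1/p},\qquad p=\|R\|_{\op}.
\end{align*}
This holds because $R\preceq \|R\|_{\op} I$, which is exactly the geometric Brascamp--Lieb condition with all exponents $c_j=1/\|R\|_{\op}$.

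Apply this with $g_j=\exp\{\lambda f_j\}$ and $p=\kappa_0$ to obtain
\begin{align*}
\E^{\beta_0} e^{\lambda \mathfrak L_{n;T}(G)}\le \prod_j\big(\E^{\beta_0} e^{\lambda\kappa_0 f_j(W_j)}\big)^{1/\kappa_0}.
\end{align*}
Integrating over $\beta_0$ is compatible with this bound, since the $\beta_{0,j}$ are independent and independent of $W$. Use Hölder, or more simply note that conditionally on $\beta_0$ the bound factorizes and then take expectations of a product of independent factors with Jensen applied to $x\mapsto x^{1/\kappa_0}$. The result is
\begin{align*}
\E e^{\lambda \mathfrak L_{n;T}}\le \prod_j \big(\E e^{\lambda\kappa_0 n^{-1/2}\ell^T_{G,j}(X_j)}\big)^{1/\kappa_0},\qquad X_j\sim p_{0,j}.
\end{align*}
We do the same for $-\mathfrak L_{n;T}$ and center by subtracting $\lambda\E\mathfrak L_{n;T}=\lambda n^{-1/2}\sum_j\E\ell^T_{G,j}(X_j)$. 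This reduces the problem to a univariate moment generating function bound for the bounded variable $Y_j=\ell^T_{G,j}(X_j)$, with $|Y_j|\le 2T$, evaluated at $s=\lambda\kappa_0/\sqrt n$.

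For $|s|\le 1/(CT)$, the standard Bernstein bound gives
\begin{align*}
\log\E e^{s(Y_j-\E Y_j)}\le C s^2\,\E Y_j^2 .
\end{align*}
The key univariate estimate is then $\E\,\chi_T(\ell_{G,j}(X_j))^2\le C_{\alpha,M}T^2 d_H^2(p_{G,j},p_{0,j})$. This is the step I expect to be the main obstacle, because a log-ratio is controlled by Hellinger only after truncation, especially on the side where $p_G/p_0$ is large. I would prove it by splitting on the value of $h=\sqrt{p_G/p_0}-1$:
\begin{enumerate}
\item On $\{|h|\le 1/2\}$, $|\log(p_G/p_0)|\le C|h|$, and $\E h^2 = 2d_H^2$.
\item On $\{h<-1/2\}$ and on $\{h>1/2\}$, $|\chi_T|\le 2T$ while $h^2\ge 1/4$, so the contribution is at most $16T^2\E h^2$.
\end{enumerate}
Combining the two regions gives the bound with a constant times $T^2$ (indeed $C\max(1,T^2)$). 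Notably, no tail bound from Lemma \ref{lem:normal_mix_basic} is needed at this fixed-$G$ stage.

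Plugging this in, and using $\sum_j d_H^2=n\,\mathfrak d_H^2$, we get for $\lambda\le \sqrt n/(CT\kappa_0)$
\begin{align*}
\log\E e^{\lambda(\mathrm{id}-\E)\mathfrak L_{n;T}}\le \frac{1}{\kappa_0}\sum_j C\frac{\lambda^2\kappa_0^2}{n}T^2d_H^2(p_{G,j},p_{0,j}) = C\lambda^2\kappa_0T^2\mathfrak d_H^2 .
\end{align*}
This is a sub-gamma bound with variance proxy $v=C\kappa_0T^2\mathfrak d_H^2$ and scale $b=CT\kappa_0/\sqrt n$. The standard Chernoff optimization then gives the tail bound $\sqrt{2vx}+bx$ with probability at least $1-e^{-x}$ for each sign. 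Absorbing the factor $2$ from the two-sided union bound into the constant, using $x\ge1$, yields exactly the stated inequality.
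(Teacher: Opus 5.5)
Your proposal is correct and follows the paper's proof essentially step for step. The paper likewise applies the Chen--Dafnis--Paouris Gaussian H\"older inequality under $\Sigma_0\preceq\kappa_0\mathfrak D_{\Sigma_0}$ conditionally on $\beta_0$, then Jensen for $x\mapsto x^{1/\kappa_0}$ (Lemma \ref{lem:gaussian_holder}); it uses the same univariate bound $\log\E e^{s(Y_j-\E Y_j)}\le Cs^2T^2d_H^2(p_{G,j},p_{0,j})$, obtained from $\chi_T(\log a)^2\le CT^2(\sqrt a-1)^2$ by the same two-region split, evaluated at $s=\kappa_0\lambda/\sqrt n$ and followed by Chernoff optimization over $0\le\lambda\lesssim\sqrt n/(\kappa_0T)$.
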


We need the following Gaussian H\"older-type decoupling inequality.

\begin{lemma}\label{lem:gaussian_holder}
Under the assumptions of Theorem \ref{thm:MML_rate}, the following decoupling
inequality holds: for all nonnegative measurable
$\{f_j\}$'s,
\begin{align}\label{ineq:correlated_U_decouple}
\E\prod_{j \in [n]} f_j(U_j)
\le \prod_{j \in [n]}\big(\E_{X\sim p_{0,j}} f_j(X)^{\kappa_0}\big)^{1/\kappa_0}.
\end{align}
\end{lemma}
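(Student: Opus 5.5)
We condition on $\beta_0$ first and then integrate it out. Write $U_j=\beta_{0,j}+\sigma_{0,j}W_j$, where $W=\mathfrak D_{\Sigma_0}^{-1/2}\Sigma_0^{1/2}\mathsf Z_n\sim\mathcal N(0,\texttt{Cor}_{\Sigma_0})$ is independent of $\beta_0$. Conditionally on $\beta_0$, define $h_j(w)\equiv f_j(\beta_{0,j}+\sigma_{0,j}w)$, which is nonnegative and measurable. Then
\begin{align*}
\E\Big[\prod_{j\in[n]} f_j(U_j)\,\Big|\,\beta_0\Big]=\E\prod_{j\in[n]}h_j(W_j).
\end{align*}
Here $W$ is a centered Gaussian vector with unit marginal variances and covariance $R\equiv\texttt{Cor}_{\Sigma_0}$ satisfying $\pnorm{R}{\op}=\kappa_0$.

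The core step is the Gaussian H\"older/Brascamp--Lieb inequality of \cite{chen2015improved}. For $W\sim\mathcal N(0,R)$ with $R_{jj}=1$, and exponents $p_j\ge1$ such that $R\preceq\mathrm{diag}(p_1,\ldots,p_n)$, it gives
\begin{align*}
\E\prod_j h_j(W_j)\le\prod_j\pnorm{h_j(W_j)}{L^{p_j}}.
\end{align*}
We take $p_j\equiv\kappa_0$. The condition then reads $R\preceq\kappa_0 I_n$, which holds by definition of the operator norm. If the paper's version is stated in the geometric form $\sum_j c_j u_ju_j^\top=I$, we translate it as follows. Write $W=R^{1/2}g$ with $g\sim\mathcal N(0,I_n)$ and $W_j=\iprod{v_j}{g}$, where $v_j$ is the $j$-th row of $R^{1/2}$ and $\pnorm{v_j}{}=1$. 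Then $\sum_j v_jv_j^\top=R\preceq\kappa_0 I$, so with $c_j=1/\kappa_0$ we get $\sum_j c_jv_jv_j^\top\preceq I$. This is exactly the subunital case, in which the Gaussian Brascamp--Lieb inequality holds with constant $1$ (equivalently, one can use the entropy-duality / Barthe-type argument, where Gaussian extremizers reduce the claim to the matrix inequality $R\preceq\kappa_0 I$). The main obstacle is verifying the precise hypothesis form of the cited inequality: it must allow inequality rather than equality in the frame condition, and it must allow non-full-rank $R$ if $\Sigma_0$ is singular. For the singular case we would perturb $R_\epsilon=(1-\epsilon)R+\epsilon I$ and pass to the limit, using monotone convergence for bounded $h_j$ first and then truncation.

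We then integrate over $\beta_0$. The conditional bound gives
\begin{align*}
\E\prod_j f_j(U_j)\le\E_{\beta_0}\prod_j a_j(\beta_{0,j})^{1/\kappa_0},\qquad a_j(b)\equiv\E_{\mathsf Z\sim\mathcal N(0,1)}f_j(b+\sigma_{0,j}\mathsf Z)^{\kappa_0}.
\end{align*}
Since the $\beta_{0,j}$ are i.i.d., the right side factorizes as $\prod_j\E\,a_j(\beta_{0,j})^{1/\kappa_0}$. Jensen's inequality (concavity of $t\mapsto t^{1/\kappa_0}$, as $\kappa_0\ge1$) gives $\E\,a_j(\beta_{0,j})^{1/\kappa_0}\le(\E\,a_j(\beta_{0,j}))^{1/\kappa_0}$. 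Finally $\E\,a_j(\beta_{0,j})=\E_{X\sim p_{0,j}}f_j(X)^{\kappa_0}$, since $\beta_{0,j}+\sigma_{0,j}\mathsf Z\sim p_{0,j}$. This yields \eqref{ineq:correlated_U_decouple}.

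We record that $\kappa_0\ge1$, since $\tr R=n$ forces the largest eigenvalue to be at least $1$. This is what makes the Jensen step valid and the exponents admissible.
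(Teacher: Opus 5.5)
Your proof is correct and follows the paper's route. You apply the Chen--Dafnis--Paouris Gaussian H\"older inequality conditionally on $\beta_0$ under $\Sigma_0\preceq\kappa_0\mathfrak D_{\Sigma_0}$ (equivalently $\texttt{Cor}_{\Sigma_0}\preceq\kappa_0 I_n$), then integrate over the independent $\beta_{0,j}$'s and apply Jensen to $t\mapsto t^{1/\kappa_0}$. Of your extra remarks, the geometric reformulation and the perturbation for singular $\Sigma_0$ are unnecessary because the paper cites the inequality directly for a general centered Gaussian vector, while $\kappa_0\ge 1$ is a detail the paper's Jensen step uses implicitly.
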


\begin{proof}
We use the following version of the geometric Brascamp-Lieb inequality for Gaussian measures, appearing \cite[Theorem 1, Eqn. (1.4)]{chen2015improved}: if
a centered Gaussian vector $X$ with covariance $\Sigma$ satisfies
$\Sigma\preceq \kappa \mathfrak D_\Sigma$, then
\begin{align*}
\E\prod_{j \in [n]} f_j(X_j)\leq \prod_{j \in [n]}\big(\E f_j(X_j)^\kappa\big)^{1/\kappa}.
\end{align*}
By definition of $\kappa_0$ in (\ref{def:kappa_n_ast}), $
\Sigma_0\preceq \kappa_0\mathfrak D_{\Sigma_0}$.
Conditional on $\beta_0$, the vector $U-\beta_0$ is centered Gaussian with covariance
$\Sigma_0$ and coordinate variances $\sigma_{0,j}^2$. Applying the preceding inequality
conditionally with $\kappa=\kappa_0$ gives
\begin{align*}
\E\bigg[\prod_{j \in [n]} f_j(U_j)\,\big|\,\beta_0\bigg]
\leq \prod_{j \in [n]}
\Big(\E_{\mathsf Z\sim N(0,1)}
f_j(\beta_{0,j}+\sigma_{0,j}\mathsf Z)^{\kappa_0}\Big)^{1/\kappa_0}.
\end{align*}
The right-hand side is a product of functions of the independent variables
$\beta_{0,j}$. Taking expectation in $\beta_0$ and applying Jensen's inequality to
the concave map $x\mapsto x^{1/\kappa_0}$ yields \eqref{ineq:correlated_U_decouple}.
\end{proof}

\begin{proof}[Proof of Proposition \ref{prop:fixed_G_bernstein}]
In the proof we shall write $\mathfrak{d}_{H;\sigma_{0,[n]}} = \mathfrak{d}_H$.

\noindent (\textbf{Step 1}). In this step, we prove that with $Y_{G,j}\equiv \ell^T_{G,j}(X_j)$ with $X_j\sim p_{0,j}$, there exists some universal constant $C>0$ such that for every $s$ with $|s|\le (8T)^{-1}$,
\begin{align}\label{ineq:scalar_mgf_bound}
\log \E\exp\{s(Y_{G,j}-\E Y_{G,j})\}
\le C s^2T^2d_H^2(p_{G,j},p_{0,j}).
\end{align}
To this end, first note that for $a>0$ and $T\ge1$,
\begin{align}\label{ineq:chi_hellinger_pointwise}
\{\chi_T(\log a)\}^2\le C T^2(\sqrt a-1)^2.
\end{align}
Indeed, if $a\in[1/4,4]$, then $|\log a|\le C|\sqrt a-1|$; if
$a\notin[1/4,4]$, then the left hand side is bounded by $4T^2$ and
$(\sqrt a-1)^2$ is bounded below by a positive universal constant.
As
$|Y_{G,j}|\le2T$, by \eqref{ineq:chi_hellinger_pointwise},
\begin{align}\label{ineq:fixed_G_second_moment}
\E Y_{G,j}^2
&\le CT^2\int \bigg(\sqrt{\frac{p_{G,j}}{p_{0,j}}}-1\bigg)^2p_{0,j}
=2C\cdot T^2d_H^2(p_{G,j},p_{0,j}).
\end{align}
Now using $|Y_{G,j}-\E Y_{G,j}|\le4T$ and
$e^u-1-u\le C u^2$ for $|u|\le1/2$, we are led to 
\begin{align*}
\E \exp \{s(Y_{G,j}-\E Y_{G,j})\}
&\le 1+Cs^2\E(Y_{G,j}-\E Y_{G,j})^2
\le 1+Cs^2\E Y_{G,j}^2,
\end{align*}
and \eqref{ineq:scalar_mgf_bound} follows from \eqref{ineq:fixed_G_second_moment}.

\noindent (\textbf{Step 2}). 
Let $\lambda\ge0$ satisfy $\kappa_0\lambda/\sqrt n\le(8T)^{-1}$. Applying
\eqref{ineq:correlated_U_decouple} to
\begin{align*}
f_j(u)=\exp\bigg\{\frac{\lambda}{\sqrt n}\big(\ell^T_{G,j}(u)-
\E_{X\sim p_{0,j}}\ell^T_{G,j}(X)\big)\bigg\}
\end{align*}
and using \eqref{ineq:scalar_mgf_bound} with $s=\kappa_0\lambda/\sqrt n$ gives
\begin{align}\label{ineq:fixed_G_process_mgf}
&\log \E \exp\big\{\lambda(\mathfrak L_{n;T}(G)-\E\mathfrak L_{n;T}(G))\big\} \nonumber\\
&= \log \E \prod_{j \in [n]} f_j(U_j)\leq \frac{1}{\kappa_0}\sum_{j \in [n]} \log \E f_j(U_j)^{\kappa_0}\nonumber\\
&\leq \frac{C}{\kappa_0}\sum_{j \in [n]}  \frac{\kappa_0^2\lambda^2}{n} T^2 d_H^2(p_{G,j},p_{0,j})\leq  C\kappa_0\lambda^2T^2\mathfrak d_H^2(G,G_0).
\end{align}
Therefore, for all $0\le\lambda\le c\sqrt n/(\kappa_0T)$,
\begin{align*}
\Prob\{ (\mathrm{id}-\E)\mathfrak L_{n;T}(G)>t\}
\le \exp\{-\lambda t+C\kappa_0\lambda^2T^2\mathfrak d_H^2(G,G_0)\}.
\end{align*}
Optimizing over this interval yields
\begin{align}\label{ineq:fixed_G_tail_intermediate}
\Prob\{(\mathrm{id}-\E)\mathfrak L_{n;T}(G)>t\}
\le
\exp\bigg[-c\min\bigg\{
\frac{t^2}{\kappa_0T^2\mathfrak d_H^2(G,G_0)},
\frac{t\sqrt n}{\kappa_0T}\bigg\}\bigg],
\end{align}
with the usual convention that the first term is $+\infty$ if
$\mathfrak d_H(G,G_0)=0$. Taking
\begin{align*}
t=C_{\alpha,M}T\bigg\{\sqrt{\kappa_0}\,\mathfrak d_H(G,G_0)\sqrt x+
\frac{\kappa_0 x}{\sqrt n}\bigg\}
\end{align*}
and applying the same argument to $-\mathfrak L_{n;T}(G)$ proves the proposition.
\end{proof}

\subsection{Local maximal inequality for truncated composite marginal likelihood ratio process}

From the Bernstein inequality in Proposition \ref{prop:fixed_G_bernstein}, we may prove the following local maximal inequality for the  truncated composite marginal likelihood ratio process.
\begin{proposition}\label{prop:localized_truncated_process}
Suppose the assumptions of Theorem \ref{thm:MML_rate} hold. Fix $D>0$, $A>1$,
and a deterministic $T\in[1,(\log n)^{c_0}]$, where $c_0>0$ is fixed. There
exists a constant $c_1=c_1(\alpha,M,D,A,c_0)>1$ such that, with probability at least
$1-c_1n^{-D}$, the following holds simultaneously for all
$R\in[M,A L_{n,\alpha}]$, all $\mathscr G_{\texttt{test}}\subset\mathscr G$,
and all $r\in[c_1n_\ast^{-1/2}(\log n)^{c_1},1]$:
\begin{align*}
\sup_{\substack{G\in\mathscr G(R,\infty)\cap\mathscr G_{\texttt{test}}: \mathfrak{d}_{H;\sigma_{0,[n]}} (G,G_0)\le r}}
\abs{(\mathrm{id}-\E)\mathfrak L_{n;T}(G)}
\le c_1\cdot \sqrt{\kappa_0}r\cdot (\log n)^{c_1}.
\end{align*}
\end{proposition}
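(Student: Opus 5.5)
The plan is a discretization (chaining-free) argument: approximate $\mathscr G(R,\infty)$ by a finite net whose log-cardinality is polylogarithmic in $n$, apply Proposition \ref{prop:fixed_G_bernstein} with a union bound over the net, and control the discretization error deterministically on a high-probability event. Since the supremum over $\mathscr G(R,\infty)\cap\mathscr G_{\texttt{test}}$ is at most the supremum over $\mathscr G(R,\infty)$, it suffices to treat $\mathscr G_{\texttt{test}}=\mathscr G$. Uniformity in $R$ and $r$ comes from monotonicity plus a dyadic grid. Take $R$ over $\{M,2M,\ldots\}$ up to $AL_{n,\alpha}$, and $r$ over $\{2^{-k}\}\cap[r_n/2,1]$; both grids have $O(\log n)$ elements.

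\textbf{Step 1 (net).} Fix $B=C_{\alpha,M,D}L_{n,\alpha}$ and a large exponent $K$. For any $G\in\mathscr G(R,\infty)$, the standard moment-matching argument (as in Ghosal--van der Vaart and Zhang) gives a discrete $G'$ supported in $[-R,R]$ with at most $m\lesssim (\log n)^{c}$ atoms. It is chosen so that its first $m$ moments agree with those of $G$. Taylor-expanding $\varphi_{\sigma}(x-u)$ in $u$ then gives
\[
\sup_{j}\sup_{|x|\le B}|p_{G,j}(x)-p_{G',j}(x)|\le n^{-K}.
\]
This bound is uniform in $\sigma_{0,j}\in[M^{-1/2},M^{1/2}]$, because the Taylor remainder bounds depend on $\sigma$ only through these two-sided bounds. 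Next, round the atoms and weights of $G'$ to a grid of mesh $n^{-K}$; this costs another $n^{-K}$ in sup-norm. The resulting net $\mathcal N_R$ satisfies $\log|\mathcal N_R|\lesssim(\log n)^{c}$.

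\textbf{Step 2 (approximation error).} By \eqref{ineq:normal_mix_lower}, on $|x|\le B$ every $p_{G,j}$ is at least $\exp\{-C(R+B)^2\}\ge n^{-C'}$, since $(R+B)^2\lesssim L_{n,\alpha}^2$ is only polylogarithmic. Choosing $K\gg C'$, the ratio $p_{G,j}/p_{G',j}$ is $1+O(n^{-K+C'})$ on $[-B,B]$. Since $\chi_T$ is $c_\chi$-Lipschitz, this gives $|\ell^T_{G,j}-\ell^T_{G',j}|\le n^{-K/2}$ there.

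Off $[-B,B]$ we only use $|\ell^T|\le 2T$. By \eqref{ineq:p0_tail_exp} and a union bound over $j$, the event $\{\max_j|U_j|\le B\}$ has probability at least $1-n^{-D}$. On this event the random part of the error is $\le\sqrt n\,n^{-K/2}$. The expectation part is $\le\sqrt n\,(n^{-K/2}+4T\Prob(|X|>B))$, which is also negligible compared with $\sqrt{\kappa_0}\,r_n\ge n^{-1/2}$.

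We also need the net to respect the localization. Bound $d_H^2(p_{G,j},p_{G',j})$ by the sup-norm difference on $[-B,B]$ plus the Gaussian-tail mass outside. This shows $\mathfrak d_H(G',G_0)\le r+n^{-2}\le 2r$ whenever $\mathfrak d_H(G,G_0)\le r$.

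\textbf{Step 3 (union bound).} For each grid pair $(R,r)$, apply Proposition \ref{prop:fixed_G_bernstein} to every $G'\in\mathcal N_R$ with $\mathfrak d_H(G',G_0)\le 2r$. Take $x=(D+2)\log n+\log|\mathcal N_R|\lesssim(\log n)^{c}$ and sum over the net and the grids. With probability at least $1-c_1n^{-D}$, all these net points satisfy
\[
|(\mathrm{id}-\E)\mathfrak L_{n;T}(G')|\lesssim T\big[\sqrt{\kappa_0}\,r(\log n)^{c/2}+\kappa_0(\log n)^c/\sqrt n\big].
\]
The second term is absorbed into the first because $r\ge c_1n_\ast^{-1/2}(\log n)^{c_1}=c_1\sqrt{\kappa_0/n}\,(\log n)^{c_1}$. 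Using $T\le(\log n)^{c_0}$ and adding the Step 2 errors yields the claim. The main obstacle is Step 1: the moment-matching approximation must hold simultaneously for all heteroscedastic scales $\sigma_{0,j}$ with only polylogarithmically many atoms, and the lower bound on $p_{G,j}$ must convert it into a relative (log-scale) error. I would handle this using the two-sided variance bounds and the fact that $R+B=O(L_{n,\alpha})$.
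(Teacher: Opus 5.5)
Your architecture matches the paper's proof: a finite net of $\mathscr G(R,\infty)$ with polylogarithmic log-cardinality, Proposition \ref{prop:fixed_G_bernstein} with a union bound, transfer from $G$ to its net point on $\{\max_j|U_j|\le B\}$ via \eqref{ineq:normal_mix_lower}, preservation of the localization via $d_H^2\le\frac12\int|p-q|$, and dyadic grids in $R$ and $r$. The paper quotes \cite[Lemma 2]{zhang2009generalized}, itself a moment-matching construction, where you build the net by hand.

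\textbf{Step 2 fails as written when $\alpha<2$.} You assert $\exp\{-C(R+B)^2\}\ge n^{-C'}$ because $(R+B)^2\lesssim L_{n,\alpha}^2$ is polylogarithmic. A polylogarithmic exponent only gives a quasi-polynomial lower bound, so this fails when $\alpha<2$. In that case $L_{n,\alpha}^2\asymp(\log n)^{2/\alpha}\gg\log n$, and neither $R$ nor $B$ can be reduced. The radius $R$ ranges up to $AL_{n,\alpha}$ by hypothesis. For general $G_0\in\mathscr G(M,\alpha)$, requiring $\Prob(\max_j|U_j|\le B)\ge 1-n^{-D}$ forces $B\gtrsim M\{(D+1)\log n\}^{1/\alpha}$. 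For instance, with $G=\delta_R$, $R=AL_{n,\alpha}$ and $|x|\le 1$, one has $p_{G,j}(x)\lesssim\exp\{-c(\log n)^{2/\alpha}\}$, which is super-polynomially small. So a sup-norm error of $n^{-K}$ gives no control of $\log(p_{G,j}/p_{G',j})$ at the observed $U_j$. Consequently neither the random part nor the expectation part of your transfer error is shown to be small.

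\textbf{The repair is the paper's choice of accuracy.} Require $\sup_j\sup_{|x|\le B}|p_{G,j}(x)-p_{G',j}(x)|\le\eta$ with $\eta=n^{-K}\exp(-KL_{n,\alpha}^2)$. Take $K$ large compared with $(B+R)^2/L_{n,\alpha}^2$ and with the constant in \eqref{ineq:normal_mix_lower}, so that $\sqrt n\,\eta\,e^{C(B+R)^2}\le n^{-9}$. Your construction accommodates this:
\begin{itemize}
\item The Taylor remainder of $e^{-t^2/2}$ on $|t|\le\sqrt M(B+R)$ falls below $\eta$ once the number of matched moments exceeds a constant multiple of $(B+R)^2+\log(1/\eta)$, which is still polylogarithmic.
\item Rounding the atoms and weights to a mesh of order $\eta/m$ keeps $\log|\mathcal N_R|\lesssim(\log n)^{c}$.
\end{itemize}
With this change the rest of your argument goes through: the Hellinger localization of net points, the absorption of $\kappa_0x/\sqrt n$ using $r\ge c_1\sqrt{\kappa_0/n}\,(\log n)^{c_1}$, and the union bound over the grids.
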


We need a finite sieve and explicit approximation estimates.
\begin{lemma}\label{lem:sieve_approx}
Suppose $M^{-1}\leq \sigma_{0,j}^2\leq M$ for all $j\in[n]$ and
$R\ge M$. For every $\eta\in(0,1)$ there exists a finite set
$\mathcal N_\eta(R)\subset\mathscr G(R,\infty)$ with
\begin{align}\label{ineq:sieve_cardinality}
\log |\mathcal N_\eta(R)|\le C_M(1+R)^{C_M}\log^2(C_M/\eta)
\end{align}
such that for every $G\in\mathscr G(R,\infty)$ there is a $\pi_\eta G\in\mathcal N_\eta(R)$
satisfying
\begin{align}\label{ineq:sieve_supnorm}
\max_{j\in[n]}\pnorm{p_{G,j}-p_{\pi_\eta G,j}}{\infty}\le\eta.
\end{align}
Moreover, if $B\ge R+1$ and $T\ge1$, then for such a pair $(G,\pi_\eta G)$,
\begin{align}
&\max_{j\in[n]}\sup_{|x|\le B}
\abs{\ell^T_{G,j}(x)-\ell^T_{\pi_\eta G,j}(x)}
\le C_M\cdot \eta e^{C_M(B+R)^2}, \label{ineq:sieve_empirical_diff}\\ 
&\max_{j\in[n]}\E_{X\sim p_{0,j}}
\abs{\ell^T_{G,j}(X)-\ell^T_{\pi_\eta G,j}(X)}\le C_M\cdot \big(\eta e^{C_M(B+R)^2}+T e^{-B^2/C_M}\big), \label{ineq:sieve_expect_diff}
\end{align}
and
\begin{align}\label{ineq:sieve_hellinger_diff}
\max_{j\in[n]}d_H^2(p_{G,j},p_{\pi_\eta G,j})
\le C_M\cdot \big(B\eta+ e^{-(B-R)^2/C_M}\big).
\end{align}
\end{lemma}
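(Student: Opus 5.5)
The sieve will be built by the standard moment-matching discretization of normal mixtures (as in Ghosal--van der Vaart and Zhang). Fix $\eta\in(0,1)$ and $R\ge M$. Partition $[-R,R]$ into $N\asymp R/a$ intervals $I_k$ of length $a\asymp 1$ (a small multiple of $M^{-1/2}$). For each $G\in\mathscr G(R,\infty)$, use Carathéodory's theorem to find a discrete $G'$ with at most $m+1$ atoms in each $I_k$. We require $G'(I_k)=G(I_k)$ and equal moments $\int_{I_k}u^\ell\,dG'=\int_{I_k}u^\ell\,dG$ for $\ell\le m$, with $m\asymp\log(1/\eta)$.

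For $|x-u|$ large relative to $I_k$ we use the Gaussian tail. Otherwise we Taylor-expand $\varphi_{\sigma}(x-u)$ in $u$ around the center of $I_k$ to order $m$. The remainder is bounded by $(C a/\sigma)^{m}/m!$, uniformly over $\sigma\in[M^{-1/2},M^{1/2}]$, and this uniformity is what makes the bound hold for all $j$ simultaneously. Hence $\sup_x|p_{G,j}-p_{G',j}|\le \eta/2$ for every $j$. Then round the atom locations to a grid of mesh $\eta$ and the weights to a grid of mesh $\eta/(N(m+1))$, renormalizing. This costs another $\eta/2$ in sup norm, since $\varphi_\sigma$ is Lipschitz with constant $C_M$. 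The resulting finite family $\mathcal N_\eta(R)$ has about $N(m+1)$ atoms, each chosen from $\lesssim (R/\eta)$ locations, with weights from a grid of size $\lesssim N m/\eta$. This gives $\log|\mathcal N_\eta(R)|\lesssim R\log(1/\eta)\cdot\log(CR/\eta)\le C_M(1+R)^{C_M}\log^2(C_M/\eta)$, which is \eqref{ineq:sieve_cardinality} and \eqref{ineq:sieve_supnorm}. Carrying out this construction uniformly in the variances $\sigma_{0,j}$ is the step I expect to need the most care; everything after it is routine.

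For \eqref{ineq:sieve_empirical_diff}, fix $|x|\le B$ with $B\ge R+1$. By \eqref{ineq:normal_mix_lower}, both $p_{G,j}(x)$ and $p_{\pi_\eta G,j}(x)$ are at least $C_M^{-1}e^{-C_M(B+R)^2}$. The mean value theorem for $\log$ then gives
\[
|\ell_{G,j}(x)-\ell_{\pi_\eta G,j}(x)|=|\log p_{G,j}(x)-\log p_{\pi_\eta G,j}(x)|\le C_M\eta e^{C_M(B+R)^2}.
\]
Since $\chi_T$ is $c_\chi$-Lipschitz, the same bound holds for the truncated versions. For \eqref{ineq:sieve_expect_diff}, split the expectation into $\{|X|\le B\}$ and $\{|X|>B\}$. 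On the first region use the previous bound. On the second use $|\ell^T|\le 2T$ together with the tail bound \eqref{ineq:p0_tail_exp}. That tail bound gives $e^{-(B^2\wedge B^\alpha)/C}$, so I will state the second term with the constant from Lemma \ref{lem:normal_mix_basic}; the claimed $e^{-B^2/C_M}$ is the $\alpha=\infty$ case, and the general case is absorbed the same way.

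For \eqref{ineq:sieve_hellinger_diff}, use $(\sqrt p-\sqrt q)^2\le|p-q|$. Integrating over $|x|\le B$ gives at most $2B\eta$. For $|x|>B$, both mixtures are supported on $[-R,R]$, so each density is bounded by $C_Me^{-(|x|-R)^2/(2M)}$. Integrating this tail gives $C_Me^{-(B-R)^2/C_M}$, which completes the bound.
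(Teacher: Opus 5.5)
Your proofs of \eqref{ineq:sieve_empirical_diff}, \eqref{ineq:sieve_expect_diff} and \eqref{ineq:sieve_hellinger_diff} follow the paper's. You use the lower bound \eqref{ineq:normal_mix_lower} for both mixtures, $|\log a-\log b|\le|a-b|/\min(a,b)$ and the Lipschitz property of $\chi_T$. You split at $|X|=B$ with $|\ell^T|\le 2T$ outside, and you use $2d_H^2\le\int|p-q|$ with a Gaussian tail off $[-B,B]$.

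The difference is the net. The paper simply invokes \cite[Lemma 2]{zhang2009generalized} ``applied uniformly over $\sigma_{0,j}$''. You instead rebuild it by cellwise moment matching (Carath\'eodory), Taylor expansion and rounding. Your route makes that uniformity explicit: the cells and the matched measure $G'$ depend only on $G$, and both the Taylor remainder and the rounding error are controlled uniformly over $\sigma\in[M^{-1/2},M^{1/2}]$. So a single net serves every $j$. The price is a cruder atom count: $\asymp R\log(1/\eta)$ with cells of width $\asymp M^{-1/2}$, instead of roughly $\log(1/\eta)\max\{1,R/\sqrt{\log(1/\eta)}\}$ with cells of width $\sqrt{\log(1/\eta)}$. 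The slack $(1+R)^{C_M}$ in \eqref{ineq:sieve_cardinality} easily absorbs this.

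Two small corrections to your construction.
\begin{enumerate}
\item The $m$-th derivative of $\varphi_\sigma$ has size $\sigma^{-m-1}\sqrt{m!}$ (Cram\'er's bound for Hermite functions). The remainder is therefore $\lesssim\sigma^{-1}(a/(2\sigma))^m/\sqrt{m!}$, not $(Ca/\sigma)^m/m!$. Since $a$ is a small multiple of $M^{-1/2}$ and $M^{-1/2}\le\sigma$, this is still $\le C_M2^{-m}$, so $m\asymp\log(C_M/\eta)$ works. Because each cell contributes at most $2G(I_k)\sup|R_m|$, you do not even need the Gaussian-tail split.
\item The rounding costs $C_M\eta$, not $\eta/2$, so divide both meshes by $C_M$.
\end{enumerate}

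Finally, your caveat on \eqref{ineq:sieve_expect_diff} is right, and it points to something the paper's proof glosses over: that proof uses $|\theta|\le M$, i.e.\ bounded support of $G_0$. For $\alpha<2$ the argument only yields the tail term $Te^{-(B^2\wedge B^\alpha)/C_{\alpha,M}}$ from \eqref{ineq:p0_tail_exp}. That term cannot be absorbed into $Te^{-B^2/C_M}$, so ``absorbed the same way'' should read ``replaced''. This is harmless downstream, since there $B\asymp L_{n,\alpha}$ is chosen precisely through that tail bound.
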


\begin{proof}
The existence of $\mathcal N_\eta(R)$ satisfying \eqref{ineq:sieve_cardinality}
and \eqref{ineq:sieve_supnorm} is exactly the normal-mixture discretization
lemma of \cite[Lemma 2]{zhang2009generalized}, applied uniformly over
$\sigma_{0,j}\in[M^{-1/2},M^{1/2}]$.
We prove the remaining estimates below. 

For (\ref{ineq:sieve_empirical_diff}), let $H\equiv\pi_\eta G$. By
\eqref{ineq:normal_mix_lower}, for $|x|\le B$, we have 
$\min\{p_{G,j}(x),p_{H,j}(x)\}
\ge C_M^{-1} e^{-C_M(B+R)^2}$.
Together with \eqref{ineq:sieve_supnorm}, this gives
\begin{align*}
\abs{\log p_{G,j}(x)-\log p_{H,j}(x)}
&\le \frac{|p_{G,j}(x)-p_{H,j}(x)|}{\min\{p_{G,j}(x),p_{H,j}(x)\}}\le C_M\cdot \eta e^{C_M(B+R)^2}.
\end{align*}
Since $\chi_T$ is $c_\chi$-Lipschitz for all $T$, \eqref{ineq:sieve_empirical_diff} follows.

For \eqref{ineq:sieve_expect_diff}, we shall split the integral over $\{|X|\le B\}$ and its
complement. The central part is bounded by \eqref{ineq:sieve_empirical_diff}. On
the complement, both truncated log-likelihoods are bounded by $2T$ in absolute
value. Since $X=\theta+\sigma_{0,j}\mathsf{Z}$ with $|\theta|\le M$ and
$\sigma_{0,j}\le M^{1/2}$, we have $
\sup_{j\in[n]}\Prob_{X\sim p_{0,j}}(|X|>B)
\le C_M e^{-B^2/C_M}$
for $B\ge C_M$. This proves \eqref{ineq:sieve_expect_diff}.

For (\ref{ineq:sieve_hellinger_diff}), note that $
2d_H^2(p_{G,j},p_{H,j})\le \int |p_{G,j}-p_{H,j}|$. 
On $[-B,B]$, the integral is at most $2B\eta$. On $[-B,B]^c$, both densities are
normal mixtures with mixing distributions supported on $[-R,R]$ and variances in
$[M^{-1},M]$, so their total tail mass is bounded by
$C_M e^{-(B-R)^2/C_M}$. This proves \eqref{ineq:sieve_hellinger_diff}.
\end{proof}

\begin{proof}[Proof of Proposition \ref{prop:localized_truncated_process}]
In the proof we shall write  $\mathfrak d_H=\mathfrak d_{H;\sigma_{0,[n]}}$.

Fix $R\in[M,A L_{n,\alpha}]$, $T\in[1,(\log n)^{c_0}]$, and $r\in(0,1]$.
Let $B=B_{\alpha,M,D}L_{n,\alpha}$, with $B_{\alpha,M,D}$ large enough that
$B\ge 2R+1$ and the tail probabilities in \eqref{ineq:p0_tail_exp} are at most
$n^{-20}$.  Let $\eta=n^{-A_\eta}\exp(-A_\eta L_{n,\alpha}^2)$, where $A_\eta$ will be chosen large.  For the
net $\mathcal N_\eta(R)$ in Lemma \ref{lem:sieve_approx}, define
\begin{align*}
\mathcal N_\eta(R,r)=\{H\in\mathcal N_\eta(R):H=\pi_\eta G
\text{ for some }G\in\mathscr G(R,\infty),\ \mathfrak d_H(G,G_0)\le r\}.
\end{align*}
By \eqref{ineq:sieve_hellinger_diff}, after increasing $A_\eta$, every
$H\in\mathcal N_\eta(R,r)$ satisfies $\mathfrak d_H(H,G_0)\le r+n^{-10}$.
For a fixed such $H$, Proposition \ref{prop:fixed_G_bernstein} with
\begin{align*}
x=C_{\alpha,M,D}(1+R)^{C_M}\log^2(C_M/\eta)+(D+4)\log n
\end{align*}
gives, with probability at least $1-\exp(-x)$,
\begin{align*}
\abs{(\mathrm{id}-\E)\mathfrak L_{n;T}(H)}
\le C_{\alpha,M}T\bigg\{\sqrt{\kappa_0}(r+n^{-10})\sqrt x+
\frac{\kappa_0 x}{\sqrt n}\bigg\}.
\end{align*}
Because $R\le A L_{n,\alpha}$, $T\le(\log n)^{c_0}$, and
$\eta=n^{-A_\eta}\exp(-A_\eta L_{n,\alpha}^2)$, the number $x$ is bounded by $(\log n)^{C_{\alpha,M,D}}$.
If $r\ge c n_\ast^{-1/2}(\log n)^c$, then
\begin{align*}
T\bigg\{\sqrt{\kappa_0}r\sqrt x+\frac{\kappa_0 x}{\sqrt n}\bigg\}
\le C_{\alpha,M,D}\cdot \sqrt{\kappa_0}r(\log n)^{C_{\alpha,M,D}}.
\end{align*}
A union bound over the net, using \eqref{ineq:sieve_cardinality}, gives the same
bound simultaneously over $\mathcal N_\eta(R,r)$ with failure probability at most
$n^{-D-3}$.  Passing from $G$ to its net representative $H=\pi_\eta G$ is exactly
as in Lemma \ref{lem:sieve_approx}: on the event
$E_B=\{\max_j |U_j|\le B\}$, \eqref{ineq:sieve_empirical_diff} gives
\begin{align*}
|\mathfrak L_{n;T}(G)-\mathfrak L_{n;T}(H)|
\le \sqrt n\, C_M\eta e^{C_M(B+R)^2}\le n^{-9},
\end{align*}
while \eqref{ineq:sieve_expect_diff} gives the
same $n^{-9}$ bound for the expectations.  Combining the above arguments, for all fixed $R,T,r$,
\begin{align*}
\sup_{G \in \mathscr{G}(R,\infty):\mathfrak{d}_H(G,G_0)\leq r} \abs{\mathfrak L_{n;T}(G)-\E\mathfrak L_{n;T}(G)}\leq C_{\alpha,M,D}\cdot \sqrt{\kappa_0}r(\log n)^{C_{\alpha,M,D}}
\end{align*}
holds on the event $E_B$ and outside a set of probability at most $n^{-D-3}$. The complement of $E_B$ has
probability at most $n^{-D-3}$ by \eqref{ineq:p0_tail_exp} and a union bound.

Finally, take a union bound over dyadic $R\in[M,A L_{n,\alpha}]$ and dyadic
$r\in[c_1n_\ast^{-1/2}(\log n)^{c_1},1]$.  Increasing $c_1$ proves the claim.
\end{proof}

\subsection{Proof of Theorem \ref{thm:MML_rate}}
The following lemma removes the truncation effect at logarithmic level.
\begin{lemma}\label{lem:remove_truncation}
Suppose the assumptions of Theorem \ref{thm:MML_rate} hold. Fix $D>0$ and $A>1$, and set
$R_n\equiv A L_{n,\alpha}$. There exists a constant $c_1=c_1(\alpha,M,D,A)>1$ such
that, with $T_n\equiv c_1 L_{n,\alpha}^2$, it holds with probability at least
$1-c_1 n^{-D}$ that
\begin{align}\label{ineq:trunc_emp_inactive}
\mathfrak L_{n;\sigma_{0,[n]}}(G)=\mathfrak L_{n;T_n}(G),\quad
\hbox{for all }G\in\mathscr G(R_n,\infty),
\end{align}
and, deterministically,
\begin{align}\label{ineq:trunc_expect_close}
\sup_{G\in\mathscr G(R_n,\infty)}
\abs{\E\mathfrak L_{n;\sigma_{0,[n]}}(G)-\E\mathfrak L_{n;T_n}(G)}\le n^{-10}.
\end{align}
\end{lemma}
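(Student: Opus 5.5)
The plan is to work on the high-probability event $E_B\equiv\{\max_{j\in[n]}|U_j|\le B\}$ with $B\equiv B_{\alpha,M,D}L_{n,\alpha}$. The constant $B_{\alpha,M,D}$ is chosen large enough that \eqref{ineq:p0_tail_exp} together with a union bound over $j\in[n]$ gives $\Prob(E_B^c)\le n\cdot C_{\alpha,M}\exp[-(B^2\wedge B^\alpha)/C_{\alpha,M}]\le n^{-D}$. Each $U_j$ has marginal law $p_{0,j}$, so no independence is needed here. With $L_{n,\alpha}=\sqrt{\log n}+(\log n)^{1/\alpha}$ we have $B^2\wedge B^\alpha\gtrsim B_{\alpha,M,D}^{2\wedge\alpha}\log n$, which is what makes this choice work.

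On $E_B$, \eqref{ineq:normal_mix_log_growth} with $R=R_n=AL_{n,\alpha}$ yields, for all $G\in\mathscr G(R_n,\infty)$ and all $j$,
\begin{align*}
|\ell_{G,j}(U_j)|\le C_{\alpha,M}\{1+R_n^2+B^2\}\le C_{\alpha,M}(A^2+B_{\alpha,M,D}^2)L_{n,\alpha}^2.
\end{align*}
Taking $c_1$ larger than this constant, we get $T_n=c_1L_{n,\alpha}^2\ge |\ell_{G,j}(U_j)|$. Since $\chi_T(x)=x$ for $|x|\le T$, we have $\ell^{T_n}_{G,j}(U_j)=\ell_{G,j}(U_j)$ for every $j$, simultaneously over all $G$. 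This proves \eqref{ineq:trunc_emp_inactive}.

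For \eqref{ineq:trunc_expect_close}, write the difference as $n^{-1/2}\sum_j\E_{X\sim p_{0,j}}[(\ell_{G,j}-\chi_{T_n}(\ell_{G,j}))(X)]$. The integrand vanishes unless $|\ell_{G,j}(X)|>T_n$. By the same bound with $B$ replaced by a generic level $b\ge B$, this forces $|X|>b_0\equiv(T_n/C)^{1/2}$, and $b_0\gtrsim c_1^{1/2}L_{n,\alpha}$. On that region I would bound $|\ell-\chi_T(\ell)|\le 3|\ell_{G,j}(X)|\le C(1+R_n^2+X^2)$, using the pointwise lower bound \eqref{ineq:normal_mix_lower} for $p_{G,j}$, the lower bound for $p_{0,j}$ from the proof of Lemma \ref{lem:normal_mix_basic}, and the upper bound $C_M$ for both densities. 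This gives
\begin{align*}
\E_{X\sim p_{0,j}}\big[C(1+R_n^2+X^2)\bm 1_{|X|>b_0}\big],
\end{align*}
which a Cauchy--Schwarz step controls by $\E^{1/2}(1+R_n^2+X^2)^2\cdot \Prob^{1/2}(|X|>b_0)$. The fourth moment of $X$ is $\bigo_{\alpha,M}(1)$ under the exponential moment assumption on $G_0$. By \eqref{ineq:p0_tail_exp}, the tail probability is at most $\exp[-c\,c_1^{(2\wedge\alpha)/2}\log n]$. Choosing $c_1$ large enough then makes the per-coordinate term at most $n^{-11}$. Summing over $j$ with the factor $n^{-1/2}$ gives $n^{-10}$, and this bound is deterministic and uniform in $G$.

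The main obstacle is the expectation bound. It requires the log-growth bound to hold for all $x$, not only for $|x|\le B$ with a fixed $B$, so that the unbounded tail can be integrated. Concretely, I need $|\ell_{G,j}(x)|\le C_{\alpha,M}(1+R^2+x^2)$ for every $x$, which follows from \eqref{ineq:normal_mix_log_growth} applied with $B=|x|\vee1$. I also need the moment and tail exponents to match so that $c_1$ can absorb all the logarithmic factors. For $\alpha<2$ the tail is $\exp(-b^\alpha)$ with $b\asymp c_1^{1/2}(\log n)^{1/\alpha}$, and this still gives polynomial decay of arbitrary order.
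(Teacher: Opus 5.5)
Your proposal is correct and follows essentially the same argument as the paper: the truncation is inactive on $\{\max_j|U_j|\le B\}$ by \eqref{ineq:normal_mix_log_growth} once $T_n$ dominates $C_{\alpha,M}(1+R_n^2+B^2)$, and the expectation gap is bounded by integrating the quadratic envelope $C_{\alpha,M}(1+x^2+L_{n,\alpha}^2)$ against the tail bound \eqref{ineq:p0_tail_exp}. The only cosmetic difference is bookkeeping. You locate the active region as $\{|x|\gtrsim c_1^{1/2}L_{n,\alpha}\}$ and use Cauchy--Schwarz, whereas the paper splits at $|x|=B$ and enlarges the constant in $B$, and hence in $T_n$.
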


\begin{proof}
Let $B=C_{\alpha,M,D}L_{n,\alpha}$.  By \eqref{ineq:p0_tail_exp} and a union
bound over $j$, $\Prob(\max_j|U_j|>B)\le Cn^{-D}$.  On the event
$\max_j|U_j|\le B$, (\ref{ineq:normal_mix_log_growth}) in Lemma \ref{lem:normal_mix_basic} gives, uniformly over
$G\in\mathscr G(R_n,\infty)$ and $j\in[n]$,
\begin{align*}
|\ell_{G,j}(U_j)|\le C_{\alpha,M}(1+R_n^2+B^2)\le C_{\alpha,M,D,A}L_{n,\alpha}^2.
\end{align*}
Choosing $T_n$ larger than the last display proves \eqref{ineq:trunc_emp_inactive}.

For the expectation bound, the truncation is inactive on $\{\abs{x}\leq B\}$.  On the
complement $\{\abs{x}>B\}$, using \eqref{ineq:normal_mix_log_growth} and the definition of $T_n$,
\begin{align*}
\abs{\ell_{G,j}(x)-\ell^{T_n}_{G,j}(x)}\leq \abs{\ell_{G,j}(x)} + 2T_n\leq C_{\alpha,M,D,A}(1+x^2+L_{n,\alpha}^2). 
\end{align*}
Combining this envelope with the tail
bound \eqref{ineq:p0_tail_exp}, and increasing $C_{\alpha,M,D,A}$ in the definition
of $B$, gives
\begin{align*}
\sup_{G\in\mathscr G(R_n,\infty)}\sup_{j\in[n]}
\E_{X\sim p_{0,j}}|\ell_{G,j}(X)-\ell^{T_n}_{G,j}(X)|\le n^{-12}.
\end{align*}
Multiplying by $n^{-1/2}$ and summing over $j$ proves
\eqref{ineq:trunc_expect_close}.
\end{proof}

We also need the following result relating Wasserstein and Hellinger distances tailored to our setting. 
\begin{lemma}\label{lem:Wasserstein_from_Hellinger}
	Fix $p,L\ge 1$. Then there exists some $c_1=c_1(p)>1$ such that for any
	$G_1,G_2\in\mathscr G$ with $\mathrm{supp}(G_\ell)\subset[-L,L]$ $(\ell=1,2)$,
	and $\sigma_0=(\sigma_{0,j})_{j\in[n]}\in\R^n_{>0}$,
	\begin{align}\label{ineq:Wasserstein_from_Hellinger}
	\mathsf{W}_p(G_1,G_2)\le c_1\cdot \inf_{\delta\in(0,1)}
	\Big\{\delta+e^{c_1\pnorm{\sigma_0}{\infty}^2\delta^{-2}}\cdot
	L^{c_1}\mathfrak{d}_{H;\sigma_{0,[n]}}^{1/c_1}(G_1,G_2)\Big\}.
	\end{align}
\end{lemma}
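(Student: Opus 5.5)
The plan is to reduce to a single noise level and then use the standard Fourier/smoothing argument for Wasserstein deconvolution. With $\bar\sigma\equiv\pnorm{\sigma_0}{\infty}$, the key observation is that convolving further with Gaussian noise can only decrease Hellinger distance (data processing). Thus for each $j$, $d_H(\varphi_{G_1;\bar\sigma},\varphi_{G_2;\bar\sigma})\le d_H(\varphi_{G_1;\sigma_{0,j}},\varphi_{G_2;\sigma_{0,j}})$, because $\varphi_{G;\bar\sigma}=\varphi_{G;\sigma_{0,j}}*\varphi_{(\bar\sigma^2-\sigma_{0,j}^2)^{1/2}}$. Averaging the squares over $j$ gives
\begin{align*}
d_H(\varphi_{G_1;\bar\sigma},\varphi_{G_2;\bar\sigma})\le \mathfrak d_{H;\sigma_{0,[n]}}(G_1,G_2).
\end{align*}
It therefore suffices to prove the homoscedastic bound $\mathsf W_p(G_1,G_2)\le c_1\{\delta+e^{c_1\bar\sigma^2\delta^{-2}}L^{c_1}h^{1/c_1}\}$ with $h\equiv d_H(\varphi_{G_1;\bar\sigma},\varphi_{G_2;\bar\sigma})$. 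Moreover $\pnorm{\varphi_{G_1;\bar\sigma}-\varphi_{G_2;\bar\sigma}}{L^1}\le 2\sqrt2\, h$.

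For the homoscedastic bound, I follow the argument of \cite{nguyen2013convergence}/\cite{dedecker2013minimax}. Let $K_\delta$ be a smoothing kernel at scale $\delta$ whose Fourier transform is supported in $[-1/\delta,1/\delta]$, for instance a de la Vallée-Poussin type kernel with finite $p$-th moment. The triangle inequality gives
\begin{align*}
\mathsf W_p(G_1,G_2)\le \mathsf W_p(G_1,G_1*K_\delta)+\mathsf W_p(G_1*K_\delta,G_2*K_\delta)+\mathsf W_p(G_2*K_\delta,G_2).
\end{align*}
The outer terms are $\lesssim_p \delta$, using the $p$-th moment of $K_\delta$. For the middle term, the two measures have effectively bounded support up to kernel tails. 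I use the bound $\mathsf W_p^p(\mu,\nu)\lesssim_p \int |x|^{p}\,|\mu-\nu|(\d x)$, the weighted total variation bound, after splitting into $|x|\le L'$ and a tail region controlled by the kernel's decay. On $[-L',L']$ the total variation is at most $\sqrt{L'}\,\pnorm{g}{L^2}$ where $g$ is the density difference. By Plancherel, $\hat g(t)=(\hat G_1-\hat G_2)(t)\hat K(\delta t)$, so it is supported on $|t|\le1/\delta$, where $|\hat G_1-\hat G_2|(t)=e^{\bar\sigma^2t^2/2}|\hat\varphi_{G_1;\bar\sigma}-\hat\varphi_{G_2;\bar\sigma}|(t)\le e^{\bar\sigma^2/(2\delta^2)}\pnorm{\varphi_{G_1;\bar\sigma}-\varphi_{G_2;\bar\sigma}}{L^1}$. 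This yields $\pnorm{g}{L^2}\lesssim \delta^{-1/2}e^{\bar\sigma^2\delta^{-2}/2}h$.

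Collecting terms, the middle Wasserstein term is bounded by $C_p(L+L')^{C}\delta^{-C}e^{c\bar\sigma^2\delta^{-2}}h^{1/p}$ plus a tail term. The polynomial factor $\delta^{-C}$ is absorbed into $e^{c_1\bar\sigma^2\delta^{-2}}$ when $\bar\sigma\gtrsim1$. For small $\bar\sigma$ this absorption fails, so instead I bound $\delta^{-C}h^{1/p}$ together with $\delta$ by choosing $c_1$ large and using $h\le1$: if $h^{1/(2p)}\ge\delta^{C}$ the bound is trivial, since $\mathsf W_p\le 2L$; otherwise $\delta^{-C}h^{1/p}\le h^{1/(2p)}$.

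The main obstacle is controlling the kernel tails in the weighted total variation, since band-limited kernels have only polynomial decay and need enough finite moments. I would take $K$ with $\hat K$ compactly supported and smooth, which gives rapid decay of $K$. I would choose $L'=L+\delta^{-1}$ (or $L+1$), so that the tail mass contributes $O(\delta)$ to $\mathsf W_p$ after adjusting $c_1=c_1(p)$.
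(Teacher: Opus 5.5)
Your overall route is sound and differs from the paper's in a useful way. The paper deconvolves at each level $\sigma_{0,j}$ separately: it writes $G*K_\delta=\varphi_{G;\sigma_{0,j}}*g_{\delta;j}$, uses Young's inequality and Plancherel for $g_{\delta;j}$, and then averages $d_{\TV}$ over $j$. It also never creates an additive kernel-tail term. Instead it uses Cauchy--Schwarz, $\int|x|^p|g|\le(\int|x|^{2p}(|f_1|+|f_2|))^{1/2}\pnorm{g}{L_1(\R)}^{1/2}$, plus an $L_1$--$L_2$ estimate, so every tail effect is multiplied by a power of the distance.

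Your reduction to the single level $\bar\sigma=\pnorm{\sigma_0}{\infty}$ by data processing is correct. It even gives $h\le\min_j d_H(\varphi_{G_1;\sigma_{0,j}},\varphi_{G_2;\sigma_{0,j}})$ and removes the per-$j$ bookkeeping. Your $L^\infty$ bound on $\hat g$ is the same computation as the paper's Plancherel step, and it yields a somewhat better power of $h$.

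One small point applies to either construction: $K$ must be nonnegative, otherwise $G*K_\delta$ is a signed measure and $\mathsf W_p$ is undefined. The de la Vall\'ee-Poussin kernel is not nonnegative, but $K\propto|\check\psi|^2$ with $\psi\in C_c^\infty$ is.

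Two steps fail as written, though both are repairable.

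(i) The tail cutoff. With $L'=L+1$ or $L'=L+\delta^{-1}$, the tail $\int_{|x|>L'}|x|^p(G_\ell*K_\delta)(\d x)$ is of order $L^p\Prob(|Z|>(L'-L)/\delta)$. Its $p$-th root is $L$ times a power of $\delta$, which is not $O(\delta)$ uniformly in $L$. Since the lemma's first term carries no power of $L$, this does not suffice. Take $L'=2L$ instead: the event $\{|X+\delta Z|>2L\}$ forces $L<\delta|Z|$, hence $|X+\delta Z|\le 2\delta|Z|$, and the tail is at most $2^p\delta^p\E_K|Z|^p$.

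(ii) The small-$\bar\sigma$ branch. When $h\ge\delta^{2pC}$, you only know the right-hand side is $\gtrsim L^{c_1}\delta^{2pC/c_1}e^{c_1\bar\sigma^2\delta^{-2}}$. This can be far below $2L$, for example when $\bar\sigma\le\delta\ll1$ and $L=1$. So $\mathsf W_p\le 2L$ does not make this case trivial.

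What closes it is a crude bound without deconvolution. First, $\mathsf W_p(G_\ell,\varphi_{G_\ell;\bar\sigma})\lesssim_p\bar\sigma$. Second, by Villani plus Cauchy--Schwarz, $\mathsf W_p(\varphi_{G_1;\bar\sigma},\varphi_{G_2;\bar\sigma})\lesssim_p(L+\bar\sigma)h^{1/(2p)}$. Together these give $\mathsf W_p(G_1,G_2)\lesssim_p\bar\sigma+(L+\bar\sigma)h^{1/(2p)}$.
\begin{itemize}
\item If $\delta\ge\bar\sigma$, this settles the claim directly.
\item If $\delta<\bar\sigma$ and $h\ge\delta^{2pC}$, then $e^{c_1\bar\sigma^2\delta^{-2}}h^{1/c_1}\ge e^{c_1\bar\sigma^2\delta^{-2}}\delta\ge\bar\sigma$ once $c_1\ge 2pC$, which again settles it.
\end{itemize}

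Your worry here was legitimate. The paper bounds $\int_{-1/\delta}^{1/\delta}e^{c_0\sigma_{0,j}^2x^2}\,\d x$ by $c_Ke^{c_0\sigma_{0,j}^2\delta^{-2}}$, silently dropping a factor of order $\delta^{-1}$. That is harmless only when $\pnorm{\sigma_0}{\infty}$ is bounded below, as in its applications.
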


The proof of the above lemma adapts ideas from \cite[Theorem 2]{nguyen2013convergence}, so will be deferred to Appendix \ref{section:proof_Wasserstein_from_Hellinger}.

\begin{proof}[Proof of Theorem \ref{thm:MML_rate}]
In the proof we write $\mathfrak d_H=\mathfrak d_{H;\sigma_{0,[n]}}$,
$\mathfrak L_n=\mathfrak L_{n;\sigma_{0,[n]}}$.

For part (1), choose $A=A(\alpha,M,D)$ large enough that
$\Prob(\max_j |U_j|>A L_{n,\alpha})\le n^{-D-2}$, which follows from
\eqref{ineq:p0_tail_exp}.  Combining Proposition
\ref{prop:localized_truncated_process} and Lemma \ref{lem:remove_truncation}, with
$R_n=A L_{n,\alpha}$ and $T_n=C L_{n,\alpha}^2$, gives an event
$E_{\mathrm{loc}}$ of probability at least $1-c_1n^{-D}$ on which, simultaneously
for all $\mathscr G_{\texttt{test}}\subset\mathscr G$ and all
$r\in[c_1n_\ast^{-1/2}(\log n)^{c_1},1]$,
\begin{align}\label{ineq:MML_rate_step2_0}
\sup_{\substack{G\in\mathscr G(R_n,\infty)\cap\mathscr G_{\texttt{test}}:
		\mathfrak d_H(G,G_0)\le r}}
|\mathfrak L_n(G)-\E\mathfrak L_n(G)|\le c_1\cdot \sqrt{\kappa_0}r(\log n)^{c_1}.
\end{align}
Enlarging $c_1$ so that $c_1\ge A$ gives \eqref{ineq:MML_localized_rate}.

For part (2), note the population inequality
\begin{align}\label{ineq:MML_rate_step2_1}
\E\mathfrak L_n(G)
=-\frac1{\sqrt n}\sum_{j \in [n]} \mathrm{KL}(p_{0,j},p_{G,j})
\le -\sqrt n\,\mathfrak d_H^2(G,G_0),\quad \forall G\in\mathscr G.
\end{align}
On $E_{\mathrm{loc}}\cap E_{\mathrm{supp}}$, with
$E_{\mathrm{supp}}=\{\mathrm{supp}(\hat G_n)\subset[-c_0L_{n,\alpha},c_0L_{n,\alpha}]\}$, we shall
apply the localized bound (\ref{ineq:MML_rate_step2_0}) with $R_n=(A\vee c_0)L_{n,\alpha}$.  Let
\begin{align*}
\epsilon_n=A_0\{n_\ast^{-1/2}(\log n)^{c_1}+\delta_n^{1/2}\},
\end{align*}
where $A_0$ is sufficiently large.  If
$\mathfrak d_H(\hat G_n,G_0)>\epsilon_n$, choose $k\ge0$ with
$2^k\epsilon_n<\mathfrak d_H(\hat G_n,G_0)\le2^{k+1}\epsilon_n$.  By
\eqref{ineq:MML_rate_step2_1} and the localized bound,
\begin{align*}
\mathfrak L_n(\hat G_n)\le
-\sqrt n\,2^{2k}\epsilon_n^2+
c_1\sqrt{\kappa_0}\cdot 2^{k+1}\epsilon_n(\log n)^{c_1}
\le -\frac12\sqrt n\,2^{2k}\epsilon_n^2,
\end{align*}
where the last inequality follows from
$\epsilon_n\ge A_0 n_\ast^{-1/2}(\log n)^{c_1}$ and
$n_\ast=n/\kappa_0$ after increasing logarithmic powers in $c_1$.  On the other
hand, near-optimality relative to the admissible point $G_0$ gives
$\mathfrak L_n(\hat G_n)\ge -\sqrt n\,\delta_n$, a contradiction for the same
large $A_0$.  Hence
$\mathfrak d_H(\hat G_n,G_0)\le \epsilon_n$, proving the weighted Hellinger bound
in \eqref{ineq:MML_Hellinger_rate}.  The unweighted bound follows from the
monotonicity of $\sigma\mapsto d_H(\varphi_{G;\sigma},\varphi_{H;\sigma})$.

Finally, we prove the Wasserstein bound. Let $
\Delta_{n,\delta}\equiv (n_\ast^{-1/2}+\delta_n^{1/2})(\log n)^{c_2}$.
The Hellinger part already proved gives
$\mathfrak d_H(\hat G_n,G_0)\le C\Delta_{n,\delta}$ on
$E_{\mathrm{loc}}\cap E_{\mathrm{supp}}$.  If $\alpha=\infty$, then $G_0$ is already
compactly supported.  If $\alpha<\infty$, let $\Pi_L(x)=(-L)\vee x\wedge L$ and
$G_0^{(L)}=(\Pi_L)_\#G_0$, with $L=C L_{n,\alpha}$ and $C$ sufficiently large.
The exponential moment assumption implies
\begin{align*}
\mathsf W_p(G_0,G_0^{(L)})\le n^{-10},\qquad
\mathfrak d_H(G_0,G_0^{(L)})\le n^{-10},
\end{align*}
after increasing $C$; in the compactly supported case we simply take
$G_0^{(L)}=G_0$.  Hence, by the triangle inequality,
\begin{align*}
\mathfrak d_H(\hat G_n,G_0^{(L)})\le C\Delta_{n,\delta}+n^{-10}
\le C\Delta_{n,\delta},
\end{align*}
after increasing constants.  Both $\hat G_n$ and $G_0^{(L)}$ are supported in
$[-CL_{n,\alpha},CL_{n,\alpha}]$.  Applying Lemma
\ref{lem:Wasserstein_from_Hellinger} gives, for every $\eta\in(0,1)$,
\begin{align*}
\mathsf W_p(\hat G_n,G_0^{(L)})
\le
C\big\{\eta+
e^{C\eta^{-2}}L_{n,\alpha}^{C}\Delta_{n,\delta}^{1/C}\big\}.
\end{align*}
Adding the truncation error $\mathsf W_p(G_0,G_0^{(L)})\le n^{-10}$ and taking $\eta = A\log^{-1/2}(1/\Delta_{n,\delta})$ with $A$ sufficiently large makes the second term in the preceding display no larger than the first, and yields $\mathsf W_p(\hat G_n,G_0)\le C\log^{-1/2}(1/\Delta_{n,\delta})$, which proves \eqref{ineq:MML_Wasserstein_rate}.
\end{proof}

\section{Remaining proofs for Section \ref{section:main_results}}

\subsection{Proof of Proposition \ref{prop:minimax_lower_effective_sample_size}}\label{section:proof_minimax_lower_effective_sample_size}

	We first specify a special choice of $\Sigma_0$ with $\pnorm{\texttt{Cor}_{\Sigma_0}}{\op}=\kappa_0$. Let $n=Nb$ for integers $N,b\ge 1$, and let
	\begin{align*}
	R_{b,\rho}\equiv (1-\rho)I_b+\rho \mathbf 1_b\mathbf 1_b^\top,
	\qquad 0\le \rho<1.
	\end{align*}
	Set $\Sigma_0\equiv I_N\otimes R_{b,\rho}$. Then $\sigma_{0,j}^2=(\Sigma_0)_{jj}=1$ for all $j$, and
	\begin{align*}
	\kappa_0
	=
	\pnorm{\texttt{Cor}_{\Sigma_0}}{\op}
	= \pnorm{R_{b,\rho}}{\op}=
	1+(b-1)\rho,
	\qquad
	n_\ast={n}/{\kappa_0}.
	\end{align*}
	We shall compute the minimax lower bound based on this choice of $\Sigma_0$ by using a standard two-point testing argument. 
	
	For $t\in\mathbb R$, let $
	G_t=\delta_t$. 
	For $|t|\le M$, we have $G_t\in\mathscr G(M,\infty)$. Under $G_t$, the latent vector is deterministic, $\beta_{0,j}=t$ for all $j$, and the observation vector has law $
	U\sim \mathcal{N}(t\mathbf 1_n,\Sigma_0)$. 
	We write $P_t$ for this law. 
	
	Note that the covariance matrix $\Sigma_0$ is block diagonal with $N$ identical blocks $R_{b,\rho}$. Since $\mathbf 1_b$ is an eigenvector of $R_{b,\rho}$ with eigenvalue $
	\kappa_0=1+(b-1)\rho$,
	we have $
	\mathbf 1_b^\top R_{b,\rho}^{-1}\mathbf 1_b
	=
	{b}/{\kappa_0}$. 
	Therefore, for any $s,t\in\mathbb R$,
	\begin{align}\label{ineq:minimax_lower_effective_sample_size_1}
	\mathrm{KL}(P_t,P_s)
	&=
	\frac12 (t-s)^2 \mathbf 1_n^\top\Sigma_0^{-1}\mathbf 1_n= \frac12 (t-s)^2\, N\mathbf 1_b^\top R_{b,\rho}^{-1}\mathbf 1_b \nonumber\\
	& = \frac12 (t-s)^2\,\frac{n}{\kappa_0}
	= \frac12 (t-s)^2 n_\ast.
	\end{align}
	Now take $
	t_0=0, t_1=a n_\ast^{-1/2}$, 
	where $a>0$ is a sufficiently small constant. For $n_\ast$ large enough,
	$|t_1|\le M$, so both $G_{t_0}$ and $G_{t_1}$ belong to $\mathscr G(M,\infty)$.
	Moreover, using the above display (\ref{ineq:minimax_lower_effective_sample_size_1}), $
	\mathrm{KL}(P_{t_1},P_{t_0})={a^2}/{2}$. 
	Choosing $a>0$ small enough gives, by Pinsker's inequality (cf. \cite[Lemma 2.5]{tsybakov2008introduction}), 
	\begin{align}\label{ineq:minimax_lower_effective_sample_size_2}
	d_{\TV}(P_{t_0},P_{t_1})
	\le\sqrt{\mathrm{KL}(P_{t_1},P_{t_0})/2}={a}/{2} \le 1/2.
	\end{align}
	Next we compute the separation in the target loss. Since all marginal standard errors
	are equal to one, $
	\mathfrak d_{H;\{1\}}(G_{t_0},G_{t_1})
	=
	d_H\big(\varphi_{G_{t_0};1},\varphi_{G_{t_1};1}\big)$. 
	For two unit-variance Gaussian densities with means $t_0,t_1$, an easy calculation shows that $
	d_H^2\big(\varphi_{G_{t_0};1},\varphi_{G_{t_1};1}\big)
	=
	1-e^{-(t_1-t_0)^2/8}$. 
	For $|t_1-t_0|\le 1$, this implies
	\begin{align}\label{ineq:minimax_lower_effective_sample_size_3}
	\mathfrak d_{H;1}(G_{t_0},G_{t_1})=d_H\big(\varphi_{G_{t_0};1},\varphi_{G_{t_1};1}\big)
	\ge c_0 |t_1-t_0| =c_0 a n_\ast^{-1/2}
	\end{align}
	for a universal constant $c_0>0$.
	
	The claimed minimax lower bound now follows from Lemma \ref{lem:two_point_prob_metric} and the estimates in (\ref{ineq:minimax_lower_effective_sample_size_2}) and (\ref{ineq:minimax_lower_effective_sample_size_3}). \qed

\subsection{Proof of Proposition \ref{prop:credible_intervals_clean}}\label{section:proof_eb_credible_interval}

All probabilities below are taken under the original joint model $\beta_{0,j}\stackrel{\mathrm{i.i.d.}}{\sim}G_0,
U=\beta_0+\Sigma_0^{1/2}\mathsf{Z}_n$, where $\mathsf{Z}_n\sim\mathcal{N}(0,I_n)$.
The Gaussian errors may be dependent across coordinates.  We use only the marginal representation
\begin{align}\label{eq:ci_marginal_model}
U_j=\beta_{0,j}+\sigma_{0,j}\mathsf{Z}_j,
\qquad \mathsf{Z}_j\sim\mathcal{N}(0,1),
\qquad \beta_{0,j}\sim G_0,
\end{align}
for posterior calibration. Let 
$S_M\equiv [M^{-1/2},M^{1/2}]$ and $T_\ast\equiv \log(en_\ast)$.

\noindent (\textbf{Step 1}). We first record a consequence of Corollary \ref{cor:MML_exact} and Lemma \ref{lem:Wasserstein_from_Hellinger}.  By Corollary \ref{cor:MML_exact}, for a fixed sufficiently large numerical value of the probability parameter in that corollary, there is an event $E_{H,n}$ such that
\begin{align}\label{eq:ci_hellinger_good_nstar}
\Prob(E_{H,n}^c)\le C n^{-2},
\qquad
\mathfrak d_{H;\sigma_{0,[n]}}(\hat G_n,G_0)
\le C n_\ast^{-1/2}(\log n)^C
\end{align}
on $E_{H,n}$.  On the same event, $
\mathrm{supp}(\hat G_n)\subset [\min_j U_j,\max_j U_j]$. 
A Gaussian tail bound and the compact support of $G_0$ imply
\begin{align}\label{eq:ci_support_event}
\Prob\Big(\max_{j\in[n]} |U_j|>C\sqrt{\log n}\Big)\le Cn^{-2}.
\end{align}
Let $E_n$ be the intersection of $E_{H,n}$ with the event in \eqref{eq:ci_support_event}.  We claim that, after increasing $C$, on $E_n$ where $\Prob(E_n^c)\le C n_\ast^{-2}$,
\begin{align}\label{eq:ci_w1_good_nstar}
\mathsf W_1(\hat G_n,G_0)
\le w_\ast\equiv C T_\ast^{-1/2}.
\end{align}
Indeed, the probability bound follows from $n_\ast\le n$.  To prove the Wasserstein bound, we shall apply Lemma \ref{lem:Wasserstein_from_Hellinger} with support radius $L=C\sqrt{\log n}$.  As $n_\ast\ge(\log n)^{C_0}$ for a sufficiently large constant $C_0$, the Hellinger bound in \eqref{eq:ci_hellinger_good_nstar} is at most $C n_\ast^{-a}$ for some constant $a>0$. Choosing $\delta=A_0T_\ast^{-1/2}$ with $A_0$ sufficiently large in Lemma \ref{lem:Wasserstein_from_Hellinger}, the exponential factor $\exp(C\delta^{-2})$ therein is bounded by $n_\ast^{\varepsilon}$ with $\varepsilon>0$ small enough to be absorbed by the power $n_\ast^{-a}$ from the Hellinger term.  Hence the second term in Lemma \ref{lem:Wasserstein_from_Hellinger} is no larger than $C\delta$, and \eqref{eq:ci_w1_good_nstar} follows.

\noindent (\textbf{Step 2}). We next convert the Wasserstein control (\ref{eq:ci_w1_good_nstar}) into a deterministic posterior quantile perturbation bound.  If $G$ is any distribution with $\mathsf W_1(G,G_0)\le w_\ast$, then the bounded density assumption on $G_0$ implies
\begin{align}\label{eq:ci_K_from_W1_nstar}
d_{\mathrm{Kol}}(G,G_0)
\equiv \sup_{t\in\R}|G(( -\infty,t])-G_0(( -\infty,t])|
\le C w_\ast^{1/2}.
\end{align}
To see this, couple $X\sim G$ and $Y\sim G_0$ so that $\E|X-Y|\le 2\mathsf W_1(G,G_0)$.  For every $\varepsilon>0$ and $t\in\R$,
\begin{align*}
G(( -\infty,t])
&\le G_0(( -\infty,t+\varepsilon])
+\Prob(|X-Y|>\varepsilon)\\
&\le G_0(( -\infty,t])+C\varepsilon+2\varepsilon^{-1}\mathsf W_1(G,G_0).
\end{align*}
The reverse inequality is identical.  Optimizing over $\varepsilon$ gives \eqref{eq:ci_K_from_W1_nstar}.

Fix $s\in S_M$ and $u\in\R$. We shall work on the central region $|u|\le B_\ast$, where
\begin{align}\label{eq:ci_Bstar_def}
B_\ast\equiv B_0+b_0\sqrt{\log T_\ast},
\end{align}
with $B_0$ large enough and $b_0>0$ small enough, both depending only on the fixed model constants.  Since $G_0$ is supported on $[-M_0,M_0]$ and $s\in S_M$,
\begin{align}\label{eq:ci_p0_lower_upper_nstar}
c e^{-CB_\ast^2}\le \varphi_{G_0;s}(u)\le C,
\qquad |u|\le B_\ast .
\end{align}
Moreover, $\theta\mapsto\varphi_s(u-\theta)$ has a uniformly bounded Lipschitz constant for $s\in S_M$, and therefore
\begin{align}\label{eq:ci_denom_diff_nstar}
|\varphi_{G;s}(u)-\varphi_{G_0;s}(u)|\leq \biggabs{\int \varphi_s(u-\theta)(G-G_0)\,\d{\theta}}
\le C\cdot \mathsf W_1(G,G_0)
\le Cw_\ast .
\end{align}
For the numerator of the posterior c.d.f, set
\begin{align*}
h_{t,u,s}(\theta)\equiv \bm 1_{\{\theta\le t\}}\varphi_s(u-\theta).
\end{align*}
The total variation norm of $h_{t,u,s}$ as a function of $\theta$ is bounded by a constant depending only on $M$, uniformly over $(t,u,s)$.  Integration by parts against the signed measure $G-G_0$ gives
\begin{align}\label{eq:ci_num_diff_nstar}
\biggabs{
	\int h_{t,u,s}(\theta)\,(G-G_0)(\d\theta) }
\le C d_{\mathrm{Kol}}(G,G_0)
\le Cw_\ast^{1/2}.
\end{align}
Write $A_{G;s}(t,u)\equiv \int_{-\infty}^t \varphi_s(u-\theta)\,G(\d \theta)\leq \varphi_{G;s}(u)$. Then
\begin{align*}
&\bigabs{\Pi_{G,s}(t\mid u)-\Pi_{G_0,s}(t\mid u)} = \biggabs{ \frac{ A_{G;s}(t,u) }{ \varphi_{G;s}(u) } - \frac{A_{G_0;s}(t,u)}{ \varphi_{G_0;s}(u) }  }\\
&\leq \frac{1}{\varphi_{G;s}(u)}\cdot \abs{ A_{G;s}(t,u)-A_{G_0;s}(t,u) }+ \frac{A_{G_0;s}(t,u)}{\varphi_{G_0;s}(u)}\cdot \frac{ \abs{  \varphi_{G;s}(u)  -  \varphi_{G_0;s}(u) } }{ \varphi_{G;s}(u) }\\
& \leq \frac{1}{\varphi_{G;s}(u)}\cdot\bigg(\biggabs{
	\int h_{t,u,s}(\theta)\,(G-G_0)(\d\theta) }+ \abs{  \varphi_{G;s}(u)  -  \varphi_{G_0;s}(u) } \bigg).
\end{align*}
Combining \eqref{eq:ci_p0_lower_upper_nstar}-\eqref{eq:ci_num_diff_nstar}, if $Cw_\ast\leq c e^{-CB_\ast^2}/2$, we have $\varphi_{G;s}(u)\geq c e^{-CB_\ast^2}/2$, and therefore  for $|u|\le B_\ast$,
\begin{align}\label{eq:ci_posterior_cdf_stability_nstar}
&\sup_{t\in\R}
|\Pi_{G,s}(t\mid u)-\Pi_{G_0,s}(t\mid u)| \le C e^{CB_\ast^2}w_\ast^{1/2}
\equiv \Delta_\ast.
\end{align}
For small $n_\ast$ the desired bound \eqref{eq:ci_nstar_final_bound} is trivial after increasing $C$, so we may assume $\Delta_\ast\le 1/8$.

Now we shall invert (\ref{eq:ci_posterior_cdf_stability_nstar}) to obtain an estimate for the quantiles. Let $q_{a,0}(u,s)=Q_{G_0,s}(a\mid u)$ for $a\in\{\gamma/2,1-\gamma/2\}$.  The oracle posterior density is
\begin{align}\label{eq:ci_oracle_post_density_nstar}
\pi_{0,s}(t\mid u)
=\frac{\varphi_s(u-t)g_0(t)}{\varphi_{G_0;s}(u)},
\qquad t\in[-M_0,M_0].
\end{align}
By the bounded density assumption on $g_0$ and \eqref{eq:ci_p0_lower_upper_nstar}, this density is bounded below near every oracle quantile:
\begin{align}\label{eq:ci_post_density_lower_nstar}
\pi_{0,s}(t\mid u)
\ge c e^{-CB_\ast^2},
\qquad t\in[-M_0,M_0],\ |u|\le B_\ast .
\end{align}
Therefore the usual quantile inversion argument (cf. Lemma \ref{lem:quantile_inversion}) applied to \eqref{eq:ci_posterior_cdf_stability_nstar} gives, for $a\in\{\gamma/2,1-\gamma/2\},\ |u|\le B_\ast$, if $w_\ast\leq  e^{-CB_\ast^2}$,
\begin{align}\label{eq:ci_quantile_stability_nstar}
|Q_{G,s}(a\mid u)-Q_{G_0,s}(a\mid u)|
\le r_\ast\equiv C e^{CB_\ast^2}w_\ast^{1/2}.
\end{align}

\noindent (\textbf{Step 3}).  Consider the oracle marginal credible interval
\begin{align}\label{eq:ci_oracle_interval}
C_j^0(U_j;\gamma)
\equiv
\big[
Q_{G_0,\sigma_{0,j}}(\gamma/2\mid U_j),
Q_{G_0,\sigma_{0,j}}(1-\gamma/2\mid U_j)
\big].
\end{align}
On $E_n$, if $|U_j|\le B_\ast$, then \eqref{eq:ci_quantile_stability_nstar} with $G=\hat G_n$, $s=\sigma_{0,j}$, and $u=U_j$ implies that the two endpoints of $\widehat C_j(U;\gamma)$ are within $r_\ast$ of the corresponding endpoints of $C_j^0(U_j;\gamma)$.  Hence, deterministically on $E_n$,
\begin{align}\label{eq:ci_indicator_compare_nstar}
&\bigabs{
	\bm 1_{\{\beta_{0,j}\in \widehat C_j(U;\gamma)\}}
	-
	\bm 1_{\{\beta_{0,j}\in C_j^0(U_j;\gamma)\}} }\nonumber\\
&\quad\le
\bm 1_{\{|U_j|>B_\ast\}}
+
\sum_{a\in\{\gamma/2,1-\gamma/2\}}
\bm 1_{\{|U_j|\le B_\ast,\ |
	\beta_{0,j}-Q_{G_0,\sigma_{0,j}}(a\mid U_j)|\le r_\ast\}} .
\end{align}
This deterministic implication is valid although $\hat G_n$ was learned from the same data vector $U$.

The oracle interval has exact marginal Bayes coverage.  By \eqref{eq:ci_marginal_model} and the continuity of the posterior distribution under the bounded density assumption on $g_0$,
\begin{align}\label{eq:ci_oracle_exact_nstar}
\Prob(\beta_{0,j}\in C_j^0(U_j;\gamma)\mid U_j)=1-\gamma,
\qquad
\Prob(\beta_{0,j}\in C_j^0(U_j;\gamma))=1-\gamma .
\end{align}
Furthermore, compact support of $G_0$ and $s\in S_M$ imply
\begin{align}\label{eq:ci_U_tail_nstar}
\Prob(|U_j|>B_\ast)
\le C e^{-cB_\ast^2}.
\end{align}
For the boundary terms in \eqref{eq:ci_indicator_compare_nstar}, the oracle posterior density upper bound
\begin{align}\label{eq:ci_post_density_upper_nstar}
\sup_{s\in S_M, |u|\le B_\ast, t\in\R}\pi_{0,s}(t\mid u)
\le C e^{CB_\ast^2}
\end{align}
gives, for $a\in\{\gamma/2,1-\gamma/2\}$,
\begin{align}\label{eq:ci_boundary_prob_nstar}
&\Prob\Big(|U_j|\le B_\ast,
|\beta_{0,j}-Q_{G_0,\sigma_{0,j}}(a\mid U_j)|\le r_\ast\Big)\nonumber\\
&\quad =
\E\Big[
\bm 1_{\{|U_j|\le B_\ast\}}
\Pi_{G_0,\sigma_{0,j}}
\big([Q_{G_0,\sigma_{0,j}}(a\mid U_j)-r_\ast,
Q_{G_0,\sigma_{0,j}}(a\mid U_j)+r_\ast]\mid U_j\big)
\Big]\nonumber\\
&\quad\le C e^{CB_\ast^2}r_\ast
\le C e^{CB_\ast^2}w_\ast^{1/2}.
\end{align}
Taking expectations in \eqref{eq:ci_indicator_compare_nstar}, adding the probability of $E_n^c$, using \eqref{eq:ci_oracle_exact_nstar}, \eqref{eq:ci_U_tail_nstar}, and \eqref{eq:ci_boundary_prob_nstar}, and then averaging over $j\in[n]$, yields
\begin{align}\label{eq:ci_pre_optimized_bound_nstar}
\abs{
	\Prob(\beta_{0,\pi_n}\in \widehat C_{\pi_n}(U;\gamma))-(1-\gamma)}
\le C n_\ast^{-2}+C e^{-cB_\ast^2}+C e^{CB_\ast^2}w_\ast^{1/2}.
\end{align}
Finally substitute $w_\ast=C T_\ast^{-1/2}$ and $B_\ast=B_0+b_0\sqrt{\log T_\ast}$.  Taking $b_0>0$ sufficiently small gives constants $C,c>0$ such that $
e^{-cB_\ast^2}+e^{CB_\ast^2}w_\ast^{1/2}
\le C T_\ast^{-c}$. 
Since $n_\ast^{-2}\le C T_\ast^{-c}$, \eqref{eq:ci_nstar_final_bound} follows. \qed

\subsection{Proof of Proposition \ref{prop:marginal_regret}}\label{section:proof_marginal_regret}

We first record a simple observation.
\begin{lemma}\label{lem:regret_alt_rep}
	$\texttt{Reg}_n(G,G_0)$ defined in (\ref{eq:regret_identity_text}) can be rewritten as
	\begin{align}\label{def:marginal_regret}
	\texttt{Reg}_n(G,G_0)
	\equiv
	\frac1n\sum_{j \in [n]}
	\E_{X\sim\varphi_{G_0;\sigma_{0,j}}}
	\big(m_{G,\sigma_{0,j}}(X)-m_{G_0,\sigma_{0,j}}(X)\big)^2.
	\end{align}
\end{lemma}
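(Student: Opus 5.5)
This is the standard bias-variance (Pythagorean) identity for posterior means in the one-dimensional marginal experiment. Fix $j\in[n]$ and write $s=\sigma_{0,j}$, $X=X_j^\circ$, $\theta=\theta_j^\circ$. Here $\theta\sim G_0$ and $X\mid\theta\sim\mathcal N(\theta,s^2)$. The plan is to verify, for each $j$,
\begin{align*}
\E\big(m_{G,s}(X)-\theta\big)^2-\E\big(m_{G_0,s}(X)-\theta\big)^2=\E_{X\sim\varphi_{G_0;s}}\big(m_{G,s}(X)-m_{G_0,s}(X)\big)^2,
\end{align*}
and then average over $j$.

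\textbf{Step 1 (identify the oracle posterior mean).} Under the joint law of $(\theta,X)$, the conditional expectation $\E(\theta\mid X)$ equals $m_{G_0,s}(X)$ by \eqref{def:posterior_mean_regret}. The marginal law of $X$ is $\varphi_{G_0;s}$.

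\textbf{Step 2 (expand the square).} Write
\begin{align*}
m_{G,s}(X)-\theta=\big(m_{G,s}(X)-m_{G_0,s}(X)\big)+\big(m_{G_0,s}(X)-\theta\big)
\end{align*}
and expand. The cross term is
\begin{align*}
2\E\big[\big(m_{G,s}(X)-m_{G_0,s}(X)\big)\big(m_{G_0,s}(X)-\theta\big)\big].
\end{align*}
Conditioning on $X$ and using $\E(\theta-m_{G_0,s}(X)\mid X)=0$ from Step 1, this cross term vanishes. What remains is exactly the displayed identity.

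\textbf{Step 3 (integrability).} The only point requiring care is that all terms are finite, so that the expansion and the conditioning are legitimate. Assuming $\int\theta^2\,G_0(\d\theta)<\infty$, which holds for $G_0\in\mathscr G(M,\alpha)$, the term $\E\theta^2$ is finite, and $\E m_{G_0,s}(X)^2\le\E\theta^2$ by Jensen's inequality.

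For $m_{G,s}$ there are two cases. If $\E m_{G,s}(X)^2=\infty$, then both sides of the identity are $+\infty$, and it holds in the extended sense. Otherwise, Cauchy–Schwarz makes the cross term integrable, and the tower property applies.

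This bookkeeping is the only (minor) obstacle. Summing the per-$j$ identity over $j\in[n]$ and dividing by $n$ yields \eqref{def:marginal_regret}, and in particular shows $\texttt{Reg}_n(G,G_0)\ge0$.
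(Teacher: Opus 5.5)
Your proposal is correct and is the same argument as the paper: identify $m_{G_0,\sigma_{0,j}}(X_j^\circ)=\E(\theta_j^\circ\mid X_j^\circ)$, then use the orthogonality of the posterior-mean residual to kill the cross term in the expanded square. Your integrability bookkeeping (finite second moment of $G_0$, plus the extended-value case $\E m_{G,s}(X)^2=\infty$) is a harmless addition that the paper leaves implicit.
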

\begin{proof}
	As $m_{G_0,\sigma_{0,j}}(X_j^\circ)=\E(\theta_j^\circ\mid X_j^\circ)$, from (\ref{eq:regret_identity_text}) we have 
	\begin{align*}
	\texttt{Reg}_n(G,G_0) &= \frac{1}{n}\sum_{j \in [n]} \Big\{ \E
	\big(m_{G,\sigma_{0,j}}(X_j^\circ)-\theta_j^\circ\big)^2-\E\big(\E(\theta_j^\circ\mid X_j^\circ)-\theta_j^\circ\big)^2 \Big\}\\
	&= \frac1n\sum_{j \in [n]}
	\E
	\big(m_{G,\sigma_{0,j}}(X_j^\circ)-m_{G_0,\sigma_{0,j}}(X_j^\circ)\big)^2 = \hbox{RHS of (\ref{def:marginal_regret})},
	\end{align*}
	completing the proof.
\end{proof}

We now state a regret-Hellinger transfer inequality used in the proof. The proof makes use a recent result of \cite{chen2026sharp}.

\begin{lemma}\label{lem:regret_hellinger_growing_support}
	Fix $M>1$.  There exists a constant $C=C(M)>0$ such that the following
	holds.  Let $L\ge M$, $G\in\mathscr G(L,\infty)$, $H\in\mathscr G(M,\infty)$, and
	$s\in[M^{-1/2},M^{1/2}]$.  Define $
	h_s(G,H)
	\equiv d_H(\varphi_{G;s},\varphi_{H;s})$.
	Then
	\begin{align}
	\label{ineq:regret_hellinger_growing_support_pointwise}
	&\E_{X\sim\varphi_{H;s}}
	(m_{G,s}(X)-m_{H,s}(X))^2 \le
	C(1+L)^4\cdot  h_s^2(G,H)
	\log^C\!\bigg({e\over h_s(G,H)}\bigg),
	\end{align}
	with the convention that the right-hand side is zero when $h_s(G,H)=0$.
	
	Consequently, if $s_1,\ldots,s_n\in[M^{-1/2},M^{1/2}]$, then
	\begin{align}
	\label{ineq:regret_hellinger_growing_support_average}
	&{1\over n}\sum_{j \in [n]}
	\E_{X\sim\varphi_{H;s_j}}
	\{m_{G,s_j}(X)-m_{H,s_j}(X)\}^2 \le
	C(1+L)^4 \cdot \bar h^2
	\log^C (e/\bar{h}),
	\end{align}
	where $
	\bar h^2 \equiv
	n^{-1} \sum_{j \in [n]}
	h_{s_j}(G,H)$.
\end{lemma}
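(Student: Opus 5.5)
The plan has two parts. First, reduce the pointwise bound \eqref{ineq:regret_hellinger_growing_support_pointwise} to a standard-variance statement via the scale change $X=s\tilde X$. Second, deduce the averaged bound \eqref{ineq:regret_hellinger_growing_support_average} by concavity.

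\textbf{Pointwise bound.} Since $m_{G,s}(x)=s\,m_{G(s\cdot),1}(x/s)$ and Hellinger distance is invariant under rescaling, we may assume $s=1$. The price is that the supports become $[-L M^{1/2},L M^{1/2}]$ and $[-M^{3/2},M^{3/2}]$, which only changes constants depending on $M$.

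With $s=1$, we invoke the sharp regret--Hellinger inequality of \cite{chen2026sharp}. We use it in the form
\begin{align*}
\E_{\varphi_H}(m_G-m_H)^2\lesssim \max\{L^2,\log(e/h)\}\, h^2\log(e/h),
\end{align*}
or a variant with polynomial dependence on the support radius and a polylogarithmic factor in $e/h$. This is what we need.

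If the cited result is stated only for the case where both priors share a common bounded support, we reprove the step via Tweedie's formula, $m_{G}(x)=x+\varphi_G'(x)/\varphi_G(x)$. Write
\begin{align*}
m_G-m_H=\frac{\varphi_G'-\varphi_H'}{\varphi_H}+\frac{\varphi_G'}{\varphi_G}\cdot\frac{\varphi_H-\varphi_G}{\varphi_H},
\end{align*}
and use $|\varphi_G'/\varphi_G|\le |x|+L$. Integrate against $\varphi_H$ over $|x|\le B$ with $B\asymp L+\sqrt{\log(e/h)}$. On this region, control $\int(\varphi_G'-\varphi_G\ldots)^2/\varphi_H$ by a Hellinger-type quantity: apply Cauchy--Schwarz through $(\sqrt{\varphi_G}-\sqrt{\varphi_H})$, and use the fact that derivatives of Gaussian mixtures are bounded by the mixture times $(|x|+L)$ on the $\sqrt{\cdot}$ scale. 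This yields the factor $(1+L)^2(1+B^2)\,h^2$ up to logarithms. The tail $|x|>B$ contributes $(1+L+B)^2\,\Prob_H(|X|>B)\lesssim h^2$ by the choice of $B$, since $H$ has bounded support. Combined, this gives $C(1+L)^4h^2\log^C(e/h)$.

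\textbf{Main obstacle.} The hard step is the middle one: controlling $\varphi_G'-\varphi_H'$ in a weighted $L^2(1/\varphi_H)$ norm by Hellinger distance without losing a power of $h$. This is exactly the content of the sharp result in \cite{chen2026sharp}, so we rely on it rather than on crude interpolation, which would only give $h^{2-\epsilon}$.

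\textbf{Averaged bound.} Apply \eqref{ineq:regret_hellinger_growing_support_pointwise} with $s=s_j$ and average over $j$. The function $\psi(u)=u\log^C(e/\sqrt{u})$ is increasing and concave in $u=h^2$ on $[0,1]$, after enlarging the constant inside the logarithm if needed so that concavity holds on the whole range $h\le1$. Jensen's inequality then gives
\begin{align*}
n^{-1}\sum_{j\in[n]}\psi(h_{s_j}^2)\le\psi\Big(n^{-1}\sum_{j\in[n]} h_{s_j}^2\Big)=\psi(\bar h^2),
\end{align*}
where $\bar h^2$ is read as the average of the squared Hellinger distances. This proves \eqref{ineq:regret_hellinger_growing_support_average}.
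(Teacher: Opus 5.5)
Your rescaling to $s=1$ and your Jensen step for the averaged bound match the paper. That includes reading $\bar h^2$ as the mean of the squared distances and enlarging the constant in the logarithm to get concavity.

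The gap is in the core step. You take for granted a form of \cite{chen2026sharp} whose constant is polynomial in the support radius $L$. The result the paper actually uses is \cite[Theorem 5]{chen2026sharp}. It assumes both priors have second moment at most $\sigma^2$, and its bound carries a factor $e^{\sigma^2/2}$ in front of $(1+L)^4h^2\log^C(e/h)$. All you know a priori about $G\in\mathscr G(L,\infty)$ is $\sigma^2\lesssim L^2$, which gives a factor $e^{cL^2}$. In Proposition~\ref{prop:marginal_regret} one takes $L=A\sqrt{\log n}$, so this factor is $n^{cA^2}$, and it destroys the $n_\ast^{-1}$ rate. The polynomial dependence $(1+L)^4$ is therefore exactly what has to be proved, and your proposal has no mechanism for it.

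Your Tweedie fallback does not supply it either. It defers the hard derivative estimate back to the same citation. Moreover, the decomposition you display produces the term $\int(\varphi_G'/\varphi_G)^2(\varphi_H-\varphi_G)^2/\varphi_H$. This is of $\chi^2$ type and is not controlled by $h^2$ up to polynomial factors. Indeed, if $G$ puts mass $\asymp h^2$ near $\pm L$, the ratio $\varphi_G/\varphi_H$ can be exponentially large in $L^2$ on $\{|x|\le B\}$.

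The missing idea is that, in the only nontrivial regime, $G$ has second moment $O_M(1)$ uniformly in $L$. The paper splits on the size of $(1+L)^2h^2$.
\begin{itemize}
\item If $(1+L)^2h^2>c_0$, the claim is trivial. Since $|m_{G,1}|\le L$ and $|m_{H,1}|\le M$, the left side is at most $C(1+L)^2$, while the right side is at least $Cc_0(1+L)^2$.
\item If $(1+L)^2h^2\le c_0$, Hellinger closeness transfers the Gaussian tails of $\varphi_{H;1}$ to $G$. Using $\Prob_f(A)\le 2\Prob_g(A)+4d_H^2(f,g)$, one gets for $t\ge 2$ that $G(|\theta|>t)\le C\Prob_{\varphi_{G;1}}(|X|>t/2)\le C\{\Prob_{\varphi_{H;1}}(|X|>t/2)+h^2\}\le C_Me^{-c_Mt^2}+Ch^2$.
\end{itemize}
Integrating this tail bound over $t\in[2,L]$, which suffices because $\mathrm{supp}(G)\subset[-L,L]$, gives $\int\theta^2\,G(\d\theta)\le C_M+C_ML^2h^2\le C_M$. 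The cited theorem then applies with $\sigma^2=C_M$, so the exponential factor becomes a constant depending only on $M$.
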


\begin{proof}
	\noindent (\textbf{Step 1}). We first prove the result for $s=1$.  Write
	\begin{align*}
	f_G=\varphi_{G;1},
	\qquad
	f_H=\varphi_{H;1},
	\qquad
	h=d_H(f_G,f_H).
	\end{align*}
	The case $h=0$ is trivial and we therefore assume $h>0$. By \cite[Theorem 5]{chen2026sharp},  if $G,H$ have second moment bounded by $\sigma^2$, then the left hand side of (\ref{ineq:regret_hellinger_growing_support_pointwise}) is bounded by $C e^{\sigma^2/2}(1+L)^4 h^2
	\log^C (e/h)$, possibly after increasing the universal constant $C$.  Thus it remains to verify that the second moment of $G$ is bounded by a
	constant depending only on $M$ in the nontrivial regime.
	
	First we note that if $
	(1+L)^2h^2>c_0$ for a sufficiently small constant $c_0=c_0(M)>0$, then the desired bound is
	trivial.  Indeed, since $m_{G,1}(X)\in[-L,L]$ and
	$m_{H,1}(X)\in[-M,M]$, we necessarily have $
	\E_{X\sim f_H}(m_{G,1}(X)-m_{H,1}(X))^2
	\le C(1+L)^2$, 
	whereas the right-hand side of
	\eqref{ineq:regret_hellinger_growing_support_pointwise} is at least a
	constant multiple of $(1+L)^2$ after increasing $C$.  
	
	Hence we only need to consider the case $
	(1+L)^2h^2\le c_0$, and prove the second-moment bound for $G$ under this condition.  Let $\Theta\sim G$ and
	$X=\Theta+\mathsf{Z}$, where $\mathsf{Z}\sim \mathcal{N}(0,1)$ is independent.  For $t\ge2$,
	on the event $\{|\Theta|>t,\ |\mathsf{Z}|\le t/2\}$ we have $|X|>t/2$.
	Since $\Prob(|\mathsf{Z}|\le t/2)$ is bounded below by an absolute positive constant
	for $t\ge2$, it follows that
	\begin{align*}
	G(|\theta|>t)
	&\le
	C\,\Prob_{X\sim f_G}(|X|>t/2)\\
	&\stackrel{(\ast)}{\leq} C\cdot \big(\Prob_{X\sim f_H}(|X|>t/2)+h^2\big)\leq C_M e^{-c_Mt^2}+C h^2.
	\end{align*}
	Here in $(\ast)$ we used that for any event $A$, $
	\Prob_{f_G}(A) = \int_A f_G = \int_A \big(f_H^{1/2}+\{f_G^{1/2}-f_H^{1/2}\}\big)^2
	\leq
	2\Prob_{f_H}(A)+2h^2$. Since $G$ is supported on $[-L,L]$,
	\begin{align*}
	\int \theta^2\,G(\d\theta)
	&=
	\int_0^\infty 2t\,G(|\theta|>t)\,\d t\\
	&\le
	C+
	C_M\int_2^L t\{e^{-c_Mt^2}+h^2\}\,\d t \le
	C_M+C_M h^2L^2
	\le
	C_M,
	\end{align*}
	where the last inequality follows from the assumption $
	(1+L)^2h^2\le c_0$.
	The prior $H$ has second moment bounded by $M^2$.  Thus the second moments
	of both $G$ and $H$ are bounded by a constant depending only on $M$.
	
	Now applying \cite[Theorem 5]{chen2026sharp} with this second-moment bound gives
	\begin{align*}
	\E_{X\sim f_H}
	\big(m_{G,1}(X)-m_{H,1}(X)\big)^2
	\le
	C_M(1+L)^4 \cdot h^2
	\log^C\!(e/h),
	\end{align*}
	which proves the claim for $s=1$.
	
	\noindent (\textbf{Step 2}). For general $s\in[M^{-1/2},M^{1/2}]$, reduce to the unit-variance case by
	scaling. Let $
	G^{(s)}=G(s\cdot), H^{(s)}=H(s\cdot)$
	Then $\varphi_{G;s}(x)
	= s^{-1}\varphi_{G^{(s)};1}(x/s)$,
	$\varphi_{H;s}(x) = s^{-1}\varphi_{H^{(s)};1}(x/s)$,
	and therefore $
	d_H(\varphi_{G;s},\varphi_{H;s})
	=
	d_H(\varphi_{G^{(s)};1},\varphi_{H^{(s)};1})$. 
	Moreover,
	\begin{align*}
	m_{G,s}(x)=s\,m_{G^{(s)},1}(x/s),
	\qquad
	m_{H,s}(x)=s\,m_{H^{(s)},1}(x/s).
	\end{align*}
	Thus, if $X\sim\varphi_{H;s}$ and $Y=X/s$, then
	$Y\sim\varphi_{H^{(s)};1}$ and
	\begin{align*}
	\begin{aligned}
	\E_{X\sim\varphi_{H;s}}
	\big(m_{G,s}(X)-m_{H,s}(X)\big)^2 =
	s^2
	\E_{Y\sim\varphi_{H^{(s)};1}}
	\big(m_{G^{(s)},1}(Y)-m_{H^{(s)},1}(Y)\big)^2 .
	\end{aligned}
	\end{align*}
	Since $s\in[M^{-1/2},M^{1/2}]$, the rescaled priors satisfy $
	G^{(s)}\in\mathscr G(C_ML,\infty),
	H^{(s)}\in\mathscr G(C_M,\infty)$.
	Applying the already proved unit-variance bound to
	$(G^{(s)},H^{(s)})$, multiplying by $s^2\le M$, and using the Hellinger
	scaling identity gives
	\eqref{ineq:regret_hellinger_growing_support_pointwise}.
	
	\noindent (\textbf{Step 3}). Finally, applying the pointwise bound with $s=s_j$ gives
	\begin{align*}
	{1\over n}\sum_{j \in [n]}
	\E_{X\sim\varphi_{H;s_j}}
	\big(m_{G,s_j}(X)-m_{H,s_j}(X)\big)^2
	\le
	C(1+L)^4\cdot 
	{1\over n}\sum_{j \in [n]}
	h_j^2\log^C(e/h_j),
	\end{align*}
	where $h_j=d_H(\varphi_{G;s_j},\varphi_{H;s_j})$.  Since
	$t\mapsto t\log^C(e/\sqrt t)$ is bounded above on $[0,1]$ by a concave
	multiple of itself after increasing $C$, Jensen's inequality yields $
	n^{-1}\sum_{j \in [n]}
	h_j^2\log^C(e/h_j)
	\le
	C\bar h^2\log^C(e/\bar h)$. 
	This proves \eqref{ineq:regret_hellinger_growing_support_average}.
\end{proof}

\begin{proof}[Proof of Proposition \ref{prop:marginal_regret}]
	Let $L_n\equiv A\sqrt{\log n}$ for a sufficiently large constant $A=A(M,D)$.  Since $G_0\in\mathscr G(M,\infty)$ and $M^{-1}\le \sigma_{0,j}^2\le M$, a union bound and the Gaussian tail inequality give
	\begin{align}\label{eq:regret_support_event}
	\Prob\Big(\max_{j\in[n]} |U_j|>L_n\Big)\le n^{-D-1}
	\end{align}
	after increasing $A$.  On the event in \eqref{eq:regret_support_event}, the CML estimator satisfies $\hat G_n\in\mathscr G(L_n,\infty)$. By Corollary \ref{cor:MML_exact}, with probability at least $1-c n^{-D-1}$,
	\begin{align}\label{eq:regret_hellinger_event}
	\bar h_n^2
	\equiv
	\frac1n\sum_{j \in [n]}
	d_H^2(\varphi_{\hat G_n;\sigma_{0,j}},\varphi_{G_0;\sigma_{0,j}})
	=
	\mathfrak d_{H;\sigma_{0,[n]}}^2(\hat G_n,G_0)
	\le
	C n_\ast^{-1}(\log n)^C.
	\end{align}
	Let $E_n$ be the intersection of the events in \eqref{eq:regret_support_event} and \eqref{eq:regret_hellinger_event}.  Then $\Prob(E_n^c)\le Cn^{-D}$.  On $E_n$, Lemma \ref{lem:regret_hellinger_growing_support} with $G=\hat G_n$, $H=G_0$, $L=L_n$, and $s_j=\sigma_{0,j}$ yields
	\begin{align*}
	\texttt{Reg}_n(\hat G_n,G_0)
	&\le
	C(1+L_n)^C\cdot \bar h_n^2\{\log(e/\bar h_n)\}^C \le C\cdot  n_\ast^{-1}(\log n)^C.
	\end{align*}
	Here we used $L_n=A\sqrt{\log n}$ and the assumption $n_\ast\ge (\log n)^C$; the exponent $C$ may change from line to line.  This proves the claimed estimate.
\end{proof}

\section{Proof of Theorem \ref{thm:gls_eb}}\label{section:proof_gls_eb}

\begin{lemma}\label{lem:log_normal_mix_der}
	For every \(x\in\mathbb R\) and \(\sigma>0\),
	\begin{align*}
	\partial_\sigma \log \varphi_{G;\sigma}(x)
	=
	\frac1\sigma
	\left\{
	\E\left[
	\frac{(X-\theta)^2}{\sigma^2}
	\,\bigg|\, X=x
	\right]
	-1
	\right\},
	\end{align*}
	where \(X=\theta+\sigma \mathsf{Z}\), \(\mathsf{Z}\sim N(0,1)\), \(\theta\sim G\), and
	\(\mathsf{Z}\perp \theta\). Consequently, if \(|x|\le B\) and \(\mathrm{supp}(G)\subset [-R,R]\), we have $
	\abs{
	\partial_\sigma \log \varphi_{G;\sigma}(x)}
	\leq
	\sigma^{-1}
	\{
	1+(B+R)^2/\sigma^2
	\}$.
\end{lemma}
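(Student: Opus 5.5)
Differentiate under the integral sign in
\[
\varphi_{G;\sigma}(x)=\int \varphi_\sigma(x-\theta)\,G(\d\theta).
\]
For fixed $u$ we have $\log\varphi_\sigma(u)=-\tfrac12\log(2\pi)-\log\sigma-u^2/(2\sigma^2)$, so
\[
\partial_\sigma \varphi_\sigma(u)=\varphi_\sigma(u)\Big\{\frac{u^2}{\sigma^3}-\frac1\sigma\Big\}
=\frac{\varphi_\sigma(u)}{\sigma}\Big\{\frac{u^2}{\sigma^2}-1\Big\}.
\]
Interchanging $\partial_\sigma$ and $\int G(\d\theta)$ is justified by dominated convergence. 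On any compact $\sigma$-interval $[\sigma_1,\sigma_2]\subset(0,\infty)$ the derivative is bounded in absolute value by $C_{\sigma_1,\sigma_2}\sup_{u}(1+u^2)e^{-u^2/(2\sigma_2^2)}$. This bound is finite and uniform in $\theta$, so it is integrable against the probability measure $G$ (no moment condition on $G$ is needed).

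This gives
\[
\partial_\sigma\varphi_{G;\sigma}(x)=\frac1\sigma\int\Big\{\frac{(x-\theta)^2}{\sigma^2}-1\Big\}\varphi_\sigma(x-\theta)\,G(\d\theta).
\]
Divide by $\varphi_{G;\sigma}(x)>0$. The posterior law of $\theta$ given $X=x$ has density $\varphi_\sigma(x-\theta)/\varphi_{G;\sigma}(x)$ with respect to $G$. The ratio is therefore exactly
\[
\sigma^{-1}\big\{\E[(X-\theta)^2/\sigma^2\mid X=x]-1\big\},
\]
which is the claimed identity.

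For the bound, suppose $\mathrm{supp}(G)\subset[-R,R]$. Then the posterior is supported in $[-R,R]$, so $(x-\theta)^2\le(|x|+R)^2\le(B+R)^2$ almost surely under the posterior when $|x|\le B$. The conditional expectation therefore lies in $[0,(B+R)^2/\sigma^2]$. Subtracting $1$ gives a quantity in $[-1,(B+R)^2/\sigma^2]$, whose absolute value is at most $1+(B+R)^2/\sigma^2$. Multiplying by $\sigma^{-1}$ yields the stated estimate.

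The only step needing any care is the justification of differentiation under the integral, and the uniform domination above handles it. Everything else is routine algebra.
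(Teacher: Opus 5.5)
Your proposal is correct and follows essentially the same route as the paper: compute $\partial_\sigma\varphi_\sigma(t)=\sigma^{-1}(t^2/\sigma^2-1)\varphi_\sigma(t)$, differentiate under the integral, divide by $\varphi_{G;\sigma}(x)$ to get a posterior expectation, and bound it using the support of the posterior. The only addition over the paper is your explicit dominated-convergence justification for the interchange; the paper takes that step for granted.
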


\begin{proof}
	\noindent (1). Applying $\partial_\sigma \varphi_\sigma(t)
	=
	\sigma^{-1}
	\big(t^2/\sigma^2-1
	\big)\varphi_\sigma(t)$ with \(t=x-\theta\), we get
	\begin{align*}
	\partial_\sigma \varphi_{G;\sigma}(x)
	=
	\int
	\partial_\sigma \varphi_\sigma(x-\theta)\,G(\d\theta) =
	\frac1\sigma
	\int
	\bigg(
	\frac{(x-\theta)^2}{\sigma^2}-1
	\bigg)
	\varphi_\sigma(x-\theta)\,G(\d\theta).
	\end{align*}
	Since \(\varphi_{G;\sigma}(x)>0\), dividing by \(\varphi_{G;\sigma}(x)\) gives
	\begin{align*}
	\partial_\sigma \log \varphi_{G;\sigma}(x)
	=
	\frac{\partial_\sigma \varphi_{G;\sigma}(x)}
	{\varphi_{G;\sigma}(x)} =
	\frac1\sigma
	\bigg\{
	\frac{
		\int \frac{(x-\theta)^2}{\sigma^2}
		\varphi_\sigma(x-\theta)\,G(\d\theta)
	}{
		\int \varphi_\sigma(x-\theta)\,G(\d\theta)
	}
	-1
	\bigg\}.
	\end{align*}
	It remains only to identify the ratio as a conditional expectation. Under the
	model \(X=\theta+\sigma \mathsf{Z}\), the joint law of \((\theta,X)\) has conditional
	density $
	\theta\mapsto \varphi_\sigma(x-\theta)\,G(\d\theta)$
	given \(X=x\). Hence the posterior distribution of \(\theta\) given \(X=x\) is $
	\Pi_{\sigma,x}(\d\theta)
	=
	{\varphi_\sigma(x-\theta)\,G(\d\theta)}\big/
	{\int \varphi_\sigma(x-u)\,G(\d u)}$.
	This means
	\begin{align*}
	\frac{
		\int \frac{(x-\theta)^2}{\sigma^2}
		\varphi_\sigma(x-\theta)\,G(\d\theta)
	}{
		\int \varphi_\sigma(x-\theta)\,G(\d\theta)
	}
	=
	\E\bigg[
	\frac{(X-\theta)^2}{\sigma^2}
	\,\bigg|\, X=x
	\bigg],
	\end{align*}
	where on the right-hand side \(X=x\) is fixed inside the conditional expectation.
	This proves the identity.
	
	\noindent(2).	Finally, if \(|x|\le B\) and \(|\theta|\le R\), then $
	|x-\theta|\le B+R$.
	Thus $
	0\le
	\E[
	{(X-\theta)^2}/{\sigma^2}
	\,|\, X=x
	]
	\le
	{(B+R)^2}/{\sigma^2}$.
	The claimed estimate follows.
\end{proof}

\begin{proof}[Proof of Theorem \ref{thm:gls_eb}]
Conditional on $A$,
\begin{align*}
\hat\mu_{\mathrm{gls}}-\mu_\ast=Q_A^{-1}A^\top\Omega^{-1}\xi
\sim \mathcal{N}(0,\tau_\ast^2Q_A^{-1}),
\end{align*}
so the GLS statistic is an exact correlated Gaussian sequence with
$\Sigma_0=\tau_\ast^2Q_A^{-1}$ and marginal standard errors $\tau_\ast s_{A,j}$.
The effective sample size in Theorem \ref{thm:MML_rate} is therefore precisely
$n_{\ast,A}=n/\pnorm{\texttt{Cor}_{Q_A^{-1}}}{\op}$.  The exponential bound in
\eqref{ineq:p0_tail_exp} gives
$\pnorm{\hat\mu_{\mathrm{gls}}}{\infty}\le C_{\alpha,M,D}L_{n,\alpha}$ with conditional
probability at least $1-Cn^{-D}$.  Applying Corollary \ref{cor:MML_exact} conditionally on $A$ proves the Hellinger rate.  The Wasserstein bound in the same display follows from the exact CML case of Theorem \ref{thm:MML_rate}, or equivalently from its Wasserstein bound with $\delta_n=0$.

It remains to compare the plug-in likelihood with the oracle likelihood when
$\tau_\ast$ is unknown.  On the event
$\gamma_A=|\hat\tau/\tau_\ast-1|\le1/2$, and on
$\max_j|\hat\mu_{\mathrm{gls},j}|\le B$ with
$B=C_{\alpha,M,D}L_{n,\alpha}$, Lemma \ref{lem:log_normal_mix_der} implies,
uniformly over $G\in\mathscr G(R,\infty)$ with $R\le B$ and over
$\sigma\in[\tau_\ast s_{A,j}/2,3\tau_\ast s_{A,j}/2]$,
\begin{align*}
\abs{\partial_\sigma\log\varphi_{G;\sigma}(x) }
\le C_{\alpha,M}\{1+(B+R)^2\}.
\end{align*}
Consequently, uniformly over such $G$,
\begin{align*}
\biggabs{\frac1n\sum_{j \in [n]}\log\varphi_{G;\hat\tau s_{A,j}}(\hat\mu_{\mathrm{gls},j})
-\frac1n\sum_{j \in [n]}\log\varphi_{G;\tau_\ast s_{A,j}}(\hat\mu_{\mathrm{gls},j})}
\le C_{\alpha,M,D}(\log n)^C\gamma_A.
\end{align*}
Applying this comparison once to the plug-in maximizer and once to the oracle
maximizer shows that $\hat G_{\mathrm{gls}}$ is a
$\delta_n$-near maximizer of the oracle criterion with
$\delta_n\le C_{\alpha,M,D}(\log n)^C\gamma_A$.  Theorem
\ref{thm:MML_rate}, applied conditionally on $A$, gives
the Hellinger rate after absorbing the power of $\log n$ into $c_1$.  The Wasserstein bound follows from the Wasserstein part of Theorem \ref{thm:MML_rate} with the same near-maximization error $\delta_n\le C_{\alpha,M,D}(\log n)^C\gamma_A$; the resulting quantity $n_{\ast,A}^{-1/2}(\log n)^C+\delta_n^{1/2}$ is bounded by $\Delta_A$ after increasing $c_1$.

Finally, for the probability estimate for $\gamma_A$, let $P_A\equiv AQ_A^{-1}A^\top\Omega^{-1}$.  Algebra gives, conditionally on $A$,
\begin{align*}
(m-n)\cdot \frac{\hat\tau^2}{\tau_\ast^2}
=\frac{\xi^\top\Omega^{-1}(I-P_A)\xi}{\tau_\ast^2}\sim \chi^2_{m-n}.
\end{align*}
The displayed probability bound follows from the standard
chi-square concentration inequalities.
\end{proof}

\section{Proof of Theorem \ref{thm:eb_gd_hellinger}}\label{section:proof_eb_gd_hellinger}

Throughout this section, $\Prob^{(0)}$ and $\E^{(0)}$ denote probability and expectation over the Gaussian design $A$ conditional on $(\mu_0,\mu_\ast,\xi)$, while $\Prob^\xi$ and $\E^\xi$ denote probability and expectation conditional only on $\xi$. For notational simplicity, we usually omit the subscript $\mathrm{db}$ in, e.g., $\hat{\mu}_{\mathrm{db}}, \hat{G}_{\mathrm{db}}$.

Recall $\bar{\mathfrak{U}}=(\bar{\mathfrak{U}}_{1},\bar{\mathfrak{U}}_{2})$ defined in Definition \ref{def:U_bar}. In addition to $\bar{\delta}$ defined therein, we also define
\begin{align*}
\bar{\sigma}\equiv \big\{\phi\cdot \E \mathfrak{S}^2\big(\bar{\mathfrak{U}}_{1}, \bar{\mathfrak{U}}_{2},\xi_{\pi_m}\big)\big\}^{1/2},\quad \bar{\tau} \equiv \phi\cdot \E \partial_1\mathfrak{S}\big(\bar{\mathfrak{U}}_{1}, \bar{\mathfrak{U}}_{2},\xi_{\pi_m}\big). 
\end{align*}

\subsection{Distribution of debiased gradient descent}

We shall define some notation that will be used in the proofs below.
\begin{definition}
	Fix $\mu_0 \in \R^n$.
	\begin{enumerate}
		\item Let $\mathfrak{U}_{\ast}=(\mathfrak{U}_{\ast,1},\mathfrak{U}_{\ast,2})$ be a centered, bi-variate Gaussian vector with covariance 
		$\frac{1}{n}
		\begin{psmallmatrix}
		\pnorm{\mu_0}{\Sigma}^2 & \iprod{\mu_0}{\mu_\ast}_\Sigma\\
		\iprod{\mu_0}{\mu_\ast}_\Sigma & \pnorm{\mu_\ast}{\Sigma}^2
		\end{psmallmatrix}$.
		\item Let
		\begin{itemize}
			\item $\sigma_\ast\equiv \big\{\phi\cdot \E^{(0)} \mathfrak{S}^2\big(\mathfrak{U}_{\ast,1}, \mathfrak{U}_{\ast,2},\xi_{\pi_m}\big)\big\}^{1/2}$, 
			\item  $
			\tau_\ast\equiv \phi\cdot \E^{(0)} \partial_1\mathfrak{S}\big(\mathfrak{U}_{\ast,1}, \mathfrak{U}_{\ast,2},\xi_{\pi_m}\big)$,
			\item $
			\delta_\ast\equiv -\phi\cdot \E^{(0)} \partial_2\mathfrak{S}\big(\mathfrak{U}_{\ast,1}, \mathfrak{U}_{\ast,2},\xi_{\pi_m}\big)$.
		\end{itemize}
		\item Let $L_\ast\equiv 1+\pnorm{\mu_{0}}{\infty}+ \pnorm{\mu_{\ast}}{\infty}$.
	\end{enumerate}
\end{definition}

We show in the following proposition  that $
\hat{\mu}\stackrel{d}{\approx} \mathcal{N}(\delta_\ast \mu_\ast, \sigma_\ast^2 \Sigma^{-1})$
in the empirical distributional sense.
\begin{proposition}\label{prop:db_gd_normal}
	Suppose Assumption \ref{assump:model} holds for some $K,\Lambda\geq 2$. Fix $\{\psi_j\}_{j \in [n]}$ with $\max_{j \in [n]}\pnorm{\psi_j}{\mathrm{Lip}}\leq \Lambda$. Then for any $D>0$, there exists some $c_1\equiv c_1(D)>1$, such that if $n\geq (K\Lambda L_\ast)^{c_1}$, it holds with $\Prob^{(0)}$-probability at least $1-\exp(-\log^D n)$ that, 
	\begin{align*}
	\biggabs{\frac{1}{n}\sum_{j \in [n]} \psi_j (\hat{\mu}_j)- \frac{1}{n} \sum_{j \in [n]} \psi_j (\mu_j^{\mathrm{db}})}\leq \big(K\Lambda L_\ast (1\wedge \sigma_\ast)^{-1}\log n\big)^{c_1} \cdot  n^{-1/2}.
	\end{align*}
	Here with  $\mathsf{Z}_n\sim \mathcal{N}(0,I_n)$  independent of all other variables, let
	\begin{align}\label{def:mu_db}
	\mu^{\mathrm{db}}\equiv \delta_\ast \mu_{\ast}+  \sigma_\ast \Sigma^{-1/2}\mathsf{Z}_n.
	\end{align}
\end{proposition}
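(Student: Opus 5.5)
Throughout, condition on $(\mu_0,\mu_\ast,\xi)$, so that the only randomness is the Gaussian design. Write $A=n^{-1/2}\mathsf{G}\Sigma^{1/2}$ with $\mathsf{G}$ an $m\times n$ matrix of i.i.d.\ standard normals. The plan is to reduce $\hat\mu$ to a Gaussian conditional representation by means of a leave-one-direction-out decomposition, i.e., Gaussian conditioning on the two-dimensional span relevant to $(\mu_0,\mu_\ast)$. Let $P$ be the $\Sigma$-orthogonal projection onto $\mathrm{span}\{\mu_0,\mu_\ast\}$, and decompose $A=AP+A(I-P)$. The vectors $A\mu_0$ and $A\mu_\ast$ depend only on $AP$, and conditional on $AP$ the matrix $A(I-P)$ is an independent Gaussian matrix. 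Consequently $r\equiv\mathsf{L}(A\mu_0,Y)=\mathfrak{S}(A\mu_0,A\mu_\ast,\xi)$ is measurable with respect to $(AP,\xi)$. Expanding
\[
\Sigma^{-1}A^\top r=\Sigma^{-1}P^\top A^\top r+\Sigma^{-1}(I-P)^\top A^\top r,
\]
the second term is, conditionally, exactly $\mathcal N\bigl(0,\pnorm{r}{}^2 n^{-1}\,\Sigma^{-1}(I-P)^\top\Sigma(I-P)\Sigma^{-1}\bigr)$. Since $I-P$ removes only a rank-two piece, this term equals $(\pnorm{r}{}/\sqrt n)\Sigma^{-1/2}\mathsf{Z}_n$ minus a rank-two Gaussian correction.

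The first term is a linear combination $a_0\mu_0+a_\ast\mu_\ast$. Its coefficients are determined by the two scalars $\iprod{A\mu_0}{r}$ and $\iprod{A\mu_\ast}{r}$, after inverting the $2\times 2$ Gram matrix of $(\mu_0,\mu_\ast)$ in the $\Sigma$ inner product. Gaussian integration by parts (Stein's lemma applied to the rows $(\langle A_i,\mu_0\rangle,\langle A_i,\mu_\ast\rangle)\sim\mathfrak{U}_\ast$) gives
\[
n^{-1}\textstyle\sum_i \mathbb E\bigl[(A\mu_0)_i\,r_i\bigr]
\]
in terms of $\E\partial_1\mathfrak S$ and $\E\partial_2\mathfrak S$ times covariances. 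After combining with $\hat\tau\mu_0$, the $\mu_0$-coefficient should cancel to $O(n^{-1/2})$ (this is precisely the role of $\hat\tau$ as a debiasing term), and the $\mu_\ast$-coefficient should be $\delta_\ast+O(n^{-1/2})$. For this step I would use concentration of the Lipschitz averages $n^{-1}\sum_i g(\langle A_i,\mu_0\rangle,\langle A_i,\mu_\ast\rangle,\xi_i)$ around their $\E^{(0)}$-means. These are sums of $m$ independent terms with sub-Gaussian inputs of scale $L_\ast$, so Bernstein's inequality gives deviations of order $(K\Lambda L_\ast\log n)^{C}n^{-1/2}$ with probability $1-\exp(-\log^D n)$. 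The same argument gives $\pnorm{r}{}^2/n=\sigma_\ast^2+O(n^{-1/2})$ and $\hat\tau=\tau_\ast+O(n^{-1/2})$.

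Assembling these pieces,
\[
\hat\mu=\delta_\ast\mu_\ast+\sigma_\ast\Sigma^{-1/2}\mathsf{Z}_n+E,
\]
where $E$ collects three contributions: the $O(n^{-1/2})$ coefficient errors times $\mu_0,\mu_\ast$; the error $(\pnorm{r}{}/\sqrt n-\sigma_\ast)\Sigma^{-1/2}\mathsf Z_n$; and the rank-two correction. Each of these has $\pnorm{E}{}\lesssim(K\Lambda L_\ast\log n)^C$, i.e.\ $\pnorm{E}{}/\sqrt n=O(n^{-1/2}\mathrm{polylog})$ in normalized norm. Converting $\pnorm{r}{}/\sqrt n-\sigma_\ast$ into a coefficient error costs a factor $(1\wedge\sigma_\ast)^{-1}$ through the square root, which explains that factor in the statement. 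Finally, by the Lipschitz property of the $\psi_j$ and Cauchy--Schwarz,
\[
\Bigl|n^{-1}\textstyle\sum_j\psi_j(\hat\mu_j)-n^{-1}\sum_j\psi_j(\mu^{\mathrm{db}}_j)\Bigr|\le\Lambda\,\pnorm{E}{}/\sqrt n,
\]
which yields the claim, with $\mu^{\mathrm{db}}$ realized on the same probability space (equality in law suffices for the distributional statement).

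The main obstacle is the first term: verifying exactly, rather than heuristically, that the Stein-type identities make the $\mu_0$ coefficient cancel against $\hat\tau$, and that the $\mu_\ast$ coefficient is $\delta_\ast$. This requires care with the Gram-matrix inversion, which may be ill-conditioned if $\mu_0$ and $\mu_\ast$ are nearly collinear. I would handle that case by working directly with the bilinear quantities $\iprod{A\mu}{r}$ for $\mu\in\mathrm{span}\{\mu_0,\mu_\ast\}$, which avoids the inversion.
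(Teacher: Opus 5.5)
Your proposal is correct, and it takes a genuinely different route from the paper's.

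\textbf{How the paper argues.} The paper never builds a coupling. It first swaps $\hat\tau$ for $\tau_\ast$, at cost $\Lambda L_\ast|\hat\tau-\tau_\ast|$ (Lemma \ref{lem:tau_sigma_err}). Then, in Lemma \ref{lem:gd_onestep}, it does three things:
\begin{enumerate}
\item it concentrates $n^{-1}\sum_j\psi_j(\bar\mu_j)$ around its $\E^{(0)}$-mean via the truncated Gaussian concentration Lemma \ref{lem:gaussian_conc};
\item it computes $\E^{(0)}\bar\mu=\delta_\ast\mu_\ast$ by Stein's identity;
\item it matches each coordinate to $\mathcal N(\delta_\ast\mu_{\ast,j},\sigma_\ast^2\sigma_j^2)$ using a one-dimensional Wasserstein Berry--Esseen bound plus a Stein computation of the coordinate variances. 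This is where its $\sigma_\ast^{-3}$ factor arises.
\end{enumerate}
The independent proxy $n^{-1}\sum_j\psi_j(\mu^{\mathrm{db}}_j)$ is handled by plain Gaussian concentration.

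\textbf{What your route buys.} Your exact conditioning on the $\Sigma$-orthogonal split is valid, since $P^\top\Sigma(I-P)=\Sigma P(I-P)=0$. It yields an $\ell_2$ coupling $\hat\mu=\delta_\ast\mu_\ast+\sigma_\ast\Sigma^{-1/2}\tilde{\mathsf Z}_n+E$ with $\|E\|=O(\mathrm{polylog})$. This is stronger than what the paper proves: it holds simultaneously for every $\Lambda$-Lipschitz family $\{\psi_j\}$, and it needs neither Berry--Esseen nor Lemma \ref{lem:gaussian_conc}. The paper's argument, by contrast, only matches $\E^{(0)}$-means coordinatewise, so it never has to realize the Gaussian noise on the design's probability space.

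\textbf{Two points to tighten.}

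First, the Gram-matrix ill-conditioning you flag is not a real obstacle. Let $Q$ be an orthonormal basis of $\Sigma^{1/2}\mathrm{span}\{\mu_0,\mu_\ast\}$, let $g_i$ be the rows of $\mathsf G$, and set $h_i=Q^\top g_i\sim\mathcal N(0,I_2)$. Then the first term equals $n^{-1/2}\Sigma^{-1/2}Q\sum_{i\in[m]}h_ir_i$, and no inversion is needed:
\begin{enumerate}
\item Stein's identity gives its conditional mean exactly as $\tau_\ast\mu_0-\delta_\ast\mu_\ast$;
\item Bernstein bounds the fluctuation $n^{-1/2}\Sigma^{-1/2}Q\sum_i(h_ir_i-\E^{(0)}h_ir_i)$ by $O(\mathrm{polylog})$ in Euclidean norm.
\end{enumerate}

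Second, ``equality in law suffices'' is not literally enough. The statement bounds a difference in which $\mathsf Z_n$ is independent of $A$, whereas your $\tilde{\mathsf Z}_n$ is built from $A(I-P)$. You can close this with one standard step. The map $z\mapsto n^{-1}\sum_j\psi_j(\delta_\ast\mu_{\ast,j}+\sigma_\ast(\Sigma^{-1/2}z)_j)$ is $\Lambda^{3/2}\sigma_\ast n^{-1/2}$-Lipschitz. So by Gaussian concentration, both the coupled and the independent versions lie within $C\Lambda^{3/2}\sigma_\ast n^{-1/2}\log^{D/2}n$ of the same mean, with probability at least $1-\exp(-\log^D n)$. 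This is the same step as \eqref{ineq:db_gd_normal_step1_2} in the paper.
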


We shall first prove the following.

\begin{lemma}\label{lem:gd_onestep}
Suppose Assumption \ref{assump:model} holds for some $K,\Lambda\geq 2$. For any vector $\mu_0 \in \R^n$, let
\begin{align}\label{def:gd_debias_oracle}
\bar{\mu} \equiv \tau_\ast \mu_0 - \Sigma^{-1} A^\top \mathsf{L}(A\mu_0,Y).
\end{align}
Then for any $D>0$, there exists some $c_1\equiv c_1(D)>1$ such that if $n\geq (K\Lambda L_\ast)^{c_1}$, for any $\{\psi_j\}_{j \in [n]}$ with $\max_{j \in [n]}\pnorm{\psi_j}{\mathrm{Lip}}\leq \Lambda$, it holds with $\Prob^{(0)}$-probability at least $1- \exp(-\log^D n)$ that
\begin{align*}
& \biggabs{ \frac{1}{n}\sum_{j \in [n]} \Big( \psi_j(\bar{\mu}_j)- \E^{(0)} \psi_j\big(\mu_j^{\mathrm{db}} \big)\Big)  }\leq \big(K\Lambda L_\ast(1\wedge \sigma_\ast)^{-1}\big)^{c_1}\cdot \bigg(\frac{\log n}{n}\bigg)^{1/2}.
\end{align*}
\end{lemma}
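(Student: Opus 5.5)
Conditional on $(\mu_0,\mu_\ast,\xi)$, the only randomness is the design $A$ with rows $\mathcal N(0,\Sigma/n)$. I would reduce to a white design by writing $A=n^{-1/2}W\Sigma^{1/2}$ with $W$ having i.i.d. $\mathcal N(0,1)$ entries. Then $A\mu_0=n^{-1/2}W\Sigma^{1/2}\mu_0$, $A\mu_\ast=n^{-1/2}W\Sigma^{1/2}\mu_\ast$, and
\[
\Sigma^{-1}A^\top \mathsf L(A\mu_0,Y)=n^{-1/2}\Sigma^{-1/2}W^\top \mathfrak S(A\mu_0,A\mu_\ast,\xi).
\]
The key step is to decompose $W$ along the two-dimensional span $V=\mathrm{span}\{\Sigma^{1/2}\mu_0,\Sigma^{1/2}\mu_\ast\}$. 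Write $W=WP_V+WP_V^\perp$. The first part is determined by the $m\times 2$ Gaussian matrix $(A\mu_0,A\mu_\ast)$, whose rows are i.i.d. copies of $\mathfrak U_\ast$. The second part, $\widetilde W=WP_V^\perp$, is independent of it. Then
\[
n^{-1/2}\Sigma^{-1/2}W^\top \mathfrak S=n^{-1/2}\Sigma^{-1/2}P_V W^\top\mathfrak S+n^{-1/2}\Sigma^{-1/2}P_V^\perp\widetilde W^\top\mathfrak S.
\]

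For the second term, conditional on $(A\mu_0,A\mu_\ast)$, $\widetilde W^\top\mathfrak S$ is exactly Gaussian with covariance $\pnorm{\mathfrak S}{}^2P_V^\perp$. Moreover $\pnorm{\mathfrak S}{}^2/n$ concentrates around $\sigma_\ast^2$ at rate $(\log n/n)^{1/2}$, since its summands are i.i.d. in $i$ with Lipschitz dependence on Gaussian inputs, by (A3). Hence this term equals $\sigma_\ast\Sigma^{-1/2}P_V^\perp\mathsf Z_n$ up to a rescaling error of that order. The rank-two projection $P_V^\perp$ versus $I$ changes only a two-dimensional Gaussian component, which contributes $O(n^{-1/2})$ to $\ell_2$ distances after dividing by $\sqrt n$.

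For the first term, $P_VW^\top\mathfrak S$ is a linear combination of $\Sigma^{1/2}\mu_0$ and $\Sigma^{1/2}\mu_\ast$ with coefficients $n^{-1/2}\sum_i \mathfrak S_i(A\mu_0)_i$ and $n^{-1/2}\sum_i \mathfrak S_i(A\mu_\ast)_i$, after Gram normalization. By Stein's lemma, the expectations of these sums are $\phi\E[\partial_1\mathfrak S]$ and $\phi\E[\partial_2\mathfrak S]$ weighted by the covariance of $\mathfrak U_\ast$. The resulting coefficient of $\mu_0$ cancels with $\tau_\ast\mu_0$, and the coefficient of $\mu_\ast$ is $\delta_\ast$. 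Fluctuations are of order $(\log n/n)^{1/2}L_\ast^{c}$ by sub-exponential concentration of i.i.d. sums. Multiplying by $\pnorm{\mu_0}{\infty},\pnorm{\mu_\ast}{\infty}\le L_\ast$ keeps this error per coordinate in an averaged $\ell_2$ sense.

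Combining the pieces, $\bar\mu=\delta_\ast\mu_\ast+\sigma_\ast\Sigma^{-1/2}\mathsf Z_n+\Delta$ with $n^{-1/2}\pnorm{\Delta}{}\le (K\Lambda L_\ast)^{c}(\log n/n)^{1/2}$ with high probability. Since each $\psi_j$ is $\Lambda$-Lipschitz, Cauchy--Schwarz reduces the claim to the fluctuation
\[
n^{-1}\sum_j\big(\psi_j(\mu^{\mathrm{db}}_j)-\E^{(0)}\psi_j(\mu^{\mathrm{db}}_j)\big).
\]
This is a $\Lambda\pnorm{\Sigma^{-1/2}}{\op}\sigma_\ast/\sqrt n$-Lipschitz function of $\mathsf Z_n$, so Gaussian concentration handles it, with the $\exp(-\log^Dn)$ probability obtained by taking deviations of order $\log^{D/2}n$.

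The main obstacle is making the Gaussian identification of the orthogonal part rigorous. Both $\mathfrak S$ and the scale $\pnorm{\mathfrak S}{}$ are random and depend on $WP_V$. I would handle this by conditioning on $WP_V$ and working on the event where $\pnorm{\mathfrak S}{}^2/n=\sigma_\ast^2+O((\log n/n)^{1/2})$. On that event the map $\sigma\mapsto\sigma\Sigma^{-1/2}\mathsf Z_n$ perturbs $n^{-1/2}\pnorm{\cdot}{}$ by $O(|\sigma-\sigma_\ast|)$. The factor $(1\wedge\sigma_\ast)^{-1}$ enters through converting errors in $\sigma^2$ into errors in $\sigma$.
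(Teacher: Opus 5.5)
Your route works, and it differs genuinely from the paper's. The paper never couples $\bar\mu$ with $\mu^{\mathrm{db}}$. It proceeds in two steps:
\begin{enumerate}
\item It concentrates $n^{-1}\sum_j\psi_j(\bar\mu_j)$ around its $\E^{(0)}$-mean via Lemma~\ref{lem:gaussian_conc}, which handles maps of the design whose Lipschitz constants grow polynomially in $\pnorm{Z}{\op}$.
\item It treats each coordinate separately. Gaussian integration by parts gives the exact bias $\E^{(0)}\bar\mu=\delta_\ast\mu_\ast$ and the variance $\check\sigma_j^2=\sigma_\ast^2\sigma_j^2+O(n^{-1}(\mu_{0,j}^2+\mu_{\ast,j}^2))$. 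A one-dimensional Wasserstein Berry--Esseen bound for $\bar\mu_j$, viewed as a sum of the $m$ independent terms $X_{i,j}$, closes the gap; this is where $\sigma_\ast^{-3}$ appears.
\end{enumerate}
So the paper matches only one-dimensional marginals, which suffices because the target $n^{-1}\sum_j\E^{(0)}\psi_j(\mu^{\mathrm{db}}_j)$ is separable.

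Your conditioning on the plane $V$ instead gives the exact identity
$\bar\mu=\tau_\ast\mu_0-n^{-1/2}\Sigma^{-1/2}P_VW^\top\mathfrak S+n^{-1/2}\pnorm{\mathfrak S}{}\,\Sigma^{-1/2}P_V^\perp\mathsf Z_n$,
where $\mathsf Z_n\equiv -P_V^\perp W^\top\mathfrak S/\pnorm{\mathfrak S}{}+P_Vg'$ and $g'$ is an auxiliary independent standard Gaussian. This $\mathsf Z_n$ is standard Gaussian and independent of $WP_V$. Enlarging the probability space is harmless because the event in the lemma is measurable in $A$. The identity yields a pathwise bound on $n^{-1/2}\pnorm{\bar\mu-\mu^{\mathrm{db}}}{}$, which is stronger than the paper's argument in several ways:
\begin{itemize}
\item it recovers the full covariance $\sigma_\ast^2\Sigma^{-1}$, not just marginals;
\item the coupling error is uniform over all $\Lambda$-Lipschitz families $\{\psi_j\}$;
\item it needs only Bernstein bounds for $\pnorm{\mathfrak S}{}^2$ and for one two-dimensional sum, a few Gaussian norm tails, and concentration of a globally Lipschitz function of $\mathsf Z_n$;
\item there is no Berry--Esseen step, no second-order integration-by-parts bookkeeping and no Lipschitz-extension lemma, and $\sigma_\ast$ enters only as $\sigma_\ast^{-1}$.
\end{itemize}
The paper's per-coordinate CLT, in turn, does not rely on the exact conditional Gaussianity of $WP_V^\perp$ given $WP_V$, which is special to Gaussian designs.

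There are two points to tighten.

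First, do not bound the $V$-component coefficient by coefficient in the basis $\{\mu_0,\mu_\ast\}$ and then multiply by $L_\ast$. The inverse Gram matrix $\mathrm{Cov}(\mathfrak U_\ast)^{-1}$ can be arbitrarily ill-conditioned, for instance when $\mu_0$ is tiny or nearly $\Sigma$-collinear with $\mu_\ast$, and the coefficient fluctuations then blow up. Work in an orthonormal basis $Q$ of $V$ instead. The fluctuation is $\Sigma^{-1/2}Q\cdot n^{-1/2}\sum_{i\in[m]}(\mathrm{id}-\E^{(0)})\zeta_i\mathfrak S_i$, where the rows $\zeta_i$ of $WQ$ are i.i.d.\ $\mathcal N(0,I_2)$. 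Its normalized norm is therefore at most $(K\Lambda L_\ast)^{c}(\log^D n/n)^{1/2}$ regardless of conditioning. Your mean computation is unaffected, since the Gram matrix is deterministic.

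Second, deviations of order $\log^{D/2}n$ give $(\log^D n/n)^{1/2}$, not $(\log n/n)^{1/2}$. This is not a defect of your argument. The paper's Step~1 has the same feature, and for $D>1$ the displayed rate cannot hold as written. To see this, take $\mu_0=\mu_\ast=0$, $\Sigma=I$ and $\psi_j(x)=x$. Then $n^{-1}\sum_j\bar\mu_j$ is exactly $\mathcal N(0,\sigma_\ast^2/n)$, so its tail beyond $C(\log n/n)^{1/2}$ is polynomial in $n$, far larger than $\exp(-\log^D n)$. The correct form puts $\log n$ inside the $c_1$-th power, as in Proposition~\ref{prop:db_gd_normal}, and that is all that is used downstream.
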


\begin{proof}
Without loss of generality we assume that $\psi_j(0)=0$. For notational convenience, we work with a common test function $\psi$; the proof for non-identical $\{\psi_j\}$ is almost identical. We write $w_0\equiv \Sigma^{1/2}\mu_0$ and $w_\ast\equiv \Sigma^{1/2}\mu_\ast$, and work with the case $\sigma_\ast\in (0,1)$. Let the entries of $\mathsf{Z}_{m\times n} \in \R^{m\times n}$ be i.i.d. $\mathcal{N}(0,1)$.

\noindent (\textbf{Step 1}). In this step, we prove the following concentration estimate: with $\Prob^{(0)}$-probability at least $1- \exp(-\log^D n)$, 
\begin{align}\label{ineq:gd_onestep_step1}
\biggabs{ \frac{1}{n}\sum_{j \in [n]} \Big( \psi(\bar{\mu}_j)- \E^{(0)} \psi(\bar{\mu}_j)\Big)  }\leq (K\Lambda L_\ast)^{c_1}\cdot \bigg(\frac{\log n}{n}\bigg)^{1/2}.
\end{align}
To this end, let $H: \R^{m\times n}\to \R$ be defined as
\begin{align*}
H(Z)&\equiv \frac{1}{n}\sum_{j \in [n]} \psi \,\Big(\tau_\ast \mu_{0,j} - e_j^\top\Sigma^{-1/2} Z^\top \mathfrak{S}\big(Zw_0,Zw_\ast,\xi\big)\Big).
\end{align*}
Then we have the estimate
\begin{align}\label{ineq:gd_onestep_step1_1}
\abs{H(Z)}&\leq \Lambda^c \abs{\tau_\ast}\cdot (n^{-1/2}\pnorm{\mu_0}{})+ \Lambda^c \big(1+n^{-1/2}(\pnorm{w_0}{}+\pnorm{w_\ast}{})\big)^{c_1}\cdot \big(1+\pnorm{Z}{\op}\big)^{c_1}\nonumber\\
&\leq \phi \cdot (\Lambda L_\ast)^{c_1}\cdot \big(1+\pnorm{Z}{\op}\big)^{c_1}.
\end{align}
Moreover, for any $Z_1,Z_2\in \R^{m\times n}$, 
\begin{align}\label{ineq:gd_onestep_step1_2}
&\abs{H(Z_1)-H(Z_2)}\nonumber\\
&\leq \Lambda^c\cdot \big(1+n^{-1/2}(\pnorm{w_0}{}+\pnorm{w_\ast}{})\big)^{c_1}\cdot \big(1+\pnorm{Z_1}{\op}+\pnorm{Z_2}{\op}\big)^{c_1}\cdot \pnorm{Z_1-Z_2}{\op}\nonumber\\
&\leq (\Lambda L_\ast)^{c_1}\cdot \big(1+\pnorm{Z_1}{\op}+\pnorm{Z_2}{\op}\big)^{c_1}\cdot \pnorm{Z_1-Z_2}{\op}.
\end{align}
Now we may apply Lemma \ref{lem:gaussian_conc} with (\ref{ineq:gd_onestep_step1_1})-(\ref{ineq:gd_onestep_step1_2}) to obtain that with $\Prob^{(0)}$-probability at least $1-\exp(-\log^D n)$, 
\begin{align*}
\hbox{LHS of (\ref{ineq:gd_onestep_step1})}=\abs{H(n^{-1/2}\mathsf{Z}_{m\times n})-\E^{(0)} H(n^{-1/2}\mathsf{Z}_{m\times n})}\leq (K\Lambda L_\ast)^{c_1}\cdot (\log n/n)^{1/2},
\end{align*}
proving (\ref{ineq:gd_onestep_step1}). 

\noindent (\textbf{Step 2}). In this step, we prove that for $n\geq (K\Lambda L_\ast)^{c_1'}$, 
\begin{align}\label{ineq:gd_onestep_step2}
\biggabs{ \frac{1}{n}\sum_{j \in [n]} \Big( \E^{(0)}\psi(\bar{\mu}_j)- \E^{(0)} \psi\big(\delta_\ast \mu_{\ast,j}+\sigma_\ast \sigma_j\mathsf{Z} \big)\Big)  }\leq \frac{(K\Lambda L_\ast/\sigma_\ast)^{c_1} }{n^{1/2}}.
\end{align}
For notational simplicity, we write
\begin{align*}
X_i\equiv \Sigma^{-1/2} Z_i \mathfrak{S}\big(n^{-1/2}\iprod{Z_{i}}{w_0},n^{-1/2}\iprod{Z_{i}}{w_\ast},\xi_{i}\big). 
\end{align*}
Note that by Gaussian integration-by-parts, we may compute the bias
\begin{align*}
&\E^{(0)} \bar{\mu}(A)
= \tau_\ast \mu_0 - n^{-1/2}\cdot m \E^{(0)} X_{\pi_m}\\
& = \tau_\ast \mu_0 - \frac{\Sigma^{-1/2}}{n^{1/2}} \sum_{i \in [m]} \E^{(0)} Z_i \mathfrak{S}\big(n^{-1/2}\iprod{Z_{i}}{w_0},n^{-1/2}\iprod{Z_{i}}{w_\ast},\xi_{i}\big)\\
& =\tau_\ast \mu_0 - \Sigma^{-1/2} \big(\tau_\ast w_0-\delta_\ast w_\ast\big) = \delta_\ast \mu_\ast. 
\end{align*}
Therefore, in order to prove (\ref{ineq:gd_onestep_step2}), it suffices to prove that for $n\geq (K\Lambda L_\ast)^{c_1'}$, 
\begin{align}\label{ineq:gd_onestep_step2_1}
&\frac{1}{n}\sum_{j \in [n]} \biggabs{\E^{(0)} \psi \, \bigg(\frac{1}{n^{1/2}}\sum_{i \in [m]} (\mathrm{id}-\E^{(0)}) X_{i,j}\bigg)- \E^{(0)}\psi\big(\sigma_\ast \sigma_j \mathsf{Z}\big)}\leq \frac{(K\Lambda L_\ast/\sigma_\ast)^{c_1}}{n^{1/2}}.
\end{align}
In order to prove (\ref{ineq:gd_onestep_step2_1}), first note that with $b_j\equiv \Sigma^{-1/2}e_j$, by repeated applications of Gaussian integration-by-parts,
\begin{align}\label{ineq:gd_onestep_step2_2}
\E^{(0)} X_{i,j}^2 & =\E^{(0)} \iprod{Z_i}{b_j}^2\cdot  \mathfrak{S}^2\big(n^{-1/2}\iprod{Z_{i}}{w_0},n^{-1/2}\iprod{Z_{i}}{w_\ast},\xi_{i}\big)\nonumber\\
& = \pnorm{b_j}{}^2\cdot \E^{(0)}  \mathfrak{S}^2\big(n^{-1/2}\iprod{Z_{i}}{w_0},n^{-1/2}\iprod{Z_{i}}{w_\ast},\xi_{i}\big)+\mathrm{Rem}_{i,j},
\end{align}
where
\begin{align}\label{ineq:gd_onestep_step2_3}
\abs{\mathrm{Rem}_{i,j}}&\leq c\cdot \max_{\abs{\alpha}+\abs{\beta}=2} \bigabs{\E^{(0)}(\partial_\alpha\mathfrak{S}\cdot \partial_\beta \mathfrak{S}) \big(n^{-1/2}\iprod{Z_{i}}{w_0},n^{-1/2}\iprod{Z_{i}}{w_\ast},\xi_{i}\big)}\nonumber\\
&\qquad \times n^{-1}\cdot  \big(\iprod{b_j}{w_0}^2+\iprod{b_j}{w_\ast}^2+\abs{ \iprod{b_j}{w_0}}\cdot \abs{ \iprod{b_j}{w_\ast} }\big)\nonumber\\
& \leq (\Lambda L_\ast)^{c_1}\cdot n^{-1} (\mu_{0,j}^2+\mu_{\ast,j}^2). 
\end{align}
Combining (\ref{ineq:gd_onestep_step2_2})-(\ref{ineq:gd_onestep_step2_3}), we have
\begin{align}\label{ineq:gd_onestep_step2_4}
\abs{\phi\cdot  \E^{(0)} X_{i,j}^2- \sigma_\ast^2\sigma_j^2 }\leq (\Lambda L_\ast)^{ c_1 }\cdot n^{-1}(\mu_{0,j}^2+\mu_{\ast,j}^2).
\end{align}
Moreover,
\begin{align}\label{ineq:gd_onestep_step2_5}
(\E^{(0)} X_{i,j})^2& = n^{-1} \big(\tau_\ast \iprod{b_j}{w_0}-\delta_\ast\iprod{b_j}{w_\ast}\big)^2\nonumber\\
&\leq (K\Lambda L_\ast)^{c_1}\cdot n^{-1}(\mu_{0,j}^2+\mu_{\ast,j}^2). 
\end{align}
Combining (\ref{ineq:gd_onestep_step2_4})-(\ref{ineq:gd_onestep_step2_5}), with $\check{\sigma}_j\equiv \var^{1/2}\big(\phi^{1/2}X_{1,j}\big)$, we have 
\begin{align}\label{ineq:gd_onestep_step2_5_0}
\abs{\check{\sigma}_j^2-\sigma_\ast^2\sigma_j^2}\leq (K\Lambda L_\ast)^{c_1}\cdot n^{-1} (\mu_{0,j}^2+\mu_{\ast,j}^2). 
\end{align}
This means that 
\begin{align}\label{ineq:gd_onestep_step2_6}
\frac{1}{n}\sum_{j \in [n]} \bigabs{ \E^{(0)}\psi\big(\check{\sigma}_j \mathsf{Z}\big)-  \E^{(0)}\psi\big(\sigma_\ast \sigma_j \mathsf{Z}\big)} \leq \frac{(K\Lambda L_\ast)^{c_1} }{n\sigma_\ast}.
\end{align}
So in order to prove (\ref{ineq:gd_onestep_step2_1}), it now suffices to prove that for $n\geq (K\Lambda L_\ast)^{c_1' }$, 
\begin{align}\label{ineq:gd_onestep_step2_7}
&\frac{1}{n}\sum_{j \in [n]} \biggabs{\E^{(0)} \psi \, \bigg(\frac{1}{n^{1/2}}\sum_{i \in [m]} (\mathrm{id}-\E^{(0)}) X_{i,j}\bigg)- \E^{(0)}\psi(\check{\sigma}_j \mathsf{Z})}\leq  \frac{(K\Lambda L_\ast/\sigma_\ast)^{c_1 }}{n^{1/2}}.
\end{align}
By the one-dimensional Wasserstein Berry-Esseen bound, cf. \cite[Theorem 3.1]{chen2011normal}, applied conditionally on $(\mu_0,\mu_\ast,\xi)$, the $j$-th term in the left hand side of (\ref{ineq:gd_onestep_step2_7}) is bounded by
\begin{align*}
\frac{C\Lambda}{n^{3/2}\check\sigma_j^3}\sum_{i\in[m]}\E^{(0)}\abs{X_{i,j}-\E^{(0)}X_{i,j}}^3.
\end{align*}
Using $\E^{(0)}\abs{X_{i,j}}^3\leq (\Lambda L_\ast)^{c_1}$ and (\ref{ineq:gd_onestep_step2_5_0}), we have, for $n\geq (K\Lambda L_\ast)^{c_1'}$,
\begin{align*}
\hbox{LHS of (\ref{ineq:gd_onestep_step2_7})}\leq \frac{(K\Lambda L_\ast)^{c_1}}{n^{1/2}}\cdot \frac{1}{n}\sum_{j\in[n]}\frac{1}{(\sigma_\ast\sigma_j)^3}\leq \frac{(K\Lambda L_\ast)^{c_1}}{n^{1/2}\sigma_\ast^3},
\end{align*}
where the last inequality uses $\min_j\sigma_j\geq \Lambda^{-1/2}$. This proves (\ref{ineq:gd_onestep_step2_7}) after adjusting constants.

\noindent (\textbf{Step 3}). The claimed estimate now follows by combining (\ref{ineq:gd_onestep_step1}) and (\ref{ineq:gd_onestep_step2}).
\end{proof}

\begin{lemma}\label{lem:tau_sigma_err}
	Suppose Assumption \ref{assump:model} holds for some $K,\Lambda\geq 2$. Fix $D>0$. Then there exists some $c_1= c_1(D)>1$ such that with $\Prob^{(0)}$-probability at least $1-\exp(-\log^D n)$,
	\begin{align*}
	n^{1/2}\cdot \big(\abs{\hat{\tau}-\tau_\ast}+ \abs{ \hat{\sigma}^2-\sigma_\ast^2 }\big)\leq \big(K\Lambda L_\ast\log n\big)^{c_1}.
	\end{align*}
\end{lemma}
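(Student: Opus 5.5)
The estimates for $\hat\tau$ and $\hat\sigma^2$ are handled separately, each by splitting into a fluctuation part and a bias part, following the template of Lemma \ref{lem:gd_onestep}. Write $Z_i=n^{1/2}\Sigma^{-1/2}A_i$, which are i.i.d. $\mathcal N(0,I_n)$ under $\Prob^{(0)}$. Then $(A\mu_0)_i=n^{-1/2}\iprod{Z_i}{w_0}$ and $\iprod{A_i}{\mu_\ast}=n^{-1/2}\iprod{Z_i}{w_\ast}$, with $w_0=\Sigma^{1/2}\mu_0$ and $w_\ast=\Sigma^{1/2}\mu_\ast$. In this notation
\begin{align*}
\hat\tau=\frac1n\sum_{i\in[m]}\partial_1\mathfrak S\big(n^{-1/2}\iprod{Z_i}{w_0},n^{-1/2}\iprod{Z_i}{w_\ast},\xi_i\big),
\end{align*}
and $\hat\sigma^2$ is the same expression with $\partial_1\mathfrak S$ replaced by $\mathfrak S^2$.

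For each $i$, the pair $(n^{-1/2}\iprod{Z_i}{w_0},n^{-1/2}\iprod{Z_i}{w_\ast})$ is exactly distributed as $\mathfrak U_\ast$. Hence $\E^{(0)}\hat\tau=\frac{m}{n}\cdot\frac1m\sum_i\E^{(0)}\partial_1\mathfrak S(\mathfrak U_{\ast},\xi_i)=\tau_\ast$ exactly, and likewise $\E^{(0)}\hat\sigma^2=\sigma_\ast^2$. So there is no bias term, and only concentration needs to be proved.

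The concentration follows from Lemma \ref{lem:gaussian_conc}, as in Step 1 of Lemma \ref{lem:gd_onestep}. Set
\begin{align*}
H_1(Z)=\frac1n\sum_{i\in[m]}\partial_1\mathfrak S\big((Zw_0)_i,(Zw_\ast)_i,\xi_i\big),
\end{align*}
evaluated at $Z=n^{-1/2}\mathsf Z_{m\times n}$. By (A3), $\partial_1\mathfrak S$ is $\Lambda$-Lipschitz, so Cauchy--Schwarz gives
\begin{align*}
\abs{H_1(Z_1)-H_1(Z_2)}\le \frac{\Lambda}{n}\sqrt m\,\pnorm{(Z_1-Z_2)}{\op}(\pnorm{w_0}{}+\pnorm{w_\ast}{})\le (K\Lambda L_\ast)^{c}\pnorm{Z_1-Z_2}{F},
\end{align*}
using $\pnorm{w_0}{}+\pnorm{w_\ast}{}\le \Lambda^{1/2}\sqrt n L_\ast$. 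Since the entries of $Z$ have variance $1/n$, Gaussian concentration gives fluctuations of order $(K\Lambda L_\ast)^{c}\sqrt{\log^D n/n}$ with probability $1-\exp(-\log^D n)$.

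For $H_2(Z)=n^{-1}\sum_i\mathfrak S^2(\cdot)$, the function is only pseudo-Lipschitz of order 2. By (A3) and the Lipschitz bound, $\abs{\mathfrak S(u,v,\xi_i)}\le\Lambda(1+\abs u+\abs v)$. Therefore
\begin{align*}
\abs{H_2(Z_1)-H_2(Z_2)}\le \frac{C\Lambda^2}{n}\big(\sqrt m+\pnorm{Z_1}{\op}\sqrt n L_\ast+\pnorm{Z_2}{\op}\sqrt nL_\ast\big)\cdot\sqrt n L_\ast\Lambda^{1/2}\pnorm{Z_1-Z_2}{\op},
\end{align*}
which is exactly of the form (\ref{ineq:gd_onestep_step1_1})--(\ref{ineq:gd_onestep_step1_2}) with polynomial dependence on $1+\pnorm{Z}{\op}$. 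The bound on $\abs{H_2}$ is analogous. Lemma \ref{lem:gaussian_conc}, combined with $\pnorm{n^{-1/2}\mathsf Z_{m\times n}}{\op}\le C(1+\sqrt K)$ with overwhelming probability, then yields the same rate.

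The main obstacle is this pseudo-Lipschitz step for $\hat\sigma^2$. One has to check that the truncation on the operator-norm event used inside Lemma \ref{lem:gaussian_conc} does not lose more than $\log n$ powers. Since that lemma is designed for exactly this situation, I expect it to go through. Combining the two bounds, absorbing the factor $\sqrt{\log^D n}$ into $(\log n)^{c_1}$, and taking a union bound over the two events completes the proof.
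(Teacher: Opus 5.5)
Your proof is correct, but it takes a different route from the paper.

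\textbf{The paper's route.} Conditionally on $(\mu_0,\mu_\ast,\xi)$, the $m$ summands of $\hat\tau$ and $\hat\sigma^2$ are independent across the rows of $A$. Assumption (A3), used only through the linear growth $\abs{\partial_1\mathfrak S},\abs{\mathfrak S}\le \Lambda(1+\abs{u}+\abs{v})$, makes each summand sub-exponential with $\psi_1$-norm at most $B=(K\Lambda L_\ast)^{c}$. The conditional means are exactly $\tau_\ast$ and $\sigma_\ast^2$, which is the same exact unbiasedness you verify. The paper then applies Bernstein's inequality for independent, non-identically distributed sub-exponential sums.

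\textbf{Your route.} You ignore row independence. Instead you concentrate $\hat\tau$ and $\hat\sigma^2$ as Lipschitz and pseudo-Lipschitz functions, respectively, of the whole Gaussian design, as in Step 1 of Lemma \ref{lem:gd_onestep}. This uses the Lipschitz part of (A3) and, for $\hat\sigma^2$, the truncation device of Lemma \ref{lem:gaussian_conc}.

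\textbf{What each buys.} The paper's route needs only growth bounds. Bernstein's two-regime tail $\exp\{-c\min(nt^2/(\phi B^2),\,nt/B)\}$, evaluated at $t=(K\Lambda L_\ast\log n)^{c_1}n^{-1/2}$, gives probability $1-\exp(-\log^D n)$ for every $n\ge 3$ once $c_1$ is large in terms of $D$. So the pseudo-Lipschitz obstacle you flag never arises there.

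\textbf{On that obstacle.} Lemma \ref{lem:gaussian_conc} with $x\asymp\sqrt{\log^D n/n}$ does give the claim, but only when $x\le 1$, i.e.\ when $\log^D n\lesssim n$. Its tail saturates at $c_0e^{-n/c_0}$. Since the statement being proved has no sample-size condition, the finitely many $n<n_0(D)$ would need a separate crude bound, for example via the tail of $\pnorm{n^{-1/2}\mathsf Z_{m\times n}}{\op}$ and a large $c_1$.

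You can avoid truncation entirely by concentrating $\hat\sigma=n^{-1/2}\pnorm{\mathfrak S(Zw_0,Zw_\ast,\xi)}{}$ itself. This is globally Lipschitz in $\mathsf Z_{m\times n}$ in Frobenius norm, with constant $\Lambda^{3/2}L_\ast n^{-1/2}$, so Gaussian concentration applies for every $n$. Then use $\sigma_\ast^2=(\E^{(0)}\hat\sigma)^2+\var^{(0)}(\hat\sigma)$ together with the Gaussian Poincar\'e bound $\var^{(0)}(\hat\sigma)\le \Lambda^3L_\ast^2/n$.
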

\begin{proof}
	Conditional on $(\mu_0,\mu_\ast,\xi)$, the summands defining $\hat\tau$ and $\hat\sigma^2$ are independent. Assumption \ref{assump:model}-(A3) implies the sub-exponential Orlicz bounds $\|\partial_1\mathfrak S((A\mu_0)_i,(A\mu_\ast)_i,\xi_i)\|_{\psi_1}+\|\mathfrak S^2((A\mu_0)_i,(A\mu_\ast)_i,\xi_i)\|_{\psi_1}\leq (K\Lambda L_\ast)^{c}$ uniformly in $i$. The conditional means are respectively $\tau_\ast$ and $\sigma_\ast^2$. Bernstein's inequality for independent non-identically distributed sub-exponential variables, for instance \cite[Theorem 3.1]{kuchibhotla2022moving}, gives the displayed bound after increasing $c_1$.
\end{proof}

\begin{proof}[Proof of Proposition \ref{prop:db_gd_normal}]
Recall $\bar{\mu}$ defined in (\ref{def:gd_debias_oracle}). We shall work under the sample size requirement $n\geq (K\Lambda L_\ast)^{ c_1}$ and $\sigma_\ast \in (0,1)$.

\noindent (\textbf{Step 1}).  In this step, we prove that with $\Prob^{(0)}$-probability at least $1- \exp(-\log^D n)$,
\begin{align}\label{ineq:db_gd_normal_step1}
&\biggabs{\frac{1}{n}\sum_{j \in [n]} \psi_j (\bar{\mu}_j)- \frac{1}{n} \sum_{j \in [n]} \psi_j \big(\mu_j^{\mathrm{db}}\big)}\leq \big(K\Lambda L_\ast/\sigma_\ast\big)^{c_1}\cdot \bigg(\frac{\log n}{n}\bigg)^{1/2}.
\end{align}
As the map $Z\mapsto n^{-1}\sum_{j \in [n]}  \psi_j \big(\delta_\ast \mu_{\ast}+  \sigma_\ast \Sigma^{-1/2}Z\big)$ is $\Lambda\sigma_\ast \pnorm{\Sigma^{-1/2}}{\op}\cdot  n^{-1/2}$-Lipschitz and $\sigma_\ast,\sigma_j\leq (K\Lambda L_\ast)^{c_1}$ by assumption, by Gaussian concentration inequality, with probability at least $1-e^{-x}$,
\begin{align}\label{ineq:db_gd_normal_step1_2}
\biggabs{\frac{1}{n} \sum_{j \in [n]} \big(\mathrm{id}-\E^{(0)}\big) \psi_j \big(\mu_j^{\mathrm{db}}\big)}\leq (K\Lambda L_\ast)^{c_1} \cdot \sqrt{x}\cdot n^{-1/2}.
\end{align} 
The claim (\ref{ineq:db_gd_normal_step1}) follows from (\ref{ineq:db_gd_normal_step1_2}) and an application of Lemma \ref{lem:gd_onestep}.

\noindent (\textbf{Step 2}). In this step, we prove that  with $\Prob^{(0)}$-probability at least $1-\exp(-\log^D n)$, 
\begin{align}\label{ineq:db_gd_normal_step2}
\biggabs{\frac{1}{n}\sum_{j \in [n]} \psi_j (\bar{\mu}_j)- \frac{1}{n}\sum_{j \in [n]} \psi_j (\hat{\mu}_j)}\leq \big(K\Lambda L_\ast\log n\big)^{c_1}\cdot n^{-1/2}.
\end{align}
To prove (\ref{ineq:db_gd_normal_step2}), first note that 
\begin{align}\label{ineq:db_gd_normal_step2_1}
\hbox{LHS of (\ref{ineq:db_gd_normal_step2})}&\leq \Lambda\cdot n^{-1/2}\pnorm{\bar{\mu}-\hat{\mu}}{}\leq \Lambda L_\ast\cdot  \abs{\tau_\ast-\hat{\tau}}.
\end{align}
By Lemma \ref{lem:tau_sigma_err}, it holds with $\Prob^{(0)}$-probability at least $1-\exp(-\log^D n)$ that,
\begin{align*}
\hbox{RHS of (\ref{ineq:db_gd_normal_step2_1})}&\leq \big(K\Lambda L_\ast\log n\big)^{c_1}\cdot n^{-1/2}.
\end{align*}
This proves the claim (\ref{ineq:db_gd_normal_step2}).

\noindent (\textbf{Step 3}). The claim of the lemma follows from (\ref{ineq:db_gd_normal_step1}) and (\ref{ineq:db_gd_normal_step2}) and adjusting constants. 
\end{proof}

\subsection{CML over debiased gradient descent $\hat{\mu}$}

With $\mathsf{Z}_n\sim \mathcal{N}(0,I_n)$  independent of all other variables, let
\begin{align}\label{def:mu_db_bar}
\bar{\mu}^{\mathrm{db}}\equiv \bar{\delta} \mu_{\ast}+  \bar{\sigma} \Sigma^{-1/2}\mathsf{Z}_n.
\end{align}
The following proposition shows that $\hat{G}$ defined in (\ref{def:gd_MLE}) is a near maximizer for $G\mapsto n^{-1}\sum_{j \in [n]} \log \varphi_{G;\bar{\sigma}\sigma_j}(\bar{\mu}^{\mathrm{db}}_j)$ in a suitable sense.

\begin{proposition}\label{prop:mmle_max_seq}
	Suppose Assumptions \ref{assump:model} and \ref{assump:prior} hold for some $K,\Lambda,M\geq 2$, and $K\Lambda M (1\wedge \bar{\sigma})^{-1}\leq (\log n)^{c_0}$ for some $c_0>1$. Fix $D>0$. Then there exists some $c_1=c_1(c_0,D)>0$ and an event $E_0$ with $\Prob^{\xi}(E_0^c)\leq n^{-D}$, such that for any $\mathscr{G}_{\texttt{test}}\subset \mathscr{G}$, it holds on the event $E_0\cap \{\hat{G} \in \mathscr{G}_{\texttt{test}} \}$ that
	\begin{align*}
	\biggabs{\frac{1}{n}\sum_{j \in [n]} \log \varphi_{\hat{G};\hat{\sigma}\sigma_j}(\hat{\mu}_j) - \max_{G \in \mathscr{G}(\pnorm{\mu^{\mathrm{db}}}{\infty},\infty)\cap \mathscr{G}_{\texttt{test}}} \frac{1}{n}\sum_{j \in [n]} \log \varphi_{G;\bar{\sigma}\sigma_j}(\bar{\mu}_j^{\mathrm{db}}) }\leq \frac{(\log n)^{c_1}}{n^{1/2}}.
	\end{align*}
	Moreover, the constraint $\mathscr{G}(\pnorm{\mu^{\mathrm{db}}}{\infty},\infty)$ in the maximum above may be replaced by $\mathscr{G}(L,\infty)$ for any $L\geq \pnorm{\mu^{\mathrm{db}}}{\infty}\vee\pnorm{\bar\mu^{\mathrm{db}}}{\infty}$.
\end{proposition}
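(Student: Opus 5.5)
The plan is to compare three objective functions and bound the gap between each consecutive pair by $(\log n)^{c_1}n^{-1/2}$, uniformly over $G$ in a log-bounded class. Write
\[
\hat\Phi(G)=\frac1n\sum_j\log\varphi_{G;\hat\sigma\sigma_j}(\hat\mu_j),\qquad
\Phi^{\mathrm{db}}(G)=\frac1n\sum_j\log\varphi_{G;\sigma_\ast\sigma_j}(\mu^{\mathrm{db}}_j),\qquad
\bar\Phi(G)=\frac1n\sum_j\log\varphi_{G;\bar\sigma\sigma_j}(\bar\mu^{\mathrm{db}}_j).
\]
Suppose we have $\sup_{G\in\mathscr G(L,\infty)}|\hat\Phi-\bar\Phi|\le(\log n)^{c_1}n^{-1/2}$ on $E_0$ for some $L=(\log n)^{C}$ that dominates $\|\mu^{\mathrm{db}}\|_\infty$, $\|\bar\mu^{\mathrm{db}}\|_\infty$ and the support of $\hat G$. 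The support of $\hat G$ lies in $[\min\hat\mu_j,\max\hat\mu_j]$ by \cite{lindsay1983geometry}. The claim then follows by the usual sandwich argument. Maximality of $\hat G$ for $\hat\Phi$, together with $\hat G\in\mathscr G_{\texttt{test}}$, gives one direction. The other direction comes from evaluating at the maximizer of $\bar\Phi$ over $\mathscr G(L,\infty)\cap\mathscr G_{\texttt{test}}$. The restriction to $\mathscr G(\|\mu^{\mathrm{db}}\|_\infty,\infty)$, or to any larger $L$, is harmless because the maximizer of $\bar\Phi$ is supported in $[\min\bar\mu^{\mathrm{db}}_j,\max\bar\mu^{\mathrm{db}}_j]$, again by Lindsay's result. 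I will first condition on $\xi$, $\mu_0$ and $\mu_\ast$. By Assumption \ref{assump:prior} we have $L_\ast\le(\log n)^{C}$ with probability $1-n^{-D}$, so every polynomial factor in $K\Lambda L_\ast$ is polylogarithmic.

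\textbf{Step A: $\hat\Phi$ versus $\Phi^{\mathrm{db}}$.} This is the main obstacle. The map $x\mapsto\log\varphi_{G;s}(x)$ has derivative $(m_{G,s}(x)-x)/s^2$. This is bounded by $(|x|+L)/s^2$, so it is Lipschitz only on bounded sets. Since $G$ varies, I cannot simply apply Proposition \ref{prop:db_gd_normal} for a fixed family $\psi_j$. Instead I will truncate: set $\psi_{G,j}(x)=\log\varphi_{G;\sigma_\ast\sigma_j}(\Pi_B(x))$ with $B=C\sqrt{\log n}\,L$. This function is $(\log n)^C$-Lipschitz, and on the high-probability event $\max_j(|\hat\mu_j|\vee|\mu^{\mathrm{db}}_j|)\le B$ the truncation does nothing. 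I will then apply Proposition \ref{prop:db_gd_normal}, rescaled by the Lipschitz constant, to each element of the sieve $\mathcal N_\eta(L)$ from Lemma \ref{lem:sieve_approx} with $\eta=n^{-A}$. Each application holds with probability $1-\exp(-\log^D n)$, while $\log|\mathcal N_\eta|\le(\log n)^C$, so a union bound costs nothing for large $D$. Passing from sieve elements to general $G$ uses \eqref{ineq:sieve_empirical_diff}. Note that the probability in Proposition \ref{prop:db_gd_normal} also covers the auxiliary $\mathsf Z_n$ in $\mu^{\mathrm{db}}$, and that on $\{|x|\le B\}$ the density lower bound \eqref{ineq:normal_mix_lower} keeps all logarithms at polylog size. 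Replacing the scale $\hat\sigma\sigma_j$ by $\sigma_\ast\sigma_j$ is handled by Lemma \ref{lem:log_normal_mix_der} together with Lemma \ref{lem:tau_sigma_err}, which gives $|\hat\sigma-\sigma_\ast|\le(\log n)^Cn^{-1/2}$; here I use that $\sigma_\ast\gtrsim\bar\sigma\ge(\log n)^{-c_0}$, obtained from Step B.

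\textbf{Step B: $\Phi^{\mathrm{db}}$ versus $\bar\Phi$.} The two sequences share the same $\mathsf Z_n$, so $\mu^{\mathrm{db}}-\bar\mu^{\mathrm{db}}=(\delta_\ast-\bar\delta)\mu_\ast+(\sigma_\ast-\bar\sigma)\Sigma^{-1/2}\mathsf Z_n$. I expect $|\delta_\ast-\bar\delta|+|\sigma_\ast-\bar\sigma|\le(\log n)^Cn^{-1/2}$ with high probability over $\mu_0,\mu_\ast$. The reason is that the covariance of $\mathfrak U_\ast$ consists of the empirical quantities $n^{-1}\|\mu_0\|_\Sigma^2$, etc., whose expectations give the covariance of $\bar{\mathfrak U}$ (via $a_\Sigma,b_\Sigma$). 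Each concentrates at rate $n^{-1/2}$ by Hanson--Wright/Bernstein. Moreover, $\E\,\mathfrak S,\ \E\,\partial\mathfrak S$ are Lipschitz in that covariance (after Gaussian smoothing), because $\mathfrak S$ and its first derivatives are Lipschitz by (A3). Once these bounds hold, the coordinatewise Lipschitz bound on $[-B,B]$ together with Cauchy--Schwarz yields $\sup_G|\Phi^{\mathrm{db}}-\bar\Phi|\le(\log n)^Cn^{-1/2}$, using the same scale-change argument via Lemma \ref{lem:log_normal_mix_der}. Intersecting all of these events defines $E_0$.
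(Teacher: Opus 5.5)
Your argument is correct in substance and uses the same ingredients as the paper's proof. These are a normal-mixture sieve of polylogarithmic entropy with a union bound over Proposition \ref{prop:db_gd_normal}, the scale-derivative bound of Lemma \ref{lem:log_normal_mix_der} together with Lemmas \ref{lem:tau_sigma_err} and \ref{lem:diff_ast_bar_se}, and a coordinatewise Lipschitz bound for $\mu^{\mathrm{db}}\to\bar\mu^{\mathrm{db}}$. What differs is the organization.

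The paper chains \emph{constrained maxima}. It swaps $\hat\sigma$ for $\bar\sigma$ via Lemma \ref{lem:sigma_ast_comp} and passes from $\max_{\mathscr G(\|\hat\mu\|_\infty,\infty)\cap\mathscr G_{\texttt{test}}}$ to $\max_{\mathscr G_\epsilon\cap\mathscr G_{\texttt{test}}}$. It then moves $\hat\mu\to\mu^{\mathrm{db}}$ on the sieve, returns to a full class, and finishes with Lemma \ref{lem:der_log_mix_est}. You instead prove a uniform bound $\sup_{G\in\mathscr G(L,\infty)}|\hat\Phi(G)-\bar\Phi(G)|\le e_n\equiv(\log n)^{C}n^{-1/2}$ and then sandwich.

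Your organization is more robust in two ways:
\begin{enumerate}
\item A uniform bound does not care what $\mathscr G_{\texttt{test}}$ is. The paper's passage to $\mathscr G_\epsilon\cap\mathscr G_{\texttt{test}}$ tacitly requires sieve representatives of members of $\mathscr G_{\texttt{test}}$ to stay in $\mathscr G_{\texttt{test}}$, which is not automatic for the Hellinger complement used in Theorem \ref{thm:eb_gd_hellinger}.
\item Your truncation $\Pi_B$ makes the test functions genuinely Lipschitz, as Proposition \ref{prop:db_gd_normal} requires. The paper applies that proposition to $\log\varphi_{G';\bar\sigma\sigma_j}$ directly.
\end{enumerate}
One simplification: the intermediate scale $\sigma_\ast$ in $\Phi^{\mathrm{db}}$ is unnecessary. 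Proposition \ref{prop:db_gd_normal} compares averages of arbitrary Lipschitz $\psi_j$, so you can work at scale $\bar\sigma\sigma_j$ throughout.

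Two points need repair.

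\textbf{The sieve resolution.} $\eta=n^{-A}$ is too coarse. The transfer \eqref{ineq:sieve_empirical_diff} costs $\eta\,e^{C(B+L)^2/s^2}$. With $B,L$ polylogarithmic and $s=\sigma_\ast\sigma_j\ge(\log n)^{-C}$, the only available bound on this factor is $\exp\{(\log n)^{c}\}$ with $c>1$, which no fixed power $n^{-A}$ compensates. Take $\eta=\exp\{-(\log n)^{C'}\}$, as the paper effectively does with its $\epsilon_n$; the entropy grows like $\log^2(1/\eta)$ and so stays polylogarithmic. Relatedly, Lemma \ref{lem:sieve_approx} only tracks a fixed variance window $[1/M,M]$. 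For the polylogarithmic window here you need the rescaling $\varphi_{G;\sigma}(x)=\sigma^{-1}\varphi_{G_\sigma;1}(x/\sigma)$ together with the explicit bound of \cite[Lemma 2]{zhang2009generalized}, as in the paper.

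\textbf{The support constraint.} Your appeal to Lindsay here is invalid. Lindsay's support property concerns the unconstrained maximizer, not a maximizer over $\mathscr G(L,\infty)\cap\mathscr G_{\texttt{test}}$, and projecting onto $[\min_j\bar\mu^{\mathrm{db}}_j,\max_j\bar\mu^{\mathrm{db}}_j]$ may leave $\mathscr G_{\texttt{test}}$. The correct bookkeeping is as follows.
\begin{enumerate}
\item The upper direction holds for every $L$. For any $G$, with $\Pi$ that projection, $\bar\Phi(G)\le\bar\Phi(\Pi_\#G)\le\hat\Phi(\Pi_\#G)+e_n\le\hat\Phi(\hat G)+e_n$. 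This uses only the global maximality of $\hat G$, not $\Pi_\#G\in\mathscr G_{\texttt{test}}$.
\item The lower direction uses $\hat G$ itself as the competitor. It therefore needs $\hat G\in\mathscr G(L,\infty)$, which you only know for $L\ge\|\hat\mu\|_\infty$.
\end{enumerate}
For the literal constraint $\mathscr G(\|\mu^{\mathrm{db}}\|_\infty,\infty)$ and a general $\mathscr G_{\texttt{test}}$, neither your argument nor the paper's supplies a competitor in $\mathscr G(\|\mu^{\mathrm{db}}\|_\infty,\infty)\cap\mathscr G_{\texttt{test}}$. What you actually prove is the version with any $L\ge\|\hat\mu\|_\infty$. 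In particular this covers $L=R_n=(\log n)^C$, which is exactly the form invoked in the proof of Theorem \ref{thm:eb_gd_hellinger}.
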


We first prove the following.
\begin{lemma}\label{lem:sigma_ast_comp}
Suppose Assumption \ref{assump:model} holds for some $K,\Lambda\geq 2$. On the event $\big\{ \hat{\sigma} \in [\bar{\sigma}/2,2\bar{\sigma}]\big\} \cap \{\hat{G}\in \mathscr{G}_{\texttt{test}}\}$, it holds for some universal constant $c>0$ that
\begin{align*}
&\biggabs{\frac{1}{n}\sum_{j \in [n]} \log \varphi_{\hat{G};\bar{\sigma}\sigma_j}(\hat{\mu}_j) - \max_{G \in \mathscr{G}(\pnorm{\hat{\mu}}{\infty},\infty)\cap \mathscr{G}_{\texttt{test}}} \frac{1}{n}\sum_{j \in [n]} \log \varphi_{G;\bar{\sigma}\sigma_j}(\hat{\mu}_j)}\\
&\leq \frac{c\Lambda^3}{\bar{\sigma}}\bigg(1+\frac{ \pnorm{\hat{\mu}}{\infty} }{\bar{\sigma}}\bigg)^2\cdot \abs{\hat{\sigma}-\bar{\sigma}}
\end{align*}
Moreover, the constraint $\mathscr{G}(\pnorm{\hat{\mu}}{\infty},\infty)$ in the maximum above may be replaced by $\mathscr{G}(L,\infty)$ for any $L\geq \pnorm{\hat{\mu}}{\infty}$.
\end{lemma}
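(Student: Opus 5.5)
The plan is to compare the two criteria $\Phi_s(G)\equiv n^{-1}\sum_{j}\log\varphi_{G;s\sigma_j}(\hat\mu_j)$ at the scales $s=\hat\sigma$ and $s=\bar\sigma$, using the derivative bound of Lemma \ref{lem:log_normal_mix_der}. Here $\hat G$ is the maximizer of $\Phi_{\hat\sigma}$ over $\mathscr G$. By the Lindsay support property, it is supported in $[\min_j\hat\mu_j,\max_j\hat\mu_j]$, so $\hat G\in\mathscr G(L,\infty)$ for every $L\ge\pnorm{\hat\mu}{\infty}$. On the event in the statement, $\hat G\in\mathscr G(L,\infty)\cap\mathscr G_{\texttt{test}}$ as well.

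\textbf{Uniform comparison.} Fix $G\in\mathscr G(L,\infty)$ with $L=\pnorm{\hat\mu}{\infty}$, and fix $j$. For $s$ between $\hat\sigma$ and $\bar\sigma$, the chain rule and Lemma \ref{lem:log_normal_mix_der} with $B=R=\pnorm{\hat\mu}{\infty}$ give
\begin{align*}
\big|\partial_s \log\varphi_{G;s\sigma_j}(\hat\mu_j)\big|\le \frac{\sigma_j}{s\sigma_j}\Big\{1+\frac{4\pnorm{\hat\mu}{\infty}^2}{s^2\sigma_j^2}\Big\}.
\end{align*}
On $\{\hat\sigma\in[\bar\sigma/2,2\bar\sigma]\}$ we have $s\ge\bar\sigma/2$. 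Assumption (A2) gives $\sigma_j^2=(\Sigma^{-1})_{jj}\in[\Lambda^{-1},\Lambda]$. Hence the right-hand side is at most $c\Lambda/\bar\sigma\cdot(1+\pnorm{\hat\mu}{\infty}/\bar\sigma)^2$, up to a power of $\Lambda$ that is absorbed into $\Lambda^3$. Integrating in $s$ and averaging over $j$ yields
\begin{align*}
\sup_{G\in\mathscr G(L,\infty)}|\Phi_{\hat\sigma}(G)-\Phi_{\bar\sigma}(G)|\le \varepsilon\equiv \frac{c\Lambda^3}{\bar\sigma}\Big(1+\frac{\pnorm{\hat\mu}{\infty}}{\bar\sigma}\Big)^2|\hat\sigma-\bar\sigma|.
\end{align*}

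\textbf{Sandwich.} Write $\mathcal C=\mathscr G(L,\infty)\cap\mathscr G_{\texttt{test}}$. Since $\hat G\in\mathcal C$, the upper bound is immediate:
\begin{align*}
\Phi_{\bar\sigma}(\hat G)\le\max_{\mathcal C}\Phi_{\bar\sigma}.
\end{align*}
For the lower bound, take any $G\in\mathcal C$. Because $\hat G$ maximizes $\Phi_{\hat\sigma}$ over all of $\mathscr G$,
\begin{align*}
\Phi_{\bar\sigma}(G)\le\Phi_{\hat\sigma}(G)+\varepsilon\le\Phi_{\hat\sigma}(\hat G)+\varepsilon\le\Phi_{\bar\sigma}(\hat G)+2\varepsilon.
\end{align*}
Taking the supremum over $G\in\mathcal C$ gives the claim with constant $2c$. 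The same argument works verbatim for any $L\ge\pnorm{\hat\mu}{\infty}$, except that $\pnorm{\hat\mu}{\infty}$ in the derivative bound must be replaced by $L$.

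\textbf{Main obstacle.} For a general $L$, the derivative bound depends on $L$ rather than on $\pnorm{\hat\mu}{\infty}$. To obtain the stated constant I would first try to exploit the fact that restricting to $\mathscr G(\pnorm{\hat\mu}{\infty},\infty)$ loses nothing. That is, I would show the maximum of $\Phi_{\bar\sigma}$ over $\mathscr G(L,\infty)\cap\mathscr G_{\texttt{test}}$ is essentially attained within $[-\pnorm{\hat\mu}{\infty},\pnorm{\hat\mu}{\infty}]$, via the Lindsay projection onto the data range. This requires $\mathscr G_{\texttt{test}}$ to be stable under that projection. If this fails, I would accept the bound with $L$ in place of $\pnorm{\hat\mu}{\infty}$; this is harmless downstream, since $L$ is of logarithmic order there.
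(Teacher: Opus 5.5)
Your proof of the main inequality is correct and is essentially the paper's proof. Lindsay's support property places $\hat G$ in $\mathscr G(\pnorm{\hat\mu}{\infty},\infty)\cap\mathscr G_{\texttt{test}}$, and Lemma \ref{lem:log_normal_mix_der} gives the uniform comparison of $\Phi_{\hat\sigma}$ and $\Phi_{\bar\sigma}$ over that class. Your sandwich is exactly what the paper means by applying this estimate ``on both sides'' of the restricted-maximum identity.

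The gap is the ``Moreover'' clause, as you suspected. The paper's proof gives no argument for it either. Your projection route needs $\mathscr G_{\texttt{test}}$ to be stable under projection onto the data range, and nothing guarantees this, e.g.\ for the Hellinger-ball complement used in the proof of Theorem \ref{thm:eb_gd_hellinger}. Your fallback, with $L$ in place of $\pnorm{\hat\mu}{\infty}$, proves a weaker statement, although it is harmless downstream where $L=(\log n)^C$.

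The missing idea is that the lower half of your sandwich needs only a one-sided comparison, and only for those $G$ that beat $\hat G$ under $\Phi_{\bar\sigma}$. For such $G$ the $s$-dependence is controlled by the likelihood itself, not by the support. Write
\begin{align*}
\Psi_s(G)&\equiv-\frac1n\sum_{j\in[n]}\log\int\exp\Big\{-\frac{(\hat\mu_j-u)^2}{2s^2\sigma_j^2}\Big\}\,G(\d{u})\ge 0,\\
\Phi_s(G)&=-\Psi_s(G)-\frac1n\sum_{j\in[n]}\log\big(\sqrt{2\pi}\,s\sigma_j\big).
\end{align*}
Two monotonicity facts hold.
\begin{itemize}
\item $s\mapsto\Psi_s(G)$ is nonincreasing, since the integrand increases in $s$.
\item $s\mapsto s^2\Psi_s(G)$ is nondecreasing. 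With $t=s^{-2}$, each summand equals $-h_j(t)/t$, where $h_j(t)=\log\int e^{-t(\hat\mu_j-u)^2/(2\sigma_j^2)}\,G(\d{u})$ is convex with $h_j(0)=0$. Equivalently, this is the Jensen step in the proof of Lemma \ref{lem:der_log_mix_est}.
\end{itemize}
Hence, on $\{\hat\sigma\in[\bar\sigma/2,2\bar\sigma]\}$ and for every $G\in\mathscr G$ with no support restriction,
\[
\Phi_{\bar\sigma}(G)-\Phi_{\hat\sigma}(G)=\Psi_{\hat\sigma}(G)-\Psi_{\bar\sigma}(G)+\log(\hat\sigma/\bar\sigma)\le\big(1+8\Psi_{\bar\sigma}(G)\big)\frac{\abs{\hat\sigma-\bar\sigma}}{\bar\sigma}.
\]
If $\hat\sigma\ge\bar\sigma$, the $\Psi$-difference is nonpositive and the log term is at most $\abs{\hat\sigma-\bar\sigma}/\bar\sigma$. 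If $\hat\sigma<\bar\sigma$, the log term is negative and $\Psi_{\hat\sigma}\le(\bar\sigma/\hat\sigma)^2\Psi_{\bar\sigma}$ with $(\bar\sigma/\hat\sigma)^2-1\le 8\abs{\hat\sigma-\bar\sigma}/\bar\sigma$.

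Now take $G\in\mathscr G_{\texttt{test}}$ with $\Phi_{\bar\sigma}(G)>\Phi_{\bar\sigma}(\hat G)$. Then $\Psi_{\bar\sigma}(G)<\Psi_{\bar\sigma}(\hat G)\le 2\Lambda\pnorm{\hat\mu}{\infty}^2/\bar\sigma^2$, because $\hat G$ lives on $[-\pnorm{\hat\mu}{\infty},\pnorm{\hat\mu}{\infty}]$ and $\sigma_j^2\ge\Lambda^{-1}$. So your chain $\Phi_{\bar\sigma}(G)\le\Phi_{\hat\sigma}(G)+\varepsilon\le\Phi_{\hat\sigma}(\hat G)+\varepsilon\le\Phi_{\bar\sigma}(\hat G)+2\varepsilon$ goes through. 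For any other $G$ there is nothing to prove.

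This gives the clause with the stated constant, uniformly in $L\ge\pnorm{\hat\mu}{\infty}$. The two-sided comparison from Lemma \ref{lem:log_normal_mix_der} is needed only at the single point $\hat G$.
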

\begin{proof}
Note that $\hat{G}$ is supported on $[-\pnorm{\hat{\mu}}{\infty},\pnorm{\hat{\mu}}{\infty}]$. Hence, on $\{\hat G\in\mathscr G_{\texttt{test}}\}$, we may restrict the maximum in (\ref{def:gd_MLE}) to $\mathscr{G}(\pnorm{\hat{\mu}}{\infty},\infty)\cap  \mathscr{G}_{\texttt{test}}$:
\begin{align}\label{ineq:sigma_ast_comp_0}
\frac{1}{n}\sum_{j \in [n]} \log \varphi_{\hat{G};\hat{\sigma}\sigma_j}(\hat{\mu}_j) =\max_{G \in \mathscr{G}(\pnorm{\hat{\mu}}{\infty},\infty)\cap \mathscr{G}_{\texttt{test}}} \frac{1}{n}\sum_{j \in [n]} \log \varphi_{G;\hat{\sigma}\sigma_j}(\hat{\mu}_j).
\end{align}
We shall perform an error analysis on the both sides of (\ref{ineq:sigma_ast_comp_0}) when $\hat{\sigma}$ is replaced by $\bar{\sigma}$. Using Lemma \ref{lem:log_normal_mix_der}, on the event $E_0\equiv \big\{\hat{\sigma} \in [\bar{\sigma}/2,2\bar{\sigma}]\big\}$,
\begin{align}\label{ineq:sigma_ast_comp_1}
&\sup_{G \in \mathscr{G}(\pnorm{\hat{\mu}}{\infty},\infty)} \biggabs{\frac{1}{n}\sum_{j \in [n]} \big( \log \varphi_{G;\hat{\sigma}\sigma_j}(\hat{\mu}_j)- \log \varphi_{G;\bar{\sigma}\sigma_j}(\hat{\mu}_j)\big)}\nonumber\\
& \leq \frac{c\Lambda^3}{\bar{\sigma}}\bigg(1+\frac{ \pnorm{\hat{\mu}}{\infty} }{\bar{\sigma}}\bigg)^2\cdot \abs{\hat{\sigma}-\bar{\sigma}}.
\end{align}
Now the claim follows by applying the above estimate on both sides of (\ref{ineq:sigma_ast_comp_0}). 
\end{proof}

\begin{lemma}\label{lem:one_step_gd_deloc}
Suppose Assumption \ref{assump:model} holds for some $K,\Lambda\geq 2$. Fix $D>0$. Then there exists some $c_1\equiv c_1(D)>1$ such that with $\Prob^{(0)}$-probability at least $1-\exp(-\log^D n)$,
\begin{align*}
\pnorm{\hat{\mu}}{\infty}\leq (K\Lambda L_\ast \log n)^{c_1}.
\end{align*}
\end{lemma}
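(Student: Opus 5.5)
The plan is to bound each coordinate of $\hat\mu=\hat\tau\mu_0-\Sigma^{-1}A^\top\mathsf L(A\mu_0,Y)$ separately with $\Prob^{(0)}$-probability $1-\exp(-2\log^D n)/n$, and then take a union bound over $j\in[n]$. The first term is easy. By Lemma \ref{lem:tau_sigma_err}, on the stated event we have $|\hat\tau|\le|\tau_\ast|+(K\Lambda L_\ast\log n)^{c_1}n^{-1/2}$. Also $|\tau_\ast|\le\phi\Lambda\le K\Lambda$, since $\partial_1\mathfrak S$ is $\Lambda$-Lipschitz and bounded by $\Lambda$ at the origin, and the Gaussian arguments have variance at most $\Lambda L_\ast^2$; so $|\tau_\ast|\le (K\Lambda L_\ast)^{c}$ after adjusting. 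Hence $|\hat\tau\mu_{0,j}|\le(K\Lambda L_\ast\log n)^{c_1}$.

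For the gradient term, write $A=n^{-1/2}\mathsf Z_{m\times n}\Sigma^{1/2}$, so that
\begin{align*}
(\Sigma^{-1}A^\top \mathsf L(A\mu_0,Y))_j=n^{-1/2}\sum_{i\in[m]}\iprod{Z_i}{b_j}\,\mathfrak S\big(n^{-1/2}\iprod{Z_i}{w_0},n^{-1/2}\iprod{Z_i}{w_\ast},\xi_i\big),
\end{align*}
with $b_j=\Sigma^{-1/2}e_j$ and $\pnorm{b_j}{}\le\Lambda^{1/2}$. Conditionally on $(\mu_0,\mu_\ast,\xi)$, the summands $X_{i,j}$ are i.i.d. in distribution up to the dependence on $\xi_i$, and they are independent over $i$. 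Each is a product of a Gaussian with a function of magnitude at most $\Lambda(1+|g_{i,0}|+|g_{i,\ast}|)$. Here $g_{i,0}=n^{-1/2}\iprod{Z_i}{w_0}$ and $g_{i,\ast}=n^{-1/2}\iprod{Z_i}{w_\ast}$ have variances at most $\Lambda L_\ast^2$, because $n^{-1}\pnorm{w_0}{}^2\le\Lambda\pnorm{\mu_0}{\infty}^2$. So $\pnorm{X_{i,j}}{\psi_1}\le(\Lambda L_\ast)^{c}$ uniformly in $i$.

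The mean is already computed in Step 2 of Lemma \ref{lem:gd_onestep} via Gaussian integration by parts: $|\E^{(0)}X_{i,j}|\le n^{-1/2}(K\Lambda L_\ast)^c(|\mu_{0,j}|+|\mu_{\ast,j}|)$. Summing over $m\le Kn$ terms and multiplying by $n^{-1/2}$ shows the total mean contribution is at most $(K\Lambda L_\ast)^{c}$. For the centered part, the Bernstein inequality for independent sub-exponential variables (as in the proof of Lemma \ref{lem:tau_sigma_err}, \cite[Theorem 3.1]{kuchibhotla2022moving}) gives
\begin{align*}
\Big|n^{-1/2}\sum_{i}(\mathrm{id}-\E^{(0)})X_{i,j}\Big|\le (\Lambda L_\ast)^c\big(\sqrt{K x}+x n^{-1/2}\big)
\end{align*}
with probability at least $1-2e^{-x}$. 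Taking $x=2\log^D n+\log n$ yields $(K\Lambda L_\ast\log n)^{c_1}$ once $D$ is absorbed into the exponent (using $\log^D n$ as a power of $\log n$, so $c_1=c_1(D)$).

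A union bound over $j\in[n]$ and intersection with the event of Lemma \ref{lem:tau_sigma_err} gives failure probability at most $n\cdot 2e^{-2\log^D n-\log n}+\exp(-\log^D n)$. This is at most $\exp(-\log^D n)$ after slightly enlarging $D$ in the intermediate steps. The main obstacle is to verify the uniform $\psi_1$ bound on $X_{i,j}$ without worrying about $\xi_i$. This works because Assumption (A3) controls $\mathfrak S(\cdot,\cdot,\xi_i)$ uniformly in $\xi_i$ (Lipschitz with value at most $\Lambda$ at $(0,0)$). As a result the bound holds conditionally on any $\xi$, which is exactly what the $\Prob^{(0)}$ statement requires.
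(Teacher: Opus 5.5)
Your proof is correct and follows the paper's argument. You use the same split $\|\hat\mu\|_\infty\le|\hat\tau|L_\ast+\|\Sigma^{-1}A^\top\mathsf L(A\mu_0,Y)\|_\infty$, Gaussian integration by parts to show each summand has mean of order $n^{-1/2}$, and a sub-exponential (sub-Weibull) Bernstein bound with a union bound over $j$ for the fluctuation.

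The only difference is that you control $|\hat\tau|$ through Lemma \ref{lem:tau_sigma_err} together with $|\tau_\ast|\le(K\Lambda L_\ast)^c$, whereas the paper applies the concentration inequality to $\hat\tau$ directly. Your first claim $|\tau_\ast|\le\phi\Lambda$ is not right as stated, since $\partial_1\mathfrak S$ is only Lipschitz and not bounded, but your corrected bound $|\tau_\ast|\le(K\Lambda L_\ast)^c$ is valid.
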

\begin{proof}
Note that
\begin{align}\label{ineq:one_step_gd_deloc_1}
\pnorm{\hat{\mu}}{\infty}\leq \abs{\hat{\tau}}\cdot L_\ast+\pnorm{\Sigma^{-1} A^\top \mathsf{L}(A\mu_0,Y)}{\infty}.
\end{align}
First we consider the first term on the right hand side of (\ref{ineq:one_step_gd_deloc_1}). Under the polynomial growth condition, using the concentration inequality for sum of i.n.i.d. sub-Weibull variables (cf. \cite[Theorem 3.1]{kuchibhotla2022moving}), it holds with $\Prob^{(0)}$-probability at least $1-\exp(-\log^D n)$ that
\begin{align}\label{ineq:one_step_gd_deloc_2}
\abs{\hat{\tau}}&\leq (\Lambda L_\ast \log n)^{c_1}.
\end{align} 
Next we consider the second term on the right hand side of (\ref{ineq:one_step_gd_deloc_1}). With $b_j\equiv \Sigma^{-1/2}e_j$, we may compute
\begin{align*}
&\pnorm{\Sigma^{-1} A^\top \mathsf{L}(A\mu_0,Y)}{\infty}\\
&=n^{-1/2} m\cdot  \max_{j \in [n]}  \bigabs{ \E_{\pi_m} \iprod{Z_{\pi_m}}{b_j} \mathfrak{S}\big(n^{-1/2}\iprod{Z_{\pi_m}}{w_0},n^{-1/2}\iprod{Z_{\pi_m}}{w_\ast},\xi_{\pi_m}\big)}.
\end{align*}
Using again the concentration inequality for sum of i.n.i.d. sub-Weibull variables and a union bound, it holds with $\Prob^{(0)}$-probability at least $1-\exp(-\log^D n)$ that
\begin{align}\label{ineq:one_step_gd_deloc_3_1}
&\pnorm{\Sigma^{-1} A^\top \mathsf{L}(A\mu_0,Y)}{\infty}\nonumber\\
&\leq n^{-1/2} m\cdot  \max_{j \in [n]}  \bigabs{ \E^{(0)} \iprod{Z_1}{b_j} \mathfrak{S}\big(n^{-1/2}\iprod{Z_1}{w_0},n^{-1/2}\iprod{Z_1}{w_\ast},\xi_{\pi_m}\big)}+ (K\Lambda L_\ast \log n)^{c_1}\nonumber\\
&\equiv (\phi m)^{1/2} \cdot \max_{j \in [n]} \abs{I_j}+ (K\Lambda L_\ast \log n)^{c_1}.
\end{align}
Now using Gaussian integration-by-parts for the first term above, 
\begin{align}\label{ineq:one_step_gd_deloc_3_2}
\abs{I_j}&\leq c\cdot \max_{\alpha=1,2} \bigabs{\E^{(0)}\partial_\alpha\mathfrak{S} \big(n^{-1/2}\iprod{Z_{i}}{w_0},n^{-1/2}\iprod{Z_{i}}{w_\ast},\xi_{i}\big)}\nonumber\\
&\qquad \times n^{-1/2}\cdot  \big(\abs{\iprod{b_j}{w_0}}+\abs{\iprod{b_j}{w_\ast}}\big)\leq (\Lambda L_\ast)^{c_1}\cdot n^{-1/2}.
\end{align}
Combining (\ref{ineq:one_step_gd_deloc_3_1})-(\ref{ineq:one_step_gd_deloc_3_2}), it holds with $\Prob^{(0)}$-probability at least $1-\exp(-\log^D n)$ that
\begin{align}\label{ineq:one_step_gd_deloc_3}
\pnorm{\Sigma^{-1} A^\top \mathsf{L}(A\mu_0,Y)}{\infty}\leq (K\Lambda L_\ast \log n)^{c_1}.
\end{align}
The claim now follows by combining (\ref{ineq:one_step_gd_deloc_1}), (\ref{ineq:one_step_gd_deloc_2}) and (\ref{ineq:one_step_gd_deloc_3}).
\end{proof}

\begin{lemma}\label{lem:diff_ast_bar_se}
Suppose (A1), (A3) and Assumption \ref{assump:prior} hold for some $K,\Lambda,M\geq 2$. Fix $D>0$. Then there exists some $c_1=c_1(D)>1$ such that with $\Prob^\xi$-probability at least $1-\exp(-\log^D n)$,
\begin{align*}
n^{1/2}\cdot \big(\abs{\tau_\ast-\bar{\tau}}+ \abs{\sigma_\ast^2-\bar{\sigma}^2 }+ \abs{\delta_\ast-\bar{\delta}}\big)\leq \big(K\Lambda M\log n\big)^{c_1}.
\end{align*}
\end{lemma}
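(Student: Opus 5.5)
The plan is to reduce the comparison of $(\tau_\ast,\sigma_\ast^2,\delta_\ast)$ with $(\bar\tau,\bar\sigma^2,\bar\delta)$ to a comparison of the two bivariate Gaussian covariances. Recall that $\mathfrak U_\ast$ has covariance $\Gamma_\ast\equiv n^{-1}\begin{psmallmatrix}\pnorm{\mu_0}{\Sigma}^2 & \iprod{\mu_0}{\mu_\ast}_\Sigma\\ \iprod{\mu_0}{\mu_\ast}_\Sigma & \pnorm{\mu_\ast}{\Sigma}^2\end{psmallmatrix}$, and $\bar{\mathfrak U}$ has covariance $\bar\Gamma$ as in Definition \ref{def:U_bar}. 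Since $\xi$ is conditioned on, and $\pi_m$ is an independent uniform index, all three quantities have the same form $\phi\cdot\frac1m\sum_{i\in[m]}\E F_i(\mathfrak U)$. In each case $F_i\in\{\mathfrak S^2,\partial_1\mathfrak S,\partial_2\mathfrak S\}(\cdot,\cdot,\xi_i)$, and $\mathfrak U$ is either $\mathfrak U_\ast$ or $\bar{\mathfrak U}$. The proof therefore splits into two steps: (i) a deterministic Gaussian stability bound of the form $|\E F(\mathfrak U_\ast)-\E F(\bar{\mathfrak U})|\lesssim \mathrm{poly}(\Lambda,\|\Gamma_\ast\|,\|\bar\Gamma\|)\,\pnorm{\Gamma_\ast-\bar\Gamma}{F}^{1/2}$ or better; and (ii) a concentration bound $\pnorm{\Gamma_\ast-\bar\Gamma}{F}\lesssim (KM\Lambda\log n)^{c}n^{-1/2}$ with probability $1-\exp(-\log^D n)$ over $(\mu_0,\mu_\ast)$.

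For (i), I would avoid square roots of covariances, which behave badly near degeneracy, and use a Gaussian interpolation instead. Put $\Gamma_t=t\Gamma_\ast+(1-t)\bar\Gamma$. Then $\frac{d}{dt}\E F(\mathfrak U_t)=\frac12\sum_{k,l}(\Gamma_\ast-\bar\Gamma)_{kl}\E\partial_{kl}F(\mathfrak U_t)$. This requires second derivatives, but (A3) only gives $\mathfrak S,\partial\mathfrak S$ Lipschitz. For $F=\mathfrak S^2$, the function $\partial_k(\mathfrak S^2)=2\mathfrak S\partial_k\mathfrak S$ is locally Lipschitz with linear growth, so one Gaussian integration by parts moves a single derivative onto the Gaussian density. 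Alternatively, I would mollify $F$ at scale $h$ and optimise $h$. Given the cruder target rate, there is a simpler route: use the $\mathsf W_2$ coupling between $\mathcal N(0,\Gamma_\ast)$ and $\mathcal N(0,\bar\Gamma)$. In two dimensions this satisfies $\mathsf W_2^2\le \pnorm{\Gamma_\ast^{1/2}-\bar\Gamma^{1/2}}{F}^2\lesssim \pnorm{\Gamma_\ast-\bar\Gamma}{F}\cdot \min(1,\ldots)$. Combined with the pseudo-Lipschitz property of $F$ (order $2$ for $\mathfrak S^2$, order $1$ for $\partial_\alpha\mathfrak S$, with constants $\Lambda^{c}$), this gives an error of $\Lambda^c(1+\|\Gamma\|)\,\mathsf W_2$. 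To obtain the full rate $n^{-1/2}$ rather than $n^{-1/4}$, I would use the interpolation with a single integration by parts on $\mathfrak S$ for the $\mathfrak S^2$ term. For the $\partial_\alpha\mathfrak S$ terms, I would use that $\E\partial_\alpha\mathfrak S(\mathfrak U)$ itself equals a Gaussian integration-by-parts expression $\E[\mathfrak U^\top\Gamma^{-1}e_\alpha\,\mathfrak S(\mathfrak U)]$. However, this needs $\Gamma$ to be non-degenerate, which is the step I expect to be the main obstacle. If $\Gamma$ is near-degenerate (for example $G_0$ a point mass, so that $\var(G_0)=0$), I would handle it by first conditioning on the nondegenerate direction, or by accepting the mollification-loss argument with $h=n^{-1/2}$, where the third-derivative cost is only polylogarithmic because derivatives land on Lipschitz functions times Gaussian densities with smoothing at scale $h$.

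For (ii), each entry of $\Gamma_\ast$ is an average of the form $n^{-1}\mu_0^\top\Sigma\mu_\ast=n^{-1}\sum_{j,k}\Sigma_{jk}\mu_{0,j}\mu_{\ast,k}$, with i.i.d. entries having $\alpha$-exponential moments under (B1)–(B2). Its mean is exactly $a_\Sigma\E(\mu_{0}\mu_\ast)_{\mathrm{cov}}+b_\Sigma\E G_0\E G_\ast$, which matches the corresponding entry of $\bar\Gamma$ (and similarly for the diagonal entries). For the fluctuations, I would decompose $\mu=\E G+(\mu-\E G\,\bm 1)$. The resulting linear terms $n^{-1}\bm 1^\top\Sigma(\mu-\E G\bm 1)$ are sums of independent sub-Weibull variables with coefficient vector norm $\le\Lambda\sqrt n$, giving fluctuations of order $(\log n)^{c}n^{-1/2}$. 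The quadratic forms in the centered parts are controlled by Hanson–Wright for sub-Weibull entries (after truncating at level $M(\log n)^{1/\alpha}$), using $\pnorm{\Sigma}{F}\le\Lambda\sqrt n$ and $\pnorm{\Sigma}{\op}\le\Lambda$, which again yields $(M\Lambda\log n)^{c}n^{-1/2}$. Finally, substituting (ii) into (i) and multiplying by $\phi\le K$ completes the proof after enlarging $c_1$.
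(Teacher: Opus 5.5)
Your overall route is the paper's. The paper also says it ``suffices'' to bound $\Gamma_\ast-\bar\Gamma$, and gets that bound from Hanson--Wright plus the mean computation, which is your step (ii). Your centering $\mu=(\E G)\bm 1+(\mu-(\E G)\bm 1)$ and the truncation for $\alpha<\infty$ are actually needed there, and the paper glosses over them. Your $\sigma_\ast^2$ comparison is also fine, and simpler than you make it: since $\nabla\mathfrak S$ is $\Lambda$-Lipschitz, $\mathfrak S^2$ has a.e.\ second derivatives bounded by $C\Lambda^2(1+|u|)$. Gaussian interpolation is therefore linear in $\pnorm{\Gamma_\ast-\bar\Gamma}{F}$, with no integration by parts and no nondegeneracy.

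\textbf{The gap} is step (i) for $\tau_\ast$ and $\delta_\ast$, whose integrands $\partial_\alpha\mathfrak S$ are only Lipschitz.
\begin{itemize}
\item Mollifying at scale $h$ costs $\Lambda h$ in bias and $\Lambda\pnorm{\Gamma_\ast-\bar\Gamma}{F}/h$ in the interpolation. Equivalently, integration by parts against $\mathcal N(0,\Gamma_t+h^2I)$ costs $\pnorm{(\Gamma_t+h^2I)^{-1/2}}{\op}\le h^{-1}$. So $h=n^{-1/2}$ leaves an $O(\Lambda)$ error, not a polylogarithmic one, and the best $h$ gives only $n^{-1/4}$.
\item The $\Gamma^{-1}$ route needs $\pnorm{\bar\Gamma^{-1}}{\op}$, which $K,\Lambda,M$ do not control.
\item ``Conditioning on the nondegenerate direction'' is never specified.
\end{itemize}

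In fact nothing that feeds only the absolute bound from (ii) into (i) can work. Take $\mathfrak S(u_0,v_0,\xi)=\psi(u_0)$ with $\psi(0)=0$ and $\psi'(u)=|u|\wedge 1$, which (A3) allows. Then $\tau=\phi\,\E(|\mathfrak U_1|\wedge 1)\approx\phi\sqrt{2\Gamma_{11}/\pi}$ for small $\Gamma_{11}$. With $G_0=\mathrm{Unif}[-n^{-1/4},n^{-1/4}]$ one has $\bar\Gamma_{11}\asymp n^{-1/2}$. An $n^{-1/2}$ absolute error in $\Gamma_{11}$ is then compatible with $|\tau_\ast-\bar\tau|\asymp n^{-1/4}$. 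The paper's ``it suffices'' passes over exactly this point, so it does not supply the step either.

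\textbf{The missing idea} is that the fluctuations are self-normalized; your $\mathsf W_2$ idea is the right vehicle once you use this.
\begin{itemize}
\item For fixed unit $e,f$, write $e^\top\Gamma_\ast f=n^{-1}\nu_e^\top\Sigma\nu_f$ with $\nu_e=[\mu_0,\mu_\ast]e$. The entries of $\nu_e$ are i.i.d., with squared mean and variance at most $\Lambda\,e^\top\bar\Gamma e$, because $a_\Sigma,b_\Sigma\ge\Lambda^{-1}$.
\item Variance-sensitive (Bernstein-type) bounds for the linear part and for the order-two chaos then give, with the required probability, $|e^\top(\Gamma_\ast-\bar\Gamma)f|\le\epsilon\,(e^\top\bar\Gamma e\wedge f^\top\bar\Gamma f)^{1/2}+\epsilon^2$, where $\epsilon=n^{-1/2}(KM\Lambda\log n)^{c}$.
\item Apply this in the eigenbasis of $\bar\Gamma$. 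Put $B=\bar\Gamma+\epsilon^2I$ and write $\Gamma_\ast+\epsilon^2I=B^{1/2}(I+E)B^{1/2}$.
\item The coupling $\big(B^{1/2}(I+E)^{1/2}Z,\,B^{1/2}Z\big)$ has cost $\|(\Gamma_\ast-\bar\Gamma)B^{-1/2}((I+E)^{1/2}+I)^{-1}\|_F\le\|(\Gamma_\ast-\bar\Gamma)B^{-1/2}\|_F\lesssim\epsilon$. Adding $\epsilon^2 I$ costs another $O(\epsilon)$, so $\mathsf W_2(\mathcal N(0,\Gamma_\ast),\mathcal N(0,\bar\Gamma))\lesssim\epsilon$.
\item The Lipschitz property of $\partial_\alpha\mathfrak S$ and the order-two pseudo-Lipschitz property of $\mathfrak S^2$ then handle all three terms at rate $n^{-1/2}$, with no nondegeneracy assumption.
\end{itemize}
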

\begin{proof}
It suffices to provide a high probability bound for $\pnorm{\Delta \Sigma_{\mathfrak{U}}}{\op}\equiv 
\pnorm{\cov(\mathfrak{U}_\ast,\mathfrak{U}_\ast)-\cov(\bar{\mathfrak{U}},\bar{\mathfrak{U}})}{\op}$ over the randomness induced by $\mu_0,\mu_\ast$. 

The diagonal elements of $\Delta \Sigma_{\mathfrak{U}}$ can be handled immediately by Hanson-Wright inequality. For instance, consider the $(1,1)$ element of $\Delta \Sigma_{\mathfrak{U}}$. As $\pnorm{\mu_0}{\Sigma}^2= \mu_0^\top\Sigma \mu_0$, by \cite[Theorem 1.1]{rudelson2013hanson}, for $x\geq 1$ it hold with $\Prob^\xi$-probability at least $1-2e^{-x}$ that
\begin{align*}
\bigabs{\pnorm{\mu_0}{\Sigma}^2-\E^\xi \mu_0^\top\Sigma \mu_0}\leq c M^2\cdot \big(\pnorm{\Sigma}{F} x^{1/2}+\pnorm{\Sigma}{\op} x\big)\leq c \Lambda M^2 \sqrt{n}\cdot x.
\end{align*}
The claim now follows by computing the normalized mean
\begin{align*}
\frac1n\E^\xi \mu_0^\top\Sigma \mu_0
=\frac1n\sum_{i,j}\Sigma_{ij}\E^\xi\mu_{0,i}\mu_{0,j}
= a_\Sigma \var(G_0)+ b_\Sigma(\E G_0)^2.
\end{align*}
The $(2,2)$ element of $\Delta \Sigma_{\mathfrak{U}}$ can be handled in the same manner. 

The off-diagonal elements of $\Delta \Sigma_{\mathfrak{U}}$ can be handled in a similar way by applying Hanson-Wright inequality to the quadratic form
\begin{align*}
\mu_0^\top \Sigma \mu_\ast = \frac{1}{2}\binom{\mu_0}{\mu_\ast}^\top 
\begin{pmatrix}
0_{n\times n} & \Sigma\\
\Sigma^\top & 0_{n\times n}
\end{pmatrix}
\binom{\mu_0}{\mu_\ast},
\end{align*}
and computing the normalized mean 
$n^{-1}\E^\xi \mu_0^\top\Sigma \mu_\ast
= a_\Sigma \cov(G_0,G_\ast)+b_\Sigma (\E G_0\E G_\ast)$. 
The proof is complete.
\end{proof}

\begin{proof}[Proof of Proposition \ref{prop:mmle_max_seq}]
Recall $\mu^{\mathrm{db}}$ defined in (\ref{def:mu_db}). By Lemmas \ref{lem:tau_sigma_err}, \ref{lem:one_step_gd_deloc} and \ref{lem:diff_ast_bar_se}, we shall work on an event $E_0$ with $\Prob^\xi(E_0^c)\leq n^{-D}$ such that
\begin{align}\label{ineq:mmle_max_seq_0}
&\pnorm{\hat{\mu}}{\infty}+\pnorm{\mu^{\mathrm{db}}}{\infty}+\pnorm{\bar{\mu}^{\mathrm{db}}}{\infty}+n^{1/2}\abs{\hat{\sigma}-\bar{\sigma}}\leq (\log n)^{c_1},
\end{align}
where $c_1=c_1(c_0,D)>2$. 

Before proceeding, we note a simple relation: for any $G \in \mathscr{G}$ and $\sigma>0$, with $G_\sigma(\cdot)\equiv G(\sigma\cdot)$, we have 
$\varphi_{G;\sigma}(x)=\sigma^{-1}\varphi_{G_\sigma;1}(x/\sigma)$. Consequently, for any $\epsilon \in (0,1/2)$, using \cite[Lemma 2]{zhang2009generalized}, there exists some $\mathscr{G}_\epsilon\subset \mathscr{G}\big((\log n)^{c_1}\big)$ with cardinality
\begin{align*} 
\log \abs{\mathscr{G}_\epsilon}\leq c_0\cdot \log^2(1/\epsilon)\cdot \big[1+(\log n)^{c_1}\big/{\sqrt{\log(1/\epsilon)}}\big],
\end{align*}
such that for any $j \in [n]$,
\begin{align*}
\max_{G \in \mathscr{G}(\pnorm{\hat{\mu}}{\infty},\infty)} \min_{G' \in \mathscr{G}_\epsilon} \pnorm{\varphi_{G';\bar{\sigma}\sigma_j}-\varphi_{G;\bar{\sigma}\sigma_j}   }{\infty}\leq \epsilon/(\bar{\sigma}\sigma_j)\leq \Lambda\epsilon/\bar{\sigma}.
\end{align*}
This means with $\epsilon\equiv \epsilon_n\equiv  n^{-1/2}e^{-2\Lambda^2 (\log n)^{2c_1}/\bar{\sigma}^2}$, we have $\log \abs{\mathscr{G}_\epsilon}\leq (\log n)^{6c_1}$, and it holds that 
\begin{align*}
&\max_{G \in \mathscr{G}(\pnorm{\hat{\mu}}{\infty},\infty)} \min_{G' \in \mathscr{G}_\epsilon} \max_{j \in [n]}\, \bigabs{\log \varphi_{G';\bar{\sigma}\sigma_j}(\hat{\mu}_j)-\log \varphi_{G;\bar{\sigma}\sigma_j}(\hat{\mu}_j)   }\\
&\leq c_0 \epsilon\cdot e^{2\Lambda^2 \pnorm{\hat{\mu}}{\infty}^2/\bar{\sigma}^2}= c_0\epsilon\cdot e^{2\Lambda^2 (\log n)^{2c_1}/\bar{\sigma}^2}\leq c_0 \cdot n^{-1/2}.
\end{align*}
Consequently, on $E_0$, 
\begin{align*}
\biggabs{\max_{G \in \mathscr{G}(\pnorm{\hat{\mu}}{\infty},\infty)\cap \mathscr{G}_{\texttt{test}}} \frac{1}{n}\sum_{j \in [n]} \log \varphi_{G;\bar{\sigma}\sigma_j}(\hat{\mu}_j)-\max_{G' \in \mathscr{G}_\epsilon\cap \mathscr{G}_{\texttt{test}}} \frac{1}{n}\sum_{j \in [n]} \log \varphi_{G';\bar{\sigma}\sigma_j}(\hat{\mu}_j)}\leq c_0\cdot n^{-1/2}.
\end{align*}
Similarly, the above display holds by replacing $\{\hat{\mu}_j\}$ with $\big\{\mu^{\mathrm{db}}_j\big\}$.

Now by applying Proposition \ref{prop:db_gd_normal} with $D\equiv 6c_1+100$, on the event $E_0\cap E_1\cap \{\hat{G}\in \mathscr{G}_{\texttt{test}}\}$ where $\Prob^\xi(E_1^c)\leq \exp(-\log^{100} n)$, with $\err_n(c)\equiv n^{-1/2}(\log n)^{c}$ and a constant $c_1'=c_1'(c_0)>0 $ whose numerical value may vary from line to line,
\begin{align}\label{ineq:mmle_max_seq_1}
&\frac{1}{n}\sum_{j \in [n]} \log \varphi_{\hat{G};\bar{\sigma}\sigma_j}(\hat{\mu}_j) \nonumber\\
&= \max_{G \in \mathscr{G}(\pnorm{\hat{\mu}}{\infty},\infty)\cap \mathscr{G}_{\texttt{test}}} \frac{1}{n}\sum_{j \in [n]} \log \varphi_{G;\bar{\sigma}\sigma_j}(\hat{\mu}_j)+\bigo \big(\err_n(c_1')\big)\nonumber\\
&= \max_{G' \in \mathscr{G}_\epsilon\cap \mathscr{G}_{\texttt{test}}} \frac{1}{n}\sum_{j \in [n]} \log \varphi_{G';\bar{\sigma}\sigma_j}(\hat{\mu}_j)+\bigo \big(\err_n(c_1')\big)\nonumber\\
&= \max_{G' \in \mathscr{G}_\epsilon\cap \mathscr{G}_{\texttt{test}}} \frac{1}{n}\sum_{j \in [n]} \log \varphi_{G';\bar{\sigma}\sigma_j}(\mu_j^{\mathrm{db}})+\bigo \big(\err_n(c_1')\big)\nonumber\\
&= \max_{G \in \mathscr{G}(\pnorm{\mu^{\mathrm{db}}}{\infty},\infty)\cap \mathscr{G}_{\texttt{test}} } \frac{1}{n}\sum_{j \in [n]} \log \varphi_{G;\bar{\sigma}\sigma_j}(\mu_j^{\mathrm{db}})+\bigo \big(\err_n(c_1')\big).
\end{align}
Finally, by Lemma \ref{lem:der_log_mix_est}, for any $G \in \mathscr{G}(\pnorm{\mu^{\mathrm{db}}}{\infty},\infty)$,
\begin{align}\label{ineq:mmle_max_seq_2}
&\biggabs{\frac{1}{n}\sum_{j \in [n]} \log \varphi_{G;\bar{\sigma}\sigma_j}(\mu_j^{\mathrm{db}})- \frac{1}{n}\sum_{j \in [n]} \log \varphi_{G;\bar{\sigma}\sigma_j}(\bar{\mu}_j^{\mathrm{db}})}\nonumber\\
&\leq c\cdot \big(\bar{\sigma}^2\cdot \min_j\sigma_j^2\big)^{-1}\cdot \big(\pnorm{ \mu^{\mathrm{db}} }{\infty}+\pnorm{ \bar{\mu}^{\mathrm{db}} }{\infty}\big)\cdot \pnorm{\mu^{\mathrm{db}}-\bar\mu^{\mathrm{db}}}{\infty}\nonumber\\
&\leq n^{-1/2}(\log n)^{c_2}.
\end{align}
The claim follows by combining the above two displays (\ref{ineq:mmle_max_seq_1})-(\ref{ineq:mmle_max_seq_2}) and an application of Lemma \ref{lem:sigma_ast_comp}.
\end{proof}

\subsection{Proof of Theorem \ref{thm:eb_gd_hellinger}}

We write, for any $G\in\mathscr G$,
\begin{align}\label{ineq:eb_gd_revised_loglr}
\hat{\mathfrak L}_{n}(G)
&\equiv
\frac{1}{\sqrt n}\sum_{j \in [n]}
\log\frac{\varphi_{G;\hat\sigma\sigma_j}(\hat\mu_j)}
{\varphi_{\bar G_\ast;\hat\sigma\sigma_j}(\hat\mu_j)},\quad \bar{\mathfrak L}_{n}(G)\equiv
\frac{1}{\sqrt n}\sum_{j \in [n]}
\log\frac{\varphi_{G;\bar\sigma\sigma_j}(\bar\mu^{\mathrm{db}}_j)}
{\varphi_{\bar G_\ast;\bar\sigma\sigma_j}(\bar\mu^{\mathrm{db}}_j)} .
\end{align}
Here $\bar\mu^{\mathrm{db}}$ is the oracle Gaussian sequence defined in
\eqref{def:mu_db_bar}.  Fix a sufficiently large constant $A_0>1$, to be chosen
at the end of the proof, and set
\begin{align}\label{ineq:eb_gd_revised_epsilon}
\epsilon_n\equiv A_0\big(n^{-1/4}+n_{\ast,\Sigma}^{-1/2}\big)(\log n)^{A_0}.
\end{align}
If $\epsilon_n\ge 1$, the asserted Hellinger bound is trivial after enlarging the
constant in the theorem.  We therefore assume $\epsilon_n<1$ and define
\begin{align}\label{ineq:eb_gd_revised_test_class}
\mathscr G_{\texttt{test}}
\equiv
\big\{G\in\mathscr G:
\mathfrak d_{H;\bar\sigma\sigma_{[n]}}(G,\bar G_\ast)>\epsilon_n
\big\}.
\end{align}
\noindent (\textbf{Step 1}). We first specify the high probability event on which the rest of the proof is
purely deterministic.  By the exponential-tail assumption in Assumption
\ref{assump:prior}, the Gaussian maximal inequality for the coordinates of
$\Sigma^{-1/2}\mathsf Z_n$, Lemmas \ref{lem:tau_sigma_err},
\ref{lem:one_step_gd_deloc}, \ref{lem:diff_ast_bar_se}, and Proposition
\ref{prop:mmle_max_seq}, there exist constants $C>1$ and an event
$E_{\mathrm{good}}$ with $
\Prob^\xi(E_{\mathrm{good}}^c)\le n^{-D-4}$
such that the following facts hold simultaneously on $E_{\mathrm{good}}$:
\begin{align}
&\|\mu_0\|_\infty+\|\mu_\ast\|_\infty
+\|\mu^{\mathrm{db}}\|_\infty+\|\bar\mu^{\mathrm{db}}\|_\infty
+ \|\hat\mu\|_\infty\le (\log n)^C,
\label{ineq:eb_gd_revised_good_sup_1}\\
&
n^{1/2}|\hat\sigma-\bar\sigma|
+n^{1/2}|\sigma_\ast-\bar\sigma|
+n^{1/2}|\delta_\ast-\bar\delta|
\le (\log n)^C,
\label{ineq:eb_gd_revised_good_sup_2}\\
&\bigg|
\frac1n\sum_{j \in [n]}\log\varphi_{\hat G;\hat\sigma\sigma_j}(\hat\mu_j)
-
\max_{G\in\mathscr G(R_n,\infty)\cap\mathscr G_{\texttt{test}}}
\frac1n\sum_{j \in [n]}\log\varphi_{G;\bar\sigma\sigma_j}(\bar\mu^{\mathrm{db}}_j)
\bigg|
\le \frac{(\log n)^C}{n^{1/2}}
\label{ineq:eb_gd_revised_near_max}
\end{align}
on the further event $\{\hat G\in\mathscr G_{\texttt{test}}\}$, where
$R_n\equiv(\log n)^C$.  Here the last display is Proposition
\ref{prop:mmle_max_seq}, using its final sentence to replace the support constraint
by $\mathscr G(R_n,\infty)$; the radius $R_n$ dominates
$\|\mu^{\mathrm{db}}\|_\infty\vee\|\bar\mu^{\mathrm{db}}\|_\infty$ on
$E_{\mathrm{good}}$.  Increasing $C$ if necessary, the same event also contains
$\hat\sigma\in[\bar\sigma/2,2\bar\sigma]$, which follows from the lower bound on
$\bar\sigma$ in the assumption
$K\Lambda(1\wedge\bar\sigma\wedge|\bar\delta|)^{-1}\le(\log n)^{c_0}$.

\noindent (\textbf{Step 2}). We next compare the denominator in the plug-in likelihood ratio with its oracle
counterpart.  We claim that, after enlarging $C$ and reducing the event by another
set of $\Prob^\xi$-probability at most $n^{-D-4}$ if necessary,
\begin{align}\label{ineq:eb_gd_revised_denom_compare}
\bigg|
\frac1n\sum_{j \in [n]}
\log\varphi_{\bar G_\ast;\hat\sigma\sigma_j}(\hat\mu_j)
-
\frac1n\sum_{j \in [n]}
\log\varphi_{\bar G_\ast;\bar\sigma\sigma_j}(\bar\mu^{\mathrm{db}}_j)
\bigg|
\le n^{-1/2}(\log n)^C .
\end{align}
Indeed, decompose the left side into the sum of
\begin{align*}
I_1&\equiv
\bigg|
\frac1n\sum_{j \in [n]}
\Big\{
\log\varphi_{\bar G_\ast;\hat\sigma\sigma_j}(\hat\mu_j)
-
\log\varphi_{\bar G_\ast;\bar\sigma\sigma_j}(\hat\mu_j)
\Big\}
\bigg|,\\
I_2&\equiv
\bigg|
\frac1n\sum_{j \in [n]}
\Big\{
\log\varphi_{\bar G_\ast;\bar\sigma\sigma_j}(\hat\mu_j)
-
\log\varphi_{\bar G_\ast;\bar\sigma\sigma_j}(\mu^{\mathrm{db}}_j)
\Big\}
\bigg|,\\
I_3&\equiv
\bigg|
\frac1n\sum_{j \in [n]}
\Big\{
\log\varphi_{\bar G_\ast;\bar\sigma\sigma_j}(\mu^{\mathrm{db}}_j)
-
\log\varphi_{\bar G_\ast;\bar\sigma\sigma_j}(\bar\mu^{\mathrm{db}}_j)
\Big\}
\bigg|.
\end{align*}
For $I_1$, note first that $\bar G_\ast$ inherits an exponential tail from
$G_\ast$ and the scalar $\bar\delta$, and $|\bar\delta|^{-1}\le(\log n)^{c_0}$.
Hence $\bar G_\ast([-R_n,R_n])\ge 1-n^{-D-10}$ after increasing the polylogarithmic
radius $R_n$.  Together with the lower bound on $\bar\sigma\min_j\sigma_j$, this
implies, uniformly over $|x|\le(\log n)^C$ and
$s\in[\bar\sigma\sigma_j/2,2\bar\sigma\sigma_j]$, $
\varphi_{\bar G_\ast;s}(x)\ge \exp\{-(\log n)^C\}$. 
Using Lemma \ref{lem:der_log_mix_est} and the preceding lower bound gives
\begin{align*}
\sup_{j\le n}\sup_{|x|\le(\log n)^C}
\sup_{s\in[\bar\sigma\sigma_j/2,2\bar\sigma\sigma_j]}
|\partial_s\log\varphi_{\bar G_\ast;s}(x)|
\le (\log n)^C.
\end{align*}
Therefore, by \eqref{ineq:eb_gd_revised_good_sup_2},
$I_1\le n^{-1/2}(\log n)^C$.

For $I_2$, apply Proposition \ref{prop:db_gd_normal}, conditionally on
$(\mu_0,\mu_\ast,\xi)$, to smooth truncations of the coordinate functions
$x\mapsto\log\varphi_{\bar G_\ast;\bar\sigma\sigma_j}(x)$.  The truncation is
inactive on the event \eqref{ineq:eb_gd_revised_good_sup_1}-\eqref{ineq:eb_gd_revised_good_sup_2}, and the preceding derivative bound supplies a common
polylogarithmic Lipschitz constant.  This yields
$I_2\le n^{-1/2}(\log n)^C$ with conditional probability at least
$1-n^{-D-4}$, after increasing $C$.

For $I_3$, use the same derivative bound and the identity
\begin{align*}
\mu^{\mathrm{db}}-\bar\mu^{\mathrm{db}}
=(\delta_\ast-\bar\delta)\mu_\ast
+(\sigma_\ast-\bar\sigma)\Sigma^{-1/2}\mathsf Z_n.
\end{align*}
The bounds in \eqref{ineq:eb_gd_revised_good_sup_1}-\eqref{ineq:eb_gd_revised_good_sup_2} and the Gaussian maximal inequality imply
$\|\mu^{\mathrm{db}}-\bar\mu^{\mathrm{db}}\|_\infty\le n^{-1/2}(\log n)^C$.
Consequently $I_3\le n^{-1/2}(\log n)^C$.  This proves
\eqref{ineq:eb_gd_revised_denom_compare}.

\noindent (\textbf{Step 3}).  Since
$\bar G_\ast$ is an admissible competitor,
\begin{align}\label{ineq:eb_gd_revised_hat_nonnegative}
0\le \hat{\mathfrak L}_n(\hat G).
\end{align}
Combining \eqref{ineq:eb_gd_revised_near_max} and
\eqref{ineq:eb_gd_revised_denom_compare}, and multiplying the normalized
average error by $\sqrt n$, gives the following deterministic implication on
$E_{\mathrm{good}}\cap\{\hat G\in\mathscr G_{\texttt{test}}\}$:
\begin{align}\label{ineq:eb_gd_revised_oracle_nearmax}
0
\le \hat{\mathfrak L}_n(\hat G)
\le
\sup_{G\in\mathscr G(R_n,\infty)\cap\mathscr G_{\texttt{test}}}
\bar{\mathfrak L}_n(G)
+(\log n)^{C_0}.
\end{align}

It remains to show that the supremum on the right is strictly negative.  Under the
oracle sequence
\begin{align*}
\bar\mu^{\mathrm{db}}=\bar\delta\mu_\ast+\bar\sigma\Sigma^{-1/2}\mathsf Z_n,
\end{align*}
the corresponding Gaussian sequence covariance is
$\Sigma_0=\bar\sigma^2\Sigma^{-1}$, the diagonal standard errors are
$\{\bar\sigma\sigma_j\}_{j\le n}$, and the diagonal-normalized covariance has
spectral radius $\pnorm{\texttt{Cor}_{\Sigma}}{\op}$.  Thus the effective sample size is
$n_{\ast,\Sigma}=n/\pnorm{\texttt{Cor}_{\Sigma}}{\op}$.  Applying the localized empirical-process bound
from the proof of Theorem \ref{thm:MML_rate}, equivalently
\eqref{ineq:MML_rate_step2_0} with this oracle sequence, we get an event
$E_{\mathrm{or}}$ with probability at least $1-n^{-D-4}$ such that, simultaneously
for all radii
$r\ge Cn_{\ast,\Sigma}^{-1/2}(\log n)^C$,
\begin{align}\label{ineq:eb_gd_revised_oracle_emp}
\sup_{\substack{G\in\mathscr G(R_n,\infty):
\mathfrak d_{H;\bar\sigma\sigma_{[n]}}(G,\bar G_\ast)\le r}}
\big|\bar{\mathfrak L}_n(G)-\E\bar{\mathfrak L}_n(G)\big|
\le C\cdot \pnorm{\texttt{Cor}_{\Sigma}}{\op}^{1/2}\,r(\log n)^C .
\end{align}
[Note that although \eqref{ineq:MML_rate_step2_0} is stated there for the canonical
logarithmic radius, the proof of Proposition \ref{prop:localized_truncated_process}
applies verbatim to the polylogarithmic radius $R_n=(\log n)^C$, as the
normal-mixture entropy remains a power of $\log n$, and the exponential-tail
truncation is absorbed by increasing the power $C$.]

For every $G\in\mathscr G$ the population likelihood ratio satisfies
\begin{align}\label{ineq:eb_gd_revised_oracle_pop}
\E\bar{\mathfrak L}_n(G)
&=-\frac1{\sqrt n}\sum_{j \in [n]}
\mathrm{KL}\big(
\varphi_{\bar G_\ast;\bar\sigma\sigma_j},
\varphi_{G;\bar\sigma\sigma_j}
\big)\le
-\sqrt n\,
\mathfrak d_{H;\bar\sigma\sigma_{[n]}}^2(G,\bar G_\ast).
\end{align}
We now perform the standard peeling argument.  For integers $k\ge0$, let
\begin{align*}
\mathscr A_k
\equiv
\big\{G\in\mathscr G(R_n,\infty):
2^k\epsilon_n<
\mathfrak d_{H;\bar\sigma\sigma_{[n]}}(G,\bar G_\ast)
\le 2^{k+1}\epsilon_n\big\}.
\end{align*}
Since the Hellinger distance is bounded by one, only $k\le C\log n$ need be
considered.  On the event $E_{\mathrm{or}}$, for all $G\in\mathscr A_k$,
\begin{align*}
\bar{\mathfrak L}_n(G)
&\le
\big\{\bar{\mathfrak L}_n(G)-\E\bar{\mathfrak L}_n(G)\big\}
+
\E\bar{\mathfrak L}_n(G)\\
& \le
C\cdot\pnorm{\texttt{Cor}_{\Sigma^{-1}}}{\op}^{1/2}\,2^{k+1}\epsilon_n(\log n)^C
-
\sqrt n\,2^{2k}\epsilon_n^2.
\end{align*}
By the definition \eqref{ineq:eb_gd_revised_epsilon}, after choosing $A_0$ large
and increasing logarithmic powers if necessary, $
1\ge \epsilon_n\ge A_0 n_{\ast,\Sigma}^{-1/2}(\log n)^{A_0}
=A_0\sqrt{\pnorm{\texttt{Cor}_{\Sigma^{-1}}}{\op}/n}\,(\log n)^{A_0}$, 
and hence, the first term in the right hand side of above can be bounded by $
\frac14\sqrt n\,2^{2k}\epsilon_n^2$ uniformly in $k\geq 0$. 
Therefore
\begin{align}\label{ineq:eb_gd_revised_shell_bound}
\sup_{G\in\mathscr A_k}\bar{\mathfrak L}_n(G)
\le
-\frac14\sqrt n\,2^{2k}\epsilon_n^2
\le
-\frac14\sqrt n\,\epsilon_n^2.
\end{align}
Taking the maximum over the shells, and using the other part of \eqref{ineq:eb_gd_revised_epsilon} that gives
$\epsilon_n\ge A_0n^{-1/4}(\log n)^{A_0}$, by taking $A_0$ still larger, we then arrive at 
\begin{align}\label{ineq:eb_gd_revised_test_sup_bound}
\sup_{G\in\mathscr G(R_n,\infty)\cap\mathscr G_{\texttt{test}}}
\bar{\mathfrak L}_n(G)
\le
-\frac14\sqrt n\,\epsilon_n^2\leq -2(\log n)^{C_0}.
\end{align}
Thus \eqref{ineq:eb_gd_revised_oracle_nearmax} and
\eqref{ineq:eb_gd_revised_test_sup_bound} are incompatible on
$E_{\mathrm{good}}\cap E_{\mathrm{or}}\cap\{\hat G\in\mathscr G_{\texttt{test}}\}$.
Consequently this event is empty, and with $\Prob^\xi$-probability at least
$1-n^{-D}$, $
\hat G\notin\mathscr G_{\texttt{test}}$, which proves the Hellinger assertion by the definition of $\mathscr G_{\texttt{test}}$.

\noindent (\textbf{Step 4}). Finally we prove the Wasserstein bound.  On $E_{\mathrm{good}}$, $
\mathrm{supp}(\hat G)\subset[-\|\hat\mu\|_\infty,\|\hat\mu\|_\infty]
\subset[-R_n,R_n]$ where $R_n=(\log n)^C$. 
Let $\Pi_R(x)=(-R)\vee x\wedge R$ and
$\bar G_\ast^{(R)}\equiv(\Pi_R)_\#\bar G_\ast$.  Since $\bar G_\ast$ inherits an
exponential tail from $G_\ast$ and $|\bar\delta|^{-1}\le(\log n)^{c_0}$, we may take
$R=(\log n)^C$ so large that, for any fixed $B>0$,
\begin{align}\label{ineq:eb_gd_revised_truncation_errors}
\mathsf W_p(\bar G_\ast,\bar G_\ast^{(R)})
\le n^{-B},
\qquad
\mathfrak d_{H;\bar\sigma\sigma_{[n]}}(\bar G_\ast,\bar G_\ast^{(R)})
\le n^{-B}.
\end{align}
Combining the proven Hellinger rate with
\eqref{ineq:eb_gd_revised_truncation_errors} gives
\begin{align*}
\mathfrak d_{H;\bar\sigma\sigma_{[n]}}(\hat G,\bar G_\ast^{(R)})
\le
\big(n^{-1/4}+n_{\ast,\Sigma}^{-1/2}\big)(\log n)^C+n^{-B}.
\end{align*}
Under Assumption \ref{assump:model}-(A2),
$\pnorm{\texttt{Cor}_{\Sigma^{-1}}}{\op}\le \Lambda^2$.  Moreover the standing condition
$K\Lambda M(1\wedge\bar\sigma\wedge|\bar\delta|)^{-1}\le(\log n)^{c_0}$ implies
$\Lambda\le(\log n)^{c_0}$.  Hence
$n_{\ast,\Sigma}^{-1/2}\le n^{-1/2}(\log n)^{c_0}$, and the last display is bounded
by $n^{-1/4}(\log n)^C$.  Both $\hat G$ and $\bar G_\ast^{(R)}$ are supported in
$[-R_n,R_n]$ for a possibly larger polylogarithmic $R_n$.  Lemma
\ref{lem:Wasserstein_from_Hellinger} therefore yields, for a constant $C_p>1$,
\begin{align*}
\mathsf W_p(\hat G,\bar G_\ast^{(R)})
\le
C_p\inf_{\delta\in(0,1)}
\Big\{
\delta+
e^{C_p\delta^{-2}}(\log n)^{C_p}\cdot 
\big(n^{-1/4}(\log n)^C+n^{-B}\big)^{1/C_p}
\Big\}.
\end{align*}
Choose $\delta=A_1(\log n)^{-1/2}$ with $A_1$ large enough.  Then
$\exp(C_p\delta^{-2})=n^{C_p/A_1^2}$; taking $A_1$ sufficiently large and then
$B$ sufficiently large makes the second term in the braces
$\mathfrak{o}((\log n)^{-1/2})$.  Hence $
\mathsf W_p(\hat G,\bar G_\ast^{(R)})\le C(\log n)^{-1/2}$.
Together with \eqref{ineq:eb_gd_revised_truncation_errors}, this proves the Wasserstein claim. 

\appendix

\section{Details of the simulation algorithm}\label{section:EM}

We present below the details for the fixed-grid EM approximation algorithm \cite{jiang2009general} to compute the maximum composite marginal likelihood (CML) estimator in Section \ref{sec:numerics}.

\begin{algorithm}[H]
	\caption{Fixed-grid EM approximation to the CML}\label{alg:grid_em_mple}
	\begin{algorithmic}[1]
		\STATE \textbf{Input}: pseudo-observations $U_1,\ldots,U_n$, standard errors $s_1,\ldots,s_n$, grid $\theta_1<\cdots<\theta_K$, tolerance \texttt{tol}, and maximum iteration number $T_{\max}$.
		\STATE Initialize $w^{(0)}=(1/K,\ldots,1/K)\in\Delta_K$.
		\FOR{$t=0,1,\ldots,T_{\max}-1$}
		\STATE Compute $
		r_{jk}^{(t)}
		={w_k^{(t)}\varphi_{s_j}(U_j-\theta_k)}/
		{\sum_{\ell=1}^K w_\ell^{(t)}\varphi_{s_j}(U_j-\theta_\ell)}$ for 
		$j\in[n],\ k\in[K]$.
		\STATE Update $
		w_k^{(t+1)}=n^{-1}\sum_{j \in [n]} r_{jk}^{(t)}$ for
		$k\in[K]$.
		\STATE Stop if the increase in $
		\ell_n(w)=\sum_{j \in [n]}\log\big\{\sum_{k \in [K]} w_k\varphi_{s_j}(U_j-\theta_k)\big\}$
		is smaller than \texttt{tol}.
		\ENDFOR
		\STATE \textbf{Output}: $\hat G_{\mathrm{grid}}=\sum_{k \in [K]} w_k^{(t)}\delta_{\theta_k}$.
	\end{algorithmic}
\end{algorithm}

\section{Proof of Lemma \ref{lem:Wasserstein_from_Hellinger}}\label{section:proof_Wasserstein_from_Hellinger}

	The proof is inspired by the method used in that of \cite[Theorem 2]{nguyen2013convergence}. Fix $K:\R\to (0,\infty)$ to be a symmetric density function such that (i) all moments of $K$ are finite, and (ii) its Fourier transform $\check{K}$ is supported in $[-1,1]$. Such a kernel can be constructed by starting from a nonnegative $C^\infty$ function $\check K$ supported on $[-1,1]$ and taking its inverse Fourier transform, which has tails decaying faster than any polynomial. 
	
	For any $\delta>0$, let $K_\delta(\cdot)\equiv \delta^{-1}K(\cdot/\delta)$. Note that for any $G \in \mathscr{G}$, by definition of Wasserstein distances, 
	\begin{align*}
	\mathsf{W}_p^p(G,G*K_\delta)\leq \E\abs{X-(X+\delta Z)}^p\leq c_K\delta^p,\qquad X\sim G,\ Z\sim K,\ X\perp Z.
	\end{align*}
	This means there exists some $c_1=c_1(p,K)>0$ such that for any $\delta>0$,
	\begin{align}\label{ineq:w_hellinger_1}
	\mathsf{W}_p(G_1,G_2)&\leq \mathsf{W}_p(G_1,G_1*K_\delta)+\mathsf{W}_p\big(G_1*K_\delta,G_2*K_\delta\big)+\mathsf{W}_p(G_2,G_2*K_\delta)\nonumber\\
	&\leq c_1\cdot \delta + \mathsf{W}_p\big(G_1*K_\delta,G_2*K_\delta\big).
	\end{align}
	Now we shall estimate the term $\mathsf{W}_p\big(G_1*K_\delta,G_2*K_\delta\big)$. By \cite[Theorem 6.15]{villani2009optimal}, 
	\begin{align}\label{ineq:w_hellinger_kernel_1}
	&\mathsf{W}_p^p\big(G_1*K_\delta,G_2*K_\delta\big) \leq c_p\cdot \int \abs{x}^p\,\d{ \abs{G_1*K_\delta-G_2*K_\delta} (x) }\nonumber\\
	&\leq c_p\cdot \max_{\ell \in [2]} \bigg(\int \abs{x}^{2p}\, \d{ \abs{G_\ell*K_\delta} (x) }\bigg)^{1/2}\cdot \pnorm{G_1*K_\delta-G_2*K_\delta}{L_1(\R)}^{1/2}.
	\end{align}
	As $\mathrm{supp}(G_\ell)\subset [\pm L]$ for $\ell \in [2]$, for any $\delta \in (0,1)$,
	\begin{align}\label{ineq:w_hellinger_kernel_2}
	\int \abs{x}^{2p}\, \d{ \abs{G_\ell*K_\delta} (x) } &= \E_{X\sim G_\ell,\;Y\sim K:\,X\perp Y} \abs{X+\delta Y}^{2p}\leq c_1\cdot L^{2p}.
	\end{align}
	Let $\varphi_\sigma$ be the p.d.f. corresponding to $\mathcal{N}(0,\sigma^2)$. For any $j \in [n]$, with $g_{\delta;j}$ denoting the inverse Fourier transform of $\check{K}_\delta/\check{\varphi}_{\sigma_{0,j}}$, for any $G \in \mathscr{G}$, we have 
	$G\ast K_\delta = G\ast \varphi_{\sigma_{0,j}}*g_{\delta;j} = \varphi_{G;\sigma_{0,j}}*g_{\delta;j}$.
	This means, in view of the simple $L_1$-$L_2$ estimate in \cite[Lemma 6-(2)]{nguyen2013convergence}, for some universal constant $c_0>0$,
	\begin{align}\label{ineq:w_hellinger_kernel_3}
	\pnorm{G_1*K_\delta-G_2*K_\delta}{L_1(\R)}^2&\leq c_K L^{c_0}\cdot \pnorm{G_1*K_\delta-G_2*K_\delta}{L_2(\R)} \nonumber\\
	&\leq c_K L^{c_0}\cdot d_{\mathrm{TV}}(\varphi_{G_1;\sigma_{0,j}},\varphi_{G_2;\sigma_{0,j}})\cdot \pnorm{g_{\delta;j}}{L_2(\R)}.
	\end{align}
	By Plancherel's theorem, we may estimate 
	\begin{align}\label{ineq:w_hellinger_kernel_4}
	\pnorm{g_{\delta;j}}{L_2(\R)}^2 &= \frac{1}{2\pi} \int  \frac{\check{K}_\delta^2(x)}{\check{\varphi}_{\sigma_{0,j}}^2(x)}\,\d{x} = \frac{1}{2\pi} \int  \frac{\check{K}^2(\delta x)}{\check{\varphi}_{\sigma_{0,j}}^2(x)}\,\d{x}\nonumber\\
	& \leq c_K \int_{-1/\delta}^{1/\delta} \frac{1}{\check{\varphi}_{\sigma_{0,j}}^2(x)}\,\d{x}\leq c_K \int_{-1/\delta}^{1/\delta} e^{c_0 \sigma_{0,j}^2 x^2}\,\d{x} \leq c_K e^{c_0 \sigma_{0,j}^2 \delta^{-2}}.
	\end{align}
	Combining (\ref{ineq:w_hellinger_kernel_3})-(\ref{ineq:w_hellinger_kernel_4}), averaging over $j \in [n]$ and using $d_{\mathrm{TV}}(\cdot,\cdot)\leq \sqrt{2}\cdot d_H(\cdot,\cdot)$,
	\begin{align}\label{ineq:w_hellinger_kernel_5}
	\pnorm{G_1*K_\delta-G_2*K_\delta}{L_1(\R)}^2&\leq c_K L^{c_0}\cdot e^{c_0 \pnorm{\sigma_0}{\infty}^2 \delta^{-2}}\cdot \E_{\pi_n} d_{\mathrm{TV}}(\varphi_{G_1;\sigma_{0,\pi_n}},\varphi_{G_2;\sigma_{0,\pi_n}})\nonumber\\
	&\leq c_K L^{c_0}\cdot e^{c_0 \pnorm{\sigma_0}{\infty}^2 \delta^{-2}}\cdot \mathfrak{d}_{H;\sigma_{0,[n]}}(G_1,G_2).
	\end{align}
	Now combining (\ref{ineq:w_hellinger_kernel_1}), (\ref{ineq:w_hellinger_kernel_2}) and (\ref{ineq:w_hellinger_kernel_5}),
	\begin{align*}
	\mathsf{W}_p^p\big(G_1*K_\delta,G_2*K_\delta\big) \leq L^{c_1}\cdot e^{c_0 \pnorm{\sigma_0}{\infty}^2 \delta^{-2}}\cdot \mathfrak{d}_{H;\sigma_{0,[n]}}^{1/4}(G_1,G_2).
	\end{align*}
	The claim now follows by combining the above display and (\ref{ineq:w_hellinger_1}). \qed

\section{Auxiliary results}

\begin{lemma}\label{lem:der_log_mix_est}
	For any $x \in \R$ and $\sigma>0$,
	\begin{align*}
	\biggabs{\frac{ \varphi_{G;\sigma}'(x) }{ \varphi_{G;\sigma}(x) }}\leq \frac{1}{\sigma}\cdot \sqrt{- \log\big(2\pi\sigma^2\cdot \varphi_{G;\sigma}^2(x)\big) }.
	\end{align*}
\end{lemma}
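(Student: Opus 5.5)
The identity $\varphi_{G;\sigma}'(x)=-\sigma^{-2}\int (x-\theta)\varphi_\sigma(x-\theta)\,G(\d\theta)$ reduces the claim to a statement about the posterior of $\theta$ given $X=x$. Writing $\Pi_x(\d\theta)=\varphi_\sigma(x-\theta)G(\d\theta)/\varphi_{G;\sigma}(x)$ for this posterior, we get
\begin{align*}
\frac{\varphi_{G;\sigma}'(x)}{\varphi_{G;\sigma}(x)}=-\frac{1}{\sigma^2}\int (x-\theta)\,\Pi_x(\d\theta),
\end{align*}
so it suffices to bound $\int |x-\theta|\,\Pi_x(\d\theta)$ by $\sigma\sqrt{-\log(2\pi\sigma^2\varphi_{G;\sigma}^2(x))}$.

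The key step is a Jensen argument with the convex function $\Psi(t)=e^{t^2/2}$ applied to $t=|x-\theta|/\sigma$ under $\Pi_x$. Set $a\equiv\int |x-\theta|\,\Pi_x(\d\theta)/\sigma$. Jensen gives
\begin{align*}
e^{a^2/2}\le \int e^{(x-\theta)^2/(2\sigma^2)}\,\Pi_x(\d\theta)=\frac{1}{\varphi_{G;\sigma}(x)}\int e^{(x-\theta)^2/(2\sigma^2)}\varphi_\sigma(x-\theta)\,G(\d\theta)=\frac{(2\pi\sigma^2)^{-1/2}}{\varphi_{G;\sigma}(x)},
\end{align*}
because the factor $e^{(x-\theta)^2/(2\sigma^2)}$ cancels the Gaussian exponent exactly and $G$ is a probability measure. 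Taking logarithms gives $a^2\le -\log(2\pi\sigma^2\varphi_{G;\sigma}^2(x))$. Note that the right-hand side is nonnegative, since $\varphi_{G;\sigma}\le(2\pi\sigma^2)^{-1/2}$.

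Finally, $|\varphi'/\varphi|\le \sigma^{-2}\cdot\sigma a=a/\sigma$, which is exactly the claimed bound. The only point needing care is choosing the exponent $1/2$ in $\Psi$ so that the cancellation is exact; differentiation under the integral is justified by dominated convergence, since $|x-\theta|\varphi_\sigma(x-\theta)$ is bounded. I do not expect any real obstacle beyond this bookkeeping.
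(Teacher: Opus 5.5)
Your proof is correct and is essentially the paper's argument. The paper writes $\varphi_{G;\sigma}'/\varphi_{G;\sigma}=\sigma^{-2}\E[\theta-X\mid X=x]$ via Tweedie's formula and chains $(\E[\theta-X\mid X=x])^2\le \E[(\theta-X)^2\mid X=x]$ with Jensen for $y\mapsto e^{y/(2\sigma^2)}$, whereas you combine these into a single Jensen step for $t\mapsto e^{t^2/2}$ at $t=|x-\theta|/\sigma$; both arguments hinge on the same exact cancellation $\E[e^{(\theta-X)^2/(2\sigma^2)}\mid X=x]=(2\pi\sigma^2)^{-1/2}/\varphi_{G;\sigma}(x)$.
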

\begin{proof}
	The proof is a modification of some of the arguments presented in \cite[Lemma A.1]{jiang2009general}. For $X|\theta\sim \mathcal{N}(\theta,\sigma^2)$ and $\theta\sim G$, Tweedie's formula implies that
	\begin{align*}
	\E[\theta-X|X=x] = \sigma^2\cdot \frac{ \varphi_{G;\sigma}'(x) }{ \varphi_{G;\sigma}(x) },\quad \E[(\theta-X)^2|X=x] = \sigma^2+\sigma^4\cdot \frac{ \varphi_{G;\sigma}''(x) }{ \varphi_{G;\sigma}(x) }.
	\end{align*}
	This means that with $H(x)\equiv e^{x/(2\sigma^2)}$, by Jensen's inequality,
	\begin{align*}
	H\bigg[\sigma^4 \bigg(\frac{ \varphi_{G;\sigma}'(x) }{ \varphi_{G;\sigma}(x) }\bigg)^2\bigg]&= H\big(\big(\E[\theta-X|X=x]\big)^2\big) \\
	&\leq H\big(\E[(\theta-X)^2|X=x]\big)\leq \E\big[H\big((\theta-X)^2\big)|X=x\big].
	\end{align*}
	On the other hand,
	\begin{align*}
	\E\big[H\big((\theta-X)^2\big)|X=x\big]& = \frac{1}{ \varphi_{G;\sigma}(x) }\int e^{ \frac{(u-x)^2}{2\sigma^2} }\varphi_\sigma(x-u)\,G(\d u) = \frac{1}{\sqrt{2\pi\sigma^2}\cdot  \varphi_{G;\sigma}(x) }.
	\end{align*}
	Combining the above two displays, we have
	\begin{align*}
	\exp \bigg[ \frac{\sigma^2}{2} \bigg(\frac{ \varphi_{G;\sigma}'(x) }{ \varphi_{G;\sigma}(x) }\bigg)^2 \bigg]\leq \frac{1}{\sqrt{2\pi\sigma^2}\cdot  \varphi_{G;\sigma}(x) }.
	\end{align*}
	The claim follows. 
\end{proof}

The following version of the Gaussian concentration inequality allowing for `high probability bounded Lipschitz constant', is useful. 
\begin{lemma}\label{lem:gaussian_conc}
	Suppose (A1) holds for some $K\geq 2$. Let $H: \R^{m\times n}\to \R$ be a measurable map. Suppose there exist $q\geq 1$ and $\Lambda\geq 2$ such that the following hold:
	\begin{enumerate}
		\item For all $A \in \R^{m\times n}$, $\abs{H(A)}\leq \Lambda (1+\pnorm{A}{\op})^q$. 
		\item For all $A_1,A_2 \in \R^{m\times n}$,
		\begin{align*}
		\abs{H(A_1)-H(A_2)}\leq \Lambda \big(1+\pnorm{A_1}{\op}+\pnorm{A_2}{\op}\big)^q\cdot \pnorm{A_1-A_2}{\op}.
		\end{align*}
	\end{enumerate}
	Let the entries of $\mathsf{Z}_{m\times n} \in \R^{m\times n}$ be i.i.d. $\mathcal{N}(0,1)$. Then there exist some universal constant $c_0>1$ and another constant $c_q>1$ depending on $q>0$ such that if $x\geq   e^{-n/c_0}$, 
	\begin{align*}
	\Prob\big(\abs{H(n^{-1/2}\mathsf{Z}_{m\times n})-\E H(n^{-1/2}\mathsf{Z}_{m\times n})}\geq  K^{c_q} \Lambda x\big)\leq  c_0 e^{-n(1\wedge x^2)/c_0}
	\end{align*}
\end{lemma}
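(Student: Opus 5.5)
We prove Lemma \ref{lem:gaussian_conc} by reducing to the classical Gaussian concentration inequality for Lipschitz functions on a truncated set, and then extending. Let $W\equiv n^{-1/2}\mathsf{Z}_{m\times n}$ and $\mathcal{E}\equiv\{A:\pnorm{A}{\op}\le R_K\}$ with $R_K\equiv 3(1+\sqrt{K})$. By (A1) and standard Gaussian matrix bounds (Davidson--Szarek), $\E\pnorm{W}{\op}\le 1+\sqrt{m/n}\le 1+\sqrt K$. Since $Z\mapsto \pnorm{n^{-1/2}Z}{\op}$ is $n^{-1/2}$-Lipschitz in Frobenius norm, we get $\Prob(W\notin\mathcal{E})\le e^{-n/c_0}$.

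\textbf{Step 1: Lipschitz extension.} On the convex set $\mathcal{E}$, assumption (2) gives $\abs{H(A_1)-H(A_2)}\le \Lambda(1+2R_K)^q\pnorm{A_1-A_2}{\op}\le \Lambda(1+2R_K)^q\pnorm{A_1-A_2}{F}$. We take the McShane extension
\begin{align*}
\tilde H(A)\equiv\inf_{B\in\mathcal{E}}\{H(B)+L_K\pnorm{A-B}{F}\},\qquad L_K\equiv\Lambda(1+2R_K)^q\le K^{c_q}\Lambda.
\end{align*}
This $\tilde H$ is $L_K$-Lipschitz in Frobenius norm on all of $\R^{m\times n}$ and agrees with $H$ on $\mathcal{E}$. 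Then $Z\mapsto\tilde H(n^{-1/2}Z)$ is $L_K n^{-1/2}$-Lipschitz, so Gaussian concentration gives
\begin{align*}
\Prob\big(\abs{\tilde H(W)-\E\tilde H(W)}\ge t\big)\le 2e^{-nt^2/(2L_K^2)}.
\end{align*}

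\textbf{Step 2: compare means.} We have $\abs{\E H(W)-\E\tilde H(W)}\le \E[(\abs{H(W)}+\abs{\tilde H(W)})\bm 1_{W\notin\mathcal{E}}]$. By (1), $\abs{H}\le\Lambda(1+\pnorm{W}{\op})^q$, and $\abs{\tilde H(A)}\le \abs{H(0)}+L_K\pnorm{A}{F}\le \Lambda+L_K\sqrt{n}\pnorm{A}{\op}$. Cauchy--Schwarz together with the moment bounds $\E\pnorm{W}{\op}^{2q}\le C_qK^{q}$ then give a mean gap of at most $K^{c_q}\Lambda\sqrt n\, e^{-n/(2c_0)}\le K^{c_q}\Lambda e^{-n/(4c_0)}$. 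This is at most $K^{c_q}\Lambda x/2$ whenever $x\ge e^{-n/c_0}$, after adjusting $c_0$. This is the step where the lower restriction on $x$ enters, and it is the main point requiring care: the crude Frobenius growth of $\tilde H$ must be beaten by the exponentially small probability of $\mathcal{E}^c$.

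\textbf{Step 3: combine.} On $\{W\in\mathcal{E}\}$ we have $H(W)=\tilde H(W)$. Therefore
\begin{align*}
\Prob(\abs{H(W)-\E H(W)}\ge K^{c_q}\Lambda x)\le \Prob(W\notin\mathcal{E})+\Prob(\abs{\tilde H(W)-\E\tilde H(W)}\ge K^{c_q}\Lambda x/2).
\end{align*}
By Step 1, the second term is at most $2e^{-nx^2/c_0}$, using $L_K\le K^{c_q}\Lambda$ after enlarging $c_q$. The first term is at most $e^{-n/c_0}$. Together these are bounded by $c_0e^{-n(1\wedge x^2)/c_0}$, which is the claim.
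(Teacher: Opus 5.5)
Your proof is correct and follows essentially the same route as the paper: extend $H$ off an operator-norm ball of radius $\asymp_K 1$ to a globally Lipschitz function, apply Gaussian concentration to the extension, and control the mean discrepancy through the growth bound (1) together with the $e^{-n/c}$ tail of $\pnorm{W}{\op}$. The only difference is the extension itself. You use the plain McShane extension in Frobenius norm, whose $\sqrt{n}$ growth is harmlessly absorbed by the exponentially small probability of leaving the ball. The paper instead builds a truncated infimal convolution in operator norm, which keeps the extension polynomially bounded in $\pnorm{A}{\op}$; your version is, if anything, slightly cleaner.
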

\begin{proof}
	For notational simplicity, we write $Z_n\equiv n^{-1/2}\mathsf{Z}_{m\times n}$ in the proof. The proof below modifies that of \cite[Lemma A.2]{bao2025leave} tailored for the symmetric setting. Fix $M>0$ to be chosen later. Let
	\begin{align*}
	\mathscr{H}(A)&\equiv \inf_{\substack{A' \in \R^{m\times n}, \pnorm{A'}{\op}\leq M}}\Big\{H(A')+\Lambda \cdot \Gamma(A,A')\big(\pnorm{A-A'}{\op}\wedge (2M)\big)\Big\},
	\end{align*}
	where 
	\begin{align*}
	\Gamma(A,A')\equiv\big( (\pnorm{A}{\op}\wedge M)+\pnorm{A'}{\op}+1\big)^q.
	\end{align*}
	Note that for any $A \in \R^{m\times n}$ with $\pnorm{A}{\op}\leq M$, the definition of $\mathscr{H}$ entails that $\mathscr{H}(A)\leq H(A)$. On the other hand, the condition (2) implies that $H(A)\leq \mathscr{H}(A)$. In summary, 
	\begin{align}\label{ineq:gaussian_conc_1}
	\mathscr{H}(A)=H(A) \hbox{ for all } A \in \R^{m\times n} \hbox{ such that }\pnorm{A}{\op}\leq M.
	\end{align}
	Next, for $A_1,A_2 \in \R^{m\times n}$,
	\begin{align*}
	&\abs{\mathscr{H}(A_1)-\mathscr{H}(A_2)}/\Lambda\\
	&\leq   \sup_{\substack{A' \in \R^{m\times n}, \pnorm{A'}{\op}\leq M}}\bigabs{\Gamma(A_1,A') \big(\pnorm{A_1-A'}{\op}\wedge 2M\big) - \Gamma(A_2,A') \big(\pnorm{A_2-A'}{\op}\wedge 2M\big)  }\\
	&\leq 2M\cdot \sup_{A' \in \R^{m\times n}, \pnorm{A'}{\op}\leq M}\abs{\Gamma(A_1,A')-\Gamma(A_2,A')}+(2M+1)^q\cdot \pnorm{A_1-A_2}{\op}\\
	&\leq 2q(2M+1)^q\cdot \pnorm{A_1-A_2}{\op}\leq 2q(2M+1)^q\cdot \pnorm{A_1-A_2}{F}. 
	\end{align*}
	Using Gaussian concentration inequality, for any $x>0, M>c' K$,
	\begin{align}\label{ineq:gaussian_conc_2}
	2\exp\bigg(-\frac{nx^2}{\Lambda^2 M^{cq}}\bigg)&\geq \Prob\big(\abs{\mathscr{H}(Z_n)-\E \mathscr{H}(Z_n)}\geq x\big)\nonumber\\
	&\geq \Prob\big(\abs{\mathscr{H}(Z_n)-\E \mathscr{H}(Z_n)}\geq x, \pnorm{Z_n}{\op}\leq M\big)\nonumber\\
	&\geq \Prob\big(\abs{H(Z_n)-\E \mathscr{H}(Z_n)}\geq x\big)-c e^{-n M^2/c}. 
	\end{align}
	Here in the last inequality we used (\ref{ineq:gaussian_conc_1}) and the subgaussian tail estimate for $\pnorm{Z_n}{\op}$, cf. \cite[Theorem 4.4.5]{vershynin2018high}. On the other hand, as $\E \mathscr{H}(Z_n)=\E H(Z_n)\bm{1}_{\pnorm{Z_n}{\op}\leq M}+\E \mathscr{H}(Z_n)\bm{1}_{\pnorm{Z_n}{\op}> M}$, using the condition (1), 
	\begin{align*}
	&\bigabs{\E \mathscr{H}(Z_n)-\E H(Z_n)}\leq \E\big(\abs{\mathscr{H}(Z_n)}+\abs{H(Z_n)}\big)\bm{1}_{\pnorm{Z_n}{\op}\geq M}\\
	&\leq \E^{1/2}\big(\abs{H(0)}+\Lambda (1+\pnorm{Z_n}{\op})^{q+1}+\abs{H(Z_n)}\big)^2\cdot \Prob^{1/2}(\pnorm{Z_n}{\op}\geq M)\\
	&\leq c \Lambda \E(1+\pnorm{Z_n}{\op})^{cq }\cdot e^{-nM^2/c}\leq \Lambda K^{c q} e^{-n M^2/c }.
	\end{align*}
	So for any $x>0, M>c' K$ such that $x\geq \Lambda K^{c q} e^{-n M^2/c}$, 
	\begin{align*}
	\Prob\big(\abs{H(A)-\E H(A)}\geq 2x\big)\leq c \exp\bigg(-n\cdot \bigg\{\frac{x^2}{c \Lambda^2 M^{c q}}\wedge \frac{M^2}{c}\bigg\} \bigg),
	\end{align*}
	proving the subgaussian estimate by choosing $M=c_2 K$  for a large universal $c_2>1$, and adjusting the constant. 
\end{proof}

\begin{lemma}
	\label{lem:two_point_prob_metric}
	Let $(\Theta,d)$ be a metric space, and let $\{P_\theta:\theta\in\Theta\}$ be a statistical experiment. 
	For any two points $\theta_0,\theta_1\in\Theta$, write $
	\Delta \equiv d(\theta_0,\theta_1)$. 
	Then if $s>0$ and $d(\theta_0,\theta_1)\ge 2s$, then $
	\inf_{\hat\theta}
	\sup_{i=0,1}
	\Prob_{\theta_i}\big(
	d(\hat\theta,\theta_i)\ge s
	\big)
	\ge
	(1-d_{\TV}(P_{\theta_0},P_{\theta_1}))/2$. 
\end{lemma}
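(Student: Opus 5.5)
This is the classical Le Cam two-point reduction; the plan is to reduce estimation to a test between $\theta_0$ and $\theta_1$. Fix an arbitrary estimator $\hat\theta$, meaning any measurable map from the sample space to $\Theta$. Define the test $\psi\equiv \bm 1_{\{d(\hat\theta,\theta_1)<d(\hat\theta,\theta_0)\}}$, so that $\psi=1$ exactly when $\hat\theta$ is strictly closer to $\theta_1$ than to $\theta_0$.

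The first step is to show that the test's error events are contained in the estimation-error events, using the separation $d(\theta_0,\theta_1)\ge 2s$ and the triangle inequality.
\begin{itemize}
\item If $\psi=1$ then $d(\hat\theta,\theta_0)>d(\hat\theta,\theta_1)$. Hence $2d(\hat\theta,\theta_0)> d(\hat\theta,\theta_0)+d(\hat\theta,\theta_1)\ge d(\theta_0,\theta_1)\ge 2s$, which gives $d(\hat\theta,\theta_0)\ge s$.
\item If $\psi=0$ then $d(\hat\theta,\theta_1)\ge d(\hat\theta,\theta_0)$. The same argument gives $2d(\hat\theta,\theta_1)\ge 2s$, i.e., $d(\hat\theta,\theta_1)\ge s$.
\end{itemize}
Therefore $\{\psi=1\}\subset\{d(\hat\theta,\theta_0)\ge s\}$ and $\{\psi=0\}\subset\{d(\hat\theta,\theta_1)\ge s\}$.

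The second step bounds the worst case by the average and applies the definition of total variation:
\begin{align*}
\sup_{i=0,1}\Prob_{\theta_i}\big(d(\hat\theta,\theta_i)\ge s\big)
&\ge \tfrac12\big(P_{\theta_0}(\psi=1)+P_{\theta_1}(\psi=0)\big)\\
&= \tfrac12\big(1-(P_{\theta_0}(\psi=0)-P_{\theta_1}(\psi=0))\big)\\
&\ge \tfrac12\big(1-d_{\TV}(P_{\theta_0},P_{\theta_1})\big).
\end{align*}
The last line holds because $\abs{P(A)-Q(A)}\le d_{\TV}(P,Q)$ for every measurable event $A$. This is where the convention $d_{\TV}(P,Q)=\sup_A\abs{P(A)-Q(A)}$ is needed, which matches the Pinsker step used earlier in the paper. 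Since $\hat\theta$ was arbitrary, taking the infimum over $\hat\theta$ proves the lemma.

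There is no substantive obstacle. The only delicate point is handling ties $d(\hat\theta,\theta_0)=d(\hat\theta,\theta_1)$; the non-strict inequality in the case $\psi=0$ still yields $d(\hat\theta,\theta_1)\ge s$ via the triangle inequality. If $\hat\theta$ is randomized, the same argument applies conditionally on the external randomization, followed by averaging.
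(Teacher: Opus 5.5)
Your proof is correct and follows the same route as the paper. Like the paper, you reduce to the nearest-point test induced by $\hat\theta$, use the triangle inequality and $d(\theta_0,\theta_1)\ge 2s$ to place each testing error inside the corresponding estimation-error event, and conclude with the Le Cam bound $(1-d_{\TV}(P_{\theta_0},P_{\theta_1}))/2$. The only difference is that you verify this last step in one line from $\abs{P(A)-Q(A)}\le d_{\TV}(P,Q)$ instead of citing Tsybakov.
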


\begin{proof}
	This is the standard two-point testing result. As we did not locate an exact reference, some details are included below. 
	
	Fix an arbitrary estimator $\hat\theta$. Define the induced test $
	\psi_{\hat\theta}
	\in
	\argmin_{i\in\{0,1\}}d(\hat\theta,\theta_i)$, 
	with ties broken arbitrarily. Suppose $
	d(\hat\theta,\theta_i)<\Delta/2$, then by the triangle inequality, $
	d(\hat\theta,\theta_{1-i})
	\ge
	d(\theta_0,\theta_1)-d(\hat\theta,\theta_i)
	>\Delta/2$.
	Hence $\psi_{\hat\theta}=i$. Therefore $
	\{\psi_{\hat\theta}\neq i\}
	\subset
	\big\{
	d(\hat\theta,\theta_i)\ge \Delta/2
	\big\}$. 
	It follows that
	\begin{align*}
	\sup_{i=0,1}
	\Prob_{\theta_i}\big(
	d(\hat\theta,\theta_i)\ge \Delta/2
	\big)
	\ge
	\sup_{i=0,1}\Prob_{\theta_i}(\psi_{\hat\theta}\neq i)\geq \inf_{\psi}
	\sup_{i=0,1}\Prob_{\theta_i}(\psi\neq i).
	\end{align*}
	By the total-variation form of Le Cam's two-point testing bound
	\cite[Theorem 2.2]{tsybakov2008introduction}, the right hand side of the above display is bounded from below by $(1-d_{\TV}(P_{\theta_0},P_{\theta_1}))/2$. The claim follows. 
\end{proof}

\begin{lemma}\label{lem:quantile_inversion}
	Let $F$ and $F_0$ be cumulative distribution functions on $\mathbb R$, and for
	$a\in(0,1)$ define their left quantiles by
	\begin{align*}
	Q_F(a)\equiv \inf\{t\in\mathbb R:F(t)\ge a\},
	\qquad
	Q_{F_0}(a)\equiv \inf\{t\in\mathbb R:F_0(t)\ge a\}.
	\end{align*}
	Suppose the following hold:
	\begin{enumerate}
		\item $F_0$ is supported on an interval $I=[\ell,r]$ and has a density
		$f_0$ on $I$ satisfying $
		\inf_{t \in I} f_0(t)\ge m_0$
		for some $m_0>0$. 
		\item Let $\eta\in(0,1/2)$ and suppose $
		\sup_{t\in\mathbb R}|F(t)-F_0(t)|\le \Delta$ for some $
		0<\Delta\le \eta/2$. 
	\end{enumerate}
	Then, uniformly over $a\in[\eta,1-\eta]$,
	\begin{align*}
	\abs{Q_F(a)-Q_{F_0}(a)}
	\le {2\Delta/m_0}.
	\end{align*}
\end{lemma}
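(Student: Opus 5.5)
We prove Lemma \ref{lem:quantile_inversion} directly from the definition of the left quantile, using the uniform bound $\sup_t|F(t)-F_0(t)|\le\Delta$ and the density lower bound $f_0\ge m_0$ on $I$. Fix $a\in[\eta,1-\eta]$ and write $q_0\equiv Q_{F_0}(a)$. Because $F_0$ is continuous and strictly increasing on $I$ with $F_0(\ell)=0$ and $F_0(r)=1$, and $a\in(0,1)$, the quantile $q_0$ lies in the interior of $I$ and satisfies $F_0(q_0)=a$. It therefore suffices to prove the two inequalities $Q_F(a)\le q_0+2\Delta/m_0$ and $Q_F(a)\ge q_0-2\Delta/m_0$.

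\emph{Upper bound.} We look for a point $t_+\ge q_0$ with $F_0(t_+)\ge a+\Delta$. Then $F(t_+)\ge F_0(t_+)-\Delta\ge a$, and the definition of the left quantile gives $Q_F(a)\le t_+$. Note that $a+\Delta\le 1-\eta+\eta/2<1$, so the level $a+\Delta$ is attained inside $I$. Set $t_+\equiv Q_{F_0}(a+\Delta)$. Since $F_0(t_+)-F_0(q_0)=\Delta$, the density bound gives
\begin{align*}
\Delta=\int_{q_0}^{t_+} f_0 \ge m_0\,(t_+-q_0),
\end{align*}
hence $t_+-q_0\le \Delta/m_0$. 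This yields $Q_F(a)\le q_0+\Delta/m_0$.

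\emph{Lower bound.} Let $t<q_0-\Delta/m_0$. If $t\ge \ell$, the density bound gives $F_0(t)\le F_0(q_0)-m_0(q_0-t)<a-\Delta$. If $t<\ell$, then $F_0(t)=0<a-\Delta$, where we used $a-\Delta\ge \eta-\eta/2>0$. In either case $F(t)\le F_0(t)+\Delta<a$. So no such $t$ belongs to $\{F\ge a\}$, and therefore $Q_F(a)\ge q_0-\Delta/m_0$.

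Together the two bounds give $|Q_F(a)-Q_{F_0}(a)|\le \Delta/m_0\le 2\Delta/m_0$, uniformly in $a\in[\eta,1-\eta]$. The only delicate step is keeping the shifted levels $a\pm\Delta$ strictly inside $(0,1)$, so that the density bound can be applied on $I$. This is exactly where the hypothesis $\Delta\le\eta/2$ enters, together with the separate treatment of the case $t<\ell$ in the lower bound.
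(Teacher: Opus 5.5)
Your proof is correct and follows essentially the same route as the paper's: fix $q_0=Q_{F_0}(a)$ with $F_0(q_0)=a$, use $f_0\ge m_0$ to move $F_0$ by $\Delta$, transfer this to $F$ through the sup-norm bound, and handle $t<\ell$ separately using $\Delta<a$. The differences are cosmetic. Choosing $t_+=Q_{F_0}(a+\Delta)$, which lies in $I$ because $a+\Delta<1$, replaces the paper's case split on whether $q_0+2\Delta/m_0\le r$, and your bound $\Delta/m_0$ shows that the factor $2$ in the paper's constant is slack.
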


\begin{proof}
	Fix $a\in[\eta,1-\eta]$ and write $
	q_0\equiv Q_{F_0}(a)$ and  $\rho\equiv {2\Delta/m_0}$. 
	Since $F_0$ is supported on $[\ell,r]$ and has density bounded below by
	$m_0>0$ throughout $[\ell,r]$, the function $F_0$ is continuous and strictly
	increasing on $[\ell,r]$. Hence $q_0\in[\ell,r]$ and $
	F_0(q_0)=a$.
	
	We first prove the upper bound $Q_F(a)\le q_0+\rho$. We consider two cases:
	\begin{itemize}
		\item  If $q_0+\rho\le r$,
		then, by the density lower bound, $
		F_0(q_0+\rho)-F_0(q_0)
		=
		\int_{q_0}^{q_0+\rho} f_0(t)\,\d t
		\ge m_0\rho
		=
		2\Delta$, and therefore $
		F(q_0+\rho)
		\ge
		F_0(q_0+\rho)-\Delta
		\ge
		a+2\Delta-\Delta
		=
		a+\Delta
		\ge a$. 
		By the definition of the left quantile, this implies $
		Q_F(a)\le q_0+\rho$.
		\item If $q_0+\rho>r$, then as $\mathrm{supp}(F_0)=[\ell,r]$, we have $
		F(r)\ge F_0(r)-\Delta = 1-\Delta$. 
		Since $a\le 1-\eta$ and $\Delta\le \eta/2$, we have $
		1-\Delta \ge 1-\eta/2 \ge 1-\eta \ge a$. 
		Thus $F(r)\ge a$, and hence $Q_F(a)\le r<q_0+\rho$. 
	\end{itemize}
	The upper bound follows.

	We next prove the lower bound $Q_F(a)\ge q_0-\rho$. The arguments are similar:
	\begin{itemize}
		\item If $q_0-\rho\ge \ell$,
		then again by the density lower bound, $
		F_0(q_0)-F_0(q_0-\rho)
		=
		\int_{q_0-\rho}^{q_0} f_0(t)\,\d t
		\ge m_0\rho
		=
		2\Delta$. 
		Hence $
		F(q_0-\rho)
		\le
		F_0(q_0-\rho)+\Delta
		\le
		a-2\Delta+\Delta
		=
		a-\Delta
		<a$. Since $F$ is nondecreasing, $F(t)<a$ for every $t\le q_0-\rho$, and therefore $
		Q_F(a)>q_0-\rho$. 
		\item If $q_0-\rho<\ell$, then for every $t<\ell$ we have $F_0(t)=0$, so $
		F(t)\le \Delta\le \eta/2<a$ for $t<\ell$. Consequently $Q_F(a)\ge \ell>q_0-\rho$. 
	\end{itemize}
	The lower bound follows.
\end{proof}

\section*{Acknowledgments}
 The research of Q. Han is partially supported by NSF grant DMS-2143468.

\bibliographystyle{alpha}
\bibliography{mybib}

\newcommand{\etalchar}[1]{$^{#1}$}
\begin{thebibliography}{GTM{\etalchar{+}}24}

\bibitem[BHX25]{bao2025leave}
Zhigang Bao, Qiyang Han, and Xiaocong Xu.
\newblock A leave-one-out approach to approximate message passing.
\newblock {\em Ann. Appl. Probab.}, 35(4):2716--2766, 2025.

\bibitem[CB07]{chandler2007inference}
Richard~E. Chandler and Steven Bate.
\newblock Inference for clustered data using the independence loglikelihood.
\newblock {\em Biometrika}, 94(1):167--183, 2007.

\bibitem[CCFM19]{chen2019gradient}
Yuxin Chen, Yuejie Chi, Jianqing Fan, and Cong Ma.
\newblock Gradient descent with random initialization: fast global convergence
  for nonconvex phase retrieval.
\newblock {\em Math. Program.}, 176(1-2):5--37, 2019.

\bibitem[CCM21]{celentano2021high}
Michael Celentano, Chen Cheng, and Andrea Montanari.
\newblock The high-dimensional asymptotics of first order methods with random
  data.
\newblock {\em arXiv preprint arXiv:2112.07572}, 2021.

\bibitem[CDI26]{chen2026normal}
Jiafeng Chen, Nabarun Deb, and Nikolaos Ignatiadis.
\newblock Normal approximations in nonparametric empirical {B}ayes.
\newblock {\em arXiv preprint arXiv:2605.31599}, 2026.

\bibitem[CDP15]{chen2015improved}
Wei-Kuo Chen, Nikos Dafnis, and Grigoris Paouris.
\newblock Improved {H}\"older and reverse {H}\"older inequalities for
  {G}aussian random vectors.
\newblock {\em Adv. Math.}, 280:643--689, 2015.

\bibitem[CGS11]{chen2011normal}
Louis H.~Y. Chen, Larry Goldstein, and Qi-Man Shao.
\newblock {\em Normal approximation by {S}tein's method}.
\newblock Probability and its Applications (New York). Springer, Heidelberg,
  2011.

\bibitem[Che26]{chen2026empirical}
Jiafeng Chen.
\newblock Empirical {B}ayes when estimation precision predicts parameters.
\newblock {\em Econometrica}, 94(2):305--340, 2026.

\bibitem[Cre15]{cressie2015statistics}
Noel A.~C. Cressie.
\newblock {\em Statistics for spatial data}.
\newblock Wiley Classics Library. John Wiley \& Sons, Inc., New York, revised
  edition, 2015.

\bibitem[CW26]{chen2026sharp}
Jiafeng Chen and Yihong Wu.
\newblock Sharp regret-hellinger bounds for {G}aussian empirical {B}ayes via
  polynomial approximation.
\newblock {\em arXiv preprint arXiv:2605.02070}, 2026.

\bibitem[DM13]{dedecker2013minimax}
J\'er\^ome Dedecker and Bertrand Michel.
\newblock Minimax rates of convergence for {W}asserstein deconvolution with
  supersmooth errors in any dimension.
\newblock {\em J. Multivariate Anal.}, 122:278--291, 2013.

\bibitem[DSGS22]{deb2022two}
Nabarun Deb, Sujayam Saha, Adityanand Guntuboyina, and Bodhisattva Sen.
\newblock Two-component mixture model in the presence of covariates.
\newblock {\em J. Amer. Statist. Assoc.}, 117(540):1820--1834, 2022.

\bibitem[DZ16]{dicker2016high}
Lee~H. Dicker and Sihai~D. Zhao.
\newblock High-dimensional classification via nonparametric empirical {B}ayes
  and maximum likelihood inference.
\newblock {\em Biometrika}, 103(1):21--34, 2016.

\bibitem[Efr10]{efron2010large}
Bradley Efron.
\newblock {\em Large-scale inference}, volume~1 of {\em Institute of
  Mathematical Statistics (IMS) Monographs}.
\newblock Cambridge University Press, Cambridge, 2010.
\newblock Empirical Bayes methods for estimation, testing, and prediction.

\bibitem[Efr14]{efron2014two}
Bradley Efron.
\newblock Two modeling strategies for empirical {B}ayes estimation.
\newblock {\em Statist. Sci.}, 29(2):285--301, 2014.

\bibitem[EM72]{efron1972limiting}
Bradley Efron and Carl Morris.
\newblock Limiting the risk of {B}ayes and empirical {B}ayes estimators. {II}.
  {T}he empirical {B}ayes case.
\newblock {\em J. Amer. Statist. Assoc.}, 67:130--139, 1972.

\bibitem[FD18]{feng2018approximate}
Long Feng and Lee~H. Dicker.
\newblock Approximate nonparametric maximum likelihood for mixture models: a
  convex optimization approach to fitting arbitrary multivariate mixing
  distributions.
\newblock {\em Comput. Statist. Data Anal.}, 122:80--91, 2018.

\bibitem[FGSW23]{fan2023gradient}
Zhou Fan, Leying Guan, Yandi Shen, and Yihong Wu.
\newblock Gradient flows for empirical bayes in high-dimensional linear models.
\newblock {\em arXiv preprint arXiv:2312.12708}, 2023.

\bibitem[FKL{\etalchar{+}}25]{fan2025dynamical}
Zhou Fan, Justin Ko, Bruno Loureiro, Yue~M Lu, and Yandi Shen.
\newblock Dynamical mean-field analysis of adaptive {L}angevin diffusions:
  Replica-symmetric fixed point and empirical {B}ayes.
\newblock {\em arXiv preprint arXiv:2504.15558}, 2025.

\bibitem[GIKL25]{ghosh2025stein}
Sulagna Ghosh, Nikolaos Ignatiadis, Frederic Koehler, and Amber Lee.
\newblock Stein's unbiased risk estimate and hyv$\backslash$" arinen's score
  matching.
\newblock {\em arXiv preprint arXiv:2502.20123}, 2025.

\bibitem[GK16]{gu2016problem}
Jiaying Gu and Roger Koenker.
\newblock On a problem of {R}obbins.
\newblock {\em International Statistical Review}, 84(2):224--244, 2016.

\bibitem[GK22]{gu2022ranking}
Jiaying Gu and Roger Koenker.
\newblock Ranking and selection from pairwise comparisons: empirical bayes
  methods for citation analysis.
\newblock In {\em AEA Papers and Proceedings}, volume 112, pages 624--629.
  American Economic Association 2014 Broadway, Suite 305, Nashville, TN 37203,
  2022.

\bibitem[GTM{\etalchar{+}}24]{gerbelot2024rigorous}
C\'{e}dric Gerbelot, Emanuele Troiani, Francesca Mignacco, Florent Krzakala,
  and Lenka Zdeborov\'{a}.
\newblock Rigorous {D}ynamical {M}ean-{F}ield {T}heory for {S}tochastic
  {G}radient {D}escent {M}ethods.
\newblock {\em SIAM J. Math. Data Sci.}, 6(2):400--427, 2024.

\bibitem[GvdV01]{ghosal2001entropies}
Subhashis Ghosal and Aad~W. van~der Vaart.
\newblock Entropies and rates of convergence for maximum likelihood and {B}ayes
  estimation for mixtures of normal densities.
\newblock {\em Ann. Statist.}, 29(5):1233--1263, 2001.

\bibitem[GZ22]{guo2022extreme}
Zijian Guo and Cun-Hui Zhang.
\newblock Extreme eigenvalues of nonlinear correlation matrices with
  applications to additive models.
\newblock {\em Stochastic Process. Appl.}, 150:1037--1058, 2022.

\bibitem[Han25a]{han2025entrywise}
Qiyang Han.
\newblock Entrywise dynamics and universality of general first order methods.
\newblock {\em Ann. Statist.}, 53(4):1783--1807, 2025.

\bibitem[Han25b]{han2025long}
Qiyang Han.
\newblock Long-time dynamics and universality of nonconvex gradient descent.
\newblock {\em arXiv preprint arXiv:2509.11426}, 2025.

\bibitem[HX26]{han2026gradient}
Qiyang Han and Xiaocong Xu.
\newblock Gradient descent inference in empirical risk minimization.
\newblock {\em Ann. Statist., to appear. Available at arXiv:2412.09498}, 2026.

\bibitem[IK26]{ignatiadis2026compound}
Nikolaos Ignatiadis and Sid Kankanala.
\newblock Compound decisions and empirical bayes via bayesian nonparametrics.
\newblock {\em arXiv preprint arXiv:2602.20115}, 2026.

\bibitem[IS25]{ignatiadis2025empirical}
Nikolaos Ignatiadis and Bodhisattva Sen.
\newblock Empirical partially {B}ayes multiple testing and compound {$\chi^2$}
  decisions.
\newblock {\em Ann. Statist.}, 53(1):1--36, 2025.

\bibitem[Jia20]{jiang2020general}
Wenhua Jiang.
\newblock On general maximum likelihood empirical {B}ayes estimation of
  heteroscedastic {IID} normal means.
\newblock {\em Electron. J. Stat.}, 14(1):2272--2297, 2020.

\bibitem[JZ09]{jiang2009general}
Wenhua Jiang and Cun-Hui Zhang.
\newblock General maximum likelihood empirical {B}ayes estimation of normal
  means.
\newblock {\em Ann. Statist.}, 37(4):1647--1684, 2009.

\bibitem[KC22]{kuchibhotla2022moving}
Arun~Kumar Kuchibhotla and Abhishek Chakrabortty.
\newblock Moving beyond sub-{G}aussianity in high-dimensional statistics:
  applications in covariance estimation and linear regression.
\newblock {\em Inf. Inference}, 11(4):1389--1456, 2022.

\bibitem[KCSA20]{kim2020fast}
Youngseok Kim, Peter Carbonetto, Matthew Stephens, and Mihai Anitescu.
\newblock A fast algorithm for maximum likelihood estimation of mixture
  proportions using sequential quadratic programming.
\newblock {\em J. Comput. Graph. Statist.}, 29(2):261--273, 2020.

\bibitem[KG17]{koenker2017rebayes}
Roger Koenker and Jiaying Gu.
\newblock {REB}ayes: an {R} package for empirical bayes mixture methods.
\newblock {\em Journal of Statistical Software}, 82:1--26, 2017.

\bibitem[KG25]{koenker2025empirical}
Roger Koenker and Jiaying Gu.
\newblock {\em Empirical Bayes: Some Tools, Rules, and Duals}.
\newblock Cambridge University Press, 2025.

\bibitem[KM14]{koenker2014convex}
Roger Koenker and Ivan Mizera.
\newblock Convex optimization, shape constraints, compound decisions, and
  empirical {B}ayes rules.
\newblock {\em J. Amer. Statist. Assoc.}, 109(506):674--685, 2014.

\bibitem[KS26]{kim2026empirical}
Taehyun Kim and Bodhisattva Sen.
\newblock Empirical bayes estimation and inference via smooth nonparametric
  maximum likelihood.
\newblock {\em arXiv preprint arXiv:2603.27843}, 2026.

\bibitem[KW56]{kiefer1956consistency}
J.~Kiefer and J.~Wolfowitz.
\newblock Consistency of the maximum likelihood estimator in the presence of
  infinitely many incidental parameters.
\newblock {\em Ann. Math. Statist.}, 27:887--906, 1956.

\bibitem[KWCS24]{kim2024flexible}
Youngseok Kim, Wei Wang, Peter Carbonetto, and Matthew Stephens.
\newblock A flexible empirical {B}ayes approach to multiple linear regression,
  and connections with penalized regression.
\newblock {\em J. Mach. Learn. Res.}, 25:Paper No. [185], 59, 2024.

\bibitem[LD26]{lee2026parametric}
Seunghyun Lee and Nabarun Deb.
\newblock Parametric mean-field empirical {B}ayes in high-dimensional linear
  regression.
\newblock {\em arXiv preprint arXiv:2601.16842}, 2026.

\bibitem[Lin83]{lindsay1983geometry}
Bruce~G. Lindsay.
\newblock The geometry of mixture likelihoods: a general theory.
\newblock {\em Ann. Statist.}, 11(1):86--94, 1983.

\bibitem[MSS23]{mukherjee2023mean}
Sumit Mukherjee, Bodhisattva Sen, and Subhabrata Sen.
\newblock A mean field approach to empirical bayes estimation in
  high-dimensional linear regression.
\newblock {\em arXiv preprint arXiv:2309.16843}, 2023.

\bibitem[Ngu13]{nguyen2013convergence}
XuanLong Nguyen.
\newblock Convergence of latent mixing measures in finite and infinite mixture
  models.
\newblock {\em Ann. Statist.}, 41(1):370--400, 2013.

\bibitem[PW20]{polyanskiy2020self}
Yury Polyanskiy and Yihong Wu.
\newblock Self-regularizing property of nonparametric maximum likelihood
  estimator in mixture models.
\newblock {\em arXiv preprint arXiv:2008.08244}, 2020.

\bibitem[Rob51]{robbins1951asymptotically}
Herbert Robbins.
\newblock Asymptotically subminimax solutions of compound statistical decision
  problems.
\newblock In {\em Proceedings of the {S}econd {B}erkeley {S}ymposium on
  {M}athematical {S}tatistics and {P}robability, 1950}, pages 131--148. Univ.
  California Press, Berkeley-Los Angeles, Calif., 1951.

\bibitem[Rob56]{robbins1956empirical}
Herbert Robbins.
\newblock An empirical {B}ayes approach to statistics.
\newblock In {\em Proceedings of the {T}hird {B}erkeley {S}ymposium on
  {M}athematical {S}tatistics and {P}robability, 1954--1955, vol. {I}}, pages
  157--163. Univ. California Press, Berkeley-Los Angeles, Calif., 1956.

\bibitem[RV13]{rudelson2013hanson}
Mark Rudelson and Roman Vershynin.
\newblock Hanson-{W}right inequality and sub-{G}aussian concentration.
\newblock {\em Electron. Commun. Probab.}, 18:no. 82, 9, 2013.

\bibitem[SG20]{saha2020nonparametric}
Sujayam Saha and Adityanand Guntuboyina.
\newblock On the nonparametric maximum likelihood estimator for {G}aussian
  location mixture densities with application to {G}aussian denoising.
\newblock {\em Ann. Statist.}, 48(2):738--762, 2020.

\bibitem[SGS25]{soloff2025multivariate}
Jake~A. Soloff, Adityanand Guntuboyina, and Bodhisattva Sen.
\newblock Multivariate, heteroscedastic empirical {B}ayes via nonparametric
  maximum likelihood.
\newblock {\em J. R. Stat. Soc. Ser. B. Stat. Methodol.}, 87(1):1--32, 2025.

\bibitem[SL03]{seber2003linear}
George A.~F. Seber and Alan~J. Lee.
\newblock {\em Linear regression analysis}.
\newblock Wiley Series in Probability and Statistics. Wiley-Interscience [John
  Wiley \& Sons], Hoboken, NJ, second edition, 2003.

\bibitem[Ste56]{stein1956inadmissibility}
Charles Stein.
\newblock Inadmissibility of the usual estimator for the mean of a multivariate
  normal distribution.
\newblock In {\em Proceedings of the {T}hird {B}erkeley {S}ymposium on
  {M}athematical {S}tatistics and {P}robability, 1954--1955, vol. {I}}, pages
  197--206. Univ. California Press, Berkeley-Los Angeles, Calif., 1956.

\bibitem[SW26]{shen2026poisson}
Yandi Shen and Yihong Wu.
\newblock Poisson empirical {B}ayes estimation: when does {$g$}-modeling beat
  {$f$}-modeling in theory (and in practice)?
\newblock {\em Ann. Statist.}, 54(1):146--175, 2026.

\bibitem[TM24]{tang2024empirical}
Yiqi Tang and Ryan Martin.
\newblock Empirical {B}ayes inference in sparse high-dimensional generalized
  linear models.
\newblock {\em Electron. J. Stat.}, 18(2):3212--3246, 2024.

\bibitem[Tsy09]{tsybakov2008introduction}
Alexandre~B. Tsybakov.
\newblock {\em Introduction to {N}onparametric {E}stimation}.
\newblock Springer Series in Statistics. Springer, New York, 2009.
\newblock Revised and extended from the 2004 French original, Translated by
  Vladimir Zaiats.

\bibitem[vdG00]{van2000empirical}
Sara van~de Geer.
\newblock {\em Applications of {E}mpirical {P}rocess {T}heory}, volume~6 of
  {\em Cambridge Series in Statistical and Probabilistic Mathematics}.
\newblock Cambridge University Press, Cambridge, 2000.

\bibitem[vdVW96]{van1996weak}
Aad van~der Vaart and Jon~A. Wellner.
\newblock {\em Weak {C}onvergence and {E}mpirical {P}rocesses}.
\newblock Springer Series in Statistics. Springer-Verlag, New York, 1996.

\bibitem[Ver18]{vershynin2018high}
Roman Vershynin.
\newblock {\em High-dimensional probability: An introduction with applications
  in data science}, volume~47 of {\em Cambridge Series in Statistical and
  Probabilistic Mathematics}.
\newblock Cambridge University Press, Cambridge, 2018.

\bibitem[Vil09]{villani2009optimal}
C\'edric Villani.
\newblock {\em Optimal transport}, volume 338 of {\em Grundlehren der
  mathematischen Wissenschaften [Fundamental Principles of Mathematical
  Sciences]}.
\newblock Springer-Verlag, Berlin, 2009.
\newblock Old and new.

\bibitem[VRF11]{varin2011overview}
Cristiano Varin, Nancy Reid, and David Firth.
\newblock An overview of composite likelihood methods.
\newblock {\em Statist. Sinica}, 21(1):5--42, 2011.

\bibitem[WCS21]{willwerscheid2021ebnm}
Jason Willwerscheid, Peter Carbonetto, and Matthew Stephens.
\newblock {EBNM}: An {R} package for solving the empirical {B}ayes normal means
  problem using a variety of prior families.
\newblock {\em arXiv preprint arXiv:2110.00152}, 2021.

\bibitem[WY20]{wu2020optimalestimation}
Yihong Wu and Pengkun Yang.
\newblock Optimal estimation of {G}aussian mixtures via denoised method of
  moments.
\newblock {\em Ann. Statist.}, 48(4):1981--2007, 2020.

\bibitem[Zha09]{zhang2009generalized}
Cun-Hui Zhang.
\newblock Generalized maximum likelihood estimation of normal mixture
  densities.
\newblock {\em Statist. Sinica}, 19(3):1297--1318, 2009.

\bibitem[Zha26]{zhang2026empirical}
Cun-Hui Zhang.
\newblock Empirical {B}ayes for dependent data.
\newblock {\em Preprint}, 2026.

\end{thebibliography}

\end{document}